\documentclass[a4paper,11pt,oneside]{scrreprt}
\usepackage{latexsym}
\usepackage{amssymb}
\usepackage[fleqn]{amsmath}
\usepackage{amscd}
\usepackage{amsthm}
\usepackage{float}
\usepackage{wrapfig}
\usepackage{graphicx}
\usepackage{graphicx,caption}
\usepackage{comment}
\usepackage{subfig}
\usepackage{chngcntr}
\usepackage{everyshi}
\usepackage{changepage}
\usepackage{setspace}
\usepackage{bm}
\usepackage[toc,page]{appendix}
\graphicspath{ {figures/} }
\usepackage{array}
\RequirePackage{xparse, graphicx, caption, picins}
\usepackage[shortlabels]{enumitem}
\usepackage[a4paper,bindingoffset=0in,left=1.875in,right=0.5in,top=1in,bottom=1in,footskip=0in,
  headheight=17pt, 
includehead,includefoot,
heightrounded]{geometry}
\usepackage{fancyhdr}
\fancyhf{}
\fancyfoot{}
\fancypagestyle{plain}{%
	\fancyhf{} 
	\fancyhead{} 
	\fancyfoot{} 
	\fancyhead[R]{\thepage}
	\renewcommand{\headrulewidth}{0pt}
	\renewcommand{\footrulewidth}{0pt}
}

\newif\ifcaptionlinebreak

\DeclareCaptionOption{linebreak}{\csname captionlinebreak#1\endcsname}

\newif\ifcaptionlinebreak

\DeclareCaptionOption{linebreak}{\csname captionlinebreak#1\endcsname}

\usepackage[export]{adjustbox}

\DeclareCaptionStyle{mystyle}[linebreak=false]{linebreak=true}
\captionsetup{style=mystyle,justification=centering}

\rhead{\roman{page}}
\rfoot{}
\pagestyle{plain}
\captionsetup[figure]{font=small,labelsep=space,belowskip=-10pt}
\captionsetup[subfloat]{margin=-10pt}

\setlength{\voffset}{-0.125in}
\setlength{\topmargin}{0.875in}
\setlength{\headheight}{0pt}
\setlength{\headsep}{0.35in}
\setlength{\hoffset}{0.4in} 
\setlength{\oddsidemargin}{0in}
\setlength{\evensidemargin}{0in}
\setlength{\textwidth}{5.925in}
\setlength{\textheight}{8.6in}
\setlength{\parindent}{0ex}
\usepackage{parskip}
\setlength\parskip{1em plus 0.1em minus 0.2em}

\usepackage{atbegshi}

\makeatletter
\renewcommand\paragraph{\@startsection{paragraph}{4}{\z@}{-3.25ex \@plus -.05ex \@minus -0.2ex}{0.01pt}{\raggedsection\normalfont\sectfont\nobreak\size@paragraph}}
\makeatother

\newtheorem{theorem}{Theorem}[section]

\theoremstyle{definition}
\newtheorem{definition}[theorem]{Definition}

\theoremstyle{remark}

\numberwithin{equation}{section}



\def\thetitle{Relative Equilibria of Dumbbells Orbiting\\in a Planar Newtonian Gravitational System}
\def\theauthor{Jodin Morey}

\def\themonthyear{April, 2022}

\newcommand{\mathitem}{\hspace*{1.2em}&\bullet\hspace*{\labelsep}}
\linespread{1.213}

\newcounter{preamblecounter}
\setcounter{preamblecounter}{1001}
\EveryShipout{\stepcounter{preamblecounter}}
\protected\def\isgreaterthan#1{\ifnum#1>1000 #1\else0\fi}
\protected\def\isgreaterthanb#1{\ifnum#1<1000 #1\else0\fi}

\AtBeginShipout{\ifodd\isgreaterthan{\value{preamblecounter}}\edef\mytemp{\ht\AtBeginShipoutBox=\the\ht\AtBeginShipoutBox\relax
		\dp\AtBeginShipoutBox=\the\dp\AtBeginShipoutBox\relax
	}\sbox\AtBeginShipoutBox{\raisebox{0.7in}{\usebox\AtBeginShipoutBox}}\mytemp\fi
\ifodd\isgreaterthanb{\value{preamblecounter}}\edef\mytemp{\ht\AtBeginShipoutBox=\the\ht\AtBeginShipoutBox\relax
	\dp\AtBeginShipoutBox=\the\dp\AtBeginShipoutBox\relax
}\sbox\AtBeginShipoutBox{\raisebox{-0.8in}{\usebox\AtBeginShipoutBox}}\mytemp\fi
}

\newtheorem*{theorem*}{Theorem}
\begin{document}
\raggedbottom 
\newlength{\oldparskip}\setlength\oldparskip{\parskip}\parskip=0.3in
\thispagestyle{empty}
\begin{center}
\textbf{\thetitle\\ \vspace{.4in}
A THESIS\\
SUBMITTED TO THE FACULTY OF\\ THE GRADUATE SCHOOL OF\\ THE UNIVERSITY OF MINNESOTA
\\BY \vspace{.1in}
\\ \theauthor}\\ \vspace{.1in}
\vfill
\noindent\normalsize
\textbf{IN PARTIAL FULFILLMENT OF THE REQUIREMENTS\\FOR THE DEGREE OF\\DOCTOR OF PHILOSOPHY\\ \vspace{.3in}
Richard Moeckel}\vspace{.3in}

\begin{center}
	\textbf{\themonthyear}
\end{center}
\vspace*{\fill}\end{center}

\normalsize\parskip=\oldparskip
\parskip=8pt
\pagenumbering{gobble}
\topskip0pt
\newpage
\begin{center}  
\textcopyright\hspace{.2in} \textbf{\theauthor \hspace{.2in}\themonthyear\\
ALL RIGHTS RESERVED}
\end{center}
\pagenumbering{gobble}
\chapter*{Acknowledgments}
\addcontentsline{toc}{chapter}{\protect\numberline{}Acknowledgments}
\pagenumbering{roman}
I am grateful to my wife, family, and friends for their patience as I dedicated my time to the writing of this thesis. I also thank Richard Moeckel, my advisor, who provided support, expertise, and encouragement when needed.

\newpage
\chapter*{Abstract}
 \addcontentsline{toc}{chapter}{\protect\numberline{}Abstract}
\noindent
In the cosmos, any two bodies share a gravitational attraction.  When in proximity to one another in empty space, their motions can be modeled by Newtonian gravity. Newton found their orbits when the two bodies are infinitely small, the so-called two-body problem. The general situation in which the bodies have varying shapes and sizes, called the full two-body problem, remains open. We find relative equilibria (RE) and their stability for an approximation of the full two-body problem, where each body is restricted to a plane and consists of two point masses connected by a massless rod, a dumbbell. In particular, we find symmetric RE in which the bodies are arranged colinearly, perpendicularly, or trapezoidally.  When the masses of the dumbbells are pairwise equal, we find asymmetric RE bifurcating from the symmetric RE.  And while we find that only the colinear RE have nonlinear/energetic stability (for sufficiently large radii), we also find that the perpendicular and trapezoid configurations have radial intervals of linear stability. We also provide a geometric restriction on the location of RE for a dumbbell body and any number of planar rigid bodies in planar orbit (an extension of the Conley Perpendicular Bisector Theorem).

\vspace*{\fill}

\newpage
\doublespacing
\setstretch{1.1}

\tableofcontents
\addcontentsline{toc}{chapter}{\protect\numberline{}List of Figures}\newpage
\listoffigures\newpage
\addcontentsline{toc}{chapter}{\protect\numberline{}List of Tables}
\listoftables

\newpage
\setcounter{preamblecounter}{1}
\setlength{\voffset}{-0.125in}
\setlength{\topmargin}{0.125in}
\setlength{\headheight}{0pt}
\setlength{\headsep}{0.35in}
\setlength{\hoffset}{0.4in} 
\setlength{\oddsidemargin}{0in}
\setlength{\evensidemargin}{0in}
\setlength{\textwidth}{5.925in}
\setlength{\textheight}{8.5in}
\setlength{\parindent}{0ex}
\setlength\parskip{1em plus 0.1em minus 0.2em}

\setstretch{1.28}
\pagestyle{fancy}
\fancyhead[R]{\arabic{page}}
\renewcommand{\headrulewidth}{1pt}
\renewcommand{\footrulewidth}{0pt}
\pagenumbering{arabic}
\chapter{Introduction}
We will explore the dynamics of a planar Newtonian two-body system. That is, a system in which two bodies interact by way of Newtonian gravity, and whose orbits are restricted to a plane. The Two-Body Problem (2BP), where each body is modeled as a point mass, was solved by Newton in the Principia in 1687 \cite{newton1687}.  He also proved Shell Theorem, which states that spherically symmetric rigid bodies (which also lack tidal forces) affect ``external objects gravitationally as though all of its mass were concentrated at a point at its center.'' However, when one considers the Full Two-Body Problem (F2BP) in which the bodies have nonspherical shapes, nonuniform densities, or tidal forces (like planets or asteroids); general solutions to the equations of motion (differential equations describing the motions of the system) are intractable.  Despite the difficulty in finding solutions, information about the dynamics of some of these types of systems has been found. In this paper, we will use an approximation to the F2BP, where each body is represented as a dumbbell (a dumbbell consists of two point masses connected by a massless rod).
\section{Relative Equilibria (RE)}
As a step in understanding these more complicated systems, we would like to locate some kind of equilibria. While we don't expect literal equilibria from orbiting bodies, we can identify so-called relative equilibria. A relative equilibrium (RE) of a dynamical system is a solution which becomes an equilibrium in some uniformly rotating coordinate system. Put another way, a RE is an equilibrium point for a dynamical system which has been reduced through the quotienting out of some variable (a rotation angle in our case), as was shown in \cite{marsden1992}.

Put yet another way, for a two-body (or n-body) problem in a uniformly rotating reference frame, RE configurations of the parameters (initial positions, velocities, mass distributions) are those for which the bodies are static. That is, the distance between the bodies is constant, and neither body rotates relative to the other (they are tidally locked). An observer in an \textit{inertial reference frame} however, would see the two bodies rotating in circles about the system's axis of angular momentum (which includes the center of mass) in a rigid fashion (constant radius and such that the face that each body reveals to the other remains constant). For point mass bodies, note that only the radius requirement is relevant, as a point mass has no defined state of rotation.  Lastly, RE can be characterized as the critical points of an ``amended potential,'' which we will discuss in Section \ref{AmendedPotential}.

Motivating our interest in RE, we note that Pluto and Charon are in a near RE \cite{Michaely2017}. Many known bodies also exist in quasi-RE (the less massive of the two bodies is tidally locked, but the more massive is not). Examples include recently discovered orbiting binary asteroids \cite{Naidua2020}, Jupiter and its Galilean moons (the four largest)\cite{Laplace1787}, and closer to home, the Moon and Earth. Additionally, as we send satellites out to orbit comets and asteroids, we will wish to have knowledge of configurations allowing these types of static orbits (particularly those which are stable).

\paragraph{Outline for the paper}
\begin{itemize}[parsep=3pt]
	\item In Chapter \ref{AmendedPotential} of this paper, we will introduce the amended potential and discuss its role in finding and characterizing the stability of RE.
	\item In Chapter \ref{RENewtonian}, we will re-solve Newton's 2BP using the amended potential technique as a way of introducing the methods we will use in our dumbbell models.
	\item In Chapter \ref{RE1DB}, we examine the dumbbell/point mass problem. We expand on the work of Beletskii and Ponomareva who explored this model in 1990 \cite{beletskii1990}, finding colinear and perpendicular (isosceles) RE.  However, our work will differ as we will use the amended potential method.  We use this technique and others to identify RE and examine stability (both nonlinear/energetic and linear).  Unlike Beletskii and Ponomareva (who found Lyapunov stability), we find stable RE as minimal energy states of the Hamiltonian.
	\item In Chapter \ref{RE2DB}, we will use similar techniques to look for RE and their stability in the two-dumbbell problem. Here, there are many more families of RE, and classifying all of them is not as simple as in the dumbbell/point mass problem. In addition to symmetric RE (colinear, perpendicular, trapezoid), we apply bifurcation analyses to identify families of asymmetric RE bifurcating from the symmetric ones.\end{itemize}

\paragraph{Results}
\begin{itemize}[parsep=3pt]
	\item The most general result is an extension of Conley's Perpendicular Bisector Theorem \cite{Moeckel1990}, applied to a system containing a dumbbell and other planar rigid bodies in planar orbit. Our theorem gives a geometric restriction on the location of the bodies while in RE.
	\item For the dumbbell/point mass problem, we expand the results of Beletskii and Ponomareva \cite{beletskii1990} to include energetically stable colinear RE in the radially overlapped region. We also identify regions of the parameter space that vary in the number of RE by using angular momentum as a bifurcation parameter.
	\item For the two-dumbbell problem, we locate symmetric RE (colinear, perpendicular, trapezoid), and families of asymmetric RE bifurcating from the symmetric ones. We identify regions of the parameter space that vary in the number of RE by using angular momentum as a bifurcation parameter. And while the asymmetric RE are found to be unstable, we find the colinear RE to be energetically stable for sufficiently large radii. We also identify radial intervals of linear stability for perpendicular and trapezoid RE. \end{itemize}

\section{Historical Examples}
The first orbital two-body RE discovered were in Newton's 2BP solutions. If one assumes the radius to be constant in his solutions, they produce RE consisting of circular planar orbits (which we will show below). Then, while studying the Earth-Moon-Sun system, Euler (in 1766) discovered RE solutions to the Circular Restricted Three-Body Problem (CR3PB), where two bodies have masses significantly larger than the third and the motions of the larger two are restricted to circular orbits around their center of mass \cite{Euler1766}. His RE consisted of colinear configurations with the Moon taking positions L1, L2, or L3 in Figure \ref{fig:lagrangianpoint}. In the following year, he found these colinear RE for the general three-body problem where the three bodies were free to take on any mass \cite{Euler1767}. Then, in 1772, Lagrange found the two remaining ``Lagrange Points'' (L4, L5) for the general three-body problem. Bodies orbiting at these points form equilateral triangles with the other two bodies.
\begin{figure}[H]
	\centering
	\includegraphics[width=0.4\linewidth]{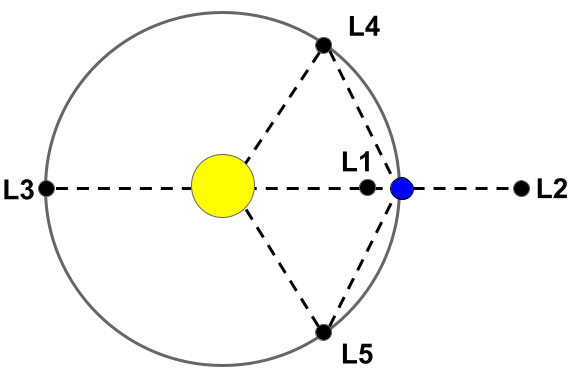}
	\caption{\\\bfseries Lagrange Points}{\vspace{0.8em}\footnotesize RE for the three-body problem.\\[-0.801em]}
	\label{fig:lagrangianpoint}
\end{figure}
Sometime later (1859) while studying the rings of Saturn, Sir James Clerk Maxwell showed the existence of RE for an (n+1)-body problem \cite{Maxwell1859}. The configuration has n orbiting bodies of equal mass positioned regularly on a circular orbit (forming a regular n-gon) orbiting the ``+1'' central mass. Maxwell found the rings were stable if the central mass was sufficiently large relative to the ring's mass. Richard Moeckel later identified a minor error in the original calculations that put a lower bound of 7 on the number of bodies necessary to achieve stability (1994) \cite{Moeckel1994}.
\begin{figure}[H]
	\centering
	\includegraphics[width=0.3\linewidth]{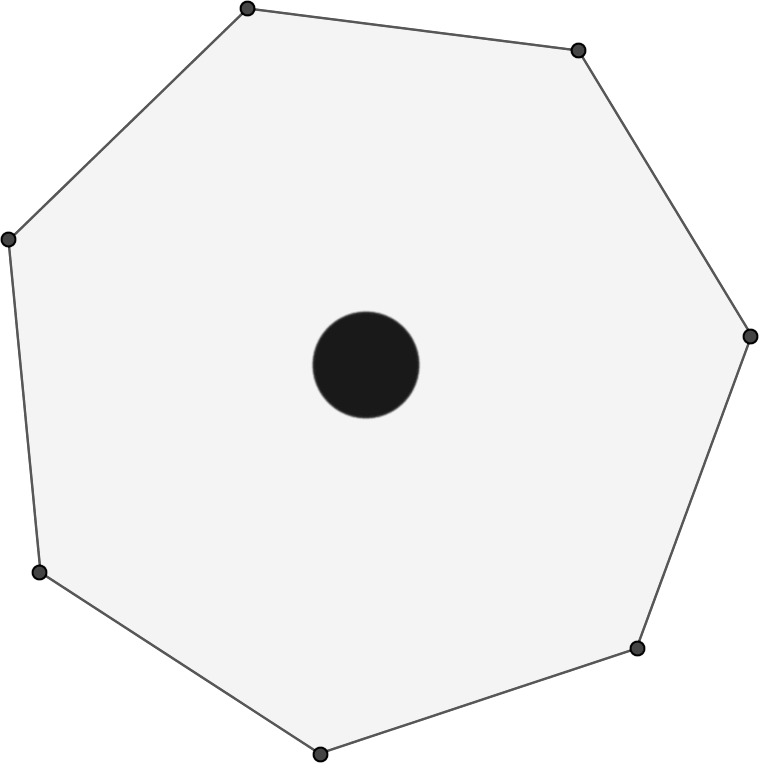}
	\caption{\\\bfseries 7-gon}
	\label{fig:regular7gonwhite}{\vspace{0.8em}\footnotesize A RE to the (n+1)-body problem.\\[-0.801em]}
\end{figure}
However, when one moves from point masses to extended rigid bodies (ERBs), the complexity of the problem for two bodies may preclude comprehensive classification of all RE configurations.  Nonetheless, a number of papers have explored the possible configurations for and stability of RE when dealing with non-spherical ERBs.  Wang and Maddocks (1992) proved the existence of non-Lagrangian RE (where the orbits of the two bodies exist in distinct parallel planes) \cite{wang1992}. And Maciejewski (1995) proved the existence of at least 36 non-Lagrangian RE in the limit as the distances between the bodies go (and angular momentum goes) to infinity \cite{maciejewski1995}.

Scheeres (2006) subsequently discovered necessary and sufficient conditions for RE of a system consisting of a point mass and an ERB. He also analyzed the linear (eigenvalues of the linearized equations of motion) and nonlinear energetic stability (whether the RE are at strict minima of an energy function) of these RE \cite{scheeres2006}. In another paper (2009), he located RE and determined the stability properties for a system (with an approximate potential) where both bodies were non-spherical but limited to planar motion \cite{scheeres2009}. A few years later (2012), he generalized by allowing (in each body) internal interactions (like tidal forces) that can cause dissipation of energy. Then, using an energy function for the system, he identified minimum energy configurations at fixed values of angular momenta \cite{scheeres2012}. Recently (2018), my advisor Richard Moeckel gave lower bounds on the number of RE for the F2BP where the angular momentum (and radius) of the system is large, but finite \cite{moeckel2018}.

For this paper, instead of looking at continuous ERBs, we will build an approximating structure out of a finite number of point masses, connected by massless rods. We build on work conducted by Beletskii and Ponomareva (1990) \cite{beletskii1990}. This point mass approximation has the benefit of massively simplifying the potential function in our equations.
\begin{figure}[H]
	\centering{\begin{flushleft}\textbf{Calculating the potential involves working with:}\end{flushleft}}
	\includegraphics[width=0.72\linewidth]{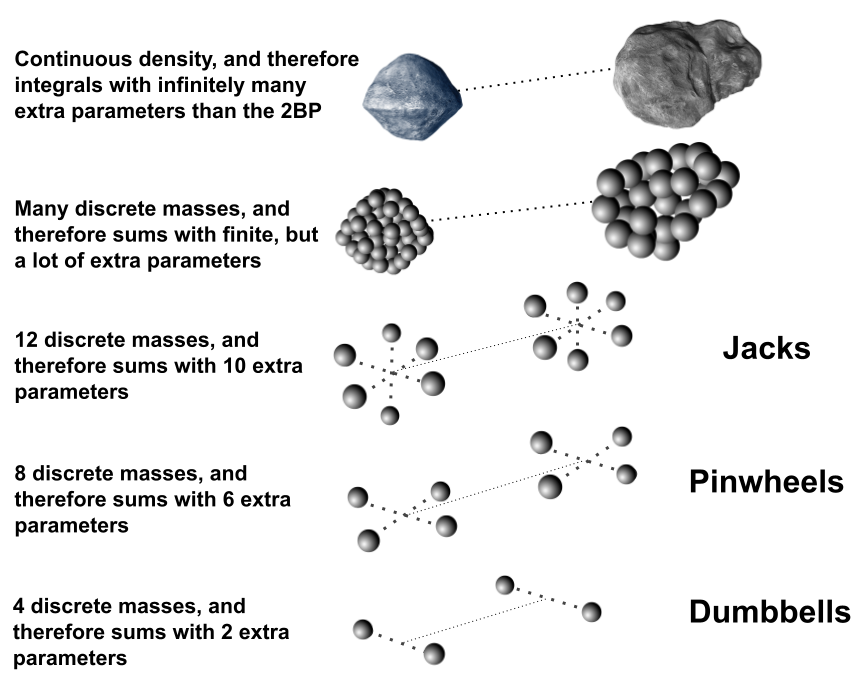}
	\caption{\\\bfseries Approximating F2BP with Point Masses}
	{\vspace{0.8em}\footnotesize Credit: NASA (dart.jhuapl.edu) 8/16/2021\\[-0.801em]}
	\label{fig:approximatingasteroid2kd}
\end{figure}

\chapter{Amended Potential}
\label{AmendedPotential}The process to discover equilibria often begins with writing down the Lagrangian of a system $\mathcal{L} = K - U$, where $K$ is the kinetic energy and $U$ is the potential energy (with conserved energy given by the Hamiltonian $\mathcal{H} = K + U$). One then uses the Euler Lagrange equations $\frac{d}{dt}\frac{\partial\mathcal{L}}{\partial \dot{q}_{i}}-\frac{\partial\mathcal{L}}{\partial q_{i}} = 0$, (where the $q_{i}$'s are the system's variables) to generate equations of motion. The equilibria are the solutions to these equations when the time derivatives of the variables are set to zero.

However, a technique used by several of the above mentioned papers, and what we will use in our analyses, is the amended (effective) potential for finding RE. The technique uses symmetries (conserved quantities) in the system to reduce the equations of motion by removing symmetry related velocities. The resulting amended potential $V$ is essentially the regular potential $U$ plus contributions we get from being in a rotating reference frame.

This technique is useful for at least three reasons. First, you form an amended potential by using conserved quantities/symmetries to eliminate velocity variables and therefore unnecessary complexity from your system. Second, Smale \cite{smale1970} showed that the RE of the original system are identical to the critical points of the amended potential. So once you form your amended potential, you can use standard tools for identifying critical points and therefore the RE. Third, if you can identify a critical point as a \textit{strict} minimum of this amended potential, Smale showed that this critical point is also a strict minimum of the original energy function, and therefore the system is energetically stable for the associated RE by the Dirichlet-Lagrange stability theorem \cite{Dirichlet1846}.
\begin{adjustwidth}{2.5em}{0pt}
\textbf{The Dirichlet-Lagrange Stability Theorem:} An equilibrium point (velocity $\vec{v}=0$) at position $\vec{r}=\vec{r}_0$ of Newton’s equations for a particle of mass $m$, moving under the influence of a potential $V$, which has a local strict minimum at $\vec{r}_0$, is stable.
\end{adjustwidth}
\vspace{1em}The general difficulty in finding energetic stability in dynamical systems makes this technique very attractive. Energetic stability means that if your trajectory is sufficiently close to the minimum of the energy function, for small enough perturbations, the resulting trajectory remains close to that minimum.

So how does one calculate the amended potential? First, we reduce our system by using an inertial reference frame moving with the same constant velocity as the system's center of mass, which we place at the origin. We then form a Lagrangian for our dynamical system $\mathcal{L} = K - U$ and use the Euler Lagrange equations to form equations of motion. Next, we note that the dynamics are invariant under a change of the rotation $\phi$ of the reference frame. As a result, $\phi$ does not appear in our Lagrangian (it is a cyclic variable). So, our Lagrangian possesses a symmetry, and we can perform a reduction of the system. By Noether \cite{Noether1918}, we then have the equation:
\begin{align}
	{\displaystyle \frac{\partial\mathcal{L}}{\partial \dot{\phi}} =L:=|\vec{L}|.\label{eq:Noether}}
\end{align}
The angular momentum $\vec{L}$ is a conserved quantity (first integral). Equation \eqref{eq:Noether} allows us to eliminate our velocity variable $\dot{\phi}$ by solving for it explicitly in terms of the other variables, and then substituting this back into our equations of motion. Once eliminated, the resulting reduced Lagrangian $\mathcal{L}_{red}$ is still a Lagrangian ($K_{red}-V$), and includes the amended potential $V$. Essentially, you are quotienting out a symmetry group (a circle in our case), and the resulting system exists on a quotient manifold with one fewer degrees of freedom. And the critical points of $V$ are RE of our original system. Now we will apply these techniques to our first model, Newton's 2BP.
\chapter{RE of Newtonian Point Mass Two-Body Problem (2BP)}\label{RENewtonian}
Newton used geometric arguments in his solution to the 2BP for point masses. In order to familiarize ourselves with the methods described in the previous section, let us use them to calculate RE for the 2BP. The 2BP (through a simple change of variables shown below), can be modeled as a restricted 2BP (Kepler Problem), or one in which the central body is assumed to be motionless (approximating the case when the mass of the central body far exceeds the mass of the orbiting body). This also meets the definition of a body in a \textbf{central force}. A central force is one which acts on masses $M_{i}$ such that:
\begin{itemize}
\item The force $\vec{F}_i$ on $M_{i}$ is always directed toward, or away from a fixed point $O$; and
\item The magnitude of the force $|\vec{F}_i|$ only depends on the distance $r$ between $M_{i}$ and $O$.
\end{itemize}
\begin{flushleft}
Characterizing our system in this way will simplify our calculations of the equations of motion. Also, we show that even though the bodies exist in three dimensional space, their motion is restricted to a plane, and therefore we can simplify our analysis by examining the motion in just two dimensions.
\end{flushleft}
\section{Central Force Motion is Planar}
To solve the Kepler problem, we assume the fixed body is located at the origin of our system, and without loss of generality we set $M_2$ as that fixed body. Observe that the initial position $\vec{r}_0$ and velocity vector $\vec{v}_0$ for the moving mass $M_1$ define a plane. We will show that $\vec{r}$ and $\vec{v}$ remain in this plane. 

Observe that the dot product of the angular momentum $\vec{L}=\vec{r}\times M_1\vec{v}$ with the mass' position vector $\vec{r}$ is zero: $\vec{r}\cdot\vec{L}=\vec{r}\cdot (\vec{r}\times M_1\vec{v})=M_1\vec{v}\cdot(\vec{r}\times\vec{r})=0$. Therefore the position $\vec{r}$ (and similarly the velocity $\vec{v}=\frac{dr}{dt}$) at each moment lies in a plane perpendicular to $\vec{L}$.  And so if $\vec{L}$ is constant, then $\vec{r}$ and $\vec{v}$ remain in the plane perpendicular to $\vec{L}$.  Recall that the time derivative of angular momentum is: 
\begin{center}
	$\frac{d\vec{L}}{dt} =\frac{d}{dt}(\vec{r}\times M_1\vec{v})=(\vec{v}\times M_1\vec{v})+(\vec{r}\times M_1\frac{d}{dt}\vec{v})=\vec{r}\times\vec{F} =$ net torque.
\end{center}
Since $\vec{F}$ points in the opposite direction as  $\vec{r}$ ($\vec{F}$ is central), we have $\vec{r}\times\vec{F} =0$. Therefore, $\vec{L}$ is constant, and our central force motion is planar. With this information, let us calculate the equations of motion for the Kepler problem.

\section{Kepler Problem: Equations of Motion}
We calculate the equations of motion using the Lagrangian: $\mathcal{L}=T-U $. Recall that force can be defined as the negative of the vector gradient of a potential field:  $\vec{F}(\vec{r}):=-\nabla U=-\frac{d}{d\vec{r}} U(r)$ (where $r:=|\vec{r}|$). And furthermore, from Newton's universal law of gravitation, the force between two masses $M_1$ and $M_2$ is: $\vec{F}(\vec{r})=-\frac{GM_1 M_2}{\vert\vec{r}\vert^3}\vec{r}$. So, integrating we find: $U(r)=-\int_{\vec{r}}^{\infty}\vec{F}(\vec{s})\cdot d\vec{s}=-\int_{\vec{r}}^{\infty}-\frac{GM_1 M_2}{\vert\vec{s}\vert^3}\vec{s}\cdot d\vec{s}=-\frac{GM_1 M_2}{r}$. And since our motion is restricted to two dimensions, we have kinetic energy $T=\frac{1}{2}M_1(\dot{r}^2+r^2\dot{\phi}
^2)$. So the Lagrangian is: $\mathcal{L}=\frac{1}{2}M_1(\dot{r}^2+r^2\dot{\phi}
^2)+\frac{GM_1 M_2}{r}$. Applying the Euler-Lagrange equations, we have $\frac{d}{dt}\frac{\partial \mathcal{L}}{\partial \dot{q}_{i}}-\frac{\partial \mathcal{L}}{\partial q_{i}}=0$, where the $q_{i}$'s in our case are $r$ and $\phi$. Taking these derivatives we find two coupled equations of motion:
\begin{subequations}\label{eq:keplerEOM}
	\makeatletter\@fleqntrue\makeatother
		\begin{align}
			\begin{split}
				\mathitem \text{$M_1\ddot{r}+M_1 r\dot{\phi}^2-\frac{GM_1 M_2}{r^2}=0\label{eq:keplerEOMa}$, and}
			\end{split}\\
			\begin{split}
				\mathitem \text{$\frac{d}{dt}(M_1r^2\dot{\phi})=0\label{eq:keplerEOMb}$.}
			\end{split}
		\end{align}
\end{subequations}

Now we can either note that \eqref{eq:keplerEOMb} upon integrating gives us a conserved quantity $L:=M_1 r^2\dot{\phi}$, or we can appeal to Noether and the existence of the cyclic variable $\phi$, and calculate $\frac{\partial \mathcal{L}}{\partial \dot{\phi}}=L$. Either way, we then perform the reduction from Section \ref{AmendedPotential} using our conserved quantity $L$. Solving for the rotation speed from the equation above, we have: 
\begin{align}{\displaystyle\dot{\phi} =\frac{L}{M_1 r^2}\label{eq:keplerphidot}}.
\end{align}
Substituting $\dot{\phi}$ into \eqref{eq:keplerEOMa}, we have: $M_1\ddot{r}+\frac{L^2}{M_1r^3}-\frac{GM_1 M_2}{r^2}=0$.  To find our reduced Lagrangian $\mathcal{L}_{red}$ and the amended potential, note that the kinetic and potential energy necessary for $\frac{d}{dt}\frac{\partial \mathcal{L}_{red}}{\partial \dot{r}}-\frac{\partial \mathcal{L}_{red}}{\partial r}=0$ to produce the previous equation would be: $\mathcal{L}_{red}=T_{red}-V$, where $T_{red}=\frac{1}{2}M_1\dot{r}^2$ and $V=\frac{L^2}{2M_1 r^2}-\frac{GM_1 M_2}{r}$. So, we have eliminated the velocity variable $\dot{\phi}$, decoupled the system, and reduced to a single equation of motion: 
\begin{align}{\displaystyle
\ddot{r}=\frac{\partial_rV}{M_1}=\frac{GM_2}{r^2}-\frac{L^2}{M_1^2 r^3}\label{eq:keplerreducedeom}}.
\end{align}
\begin{figure}[H]
	\captionsetup[subfigure]{justification=centering}
	\centering
	\subfloat[Kepler Amended Potential showing strict\\minimum, and therefore a stable RE.]
	{\includegraphics[width=0.51\textwidth]{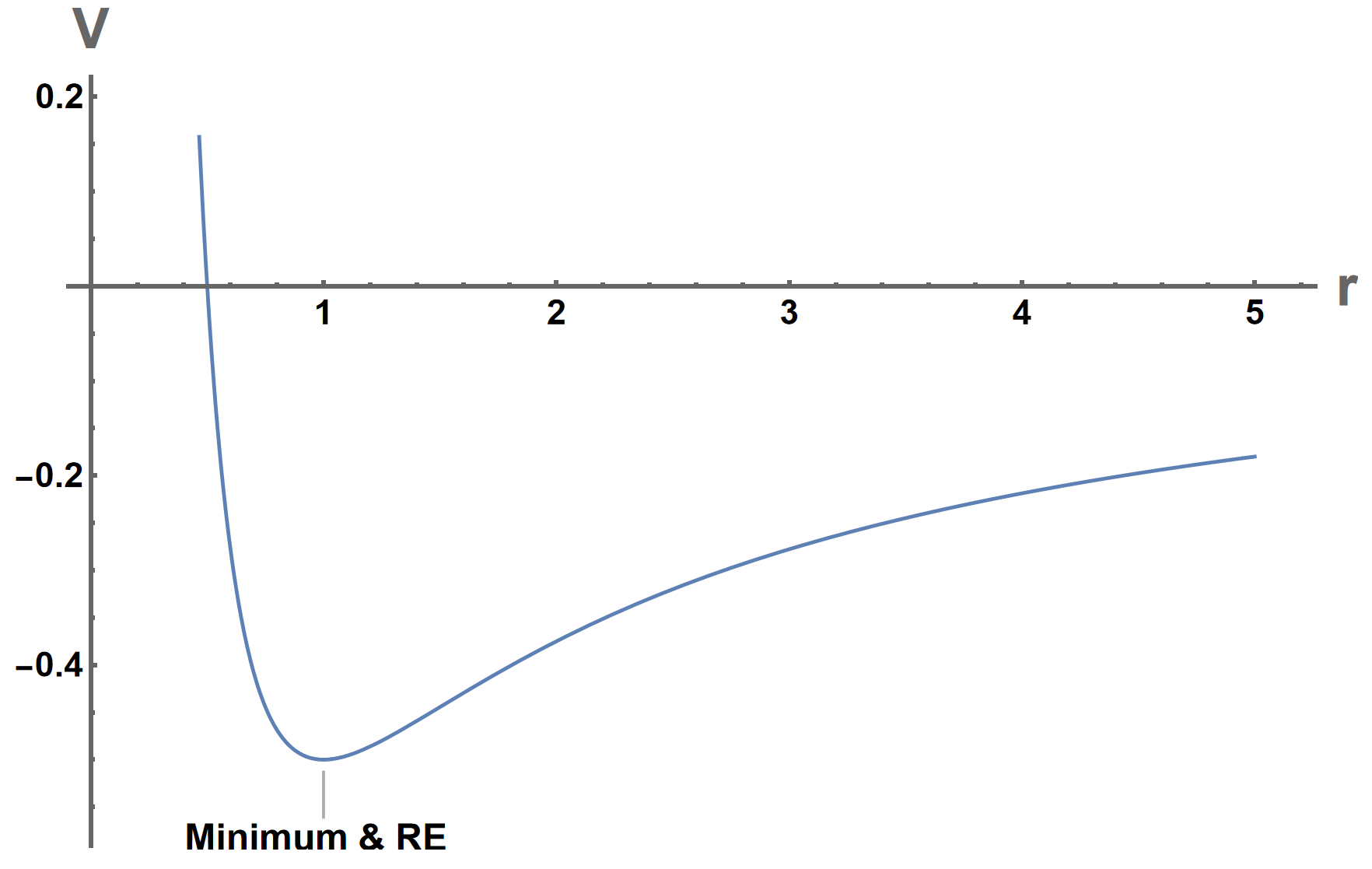}}
	\hspace{.5cm}
	\subfloat[Kepler Phase Portrait of V showing nearly\\circular nearby elliptic periodic orbits.]
	{\includegraphics[width=0.44\textwidth]{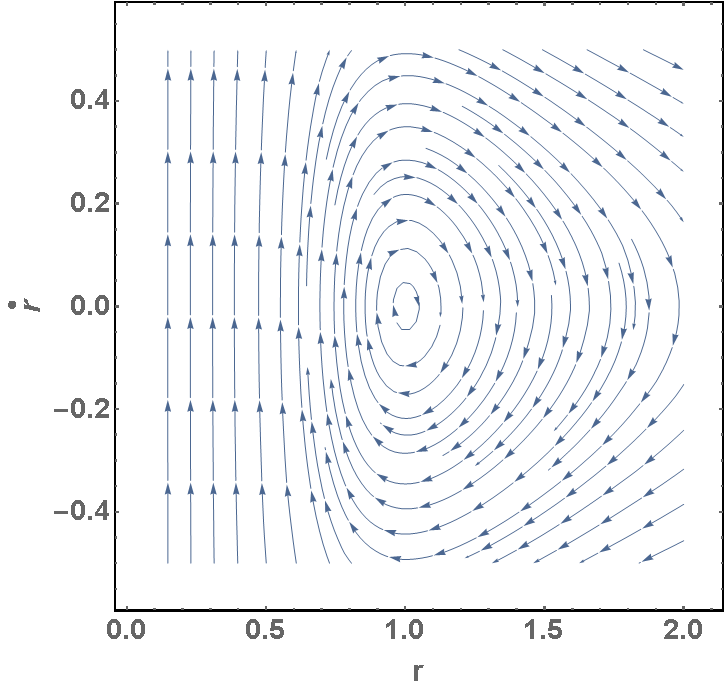}}
	\caption{\\\bfseries Kepler Dynamics}{\vspace{0.8em}\footnotesize with $M_1=M_2=L=G=1$\\[-0.801em]}
	\label{fig:kepler_amendpot_phase}
\end{figure}
To find RE of our system, one option is to look for fixed points of this reduced system. Note that a constant radius implies a circular orbit. Additionally, since point masses have no meaningful sense of rotation, all two-body circular orbits of point masses meet the definition of being RE.  So, from \eqref{eq:keplerreducedeom}, we use the fact that RE occur when the time derivatives are zero (except rotational $\dot{\phi}$), and set $\ddot{r} =0$. Solving for the radius in what remains, we conclude: $r=\frac{L^2}{GM_1 M_2^2}$. Observe the one-to-one relationship between radii and positive $L$ in this equation. From \eqref{eq:keplerEOMa}, we can determine the rotation speed: $\dot{\phi} =\sqrt{\frac{GM_1}{r}}=\frac{GM_1 M_2}{L}$. Again, a one-to-one relationship between $\dot{\phi}$ and $L$. So, angular momentum uniquely determines both the $\dot{\phi}$ and $r$ for RE.

Additionally, for each fixed $L$, note that the circular orbits are critical points (minimums) of the amended potential: $\partial_{r}V=-\frac{L^2}{M_1 r^3}+\frac{GM_1 M_2}{r^2}=0$, when $r=\frac{L^2}{GM_1 M_2^2}$, as expected (see Figure \ref{fig:kepler_amendpot_phase}a).
As noted earlier, the RE of the original system \eqref{eq:keplerEOM} are the same as the equilibria of the reduced system \eqref{eq:keplerreducedeom}. Now that we have found the RE, let us see if they are stable.  That is, if something were to perturb the RE orbit (some solar wind, or small asteroid), will the resulting orbit remain close to the RE?
\subsubsection*{Energetic Stability of Kepler}
As we saw in Section \ref{AmendedPotential}, to determine energetic stability we verify that the RE are strict minima of the amended potential $V$.
We find positive curvature ($\partial^2_{r}V=\frac{3L^2}{M_1 r^4}-\frac{2GM_1 M_2}{r^3}=\frac{M_1^4M_2^7G^4}{L^6}>0$), when $r=\frac{L^2}{GM_1 M_2^2}$. So, these RE are strict minima. So, by Smale, these RE are energetically stable. In the Figure \ref{fig:kepler_amendpot_phase}b, you see the phase portrait for the amended potential. Note that surrounding the equilibrium are nearby periodic orbits.{\text{ }\\[-1.801em]}
\section{Unrestricted 2BP}
Generalizing, what are the RE when both bodies can move freely? So this is not a Kepler problem. First we may wish to reconsider the location of the origin of our inertial reference frame. Due to conservation of linear momentum, we can assume the system's center of mass moves at a constant rate. This allows us to choose an inertial reference frame such that our choice of origin coincides with the system's center of mass. We then denote the positions of the two bodies as $\vec{r}_{1},\vec{r}_2$, and the vector between them as $\vec{r}:=\vec{r}_2-\vec{r}_1$.

Constructing the dynamics, recall from Newton's second ($\vec{F}=M\vec{a}$) and third ($\vec{F}_{12}=-\vec{F}_{21}$) laws that $M_2 \ddot{r}_2=\vec{F}_{21}=-\vec{F}_{12}=-M_1\ddot{r}_1$, or equivalently $\ddot{r}=\frac{\vec{F}_{21}}{M_2}-\frac{\vec{F}_{12}}{M_1}=(\frac{1}{M_2}+\frac{1}{M_1})\vec{F}_{21}$, where $\vec{F}_{ij}$ is the force on $\vec{r}_i$ by $\vec{r}_j$. And denoting the ``reduced mass'' $M:=\frac{M_1M_2}{M_1+M_2}$, we have: $M\ddot{r}=\vec{F}_{21}$. So, by the form of this equation, we see that we have characterized the motion of the two bodies as equivalent to the motion of a single body at radius $\vec{r}$ with mass $M$, moving under the influence of $\vec{F}_{21}$. And if the motion described here is due to a central force, we can use the results from the Kepler Problem to find our equation of motion. Note that in this system, the center of mass serves as our fixed point, and always lies between $\vec{r}_1$ and $\vec{r}_2$, so the force is directed toward it. Additionally, since the force $\frac{GM_1M_2}{|\vec{r}|^3}\vec{r}$ only depends on $\vec{r}$, it fits the definition of a central force. And so the Kepler problem above gives us our solutions.

Shell Theory lets us also apply these solutions to spherically symmetric ERBs with constant density. However, to find the RE when one of the bodies is not spherically symmetric, or with irregular density is complicated. As a step in that direction, our next model alters one of the point mass bodies by splitting it into two point masses connected by a massless rod, i.e. a ``dumbbell.''
\chapter{Planar Dumbbell/Point Mass Problem}\label{RE1DB}
\begin{figure}[H]
	\centering
	\includegraphics[width=0.5\linewidth]{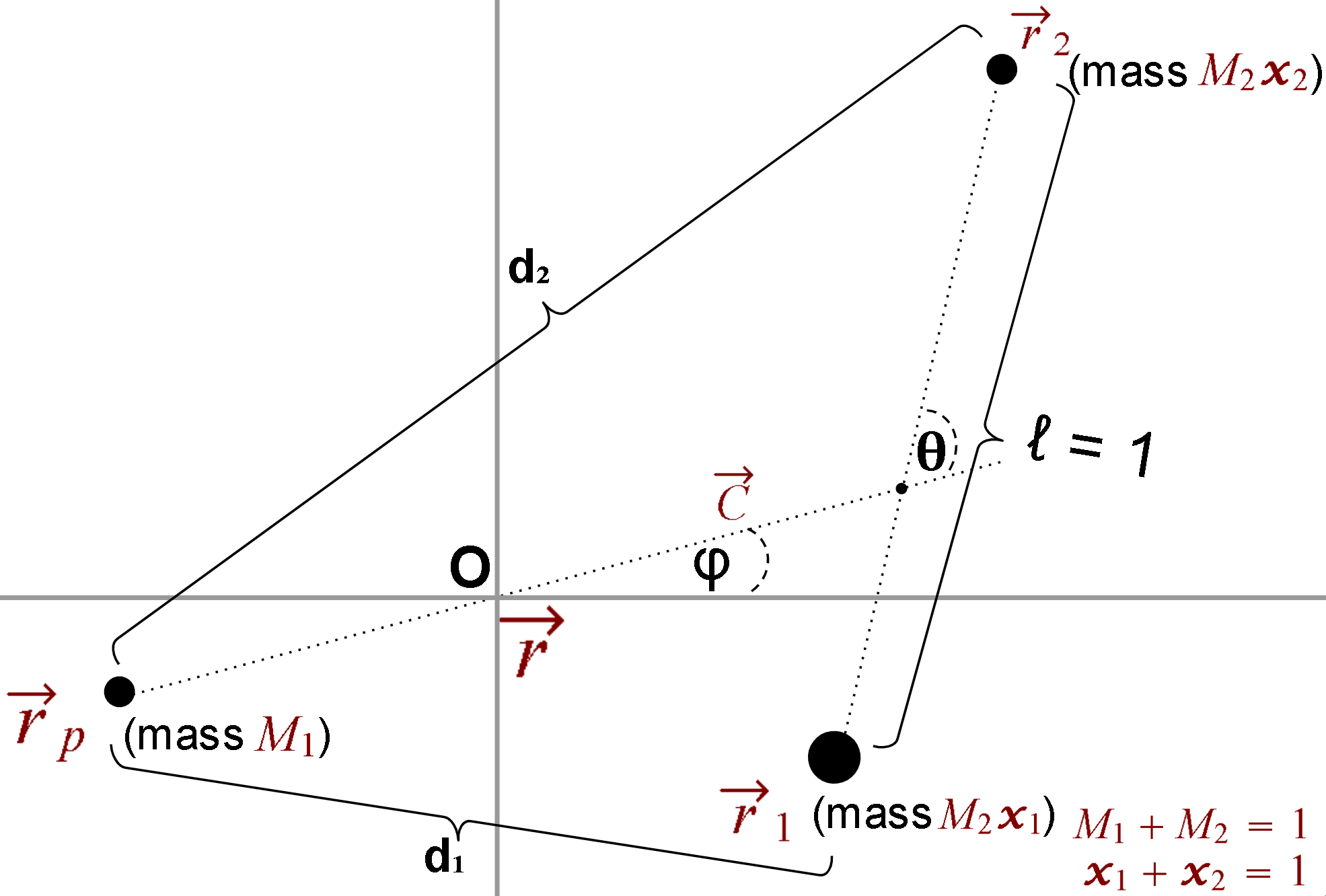}
	\caption{\\\bfseries Dumbbell/Point Mass Problem}{\text{ }\\[-1.801em]}
	\label{fig:1kdb_configurationnotation}
\end{figure}
It was shown in 2013 \cite{Kinoshita2013} that the dumbbell/point mass problem (seen above) is non-integrable. Nonetheless, various researchers have identified its RE and their stability (\cite{beletskii1990},\cite{ElipeETAL2006},\cite{MaciejewskiETAL2018},\cite{DilaoMurteira2020}). Unlike Newton's model with two point masses, the dumbbell/point mass problem need not have planar orbits. For example, researchers in 2020 \cite{DilaoMurteira2020} examined the dumbbell/point mass problem, allowing for three dimensional motion, and identified Lyapunov stability of RE. However, a likely location for RE of our subsequent two-dumbbell problem \textit{is} in planar motion, so it is in this context we will study the dumbbell/point mass problem.

We follow some of the calculations of Beletskii and Ponomareva (1990) \cite{beletskii1990}, who also looked at RE for this planar model. In particular, the authors were interested in how stability (Lyapunov and linear) of RE changes as you alter parameter values (the mass and length of the dumbbell).

Below, we extend the results of these previous papers, formulating the problem through the use of an amended potential to identify and characterize the energetic stability of RE via Smale. We also identify linear stability, and use angular momentum as a bifurcation parameter in the equations of motion. This will identify regions of the parameter space that differ in the number of RE and help us identify extrema in the angular momentum, assisting with the energetic stability analysis. 

As previous authors identified, the bodies of the planar configuration RE are either colinear, or form an isosceles triangle. In Section \ref{PerpBisThm}, we prove an extension of the Conley Perpendicular Bisector Theorem, which limits the possible RE configurations to only the colinear or isosceles.
\begin{theorem*}[Perpendicular Bisector Theorem for RE of a Dumbbell and Rigid Planar Bodies]
	Let a dumbbell $\overline{r_1r_2}$ and one or more planar rigid bodies $\mathcal{B}_2,...,\mathcal{B}_n$ be in a planar RE. Then, if one of the two open cones determined by the lines through $\overline{r_1r_2}$ and its perpendicular bisector contains one or more rigid bodies, the other open cone cannot be empty.
\end{theorem*}
In particular, applying our theorem shows that any RE must have the point mass body lie on the line defined by the dumbbell's massless rod (colinear configuration), or on the perpendicular bisector of that rod (isosceles configuration).
\section{Set-up and Notation}
For the following description, refer to Figure \ref{fig:1kdb_configurationnotation}.  Without loss of generality, we denote the mass of the point mass body as $M_1$, and the total dumbbell's mass as $M_2$ such that the system mass is $M_1+M_2=1$. We also denote the mass ratios of the dumbbell masses as $x_1$ and $x_2$ such that $x_1+x_2=1$.

We label the origin and center of mass of our system reference frame as $O$. The masses of the dumbbell are connected by a massless rod of length $\ell$ which we set to 1 (by scaling the system distances). The dumbbell has a center of mass located at $\vec{C}$. We label the ``radius'' vector connecting the point mass body to $\vec{C}$ as $\vec{r}$, and $r:=|\vec{r}|$. We also label the acute angle between $\vec{r}$ and the horizontal axis of our system reference frame as $\phi$. We let $\theta$ represent the angle between $\vec{r}$ and the dumbbell's massless rod, where if $\theta =0$, then $x_1$ is the mass located on $\overline{OC}$. Without loss of generality, we limit $\theta\in[0,\pi)$. Note we can replicate dynamics on the rest of the range by switching the dumbbell masses. The system has three degrees of freedom $r,\phi$, and $\theta$.

The position vectors for the dumbbell masses are:\\ $\vec{r}_{i}:=M_1r(\cos\phi,\sin\phi)+(-1)^{i}x_{k}(\cos(\phi+\theta),\sin(\phi +\theta))$, where $k\neq i$, and the point mass body position vector is $\vec{r}_{p}:=-M_2r(\cos\phi,\sin\phi)$. Note that the mass associated with $\vec{r}_{i}$ is $M_2 x_{i}$. Also, the distance between $\vec{r}_{p}$ and $\vec{r}_{i}$ is:
\begin{align}\label{eq:1DB_Distances}
{\textstyle d_{i}(r,\theta):=\sqrt{r^2+(-1)^{i}2x_{k}r\cos\theta+x_{k}^2}}.
\end{align}
For a table of the notation used in this paper, see Appendix \ref{appendix:Notation Used in This Paper}.
\section{Equations of Motion}
In order to find the equations of motion, we let $\vec{v}_{i},\vec{v}_{p}$ denote the velocities of $\vec{r}_{i},\vec{r}_{p}$. We will calculate the Lagrangian $\mathcal{L}=T-U$, where $T=\frac{1}{2}M_1 \vec{v}_{p}^{\hspace{.2em}2}+\frac{1}{2}\Sigma_{i\in\{1,2\}}M_2 x_{i}\vec{v}_{i}^{\hspace{.2em}2}$ is the kinetic energy, and (from Newton) the potential energy of the system is (with $G=1$, and scaled by $\frac{1}{M_1 M_2}$):
\begin{align}
	{\textstyle U(r,\theta):=-\frac{x_1}{d_1}-\frac{x_2}{d_2}.}
\end{align}
To calculate kinetic energy, we write:\\
$\text{\enspace\enspace\enspace\enspace\enspace}\textstyle  \vec{v}_{i}=\frac{d}{dt}\vec{r}_{i}=M_1 \dot{r}(cos\phi,sin\phi)$\\
$\text{\enspace\enspace\enspace\enspace\enspace\enspace\enspace\enspace\enspace\enspace\enspace\enspace\enspace\enspace\enspace}\textstyle  +M_1 r\phi(-sin\phi,cos\phi)+(-1)^{i}x_{k}(\dot{\phi}+\dot{\theta})(-sin(\phi+\theta),cos(\phi+\theta))$, and\\
$\text{\enspace\enspace\enspace\enspace\enspace}\textstyle \vec{v}_{p}=-M_2\dot{r}(cos\phi,sin\phi)-M_2r\dot{\phi}(-sin\phi,cos\phi).$\\
So, using some trigonometric identities, we find our Lagrangian (scaled by $\frac{1}{M_1 M_2}$) is:
\begin{align}\label{eq:1DB_Lagrangian}
	{\textstyle \mathcal{L}=T-U=\frac{1}{2}(\dot{r}^2+r^2\dot{\phi}^2)+\frac{1}{2}B(\dot{\theta}+\dot{\phi})^2 +(\frac{x_1}{d_1}+\frac{x_2}{d_2}),}
\end{align}
\begin{align}\label{eq:1DB_MOI}
	{\textstyle \text{where }B:=\frac{x_1 x_2}{M_1}}
\end{align}
is the dumbbell's scaled moment of inertia relative to its center of mass. The equations of motion are given by the Euler-Lagrange equation: $\frac{d}{dt}\frac{\partial L}{\partial \dot{q}_{i}} -\frac{\partial L}{\partial q_{i}}=0$, for each degree of freedom $q_i\in\{r,\phi,\theta\}$. So, we calculate:
\begin{subequations}\label{eq:1DB_EOMs}
	\makeatletter\@fleqntrue\makeatother
\begin{align}
	\mathitem \text{$\ddot{r}-r\dot{\phi}^2+\frac{\partial U}{\partial r}=0,\label{eq:1DBEOMsA}$}\\
	\mathitem \text{$r^2\ddot{\phi}+2\dot{r}\dot{\phi} r+B(\ddot{\theta}+\ddot{\phi})=-\frac{\partial U}{\partial\phi}=0$,  and$\label{eq:1DBEOMsB}$}\\
	\mathitem \text{$B(\ddot{\theta}+\ddot{\phi})+\frac{\partial U}{\partial\theta}=0.\label{eq:1DBEOMsC}$}
\end{align}
\end{subequations}
\section{Finding RE}
To find RE, we first wish to reduce our system using the amended potential method in Section \ref{AmendedPotential} by exploiting the conserved angular momentum. As we did in the $2BP,$ we calculate scalar angular momentum (scaled by $\frac{1}{M_{1}M_{2}}$):\ $\frac{\partial 
	\mathcal{L}}{\partial\overset{\cdot}{\varphi}}=r^{2}\overset{\cdot}{\varphi}+B\left(\overset{\cdot}{\theta}+\overset{\cdot}{\varphi}\right) =:L.$ Solving for the rotational speed:
\begin{align}\label{eq:1DB_PhiDot}
	{\textstyle \overset{\cdot}{\varphi}\,=\frac{L-B\overset{\cdot}{\theta}}{r^{2}+B}\text{\enspace\enspace}\text{ or }\text{\enspace\enspace}\overset{\cdot}{\varphi}\,=\frac{L}{r^{2}+B}\text{ at RE}.}
\end{align}
We can now eliminate the rotation speed $\overset{\cdot}{\varphi}$ from our system using this relation. Upon substituting $\overset{\cdot}{\varphi}$ into $\eqref{eq:1DBEOMsB},$ and solving for $\overset{\cdot\cdot}{\varphi}$, we see that $\eqref{eq:1DBEOMsB}$ is equivalent to $\overset{\cdot\cdot}{\varphi}\,=-\frac{2r\overset{\cdot}{r}\left(L-B\overset{\cdot}{\theta}\right)+B\overset{\cdot \cdot}{\theta}\left(r^{2}+B\right)}{\left(r^{2}+B\right)^{2}}.$
 Substituting $\overset{\cdot}{\varphi}$ and $\overset{\cdot\cdot}{\varphi}$ into $\eqref{eq:1DBEOMsA}$ and $\eqref{eq:1DBEOMsC}$, we determine the reduced Lagrangian $\mathcal{L}_{red}$ for which $\frac{d}{dt}\frac{\partial\mathcal{L}_{red}}{\partial \overset{\cdot}{r}_{i}}-\frac{\partial \mathcal{L}_{red}}{\partial r_{i}}$ will
generate these reduced equations. We find: $\mathcal{L}_{red}=T_{red}-V=\frac{1}{2}\left(\overset{\cdot}{r}^{2}\allowbreak+\,\frac{Br^{2}}{r^{2}+B}\overset{\cdot}{\theta}^{2}\right)+\frac{BL}{r^{2}+B}\overset{\cdot}{\theta}-\left(\frac{L^{2}}{2\left(r^{2}+B\right)}+U\right)$.

So, we have our amended potential (scaled by $\frac{1}{M_{1}M_{2}}$):
\begin{align}\label{eq:1DB_AmendedPot}
	{\textstyle
		 V=\frac{L^{2}}{2\left(r^{2}+B\right)}-\frac{x_{1}}{d_{1}}-\frac{x_{2}}{\,\,d_{2}}.}
\end{align}
Recall we can characterize the RE of our system as the critical points of the amended potential $V.$ Taking our derivatives, we find for $\theta$, the \textbf{angular requirement}:
\begin{align}\label{eq:1DB_Angular_REQ}
	{\textstyle \partial_{\theta}V=x_1 x_2 r\sin\theta\left(\frac{1}{d_{1}^{3}}-\frac{1}{d_{2}^{3}}\right)=0.}
\end{align}
And for $r,$ the \textbf{radial requirement}: 
\begin{align}\label{eq:1DB_Radial_REQ}
	{\textstyle \partial_{r}V=\frac{x_{1}\left(r-x_{2}\cos\theta\right)}{d_{1}^{3}}+\allowbreak \frac{x_{2}\left(r+x_{1}\cos \theta\right)}{d_{2}^{3}}\allowbreak -\frac{L^{2}}{\left(r^{2}+B\right)^{2}}r=0}.
\end{align}
Observe that if we find a $\theta$ which satisfies \eqref{eq:1DB_Angular_REQ}, then substitute it into \eqref{eq:1DB_Radial_REQ}, we will have a relationship between $r$ and $L$ for RE. This allows us to parameterize the RE by $r$. Also note that for RE, if you fix angular momentum $L$, then for each $r$, equation \eqref{eq:1DB_PhiDot} provides a unique rotation speed $\overset{\cdot}{\varphi}.$
\begin{definition}[Nontrivial Dumbbell]
	A nontrivial dumbbell is one that has positive length, both masses are positive, and whose distance from the system's center of mass is positive i.e., $x_{i}\notin\left\{0,1\right\},$ and ${\ell},r>0$.
\end{definition}

The authors of \cite{beletskii1990} observed that for $\eqref{eq:1DB_Angular_REQ}$ to hold with a nontrivial dumbbell, there are only two configurations: $\theta \in\left\{ 0,\pi\right\}$ or $d_{1}=d_{2}$. Remarkably, the only configurations allowed correspond to RE with masses being in a colinear alignment or in the shape of an isosceles triangle. Particularly, the isosceles RE exist with any choice of dumbbell masses (they need not be equal). These results seem less mysterious when considered as the only allowed configurations of our Perpendicular Bisector Theorem for RE of a Dumbbell and Rigid Planar Bodies.
\begin{figure}[H]
	\centering
	\begin{minipage}{0.45\textwidth}
		\centering
		\includegraphics[width=0.4\textwidth]{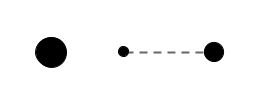}\\{$\theta =0$}
	\end{minipage}\hfill or \hfill
	\begin{minipage}{0.45\textwidth}
		\centering
		\includegraphics[width=0.4\textwidth]{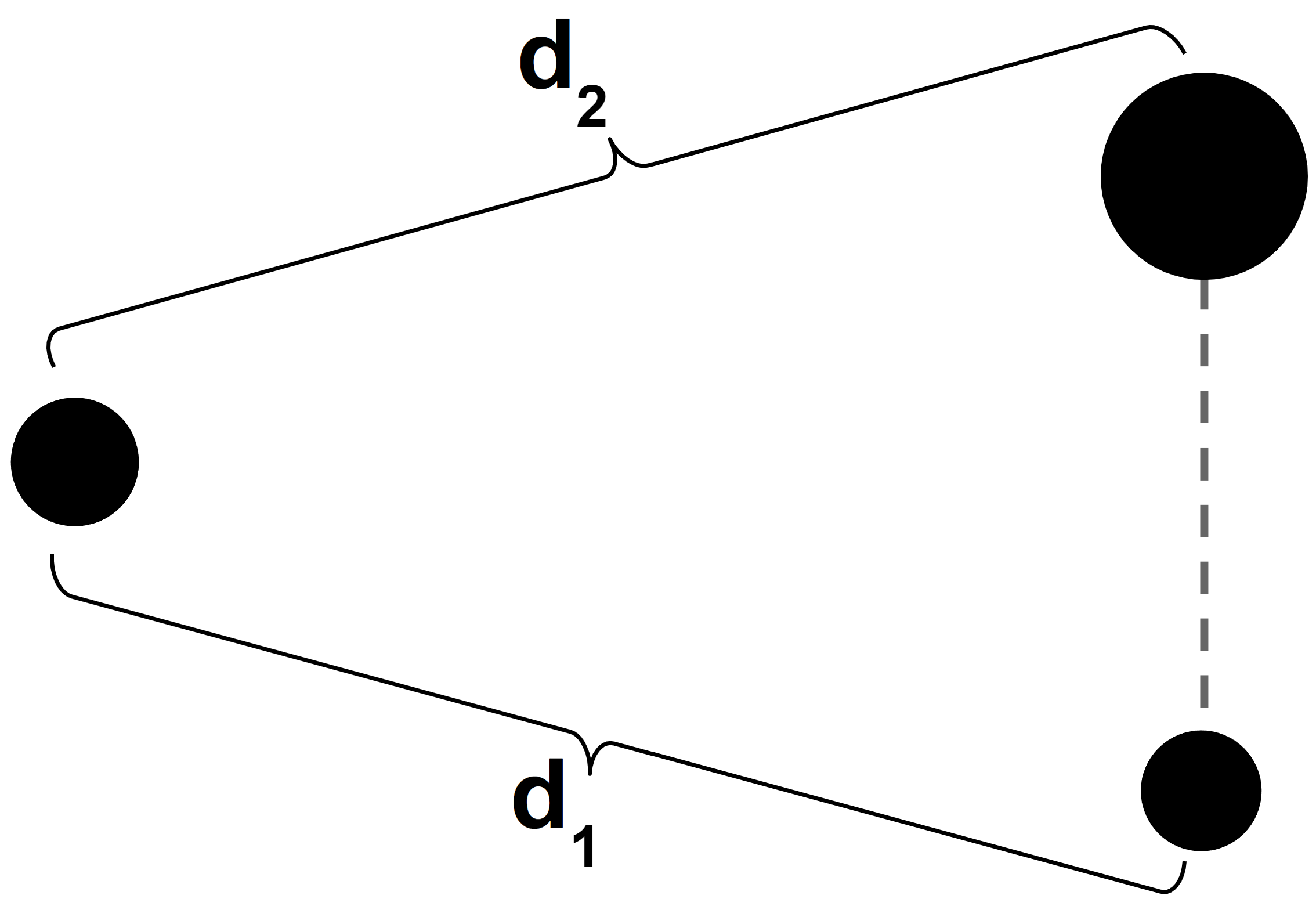}\\{$d_{1}=d_{2}$}
	\end{minipage}
\caption{\\\bfseries Colinear and Isosceles Configurations}{\text{ }\\[-1.801em]}
\end{figure}
Before we look at each of these configurations individually, we first develop the tools we will need to analyze the energetic and linear stability.

\paragraph{Energetic Stability}
As we saw in Section \ref{AmendedPotential}, to determine energetic stability we check if RE are strict minima of the amended potential $V$. So we calculate the Hessian of $V$: $H:=\begin{bmatrix}\partial_{r}^2 V & \partial_{r,\theta}^2 V \\ 
\partial_{r,\theta}^2 V & \partial_{\theta}^2 V\end{bmatrix}$. Any strict minima will produce real, positive $H$ eigenvalues, making $H$ positive definite. To simplify our calculations, we first recharacterize $\partial_{r}V$ (equation \ref{eq:1DB_Radial_REQ}) as:
$\left(g\left(r,\theta\right) \allowbreak -L^{2}\right) \frac{r}{\left(r^{2}+B\right)^{2}},$ where $g\left(r,\theta\right) :=\frac{\left(r^{2}+B\right)^{2}}{r}\left(\frac{x_{1}\left(r-x_{2}\cos\theta\right)}{d_{1}^{3}}+\allowbreak\frac{x_{2}\left(r+x_{1}\cos\theta\right)}{d_{2}^{3}}\right)$. So at a RE (critical point of $\partial_{r}V$), we have: $g\left(r,\theta\right) =L^{2}.$ Also, $\textstyle \partial_{r}^{2}V=\frac{r}{\left(r^{2}+B\right)^{2}}\partial_{r}g\left(r,\theta\right)+\left(g\left(r,\theta\right)-L^{2}\right)\left(1-\frac{4r^{2}}{B+r^{2}}\right)\frac{1}{\left(r^{2}+B\right)^{2}}$, which at RE is: 
\begin{align}
	{\displaystyle \partial_{r}^{2}V|_{RE}=\frac{r}{\left(r^{2}+B\right)^{2}}\partial_{r}g\left(r,\theta\right).\label{eq:1DB_PartialV_AtRE}}
\end{align}
We can now determine the sign of $\partial_{r}^{2}V$ at a RE by looking at $\partial_{r}g,$
or $\partial_{r}L^{2}$. In other words, with the slopes of graphs like Figure \ref{fig:1DB_Colinear_LS} (positive slopes indicate $\partial_{r}^{2}V>0$). Next, for $\theta $, we calculate:
\begin{align}\label{eq:1DB_SecondThetaPartialofV}
	{\displaystyle \partial_{\theta}^{2}V=x_1 x_2 r\cos \theta\left(\frac{1}{d_{1}^{3}}-\frac{1}{d_{2}^{3}}\right) -3x_{1}x_2 r^{2}\sin ^{2}\theta
\left(\frac{x_{2}}{d_{1}^{5}}+\frac{x_{1}}{d_{2}^{5}}\right).}
\end{align}
 And lastly: $\text{\enspace\enspace}\partial_{\theta ,r}V=\partial_{r,\theta}V$\\$\text{\hspace{20pt}}=x_1 x_{2}\sin \theta\left(\frac{1}{d_{1}^{3}}-\frac{1}{d_{2}^{3}}\right)+3x_{1}x_2 r\sin \theta\left(\frac{r+x_{1}\cos \theta}{\left(r^{2}+2x_{1}r\cos \theta +x_{1}^{2}\right)^{\frac{5}{2}}}-\allowbreak \frac{r-x_{2}\cos \theta}{\left(r^{2}-2x_{2}r\cos \theta +x_{2}^{2}\right)^{\frac{5}{2}}}\right)$.

For a nontrivial (${ r\neq 0}$) critical point this becomes:
\begin{align}
	{\displaystyle \partial_{\theta,r}V|_{RE}\,{ =\,3}x_1 x_2 r\sin{\theta}\left(\frac{r+x_{1}\cos\theta}{\left(r^{2}+2x_{1}r\cos \theta +x_{1}^{2}\right)^{\frac{5}{2}}}\allowbreak -\allowbreak \frac{r-x_{2}\cos \theta}{\left(r^{2}-2x_{2}r\cos \theta +x_{2}^{2}\right)^{\frac{5}{2}}}\right).\label{eq:1DB_Mixed_PartialV_AtRE}}
\end{align}
In the configurations below, we will use these in the second partial derivative test to determine whether $H$ is positive definite, and therefore energetic stability of our RE.
\paragraph{Linear Stability}
We determine linear stability by linearizing our equations of motion \eqref{eq:1DB_EOMs} as $\overset{\cdot}{v}\,=A\vec{v}$, and requiring purely imaginary $A$ eigenvalues at RE. We start by rewriting the reduced Lagrangian in terms of the amended potential. So:
$\textstyle\mathcal{L}_{red}=\frac{1}{2}\left(\overset{\cdot}{r}^{2}+\frac{2BL}{r^{2}+B}\overset{\cdot}{\theta}+\frac{Br^{2}}{r^{2}+B}\overset{\cdot}{\theta}^{2}\right) -V\left(r,\theta\right)$, where $V\left(r,\theta\right) :=\frac{L^{2}}{2\left(r^{2}+B\right)}+U\left(r,\theta\right) $ is the amended potential. Applying the Euler Lagrange equation gives us equations of motion:
\begin{subequations}\label{eq:1DBLinearizedEOMs}
	\makeatletter\@fleqntrue\makeatother
	\begin{align}
		\begin{split}
			&\text{$ \overset{\cdot\cdot}{r}=\frac{Br\overset{\cdot}{\theta}\left(B\overset{\cdot}{\theta}-2L\right)}{\left(r^{2}+B\right)^{2}}-\partial_{r}V,\label{eq:1DBLinearizedEOMsA}$}
		\end{split}\\
		\begin{split}
			&\text{$ \overset{\cdot\cdot}{\theta}=2\frac{L-B\overset{\cdot}{\theta}}{r\left(r+B^{2}\right)}\overset{\cdot}{r}-\frac{r^{2}+B}{Br^{2}}\partial_{\theta}V.\label{eq:1DBLinearizedEOMsB} $}
		\end{split}
	\end{align}
\end{subequations}
For RE, we have $\overset{\cdot}{r}_{RE}=\overset{\cdot}{\theta}_{RE}= \overset{\cdot\cdot}{r}_{RE}=\overset{\cdot\cdot}{\theta}_{RE}=0$, giving us:
\begin{subequations}
	\makeatletter\@fleqntrue\makeatother
	\begin{align}
		\begin{split}
		&\text{$\overset{\cdot\cdot}{r}_{RE}=-\partial_{r}V=0\nonumber$,}
	\end{split}\\
\begin{split}
		&\text{$\overset{\cdot\cdot}{\theta}_{RE}=-\frac{r^{2}+B}{Br^{2}}\partial_{\theta}V=0\nonumber$,}
			\end{split}
	\end{align}
\end{subequations}
from which we can see our previous requirements for RE that $\partial_{r}V=\partial_{\theta}V=0$. Letting $\vec{v}:=\left[r\text{\enspace\enspace}\theta\text{\enspace\enspace}\overset{\cdot}{r}\text{\enspace\enspace}\overset{\cdot}{\theta}\right] $, we linearize our system $\eqref{eq:1DBLinearizedEOMs}$ as $\overset{\cdot}{v}\,=A\vec{v}$, where $A:=\begin{bmatrix}0 & I \\ 
	A_3 & A_4\end{bmatrix}$,\\ $A_3=\begin{bmatrix}\frac{B\overset{\cdot}{\theta}\left(B\overset{\cdot}{\theta}-2L\right)}{\left(r^{2}+B\right)^{2}}-4r\frac{Br\overset{\cdot}{\theta}\left(B\overset{\cdot}{\theta}-2L\right)}{\left(r^{2}+B\right)^{3}}-\partial_{r}^{2}V & -\partial_{r,\theta}^{2}V \\2\frac{\left(B\overset{\cdot}{\theta}-L\right)\left(2r+B^{2}\right)}{r^{2}\left(r+B^{2}\right)^{2}}\overset{\cdot}{r}+\left(2\frac{r^{2}+B}{Br^{3}}-\frac{2}{Br}\right)\partial_{\theta}V-\frac{r^{2}+B}{Br^{2}}\partial_{r,\theta}^{2}V& -\frac{r^{2}+B}{Br^{2}}\partial_{\theta}^{2}V\end{bmatrix}$, \\$A_4=\begin{bmatrix}0 & \frac{Br\left(B\overset{\cdot}{\theta}-2L\right)+B^{2}r\overset{\cdot}{\theta}}{\left(r^{2}+B\right)^{2}}\\2\frac{L-B\overset{\cdot}{\theta}}{r\left(r+B^{2}\right)}&-\frac{2B}{r\left(r+B^{2}\right)}\overset{\cdot}{r}\end{bmatrix}$, and $I,0$ are the $3\times 3$ identity and zero matrices, respectively. At RE ($\overset\cdot r=\overset{\cdot}{\theta}=\partial_{r}V=\partial_{\theta}V=0$) we get $A_{RE}=\begin{bmatrix}0 & I \\ 
	A_{3_{RE}} & A_{4_{RE}}\end{bmatrix}$,\\where
$A_{3_{RE}}=\begin{bmatrix}-\partial_{r}^{2}V & -\partial_{r,\theta}^{2}V \\-\frac{r^{2}+B}{Br^{2}}\partial_{r,\theta}^{2}V & -\frac{r^{2}+B}{Br^{2}}\partial_{\theta}^{2}V\end{bmatrix}$ and $A_{4_{RE}}=\begin{bmatrix}0 & -\frac{2BLr}{\left(r^{2}+B\right)^{2}}\\2\frac{L-B\overset{\cdot}{\theta}}{r\left(r+B^{2}\right)}&0\end{bmatrix}$.\\
Calculating the characteristic polynomial of $A_{RE}$, we find: $z^{4}+c_{1}z^{2}+c_{0}$,\\where $c_{1}=\partial_{r}^{2}V+\frac{4BL^{2}}{\left(r^{2}+B\right)^{3}}+\frac{r^{2}+B}{Br^{2}}\partial_{\theta}^{2}V$ and $c_{0}=\frac{r^{2}+B}{Br^{2}}\left(\partial_{\theta}^{2}V\partial_{r}^{2}V-\left(\partial_{r,\theta}^{2}V\right)^{2}\right)$.

Note for linear stability we need imaginary eigenvalues.
So our linear stability conditions are:
\begin{align}\label{eq:1DB_Linear_Stability_Criteria}
	{\displaystyle  c_{1}^{2}\geq 4c_{0},\text{\enspace\enspace}c_{1}\geq 0\text{,\enspace\enspace and \enspace\enspace}c_{0}\geq 0.}
\end{align}
We will use these conditions in the configurations below to establish linear stability.
\vspace*{-5mm}\subsection{Colinear Configuration: $r>0$ and $\theta =0$}
\begin{figure}[H]
	\centering
	\begin{minipage}{0.45\textwidth}
		\centering
		\includegraphics[width=0.4\textwidth]{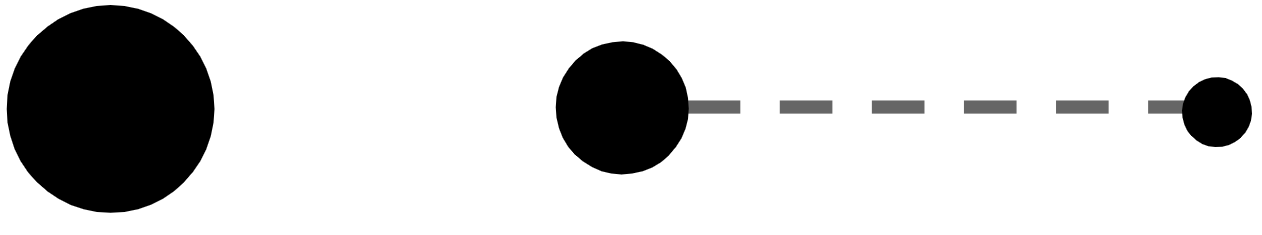}\\{$r>x_2$}
		\label{fig:1DB_Overlap_Config_A}
	\end{minipage}\hfill or \hfill
	\begin{minipage}{0.45\textwidth}
		\centering
		\includegraphics[width=0.4\textwidth]{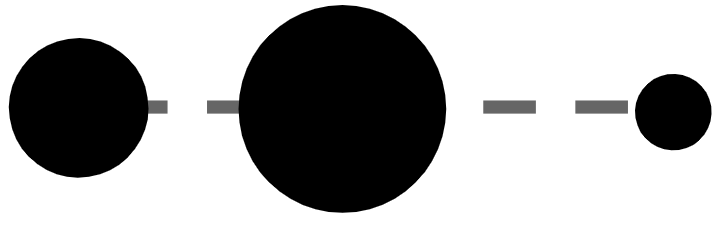}\\{$r<x_2$}
		\label{fig:1DB_Colinear_Config}
	\end{minipage}
\caption{\\\bfseries Colinear Non-Overlap and Overlap}{\text{ }\\[-1.801em]}
\end{figure}
Let us first look at the simpler of the two configurations, when the masses are colinear.  Note, there is a singularity when $r=x_{2}$, so this configuration naturally divides up into $r>x_{2}$ and $r<x_{2}.$ We will not address the situation where $r=x_{2}$, as a model like ours, which uses point masses and Newton's interaction potential, does not accurately model any real situation when bodies are this close. For work that attempts to model these scenarios, see Scheeres \cite{scheeres2007} where he studies the case when the orbiting bodies come in contact with each other. Note also that the case $r<x_{2}$ is physically impossible due to the ``massless rod'' colliding with the $\vec{r}_1$ mass. So this model is explored more as a mathematical curiosity.

From the radial requirement \eqref{eq:1DB_Radial_REQ} we find: $\frac{x_{1}}{\left(r-x_{2}\right)\left\vert r-x_{2}\right\vert}+\allowbreak \frac{x_{2}}{\left(r+x_{1}\right)^2}\allowbreak =\frac{L^{2}}{\left(r^{2}+B\right)^{2}}r.$ So since $r>0$, our bifurcation equation is: 
\begin{align}\label{eq:1DB_Colinear_L_Squared}
	{\displaystyle L^{2}(r)=\frac{\left(r^{2}+B\right)^{2}}{r}\left(
		\frac{x_{1}}{\left(r-x_{2}\right)\left\vert r-x_{2}\right\vert}+\allowbreak \frac{x_{2}}{\left(r+x_{1}\right)^{2}}\allowbreak\right).}
\end{align}

Graphing\ $L^2$ for $x_{2}<x_{1}$ with $\left(x_{1},M_{1}\right) =\left(\frac{6}{10},\frac{1}{2}\right)$, and $x_{1}<x_{2}$ with $\left(x_{1},M_{1}\right) =\left(\frac{1}{10},\frac{1}{2}\right)$ we have Figure \ref{fig:1DB_Colinear_LS}. Given $x_{1},M_{1},$ one can vary a horizontal line (representing the square of the angular momentum) in such a graph, using it as a bifurcation parameter. For $x_{2}<x_{1}$ (Figure \ref{fig:1DB_Colinear_LSA}), there appear to be RE only in the non-overlapped $\left(r>x_{2}\right) $ part of the domain. So no RE for sufficiently low angular momenta $L^{2}$. However, for sufficiently large $L^{2}$, the line intersects the graph at two $r$ values of different colinear RE. For $x_{1}<x_{2}$ (in Figure \ref{fig:1DB_Colinear_LSB}), we see RE in the overlapped part of the domain as well. This particular graph suggests there is only one RE for every $L^{2}$ in the overlap. However, we will see below that the overlapped region is a bit more complicated.
\begin{figure}[H]
	\captionsetup[subfigure]{justification=centering}
	\centering
	\subfloat[$L^2$ for $x_{2}<x_{1}$\\$\left(x_{1},M_{1}\right) =\left(\frac{6}{10},\frac{1}{2}\right)$]
	{\includegraphics[width=0.35\textwidth]{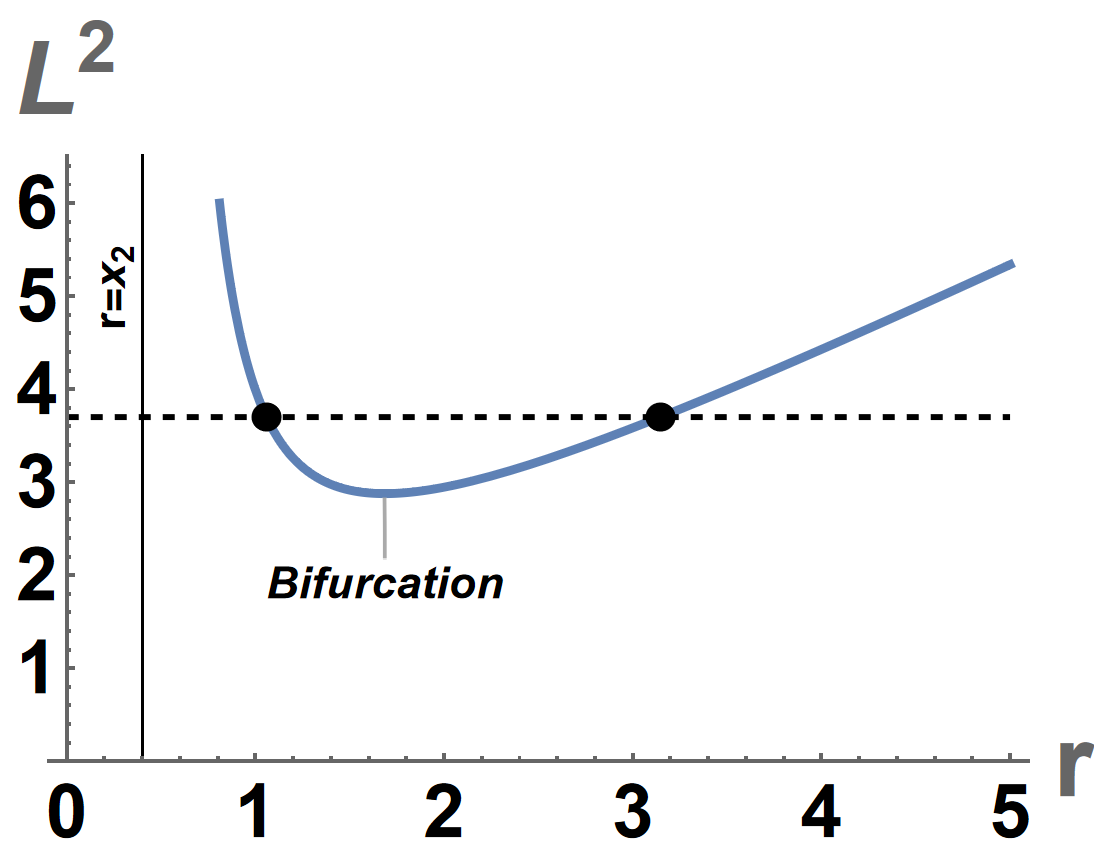}\label{fig:1DB_Colinear_LSA}}
	\hspace{1cm}
	\subfloat[$L^2$ for $x_{1}<x_{2}$\\$\left(x_{1},M_{1}\right) =\left(\frac{1}{10},\frac{1}{2}\right)$]
	{\includegraphics[width=0.35\textwidth]{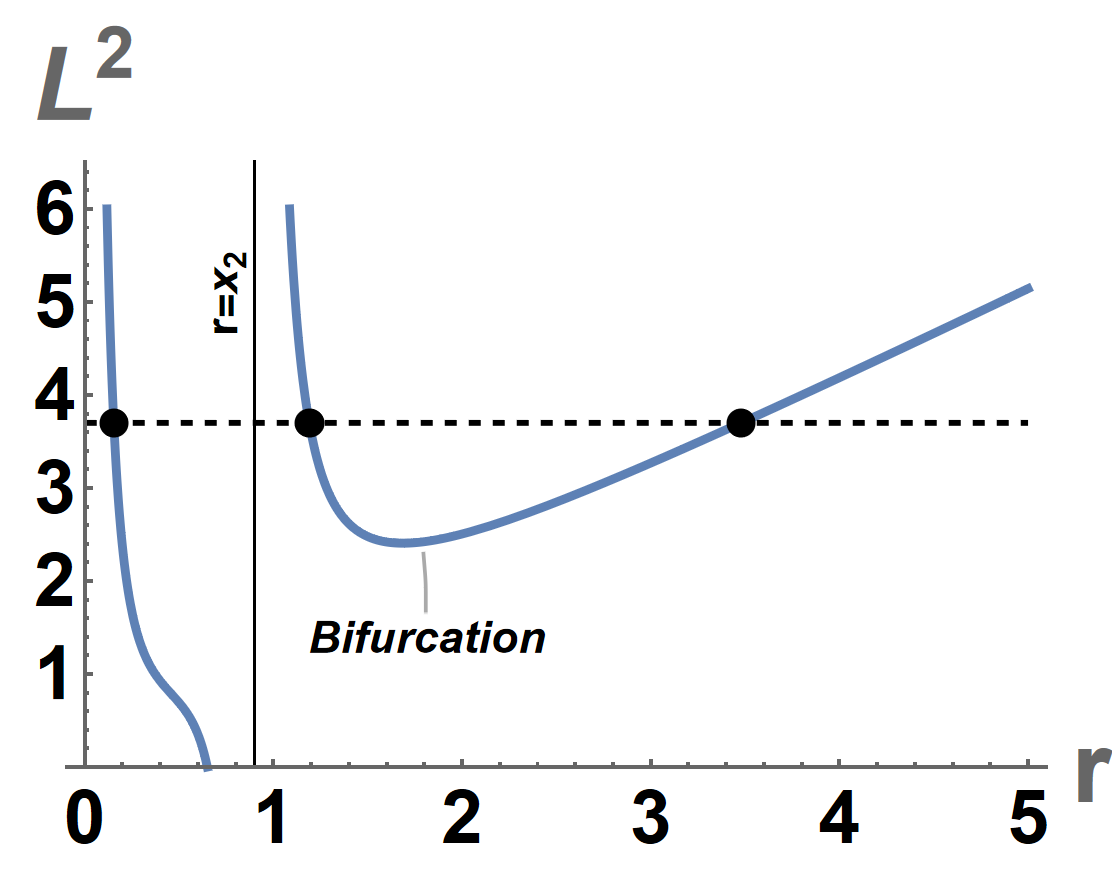}\label{fig:1DB_Colinear_LSB}}
	{\includegraphics[width=0.2\textwidth]{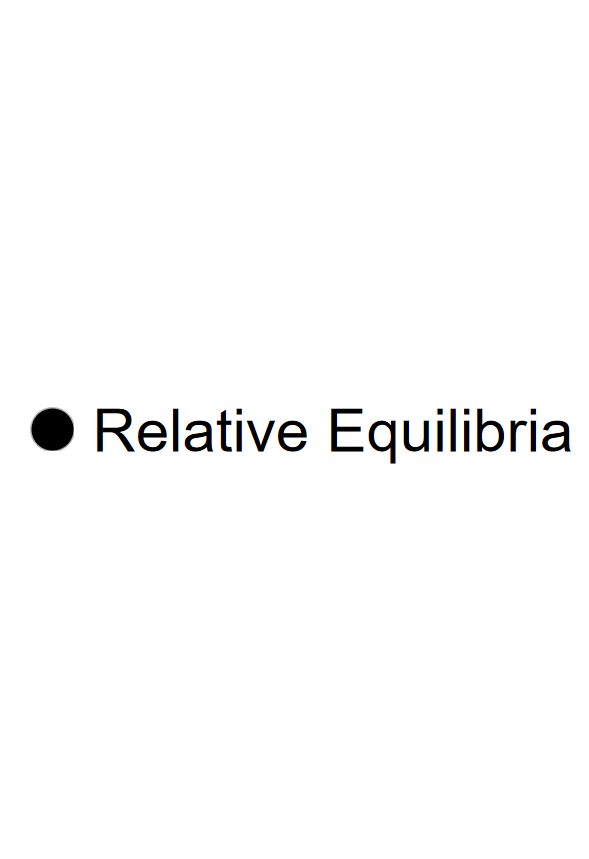}}
	\caption{\\\bfseries Colinear $L^2$ Curves}{\vspace{0.8em}\footnotesize The dashed line is a hypothetical constant angular\\momentum, the solid thin line is the overlap radius.\\[-0.801em]}
	\label{fig:1DB_Colinear_LS}
\end{figure}%

\vspace*{-5mm}\subsubsection{Subcase 1: Non-Overlapped: $r>x_{2}$}
\begin{figure}[H]
	\centering
	\begin{minipage}{0.35\textwidth}
		\centering
		\includegraphics[width=0.4\textwidth]{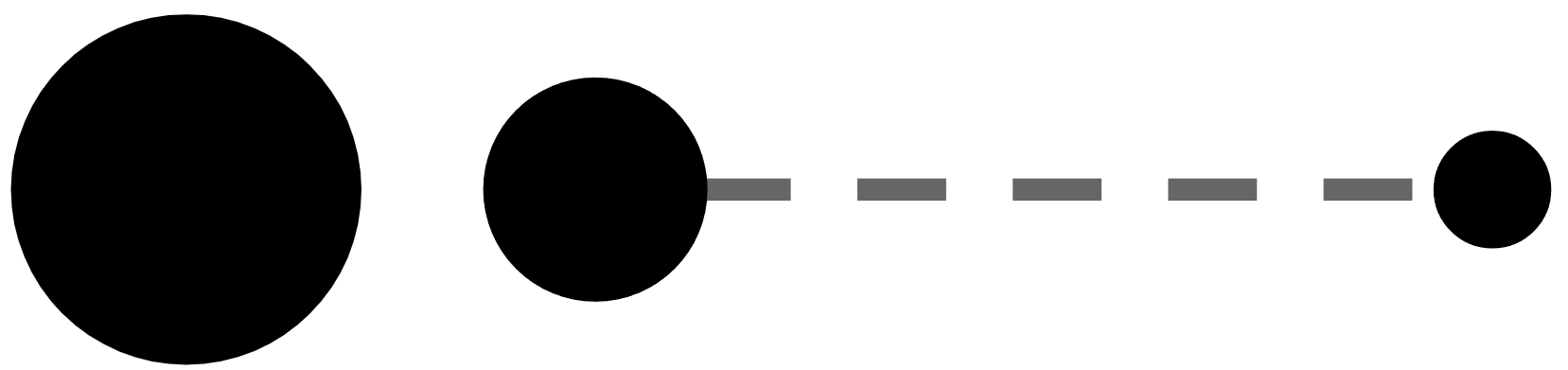}\\ \footnotesize Unstable RE
		\label{fig:1DB_Overlap_Config_A}
	\end{minipage}\hfill
	\begin{minipage}{0.47\textwidth}
		\centering
		\includegraphics[width=0.6\textwidth]{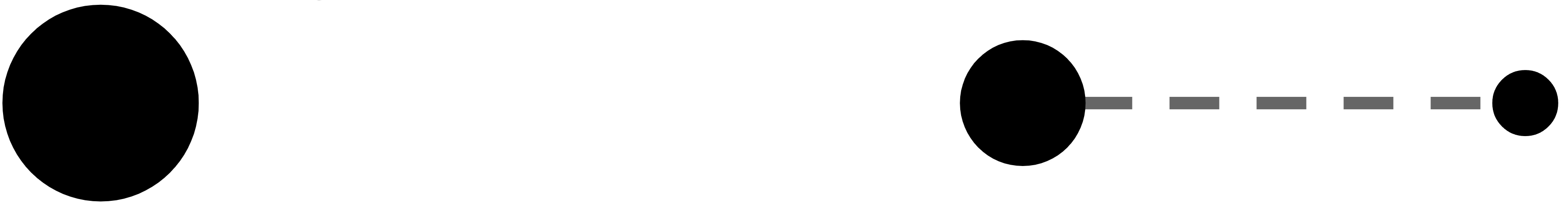}\\ \footnotesize Stable RE
		\label{fig:1DB_Overlap_Config_B}
	\end{minipage}
	\caption{\\\bfseries Colinear Non-Overlap Visualizations}{\vspace{0.8em}\footnotesize Shows the radii of unstable/stable RE, respectively.\\[-0.801em]}
		\label{fig:1DB_Overlap_Config}
\end{figure}
To examine bifurcations when the dumbbell is not overlapping, we look at $r>x_{2}$. So let $R:=r-x_{2}$ or $r=R+x_{2}.$ With this substitution, the dumbbells do not overlap for $R\in\left(0,\infty\right).$ 
\begin{theorem}[Single Bifurcation of RE for Non-Overlap Colinear Angular Momenta]
	Let a dumbbell and point mass be in a colinear non-overlapped RE. There are no RE for sufficiently low angular momenta, and two RE for angular momenta greater than some value $L_b>0$.
\end{theorem}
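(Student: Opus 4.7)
The plan is to analyze the function $L^{2}(r)$ from \eqref{eq:1DB_Colinear_L_Squared} on the open half-line $r\in(x_{2},\infty)$ and to show that its graph has the qualitative shape suggested by Figure \ref{fig:1DB_Colinear_LS}: $L^{2}\to+\infty$ at both ends, with a single positive minimum $L_{b}^{2}$ in between. A horizontal line $L^{2}=\text{const}$ then meets the graph either zero or two times, corresponding directly to zero or two RE by the radial requirement \eqref{eq:1DB_Radial_REQ}.

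First I would substitute $R:=r-x_{2}$, so that for $r>x_{2}$ we have $|r-x_{2}|=R>0$ and, using $x_{1}+x_{2}=1$,
\begin{equation*}
L^{2}(R) \;=\; \frac{\bigl((R+x_{2})^{2}+B\bigr)^{2}}{R+x_{2}}\left(\frac{x_{1}}{R^{2}}+\frac{x_{2}}{(R+1)^{2}}\right).
\end{equation*}
This is smooth and strictly positive on $(0,\infty)$. The endpoint asymptotics are immediate: as $R\to 0^{+}$, the term $x_{1}/R^{2}$ dominates (the point mass is approaching the dumbbell mass $\vec{r}_1$), forcing $L^{2}\to+\infty$; as $R\to\infty$, the prefactor grows like $R^{3}$ while the parenthesized factor decays like $R^{-2}$, so $L^{2}\sim c R\to+\infty$. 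Continuity plus both-sided blow-up forces a global minimum $L_{b}^{2}>0$ attained at some $R_{*}\in(0,\infty)$. Consequently, for $L^{2}<L_{b}^{2}$ no RE exist; and for $L^{2}>L_{b}^{2}$ the intermediate value theorem on $(0,R_{*})$ and on $(R_{*},\infty)$ produces at least two solutions of the radial requirement, hence at least two colinear non-overlapped RE.

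Second, to upgrade ``at least two'' to ``exactly two'' and to make $L_{b}$ a clean threshold, I would show that $L^{2}(R)$ is strictly decreasing on $(0,R_{*})$ and strictly increasing on $(R_{*},\infty)$; equivalently, that $\tfrac{d}{dR}L^{2}(R)$ has a unique positive root. This is where I expect the main obstacle. After clearing positive denominators, $\tfrac{d}{dR}L^{2}=0$ becomes a polynomial equation in $R$ (with coefficients depending on $x_{1},x_{2},B$), and I would try to bound its number of positive roots by one using Descartes' rule of signs, perhaps after a rescaling that exposes a clean sign pattern. A conceptually cleaner alternative is to exploit \eqref{eq:1DB_PartialV_AtRE}: at a colinear critical point $\partial_{r}^{2}V$ has the same sign as $\partial_{r}L^{2}(r)$, so critical points of $L^{2}(R)$ correspond to sign changes of $\partial_{r}^{2}V$ along the $\theta=0$ RE branch; arguing from the structure of the one-dimensional potential $V(r,0)$ on $(x_{2},\infty)$ (one minimum at infinity in the effective sense, one Euler-type saddle, nothing else between the singularity at $R=0$ and $R=\infty$) rules out additional zeros of $\tfrac{d}{dR}L^{2}$. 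Either route pins the count of RE at two for every $L^{2}>L_{b}^{2}$ and thereby completes the proof.
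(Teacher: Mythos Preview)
Your overall framework is exactly the paper's: pass to $R=r-x_{2}$, observe $L^{2}>0$ with blow-up at both ends of $(0,\infty)$, deduce a positive global minimum $L_{b}^{2}$, and then argue uniqueness of the critical point to pin the count at exactly two for $L^{2}>L_{b}^{2}$. The divergence is in how that uniqueness is obtained.

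The paper does \emph{not} work with $(L^{2})'$ directly. Instead it proves strict convexity, $(L^{2})''(R)>0$ on $(0,\infty)$, which forces $(L^{2})'$ to be strictly increasing and hence to have at most one zero; the IVT argument you already sketched then gives exactly one. The convexity is established by the rationalizing substitution $x_{1}=\frac{u}{1+u}$, $x_{2}=\frac{1}{1+u}$ (so all parameters become positive reals with no constraint), multiplying $(L^{2})''$ by a manifestly positive polynomial $h(R)$, and expanding: every monomial in $R,u,M_{1}$ appears with a positive coefficient, so positivity is immediate by inspection. This is a brute-force but airtight route, and the $u$-substitution is what makes the sign pattern clean.

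Your proposed routes for uniqueness both have issues. For Descartes on $(L^{2})'$: after clearing denominators one gets a degree-$6$ polynomial in $R$ whose $R^{4}$ and $R^{3}$ coefficients can take either sign depending on $x_{1},M_{1}$ (e.g.\ the $R^{4}$ coefficient is $3M_{1}(x_{1}^{2}+x_{1}x_{2}+x_{2}^{2})-3x_{1}x_{2}$), so you may see three sign changes rather than one, and Descartes alone won't close the argument without further work. Your ``conceptually cleaner alternative'' is circular: the claim that $V(r,0)$ on $(x_{2},\infty)$ has exactly one minimum and one saddle is precisely what the theorem asserts, so you cannot invoke it to count zeros of $\partial_{r}L^{2}$. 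The fix is simple: compute $(L^{2})''$ instead of $(L^{2})'$ and use the $u$-parametrization to get a positivity-by-inspection expansion.
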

\begin{proof}
We will check that in the non-overlap region, the graph of $L^2$ is qualitatively similar to Figure \ref{fig:1DB_Colinear_LS}, that is, concave up ( $(L^2)^{\;\prime \prime }(R)>0$) with one positive minimum.
For our analysis, it is helpful to do the following change of variable: let $x_{1}=\frac{u}{1+u}$ and $x_{2}=\frac{1}{1+u}.$ Note that we still have $x_{1}+x_{2}=1,$ but now we have characterized $x_{1},x_{2}$ with one variable $0<u<\infty $. Making these changes in $(L^2)^{\;\prime \prime}(R)$, and multiplying by the positive expression\\ $h(R):=\frac{1}{2}M_{1}^{2}R^{4}(R+1)^{4}\left(u+1\right)^{4}(1+R+uR)^{3}$, we find:\\
$\textstyle h(R)L^2=M_{1}(3M_{1}+2)u(u+1)^{5}R^{8}+8M_{1}u(u+1)^{4}(3M_{1}+u+1)R^{7}\\
\text{\enspace} +6u(u+1)^{3}\left(M_{1}^{2}(3u+14)+2M_{1}\left(u^{2}+5u+1\right)+u\right) R^{6}\\
\text{\enspace}+4u(u+1)^{2}\left(3M_{1}^{2}\left(u^{2}+8u+14\right)+2M_{1}\left(u^{3}+11u^{2}+22u+1\right)+3u(2u+1)\right)R^{5}\\
\text{\enspace}\textstyle+u(u+1)[3M_{1}^{2}\left({u}^{3}{+19u}^{2}{+71u+70}\right)+2M_{1}\left({u}^{4}{+27u}^{3}{+123u}^{2}{+137u+1}\right)\\
\text{\enspace}\text{\enspace}\text{\enspace}\textstyle+3u\left({12u}^{2}{+23u+2}\right)] R^{4}\\
\textstyle\text{\enspace}+12u\left(M_{1}^{2}\left(u^{3}+9u^{2}+21u+14\right)+M_{1}u\left(u^{3}+11u^{2}+29u+21\right)+u^{2}\left(2u^{2}+8u+7\right)\right) R^{3}\\
\text{\enspace}+2u\left(M_{1}^{2}\left(9u^{2}+42u+42\right)+2M_{1}u\left(6u^{2}+32u+35\right)+u^{2}\left(3u^{2}+22u+28\right)\right)R^{2}\\
\text{\enspace}+4u\left(3M_{1}^{2}(u+2)+M_{1}u(5u+11)+u^{2}(2u+5)\right) R+3u(M_{1}+u)^{2}$.\\
Observe (by inspection) the expression is always positive, so $L^2$ is concave up, giving at most one bifurcation. We also check that there is a critical point. We observe $(L^2)^{\;\prime}(R)=0$ when:\\
$\text{\enspace\enspace}0=M_{1}R^{6}+3M_{1}R^{5}+\left(3M_{1}\left(x_{1}^{2}+x_1 x_{2}+x_{2}^{2}\right) -3x_{1}x_{2}\right) R^{4}$\newline
$\text{\enspace\enspace\enspace\enspace}+\left(M_{1}\left(x_{1}^{3}-2x_{1}x_{2}\left(5x_{2}-1\right)+x_{2}^{3}\right)-3x_{1}x_{2}(2x_{1}+1)\right)R^{3}$\newline
$\text{\enspace\enspace\enspace\enspace}-3x_{1}x_{2}(3+2x_{2})(M_{1}x_{2}+x_{1})R^{2}$\\
$\text{\enspace\enspace\enspace\enspace}-3x_{1}x_{2}\left(2x_{2}+1\right)(M_{1}x_{2}+x_{1})R-2x_{1}x_{2}^{2}(M_{1}x_{2}+x_{1})$.

Note that this expression is continuous and takes negative (when $R=0$ for instance), and positive (as $R\rightarrow \infty $) values. Therefore, by the intermediate value theorem $(L^2)^{\;\prime }(R)$ has a zero. So $L^{2}$ must have a minimum, giving a bifurcation of the number of RE as $L^{2}$ is varied. Lastly, we check that this minimum is positive, giving these RE physical relevance (real angular momenta). Observe by inspection that for $r>x_{2}$ we have (using \eqref{eq:1DB_Colinear_L_Squared}): $L^{2}=\frac{1}{r}\left(r^{2}+B\right)^{2}\left(\frac{x_{1}}{\left(r-x_{2}\right)^{2}}+\allowbreak\frac{x_{2}}{\left(r+x_{1}\right)^{2}}\allowbreak\right)$, which is always positive. Therefore, for $r>x_{2}$ we see no RE for small angular momenta, and two RE for angular momenta larger than some positive bifurcation value $L_b$.
\end{proof}
Now let's look at the overlap case.
\vspace*{-5mm}\subsubsection{Subcase 2: Overlap $r<x_2$}
\begin{figure}[H]
	\centering
	\begin{minipage}{0.45\textwidth}
		\centering
		\includegraphics[width=0.4\textwidth]{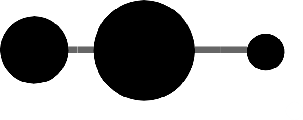} 
		\label{fig:1DB_Overlap_Config_A}
	\end{minipage}\hfill
	\begin{minipage}{0.45\textwidth}
		\centering
		\includegraphics[width=0.4\textwidth]{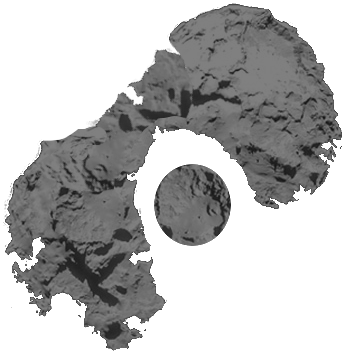}\\ \footnotesize Could be approximated by overlap model
		\label{fig:1DB_Overlap_Config_B}
	\end{minipage}
\caption{\\\bfseries Colinear Overlap Visualizations}{\text{ }\\[-1.801em]}
\end{figure}
Note that for $r<x_{2}$, the bodies are overlapping. When this occurs, $\vec{r}_{p}$ is located within the massless rod of the dumbbell. So, obviously this has limited practical application. This model could be used to loosely represent oddly shaped orbiting asteroids, the larger of which is shaped roughly like a bent dumbbell, and the smaller asteroid could be ``orbiting'' the larger in a RE within the concavity of the dumbbell (see the image above). But mostly, we study this case for mathematical curiosity. So let $R:=\frac{r}{x_{2}-r}$ or $r=x_{2}\frac{R}{R+1}.$ With this substitution, the bodies overlap for $R\in\left(0,\infty\right).$ From \eqref{eq:1DB_Colinear_L_Squared} we have:
\begin{align}\label{eq:1DB_Colinear_L_Squared_Overlap}
	{\displaystyle  L^{2}(R)=\frac{R+1}{x_2 R}\left(B+x_{2}^{2}\frac{R^{2}}{\left(R+1\right)^{2}}\right)^{2}\left(\frac{x_{2}}{\left(x_{2}\frac{R}{R+1}+x_{1}\right)^{2}}-\frac{x_{1}}{\left(x_{2}\frac{R}{R+1}-x_{2}\right)^{2}}\allowbreak \allowbreak\right).}
\end{align}

Then, multiplying $(L^{2})^{\;\prime }(R)$ by the positive expression $\frac{M_{1}^{2}x_{2}(R+1)^{2}(R+x_{1})^{3}}{2\left(M_{1}x_2 R^{2}+x_{1}(R+1)^{2}\right)}$, and collecting powers of $R$ we define:\\
$\textstyle\text{\enspace}k(R):=-2x_{1}(M_{1}x_{2}+x_{1})R^{6}-3x_{1}(2x_{1}+1)(M_{1}x_{2}+x_{1})R^{5}-3x_{1}^{2}\left(2x_{1}+3\right)(M_{1}x_{2}+x_{1})R^{4}\\
\text{\enspace\enspace}+\left(-10x_{1}^{5}-(11M_{1}+6)x_{1}^{4}x_{2}+3(1-3M_{1})x_{1}^{3}x_{2}^{2}+3(M_{1}-1)x_1 x_{2}^{4}+M_{1}x_{2}^{5}\right)R^{3}\\
\text{\enspace\enspace}-3x_{1}x_{2}\left(x_{2}-x_{1}\right)\left((2-M_{1})\left(x_{1}^{2}+x_1 x_{2}+x_{2}^{2}\right)+x_1 x_{2}\right)R^{2}\\
\text{\enspace\enspace}-3x_{1}\left(x_{2}-x_{1}\right)\left(x_{1}^{2}+x_1 x_{2}+x_{2}^{2}\right)R-x_{1}^{2}\left(x_{2}-x_{1}\right)\left(x_{1}^{2}+x_1 x_{2}+x_{2}^{2}\right).$

We will learn from analyzing $k$ that we have two qualitatively different configurations; one when $x_{1}<\frac{1}{2}<x_{2},$ and another when $x_{2}<\frac{1}{2}<x_{1}$.
\paragraph{Subcase 2a: $x_1<\frac{1}{2}<x_2$}
\begin{figure}[H]
	\centering
	\includegraphics[width=0.2\textwidth]{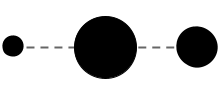} 
	\label{fig:1DB_Colinear_Config}
	\caption{\\\bfseries Colinear Overlap with Small $x_1$}{\text{ }\\[-1.801em]}
\end{figure}
\begin{theorem}[One or Three RE per Angular Momentum for Overlap Colinear with Small $x_1$]
	Let a dumbbell and point mass be in a colinear overlapped RE. For $x_{1}<x_{2}$, with $x_1$ sufficiently large, we have $(L^{2})^{\;\prime}{ (R)}<0$ for all $R,$ and therefore only one RE for each $L^{2}$ (Figure \ref{fig:1DB_Overlap_x1_small}a). However, for $x_{1}\ll x_{2},$ we have $(L^{2})^{\;\prime}>0$ for one interval, and therefore three RE for some interval of $L^{2}$ values, and one RE otherwise (Figure \ref{fig:1DB_Overlap_x1_smallb}).
\end{theorem}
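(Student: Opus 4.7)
The plan is to reduce both claims to sign-analysis of the polynomial $k(R)$, since by construction $(L^2)'(R)$ and $k(R)$ differ only by the positive factor $\frac{M_1^2 x_2 (R+1)^2 (R+x_1)^3}{2(M_1 x_2 R^2 + x_1(R+1)^2)}$ appearing before the definition of $k$. First I would record the \emph{endpoint behavior}: under the standing assumption $x_1<x_2$, the leading coefficient $-2x_1(M_1 x_2 + x_1)$ is strictly negative, and the constant term $-x_1^2(x_2-x_1)(x_1^2+x_1 x_2 + x_2^2)$ is strictly negative. Hence $k(0)<0$ and $k(R)\to -\infty$ as $R\to\infty$, so generically $(L^2)'<0$ near both endpoints and the question becomes whether $k$ manages to become positive somewhere in between.

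For the ``$x_1$ sufficiently large'' case, I would show that every coefficient of $k(R)$ is negative on an open neighborhood of $x_1 = \tfrac12$ in $(0,\tfrac12)$. The $R^6,R^5,R^4$ coefficients are manifestly negative whenever $x_1>0$. The coefficients of $R^2,R^1,R^0$ each carry a factor of $-(x_2-x_1)$ times a positive polynomial in $x_1,x_2,M_1$ (one has to check that $(2-M_1)(x_1^2+x_1x_2+x_2^2)+x_1x_2>0$, which is immediate for $M_1\le 2$, and a short separate argument handles $M_1>2$). Only the $R^3$ coefficient has mixed signs; substituting $x_1=x_2=\tfrac12$ collapses it to $-\tfrac12-\tfrac{M_1}{2}<0$. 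Continuity in $x_1$ then gives a neighborhood on which the $R^3$ coefficient is also negative, forcing $k(R)<0$ on all of $(0,\infty)$. Consequently $L^2$ is strictly monotone, so each horizontal line meets its graph at most once, giving a unique RE per $L^2$ value.

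For the ``$x_1\ll x_2$'' case, I would do an asymptotic expansion as $x_1\to 0^+$ (with $x_2=1-x_1\to 1^-$). The coefficients of $R^6,R^5,R^4,R^2,R^1,R^0$ all vanish at least linearly in $x_1$, whereas the $R^3$ coefficient tends to $M_1 x_2^5\to M_1>0$. Thus for any fixed $R_0>0$ and all sufficiently small $x_1$, one has $k(R_0)>0$. Combined with $k(0)<0$ and $k(R)\to -\infty$, the intermediate value theorem supplies at least two positive roots of $k$, hence at least two critical points of $L^2$. To upgrade ``at least two'' to ``exactly two,'' I would apply Descartes' rule of signs to the sign pattern of the coefficients of $k$ (which, after the asymptotic analysis, becomes $-,-,-,+,-,-,-$ giving at most two sign changes), concluding that $k$ has exactly one interval of positivity and so $L^2$ has exactly one local minimum followed by one local maximum on $(0,\infty)$. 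Three RE then exist precisely for $L^2$ strictly between these two critical values, and one RE otherwise.

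The main obstacle is the $R^3$ coefficient: it is the only term in $k$ whose sign is not pinned down by inspection, and both halves of the theorem hinge on its sign changing as $x_1$ decreases from $\tfrac12$ to $0$. Showing monotonicity/uniqueness of the transition $x_1^\ast$ where the $R^3$ coefficient flips sign is auxiliary and not needed for the stated theorem, so I would simply separate ``$x_1$ close to $\tfrac12$'' from ``$x_1$ close to $0$'' and leave the intermediate regime implicit, exactly as the statement's ``sufficiently'' qualifiers allow.
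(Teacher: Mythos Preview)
Your proposal is correct and follows essentially the same route as the paper: both reduce to sign-analysis of $k(R)$, observe that all coefficients except the $R^3$ one are negative for $x_1<x_2$, use the limit $x_1\to 0$ to force the $R^3$ coefficient positive, and then invoke Descartes' rule of signs (you on $k$, the paper on $-k$) together with the endpoint behavior to conclude exactly zero or two positive roots. One small simplification: since $M_1+M_2=1$ forces $M_1\in(0,1)$, your side remark about handling $M_1>2$ in the $R^2$ coefficient is unnecessary.
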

\begin{figure}[H]
	\captionsetup[subfigure]{justification=centering}
	\centering
	\subfloat[$L^2$ for $x_{1}<x_{2}$\\$(x_1,x_2,M_1)=(0.1,0.9,0.5)$\label{fig:1DB_Overlap_x1_smalla}]
{\includegraphics[width=0.35\textwidth]{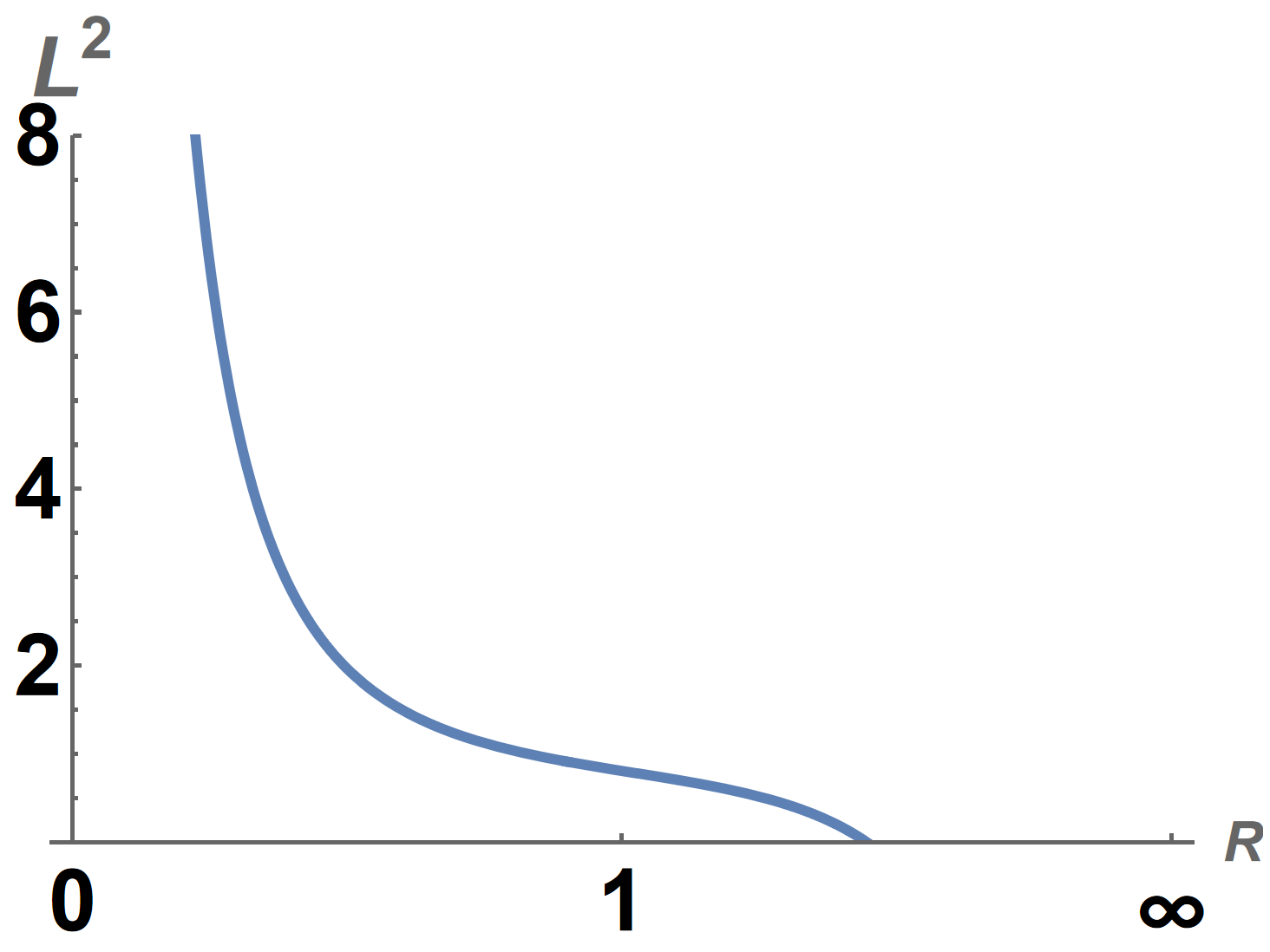}}
	\hspace{1cm}
	\subfloat[$L^2$ for $x_{1}\ll x_{2}$\\$(x_1,x_2,M_1)=(0.008,0.992,0.5)$\label{fig:1DB_Overlap_x1_smallb}]
	{\includegraphics[width=0.38\textwidth]{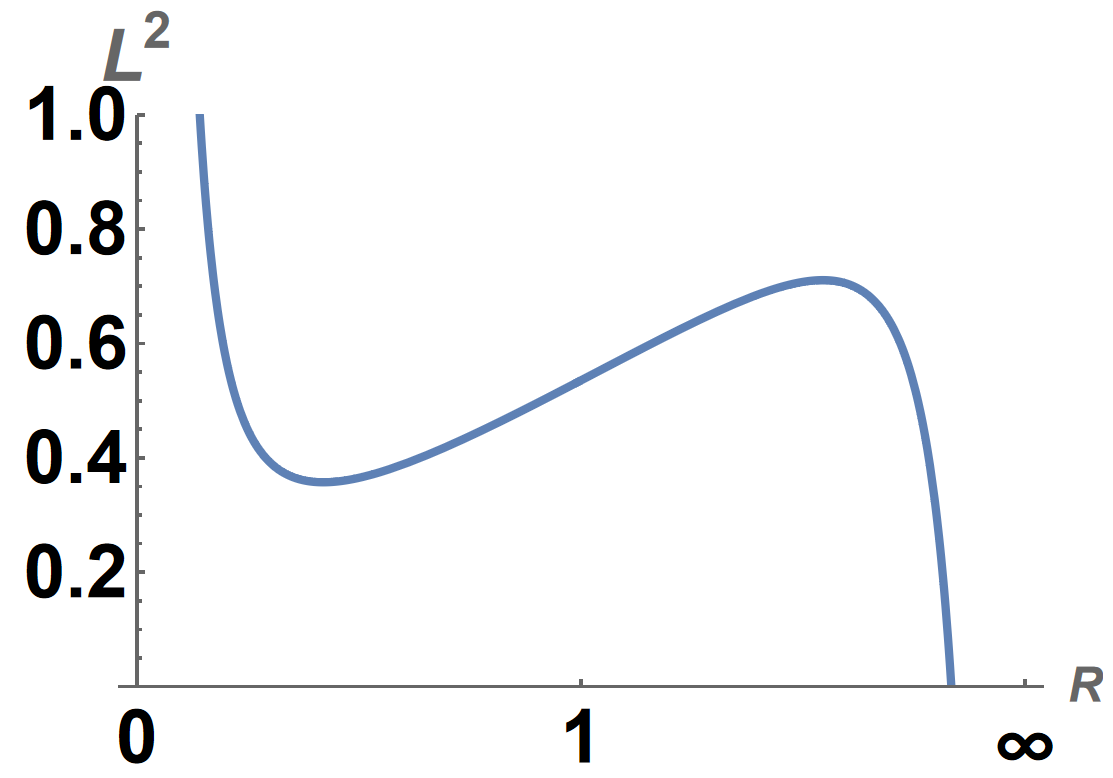}}
	\caption{\\\bfseries Colinear Overlap $L^2$ Curves with Small $x_1$}{\vspace{0.8em}\footnotesize One RE per $L^2$ for most $x_{1}<x_{2}$, but one or three RE for $x_{1}\ll x_{2}$.\\[-1.801em]}
	\label{fig:1DB_Overlap_x1_small}
\end{figure}
In order to visually cover the entire range $R\in\left(0,\infty\right)$, here and in other graphs in this paper, we graph the horizontal axis as $R=\frac{z}{2-z}$ for $z\in(0,2)$. 
\begin{proof}
We look for zeros of $(L^2)^{\;\prime }{ (R).}$ Observe that in $k(R),$ every term is negative except perhaps the $R^{3}$ term. Also, every term has an $x_{1}$ factor except the $R^{3}$ term. So, as $x_{1}\rightarrow 0,$ every term except $R^{3}$ gets arbitrarily small. The $R^{3}$ term can remain large, since there is a $x_{2}^{5}$ in the coefficient which remains (and increases) as $x_{1}\rightarrow 0.$ Note in particular, that the sign of $k(R)$ will change since the positive $R^{3}$ term will survive, and the others (while being negative) will shrink to zero. So, for fixed $R$ and $x_{1}\ll x_{2}$ we find positive slope $(L^2)^{\;\prime }(R)$, and as you then increase $R$, larger powers of $R$ overtake the $R^{3}$ term, and the slope $(L^2)^{\;\prime }(R)$ changes from positive back to negative (Figure \ref{fig:1DB_Overlap_x1_smallb}).

To ensure that there is only one interval in which $(L^2)^{\;\prime }>0$, we bound the number of zeros of $(L^2)^{\;\prime }(R)$ using Descartes' rule of signs on $-k(R).$ Observe that when $x_{1}<x_{2}$, all of the coefficients of $-k(R)$ except the coefficient of $R^{3}$ are trivially positive. And as we observed above, the coefficient of $R^{3}$ can change sign depending on the size of $x_{1}$. So, using the rule of signs, we observe that the sign changes twice or not at all, and we conclude that the number of (positive) roots is either two or zero. We also observe that as $R\rightarrow 0$ the constant term gives us $x_{1}^{2}\left(x_{1}-x_{2}\right)\left(x_{1}^{2}+x_1 x_{2}+x_{2}^{2}\right) <0$ (since $x_{1}<x_{2}$). And as $R\rightarrow \infty $, we see from the $R^{6}$ term that $-2x_{1}(M_{1}x_{2}+x_{1})<0$, and the boundary behavior is what we suspect from the graphs above. So, with only two potential roots for $(L^2)^{\;\prime}$, we see $L^2$ can have at most one increasing interval.\\[-1.801em]
\end{proof}
\vspace{-5pt}So if the overlapped dumbbell mass $x_1$ is very small, for the right choice of angular momentum, the two bodies find themselves in a RE at one of three (overlapped) radii. Below, we graph $k=\frac{d}{dR}k=0$ (representing points in parameter space at which the extrema and inflection points of graphs like \ref{fig:1DB_Overlap_x1_smallb} collide) in Figure \ref{fig:1DB_Colinear_Overlap_Bifurcation_Regions}. It shows regions of the $x_{1}M_{1}$-plane in which we have qualitatively different graphs of $(L^2)\left(R\right)$. Region $A$ has only one RE ($(L^2)^{\;\prime }<0$ for all $R,$ Figure \ref{fig:1DB_Overlap_x1_smalla}), and $B$ has one or three RE depending upon $L$ ($(L^2)^{\;\prime }>0$ on one interval, Figure \ref{fig:1DB_Overlap_x1_smallb}).
\begin{figure}[H]
		\centering
		\includegraphics[width=0.3\textwidth]{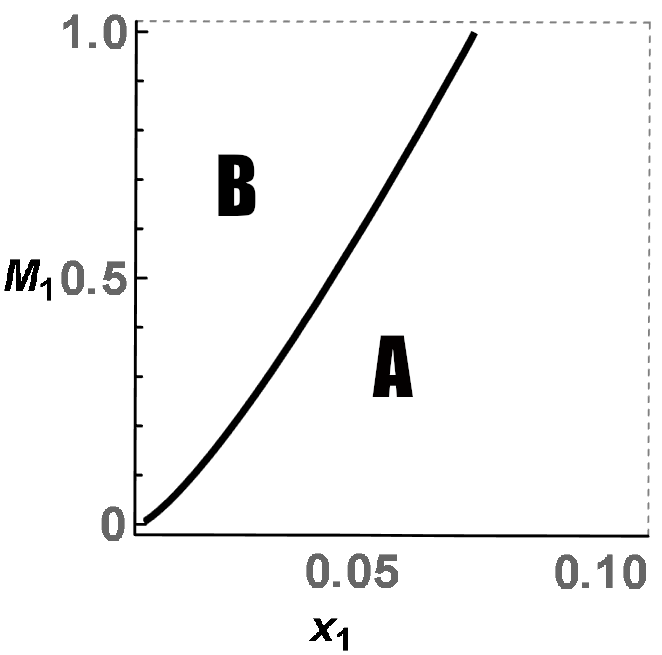} 
		\caption{\\\bfseries Colinear Overlap Bifurcation Regions}{\vspace{0.8em}\footnotesize Parameter space showing Region A which has 1 RE, and B which has 1 or 3 RE (depending upon $L^2$).\\[-1.801em]}
		\label{fig:1DB_Colinear_Overlap_Bifurcation_Regions}
\end{figure}
Below, we depict the orbital configurations of RE for a particular choice of $\left(x_{1},M_{1}\right) $ in Region $B$. Specifically, we depict the locations of the two bifurcation points which occur for $\left(x_{1},M_{1}\right) =\left(0.01, 0.75\right) $ at $L\approx 0.6536$ and$\;L\approx 0.05795,$ respectively.
\begin{figure}[H]
	\centering
	\begin{minipage}{0.45\textwidth}
		\centering
		\includegraphics[width=0.8\textwidth]{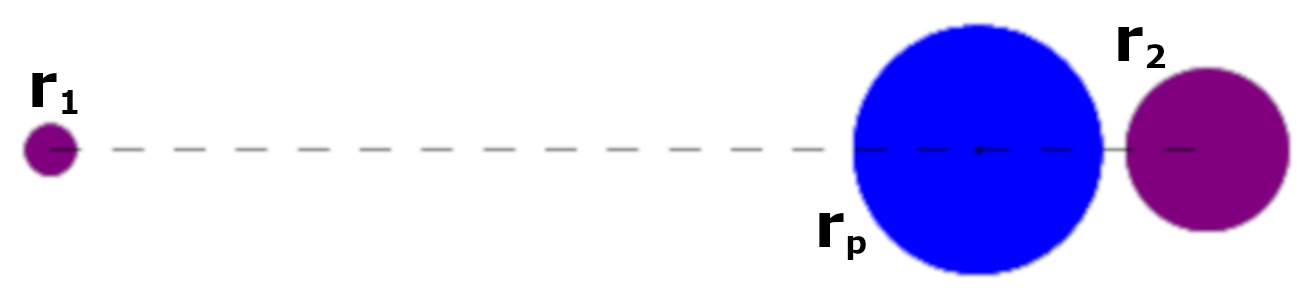}
		\newline{$r\approx 0.1884$, $L\approx 0.6536$}
		\label{fig:1DB_Bifurcation_Config_A}
	\end{minipage}\hfill
	\begin{minipage}{0.45\textwidth}
		\centering
		\includegraphics[width=0.8\textwidth]{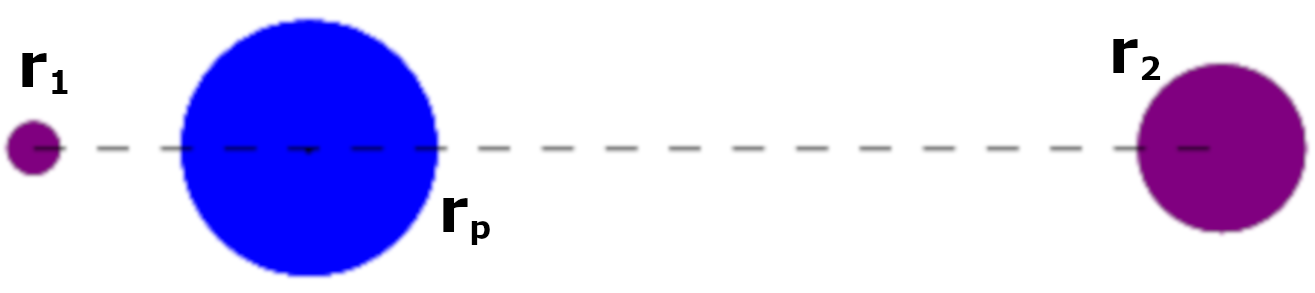} 
		\newline{$r\approx 0.7580$, $L\approx 0.05795$}
		\label{fig:1DB_Bifurcation_Config_B}
	\end{minipage}
\caption{\\\bfseries Colinear Overlap Bifurcation Radii Visualization}{\vspace{0.8em}\footnotesize Depicts the RE found at the two bifurcation angular momenta when $\left(x_{1},M_{1}\right) =\left(0.01, 0.75\right)$.\\[-0.801em]}
\end{figure}
\paragraph{Subcase 2b: $x_2<\frac{1}{2}<x_1$}
\begin{figure}[H]
	\centering
	\includegraphics[width=0.2\textwidth]{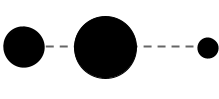} 
\caption{\\\bfseries Colinear Overlap with Large $x_1$}{\text{ }\\[-1.801em]}
	\label{fig:1DB_Colinear_Config}
\end{figure}
Figure \ref{fig:1DB_Colinear_LSA} implies that in the overlap ($r<x_{2}$) there are no RE when $x_2<x_1$. To verify this, we show $L^{2}<0$, which would correspond to $L$ with nonzero imaginary part, which are not physically realizable angular momenta. From \eqref{eq:1DB_Colinear_L_Squared_Overlap} we have:\\
$L^{2}=-\frac{\left(\left(x_{2}^{2}+B\right) R^{2}+\left(2y+1\right) B\right)^{2}\left(\left({R}+2x_{1}\right) x_{1}\allowbreak {R}+\left(x_{1}^{3}-x_{2}^{3}\right)\right) \allowbreak }{x_{2}^{3}R\left({R}+1\right)\left({R}+x_{1}\right)^{2}}.$
Observe by inspection that when \\$x_{2}<x_{1},$ we have $L^{2}$ negative. Therefore, there are no physically real angular momenta for $x_{2}<x_{1}$\ in the radius $r<x_{2}$, and therefore $x_{2}<x_{1}$ has no RE. Having characterized the location of the colinear RE, let us look at their stability.
\vspace*{-5mm}\subsubsection{Energetic Stability of Colinear}
\label{1DB_Colinear_Energetic_Stability}As we saw in Section \ref{AmendedPotential}, to determine energetic stability we check if the RE are strict minima of the amended potential $V$. We are looking for a positive definite $H$, the Hessian of $V$. Note from \eqref{eq:1DB_Mixed_PartialV_AtRE} $\partial_{\theta,r}V|_{RE}=0$ when $\theta =0$, so our Hessian is diagonal. For stability we need $\partial^2_rV, \partial^2_{\theta}V|_{RE}>0$. Recall that we can determine the sign of $\partial_{r}^{2}V|_{RE}$ by the slope of our $L^2$ curve. Next, calculating $\partial_{\theta}^{2}V|_{RE}$, we have: $\frac{x_1 x_2 r}{\left(r^{2}-2x_{2}r+x_{2}^{2}\right)^{3/2}}-\frac{x_1 x_2 r}{\left(r^{2}+2x_{1}r+x_{1}^{2}\right)^{3/2}}.$ Observe this is greater than zero when $r>\frac{x_2-x_1}{2}.$ Note that this is always true when the bodies are not overlapping (i.e., $r>x_{2}$). And therefore, when $(L^2)^{\;\prime}>0$, we conclude that our critical points are strict minima of the amended potential and energetically stable.

Looking back at Figure \ref{fig:1DB_Colinear_LS}, we recall that for sufficiently large angular momenta, in the non-overlap region ($r>x_2$) we have two RE, and now we see that the one with a greater radius is a strict minimum, and the other a saddle of $V$.

When the bodies are overlapping ($r<x_2$) and $x_1>\frac{1}{2}$ we find physically unreal angular momenta ($L^{2}<0$). For $x_1<\frac{1}{2}$ (see Figure \ref{fig:colinear_overlap_energetic_stability}) we find the region $r>\frac{x_2-x_1}{2}$ (to the right of the solid line) intersects with the region $(L^2)^{\;\prime}>0$ (below the dashed line) for low $x_{1}$, and therefore we have stability. We can therefore conclude that our critical points are energetically stable in this region.

In the overlap case when $r<\frac{x_2-x_1}{2}$, the critical points are maxima of the amended potential when $(L^2)^{\;\prime}<0,$\ and saddle points otherwise. However, since the kinetic energy in $\eqref{eq:1DB_Lagrangian}$ is positive definite, all the points in the overlap with $r<\frac{x_2-x_1}{2}$ represent saddles in the energy manifold.

Note that in the overlap region when $(x_1,M_1)=(0.008,\frac{1}{2})$ (as in Figure \ref{fig:1DB_Overlap_x1_small}) and with appropriate $L^2$, the second of the three RE occurs at a positive slope. In particular, when $L^2 = \frac{7}{10}$ we have three RE at $r\in(0.0865293, 0.721838, 0.80048)$ with the second and third one being larger than $\frac{x_2-x_1}{2}=0.492$. Therefore, the configuration in the Figure \ref{fig:1DB_Colinear_Stable} is energetically stable (we have curved the massless rod to make it a bit more physically feasible).
\begin{figure}[H]
	\centering
	\includegraphics[width=0.40\textwidth]{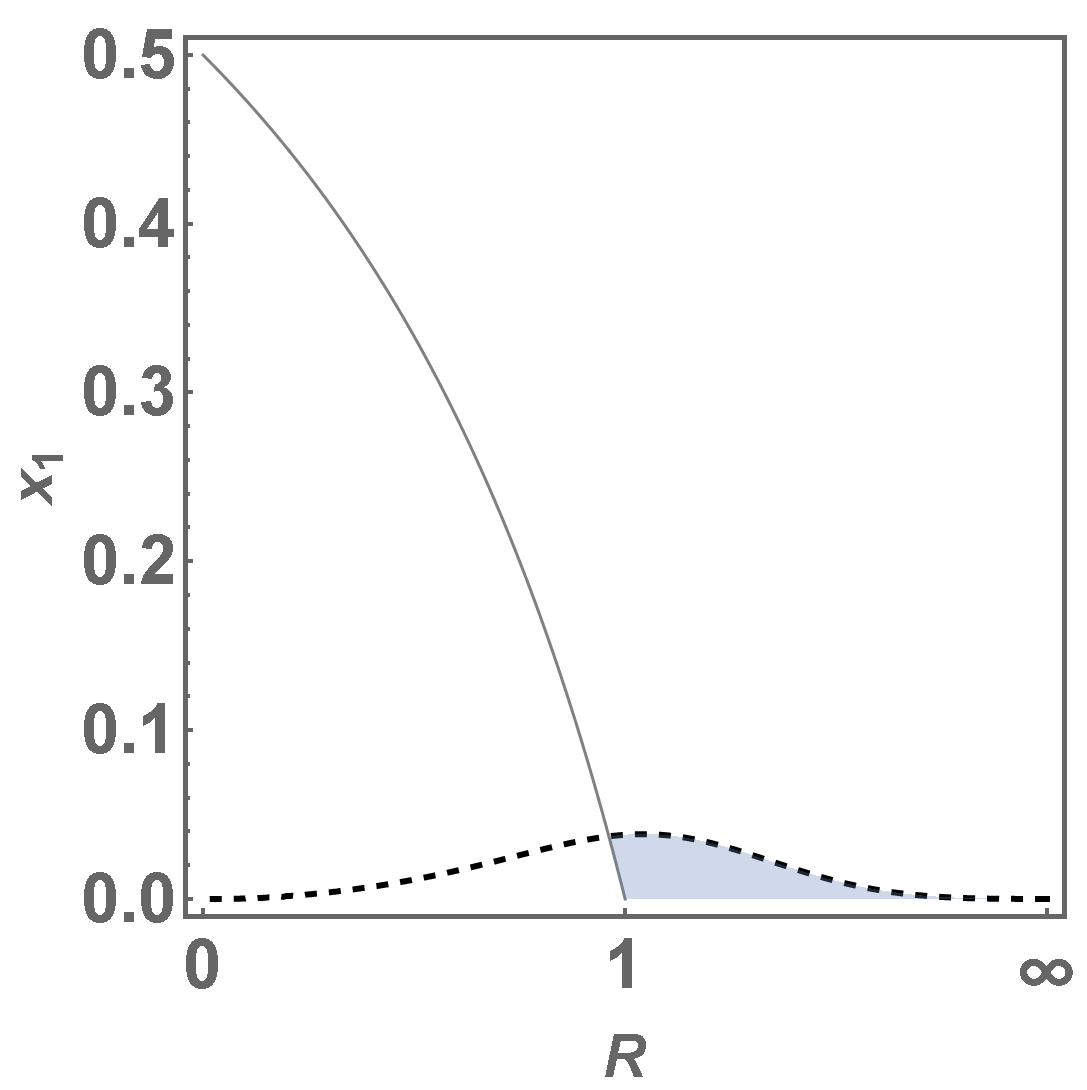}
	\caption{\\\bfseries Colinear Overlap Energetic Stability Region with $M_1=\frac{1}{2}$}{\vspace{0.8em}\footnotesize $r=\frac{x_2-x_1}{2}$ (solid line), $\partial_r L^2=0$ (dashed line), Stable Region (shaded)\\The shaded region is where $r>\frac{x_2-x_1}{2}$ and $\partial_r L^2>0$.\\We find stable RE when the point mass body is close to $\vec{r}_1$, and $x_1$ is small (see Figure \ref{fig:1DB_Colinear_Stable}).\\[-0.801em]}
	\label{fig:colinear_overlap_energetic_stability}
\end{figure}
\begin{figure}[H]
	\centering
	\includegraphics[width=0.4\linewidth]{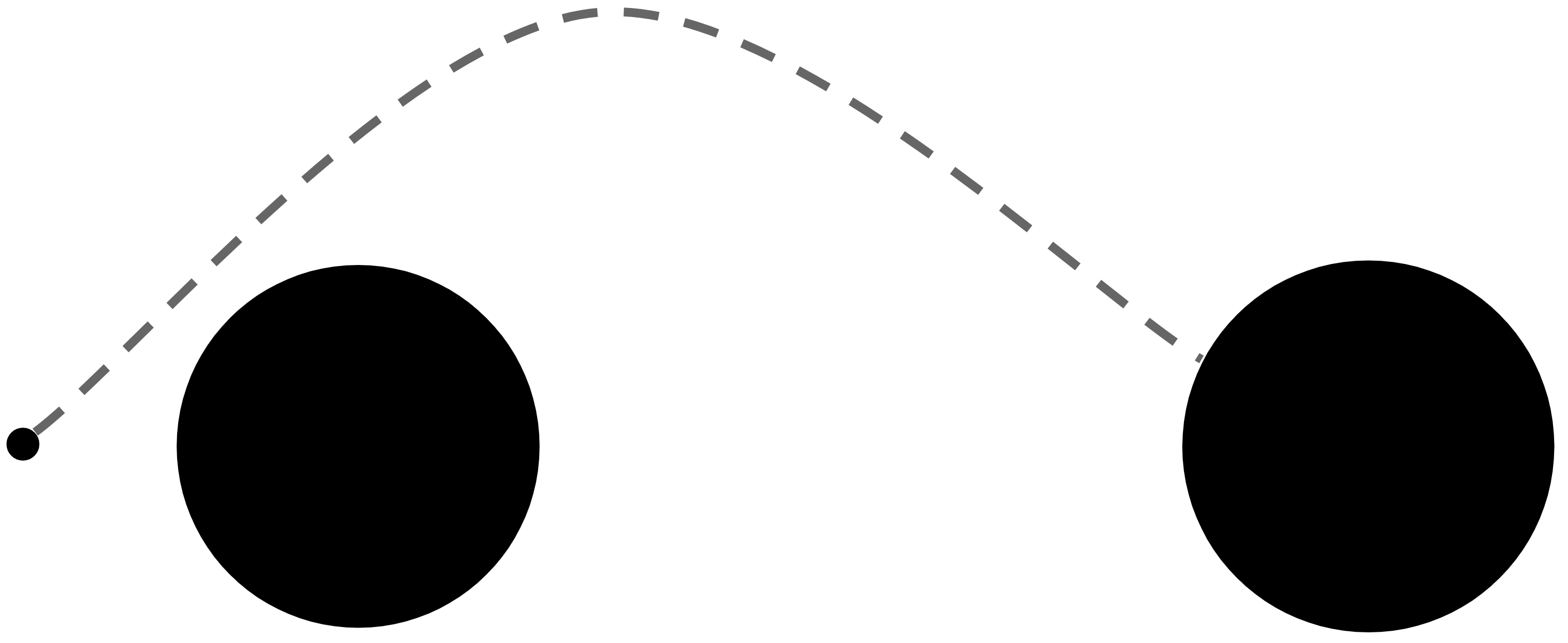}
	\caption{\\\bfseries Colinear Overlap Visualization, Energetically Stable}{\vspace{0.8em}\footnotesize Radius/masses chosen for energetic stability, and bent rod for physical feasibility.\\[-0.801em]}
	\label{fig:1DB_Colinear_Stable}
\end{figure}
\vspace*{-5mm}\subsubsection{Linear Stability of Colinear}
\label{1DB_Colinear_Linear_Stability}Mapping the linear stability conditions \eqref{eq:1DB_Linear_Stability_Criteria} in the $Rx_{1}$-plane, for the non-overlapped colinear configuration, we have Figure \ref{fig:Col_M1_0p5}. The dashed curve going through the plane is $\partial_r L^{2}=0$. So, numerically it appears the linear and energetic stability coincide.

In Figure \ref{fig:Col_M1_0p5_Overlap}, we map the linear stability conditions \eqref{eq:1DB_Linear_Stability_Criteria} for the overlapped colinear configuration. We limit our graph to $x_{1}\in\left(0,\frac{1}{2}\right) $ where stability (and physically realizable angular momenta) could be found. Recall that we found
energetic stability when $r>\frac{x_2-x_1}{2}$ and $\partial_{r}L^{2}>0$. That is, the filled in region below the $\partial_r L^{2}=0$ dashed curve. So the energetic stability coincides with the linear stability in this region. In addition, we also see some linear stability when $r<\frac{x_2-x_1}{2}$, but this region is where $L^2<0$, and therefore represents physically unrealizable RE.
\begin{figure}[H]
	\centering
	\includegraphics[width=0.4\linewidth]{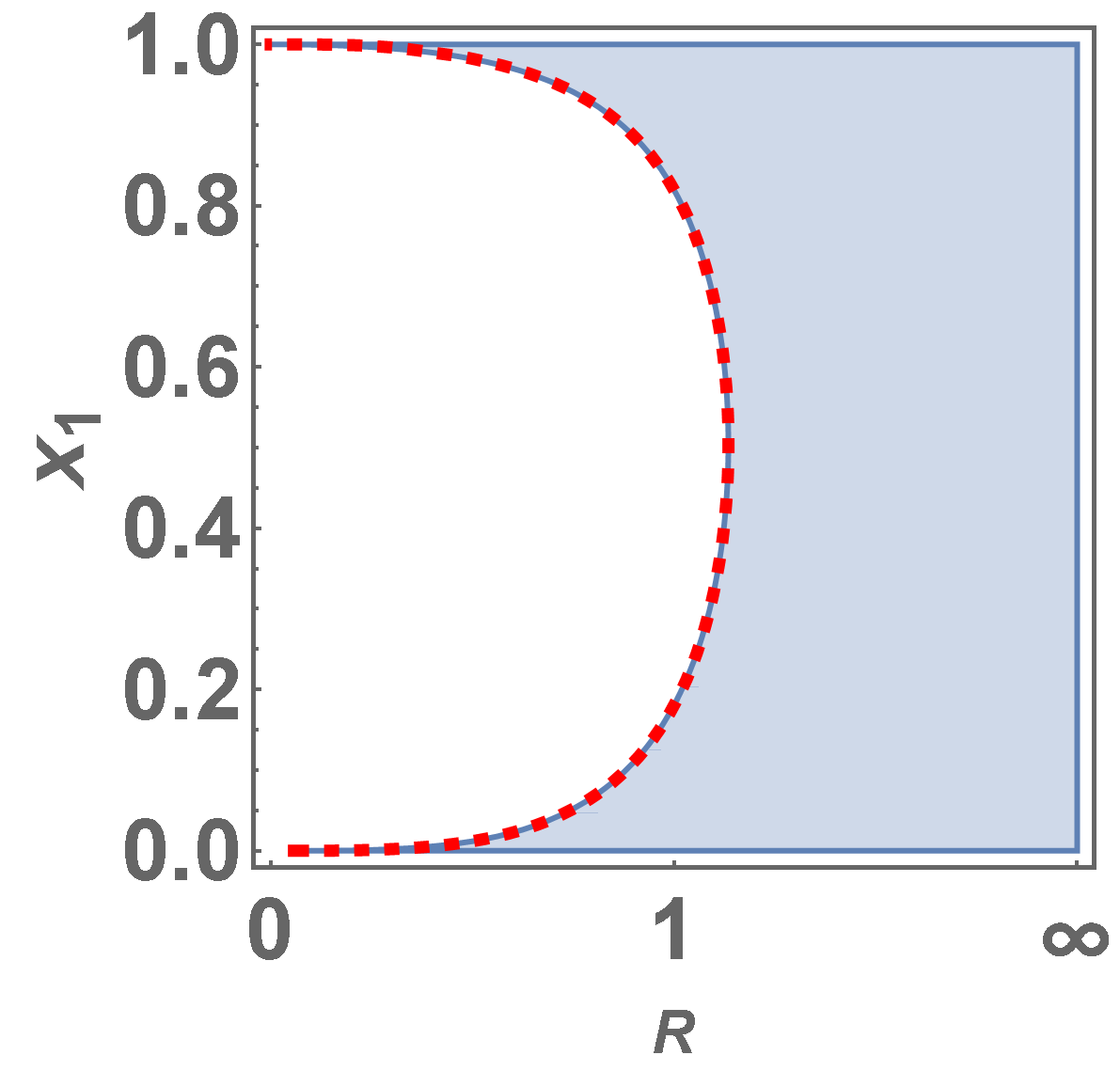}
	\caption{\\\bfseries Colinear Non-Overlap Energetic Stability Region with $M_1=\frac{1}{2}$}{\vspace{0.8em}\footnotesize $\partial_r L^2=0$ (dashed line), Energetically Stable Region (shaded)\\Energetic stability region coincides with $\partial_rL^2>0$. \\[-0.801em]}
	\label{fig:Col_M1_0p5}
\end{figure}
\begin{figure}[H]
	\centering
	\includegraphics[width=0.4\linewidth]{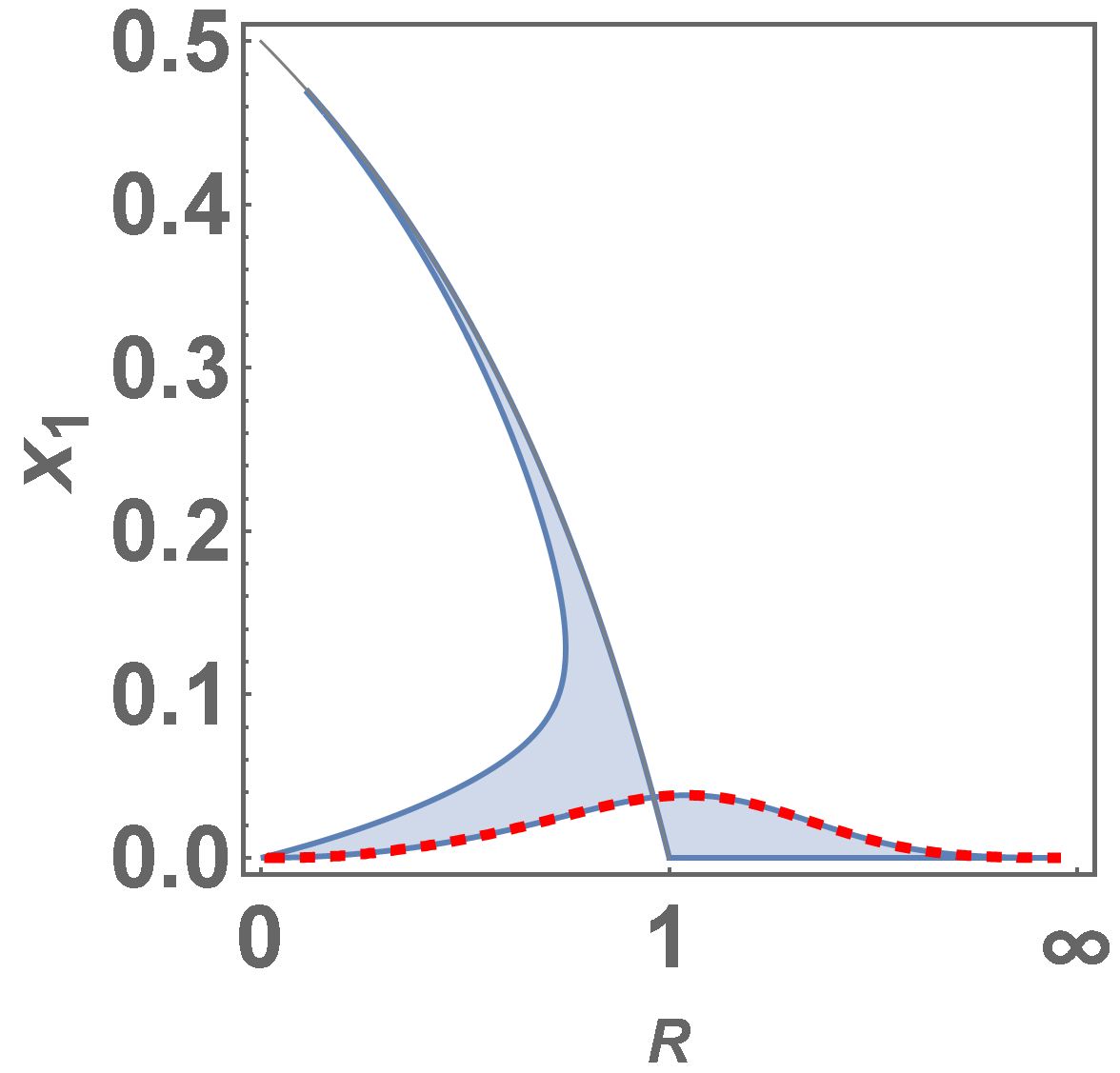}
	\caption{\\\bfseries Colinear Overlap Linear Stability Region with $M_1=\frac{1}{2}$}
	\label{fig:Col_M1_0p5_Overlap}{\vspace{0.8em}\footnotesize $r=\frac{x_2-x_1}{2}$ (solid line), $\partial_r L^2=0$ (dashed line), Stable Region (shaded)\\Shows linear stability when point mass body is close to $\vec{r}_1$, coinciding with energetic stability seen in Figure \ref{fig:colinear_overlap_energetic_stability}. The additional shaded region in this graph represents RE which are physically unrealizable due to having complex angular momenta.\\[-0.801em]}
\end{figure}
Now let us look at the other configuration having RE, when the bodies take the shape of an isosceles triangle.
\vspace*{-5mm}\subsection{Isosceles Triangle Configuration: $r>0$ and $d_{1}=d_{2}$}
\label{1DB_Isos}\begin{figure}[H]
	\captionsetup[subfigure]{justification=centering}
	\centering
	\subfloat[Equal $x_i$ masses\\$x_1=x_2$, $\theta=\frac{\pi}{2}$]
	{\includegraphics[width=0.33\textwidth]{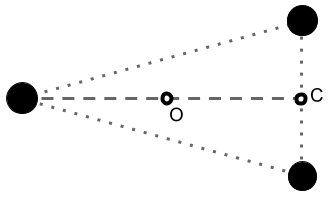}}
	\hspace{1cm}
	\subfloat[Unequal $x_i$ masses\\$x_1\ne x_2$, $\theta\ne\frac{\pi}{2}$]
	{\includegraphics[width=0.33\textwidth]{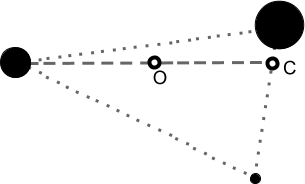}}
	\caption{\\\bfseries Isosceles Configuration}{\text{ }\\[-1.801em]}
\end{figure}
Observe from \eqref{eq:1DB_Distances} that $d_{1}=d_{2}$ implies a relationship (found by Beletskii and Ponomareva) between the radius and the angle $\theta\in\left(-\frac{\pi}{2},\frac{\pi}{2}\right]$:
\begin{align}\label{eq:1DB_Isosceles_Relation}
	{\displaystyle\cos\theta =\frac{x_{2}-x_{1}}{2r}.}
\end{align}
Note that $\theta =\frac{\pi}{2}$ when $x_{1}=x_{2}$, for any $r>0.$ And when $x_{1}<x_{2}$, we have a one-to-one relationship for $r\in\left[ \frac{x_{2}-x_{1}}{2},\infty\right)$, where $\theta \in\left[ 0,\frac{\pi}{2}\right)$. Similarly, for $x_{2}<x_{1}$, we have the one-to-one relationship for $r\in\left[ \frac{x_{1}-x_{2}}{2},\infty\right)$, and $\theta \in\left(-\frac{\pi}{2},0\right]$. So, we have a minimum radius $\frac{\left\vert x_{2}-x_{1}\right\vert}{2}.$

We see from \eqref{eq:1DB_PhiDot} and the radial requirement \eqref{eq:1DB_Radial_REQ} that when $d_{1}=d_{2}$ there is a simple relationship between rotational speed and the distances between the masses: $\left(\overset{\cdot}{\varphi}\right)^{2}=\frac{L^{2}}{\left(r^{2}+B\right)^{2}}=\frac{1}{d_{1}^{3}}.$ Rearranging this to find bifurcations of our RE with parameter $L,$ we have:
\begin{align}
	{\displaystyle L^{2}=\frac{\left(r^{2}+B\right)^{2}}{\left(r^{2}+M_{1}B\right)^{\frac{3}{2}}},}
\end{align}
where $B:=\frac{x_1 x_2}{M_1}$
is the dumbbell's scaled moment of inertia relative to its center of mass. Graphing for a couple of particular parameters $\left(x_1,M_1\right)$, we have Figure \ref{fig:1DB_Isos_L_Plot}.
\begin{theorem}[Zero, One, or Two RE per Angular Momentum for Isosceles Configuration]
	Let a dumbbell and point mass body be in an isosceles RE. The $x_{1}M_{1}$-plane divides into regions which differ by the possible number of RE. Particularly for\\$M_{1}<\frac{12 x_1 x_2}{(x_1 - x_2)^2+16x_1 x_2}$, and as $L^2$ increases from zero, we initially have no RE until a bifurcation point at $L_b=\sqrt[4]{\frac{256}{27}BM_{2}}$, where two RE appear at $r_b=\sqrt{B(3-4M_1)}$. Subsequently, a RE merges with the origin.  As a result, we have have zero, then two, then one RE. For larger $M_{1}$, the bifurcation curve has no local minima, but does have an absolute minimum at $r=\frac{\left\vert x_{2}-x_{1}\right\vert}{2}$. Therefore, we have zero, then one RE as $L^2$ increases from zero.
\end{theorem}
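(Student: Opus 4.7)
The plan is to analyze the bifurcation function $L^{2}(r)=\frac{(r^{2}+B)^{2}}{(r^{2}+M_{1}B)^{3/2}}$ on the physical domain $r\in[r_{\min},\infty)$, where $r_{\min}=\frac{|x_{2}-x_{1}|}{2}$ is the minimum radius of the isosceles branch forced by \eqref{eq:1DB_Isosceles_Relation}, and to count solutions of $L^{2}(r)=\ell$ as $\ell$ increases from $0$.

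First I will differentiate: logarithmic differentiation collapses the rational/radical expression to
\[
(L^{2})'(r) = \frac{r(r^{2}+B)\bigl(r^{2}-B(3-4M_{1})\bigr)}{(r^{2}+M_{1}B)^{5/2}},
\]
so on $r>0$ there is a unique critical point $r_{b}=\sqrt{B(3-4M_{1})}$ exactly when $M_{1}<3/4$, and the sign of the last factor shows it is a strict minimum. Using the identities $r_{b}^{2}+B=4BM_{2}$ and $r_{b}^{2}+M_{1}B=3BM_{2}$, direct substitution into $L^{2}(r_{b})$ yields the bifurcation value $L_{b}^{2}=\frac{16\sqrt{BM_{2}}}{3\sqrt{3}}$, equivalently $L_{b}=\sqrt[4]{256BM_{2}/27}$.

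Next I will determine when this interior minimum sits inside the physical domain. A short algebraic manipulation of $r_{b}\geq r_{\min}$, using $B=x_{1}x_{2}/M_{1}$ and clearing denominators, reduces to $12x_{1}x_{2}\geq M_{1}\bigl[(x_{1}-x_{2})^{2}+16x_{1}x_{2}\bigr]$, which is exactly the stated threshold on $M_{1}$. In this regime $L^{2}$ decreases on $[r_{\min},r_{b}]$ from the finite value $L^{2}(r_{\min})$ down to $L_{b}^{2}$ and then increases to $\infty$; elementary intersection counting with a horizontal line $\ell$ then gives $0$ RE for $\ell<L_{b}^{2}$, the saddle-node birth of a pair at $\ell=L_{b}^{2}$ located at $r_{b}$, two RE on $(L_{b}^{2},L^{2}(r_{\min}))$, and exactly one RE for $\ell\geq L^{2}(r_{\min})$, after the smaller-radius branch exits through $r=r_{\min}$. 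Above the threshold (including all $M_{1}\geq 3/4$, for which no positive critical point exists at all), either $r_{b}<r_{\min}$ or $r_{b}$ is undefined, so $L^{2}$ is strictly increasing on $[r_{\min},\infty)$ with absolute minimum at $r_{\min}$, giving the $0\to 1$ transition at $\ell=L^{2}(r_{\min})$.

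The main obstacle will be interpreting the boundary $r=r_{\min}$ correctly rather than any calculation: because $\cos\theta=\pm 1$ there, the isosceles family degenerates into the colinear family, so the drop from two RE to one RE is not an interior bifurcation of $L^{2}(r)$ at all but the exit of an isosceles RE into the colinear branch of the previous section. Once this boundary coalescence is identified as the mechanism behind ``merges with the origin,'' everything else reduces to single-variable calculus on $L^{2}(r)$ and the algebraic reduction that produces the threshold curve in the $x_{1}M_{1}$-plane.
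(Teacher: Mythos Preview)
Your proposal is correct and follows essentially the same route as the paper: differentiate $L^{2}(r)$, locate the unique positive critical point $r_{b}=\sqrt{B(3-4M_{1})}$, verify it is a minimum, evaluate $L^{2}(r_{b})$, and then compare $r_{b}$ with $r_{\min}$ to obtain the threshold inequality on $M_{1}$. Your use of logarithmic differentiation and the identities $r_{b}^{2}+B=4BM_{2}$, $r_{b}^{2}+M_{1}B=3BM_{2}$ is a bit cleaner than the paper's direct computation and explicit second-derivative check, and your final paragraph makes precise something the paper leaves vague: the phrase ``merges with the origin'' in the theorem statement really means the smaller-radius RE exits through $r=r_{\min}$ (where the isosceles branch degenerates to colinear), not through $r=0$ --- the paper's proof acknowledges this by computing both $L_{0}$ and $L_{r_{\min}}$ and calling the latter ``more relevant,'' but never says so as clearly as you do.
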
 
\begin{figure}[H]
	\captionsetup[subfigure]{justification=centering}
	\centering
	\setcounter{subfigure}{0}
	\subfloat[$L^2$ with $(x_1,M_1)=(\frac{3}{4},\frac{9}{20})$]
	{\includegraphics[width=0.33\textwidth]{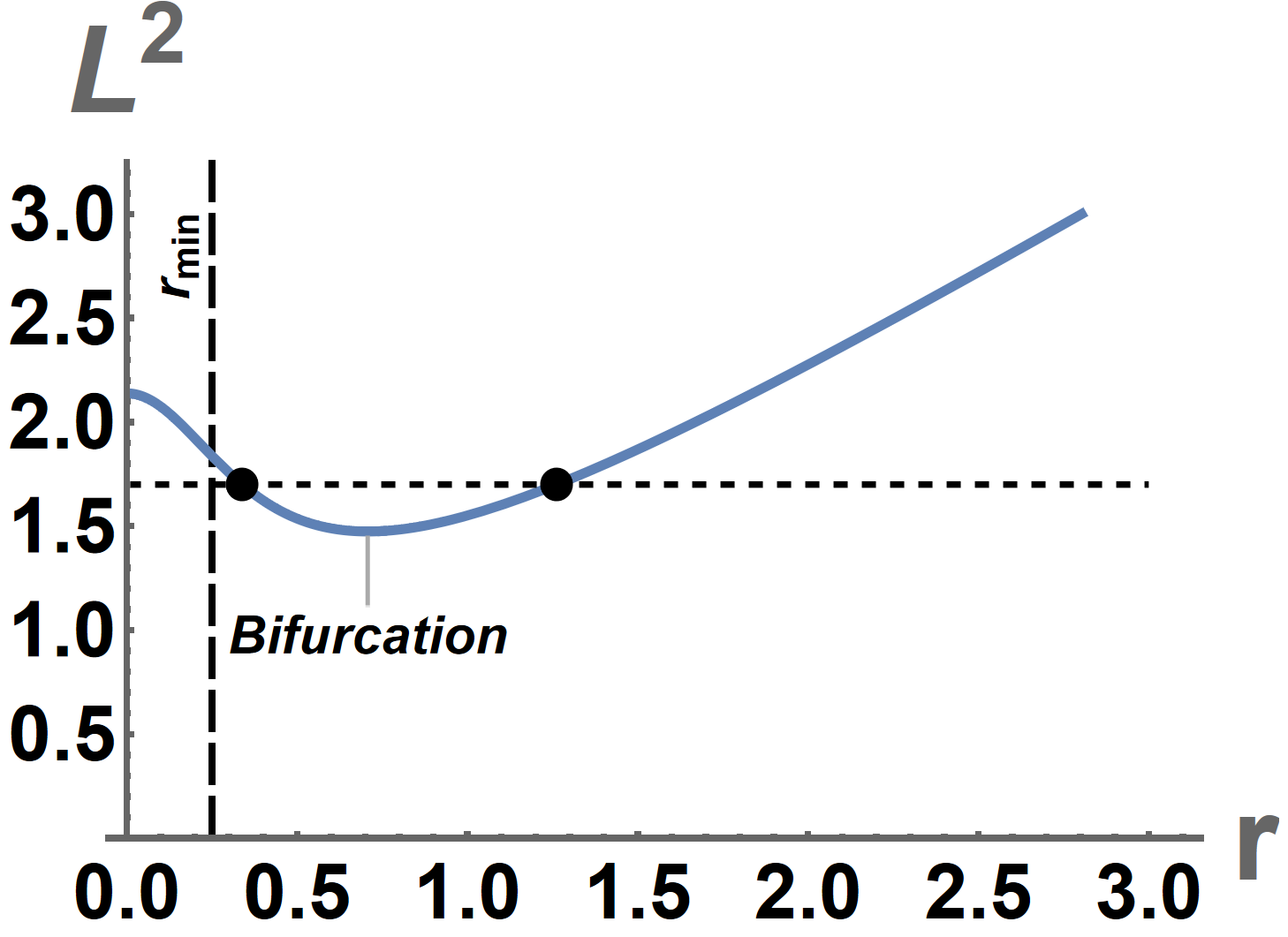}\label{fig:1DB_Isos_L_PlotA}}
	\hspace{1cm}
	\subfloat[$L^2$ with $(x_1,M_1)=(\frac{2}{3},\frac{4}{5})$]
	{\includegraphics[width=0.47\textwidth]{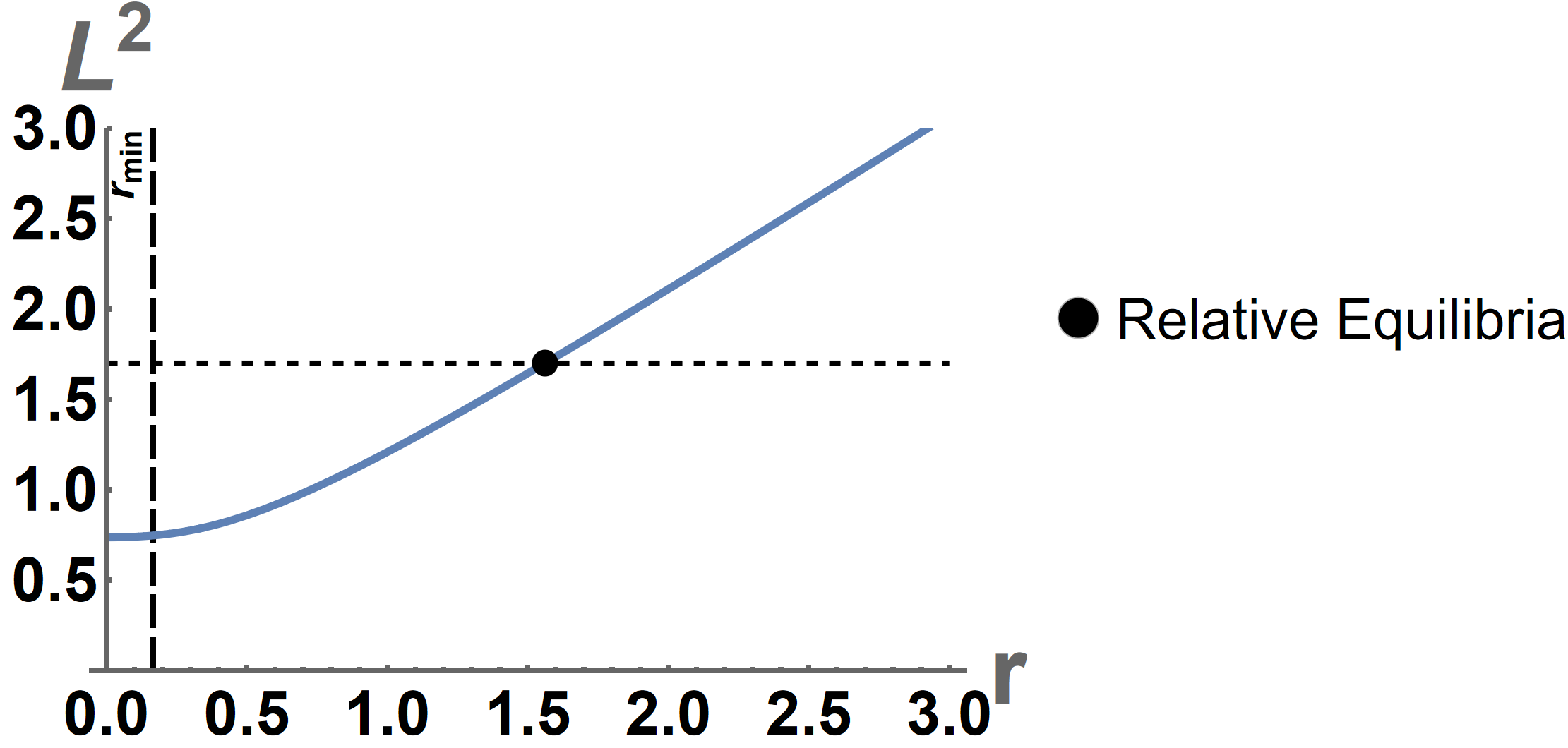}\label{fig:1DB_Isos_L_PlotB}}
	\caption{\\\bfseries Isosceles $L^2$ Curves}{\vspace{0.8em}\footnotesize Figure (a) shows $L^2$ curve with point mass body sufficiently small such that, as $L^2$ increases from zero, there are zero,  two, or one RE possible; (b) has a more massive point mass body such that there are zero or one RE possible.\\[-0.801em]}
	\label{fig:1DB_Isos_L_Plot}
\end{figure}

\begin{proof}
Note that a requirement for bifurcation is:  $(L^{2})^{\;\prime}(r)=\frac{4r\left(r^{2}+B\right)\left(r^{2}+BM_{1}\right) -3r\left(r^{2}+B\right)^{2}}{\left(r^{2}+BM_{1}\right)^{\frac{5}{2}}}$\\$=0$, from which we find only one positive critical point $r_{b}:=\sqrt{B\left(3-4M_{1}\right)},$ when $M_{1}<\frac{3}{4}$. We must also ensure that $r_{b}$ is greater than or equal to our previously calculated minimum radius $r_{min}=\frac{|x_2-x_1|}{2}$. Comparing these expressions, we find the additional restriction $M_1\le\frac{12x_1x_2}{(x_1-x_2)^2+16x_1x_2}$. A simple calculation shows the maximum of the right-hand side of this inequality occurs when $x_1=\frac{1}{2}$ and $M_1=\frac{3}{4}$. Therefore, making this inequality strict satisfies $M_{1}<\frac{3}{4}$. So, if the mass of the point mass body $M_1$ is too large, the related graph has a RE curve with no bifurcation (as in Figure \ref{fig:1DB_Isos_L_PlotB}). For instance, this would be the case for a planet and an orbiting small (dumbbell shaped) satellite. For smaller $M_1$ (see Figure \ref{fig:1DB_Isos_L_PlotB}), we compute the (square of the) angular momentum at the point of bifurcation $L^{2}(r_{b})$:
$L_{r_{b}}:=\sqrt[4]{\frac{256}{27}BM_{2}}.$
To ensure our critical point is a minimum, we calculate $(L^{2})^{\;\prime\prime}(r_b)=\frac{8\sqrt{3}(4M_1-3)}{27(M_1-1)\sqrt{-B(M_1-1)}}>0$. So there is only one bifurcation and it is located at $\left(r_{b},L_{r_{b}}\right)$. As $r\rightarrow 0,$ we find the angular momentum for RE becomes: $L_{0}:=\sqrt[4]{\frac{B}{M_{1}^{3}}}>0$. Or, since our minimum radius is $\frac{\left\vert x_{2}-x_{1}\right\vert}{2}$, a more relevant initial angular momentum is: $L_{r_{\min}}:=\underset{r\rightarrow\frac{\left\vert x_{2}-x_{1}\right\vert}{2}}{\lim}(L)=\frac{\left(x_{2}-x_{1}\right)^{2}+4B}{\sqrt{2}\left(\left(x_{2}-x_{1}\right)^{2}+4M_{1}B\right)^{\frac{3}{4}}}>0.$ 
\end{proof}
Below, Figure \ref{fig:1DB_Isos_bif_regions_scaled} represents the regions of the $x_{1}M_{1}$-plane in which we have qualitatively different graphs of $L^{2}(r).$ In the region below the curve, the graph of $L^{2}(r)$ has a minimum as calculated above (with a graph similar to Figure \ref{fig:1DB_Isos_L_PlotA}), and therefore a bifurcation above which, there are two RE until $L^{2}>L_{0}$, where there exists a unique RE. The region above the curve has no local minimums, and therefore has a unique RE for each $L^{2}>L_{r_{min}}$ (Figure \ref{fig:1DB_Isos_L_PlotB}).
\begin{figure}[H]
	\centering
	\includegraphics[width=0.4\textwidth]{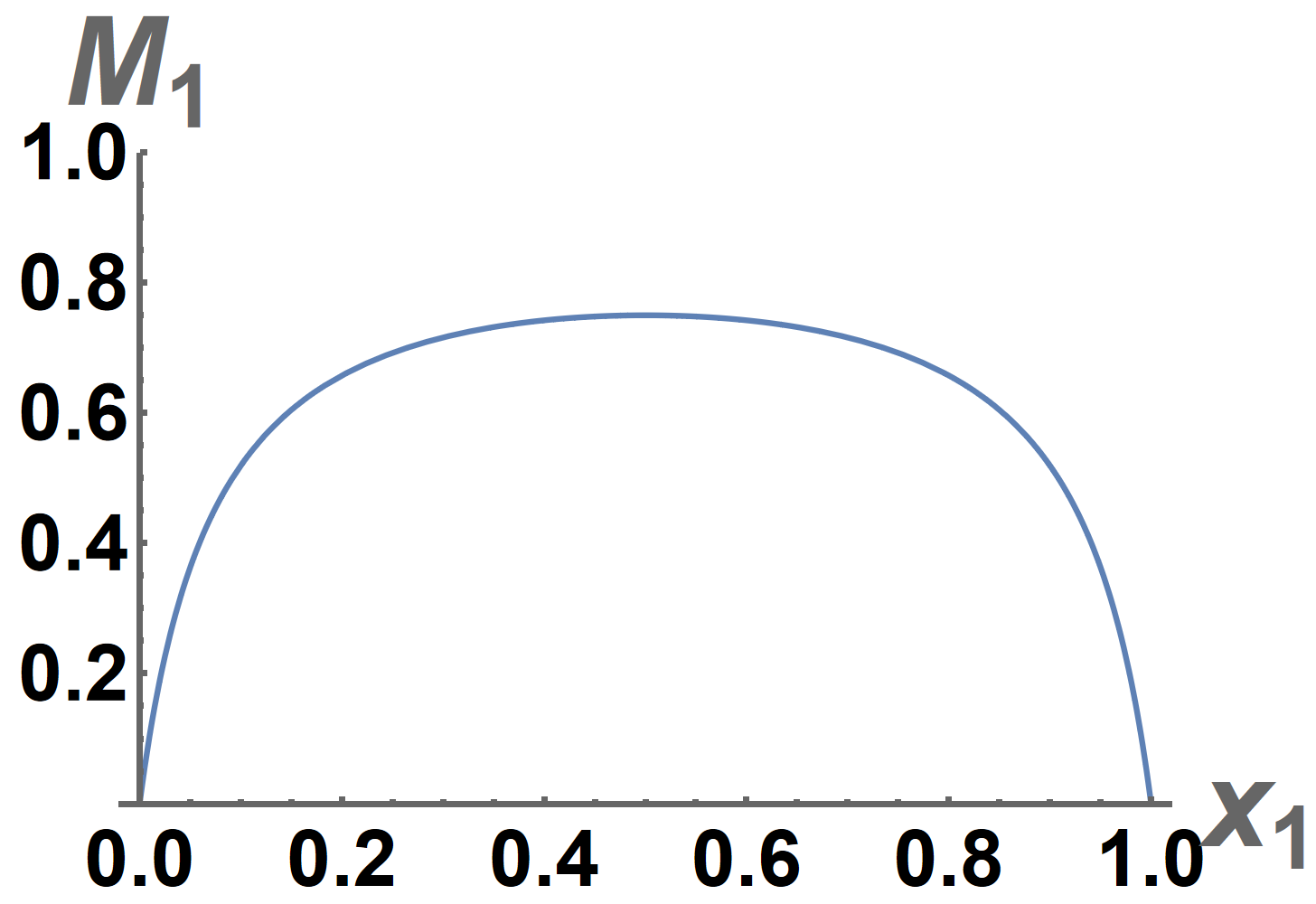} 
	\caption{\\\bfseries Isosceles Bifurcation Regions}{\vspace{0.8em}\footnotesize The curve separates a lower region with an $L^{2}$ RE bifurcation,\\and an upper region with no such bifurcation.\\[-1.801em]}
	\label{fig:1DB_Isos_bif_regions_scaled}
\end{figure}
What do the two isosceles RE look like in the lower region, and how do they differ? Looking at the example in Figure \ref{fig:1DB_Isos_L_PlotA} (where $\left(x_{1},M_{1}\right) =\left(\frac{3}{4},\frac{9}{20}\right)$ with $L^{2}=1.7$), we calculate the radii to be 
$r\approx 0.3384$ or $1.262.$ And the related $\theta $ are approximately 
$0.7646\pi$ and $0.5634\pi$, respectively. See the orbital configurations below.
\begin{figure}[H]
	\captionsetup[subfigure]{justification=centering}
	\centering
	\subfloat[$r\approx 0.3384,\theta\approx 0.7646\pi,$ $\overset{\cdot}{\varphi}{\approx 2.455}$]
	{\hspace{1.3cm}\includegraphics[width=0.14\textwidth]{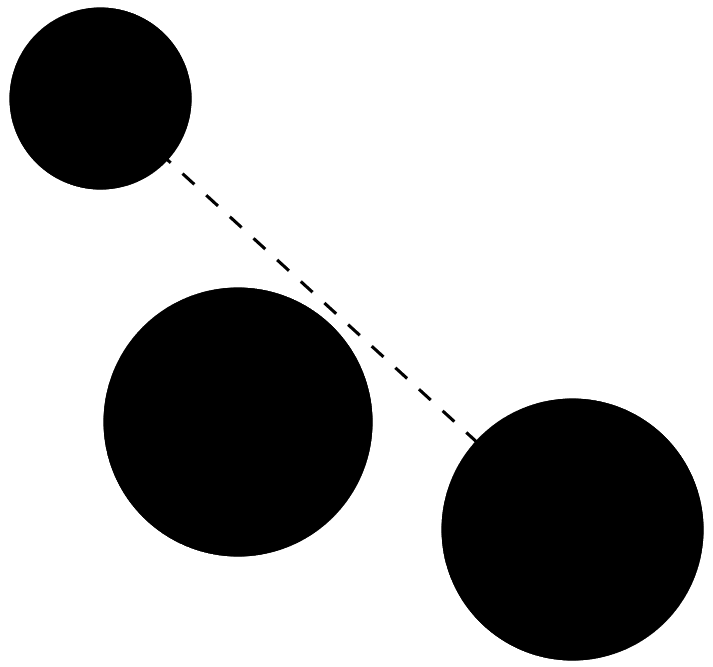}\hspace{1.3cm}}
	\hspace{1.5cm}
	\subfloat[$r\approx 1.262,\theta\approx 0.5634\pi,$ $\overset{\cdot}{\varphi}{\approx 0.6487}$]
	{\hspace{0.75cm}\includegraphics[width=0.23\textwidth]{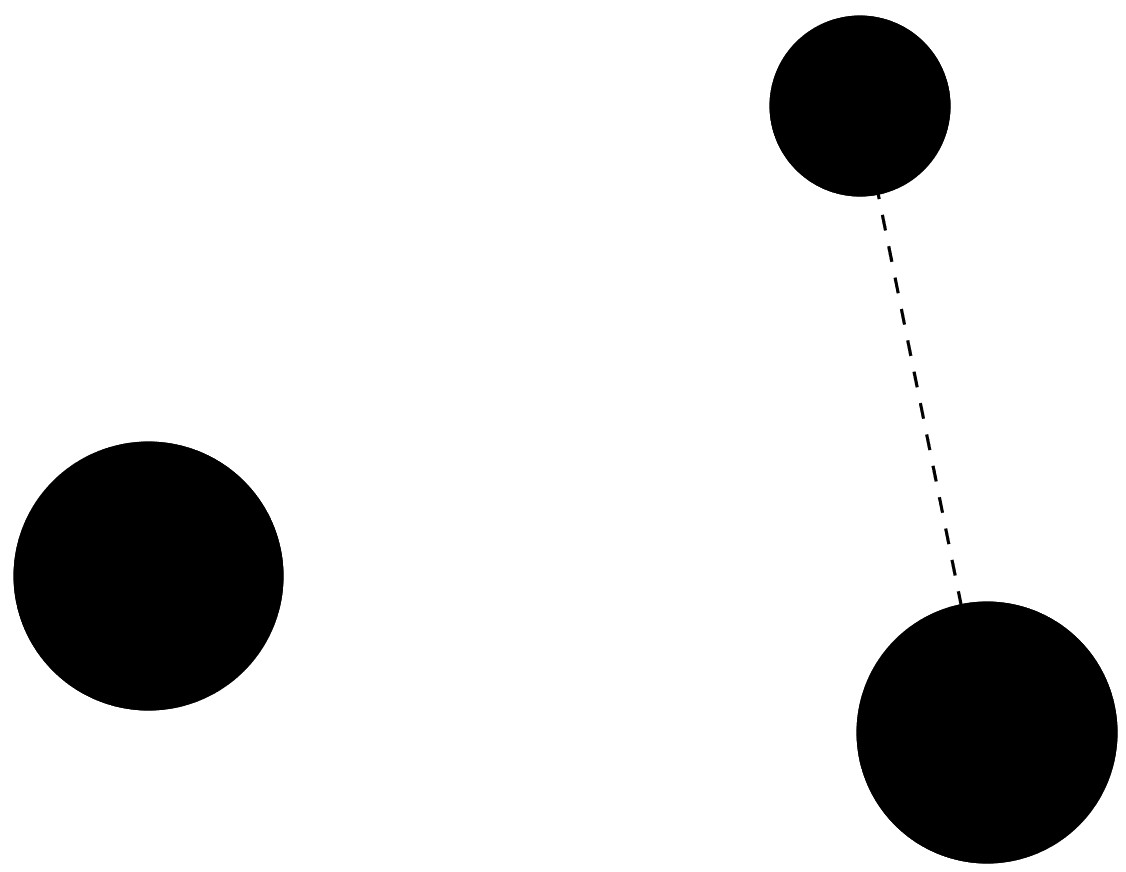}\hspace{0.75cm}}
	\caption{\\\bfseries Isosceles RE Visualization with $\left(x_{1},M_{1}\right) =\left(\frac{3}{4},\frac{9}{20}\right)$ and $L^{2}\approx1.7$}{\vspace{0.8em}\footnotesize What the RE configurations from Figure \ref{fig:1DB_Isos_L_PlotA} would look like.\\[-0.801em]}
	\label{fig:1DB_Isos_Config}
\end{figure}

For the smaller radius RE, the rotational speed is much higher $\overset{\cdot}{\varphi}\,\approx 2.45$ compared to the larger radius which has $\overset{\cdot}{\varphi}\,\approx 0.649.$ Now that we have located our isosceles RE, let us see if they are stable.

\paragraph{Energetic Stability of Isosceles}
\label{1DB_Energetic_Stability_Isosceles}As we saw in Section \ref{AmendedPotential}, to determine energetic stability we check if the RE are strict minima of the amended potential $V$. So, we are looking for a positive definite $H$, the Hessian of $V$. Using the second partial derivative test, with $\cos\theta = \frac{x_2-x_1}{2r}$ observe from \eqref{eq:1DB_SecondThetaPartialofV} that:\\$\text{\enspace\enspace}\partial_{\theta}^{2}V=x_1 x_2 r\cos\theta\left(\frac{1}{d_{1}^{3}}-\frac{1}{d_{2}^{3}}\right)-3x_{1}x_2 r^{2}\sin^{2}\theta\left(\frac{x_{2}}{d_{1}^{5}}+\frac{x_{1}}{d_{2}^{5}}\right)=-\frac{3x_{1}x_2 r^{2}\sin^{2}\theta}{d_{1}^{5}}<0.$

So, the only possibilities are maxima or saddles. Evaluating the eigenvalues, one finds that these are maxima of $V.$ So we will have no stable RE for the isosceles configuration. If we wish to classify what type of critical points these are on the energy manifold $\mathcal{H}$, note that the kinetic energy in \eqref{eq:1DB_Lagrangian} is positive definite, so that the RE found in the isosceles configuration are all saddle points in the energy manifold.
\paragraph{Linear Stability of Isosceles}
Although we lack energetic stability, when we map the linear stability conditions \eqref{eq:1DB_Linear_Stability_Criteria} for the isosceles configuration (See Figure \ref{fig:Isos_M1_0p5}), for $M_1<\frac{12 x_1 x_2}{(x_1 - x_2)^2+16x_1 x_2}\le\frac{3}{4}$ we see that for each $x_{1}$ we have linear stability for some interval(s) of (small) $r$. We graph $r_{min}$ as a light gray curve and $\partial_r L^{2}=0$ as a dashed curve. With respect to $x_1$, numerically we find linear stability to be bounded above and below by $M_1<\frac{12 x_1 x_2}{(x_1 - x_2)^2+16x_1 x_2}\le\frac{3}{4}$ (causing the horizontal region of linear stability to narrow as $M_1$ increases). Radially, we find linear stability bounded below by $r_{min}$ (where the radial eigenvalue of the amended potential's Hessian turns negative) and above by $\partial_r L^{2}=0$ (where the radial eigenvalue turns positive). Since $\partial_r L^{2}>0$ for all radii when $M_1>\frac{3}{4}$, we see the reason for the $M_1$ restriction.
\begin{figure}[H]
	\captionsetup[subfigure]{justification=centering}
	\centering
	\subfloat[$M_1=0.1875$]
	{\includegraphics[width=0.3\textwidth]{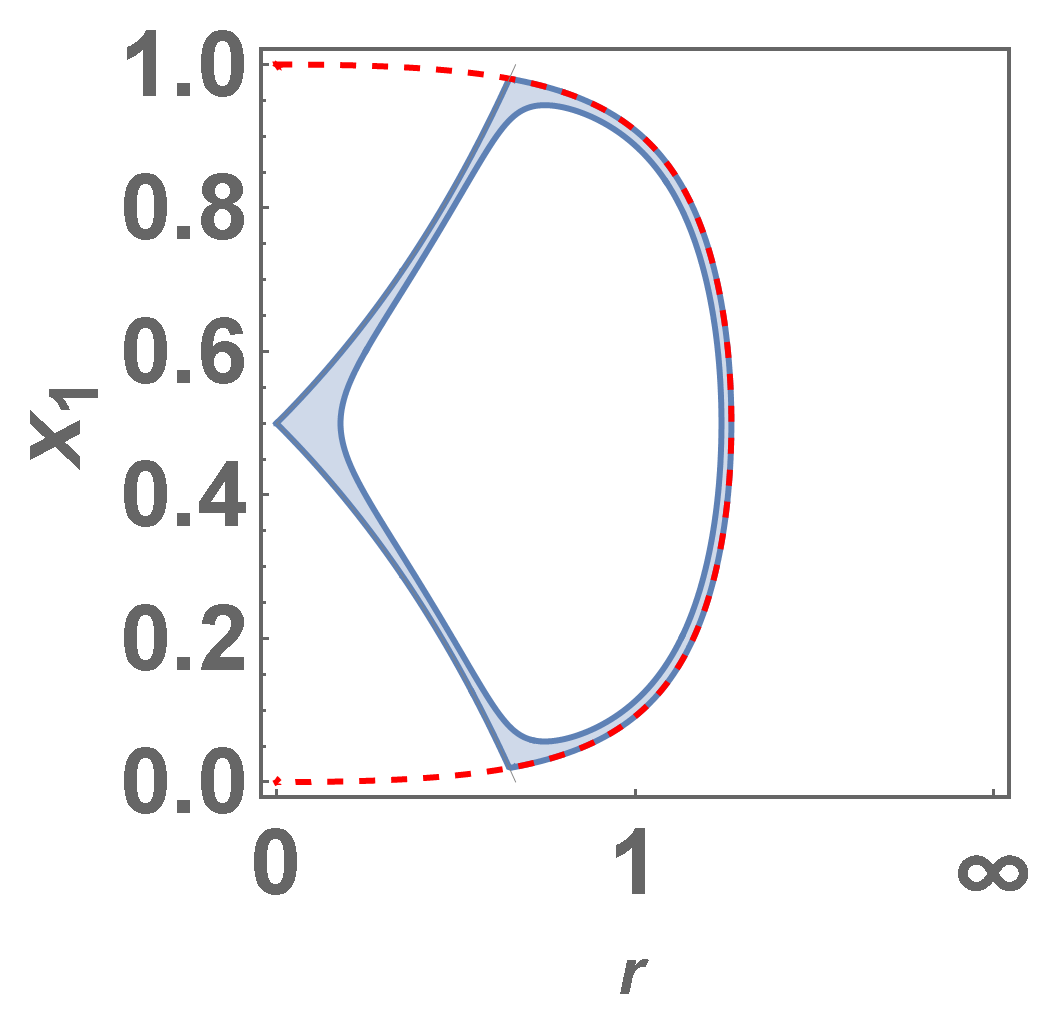}}
	\hspace{.5cm}
	\subfloat[$M_1=0.5$]
	{\includegraphics[width=0.3\textwidth]{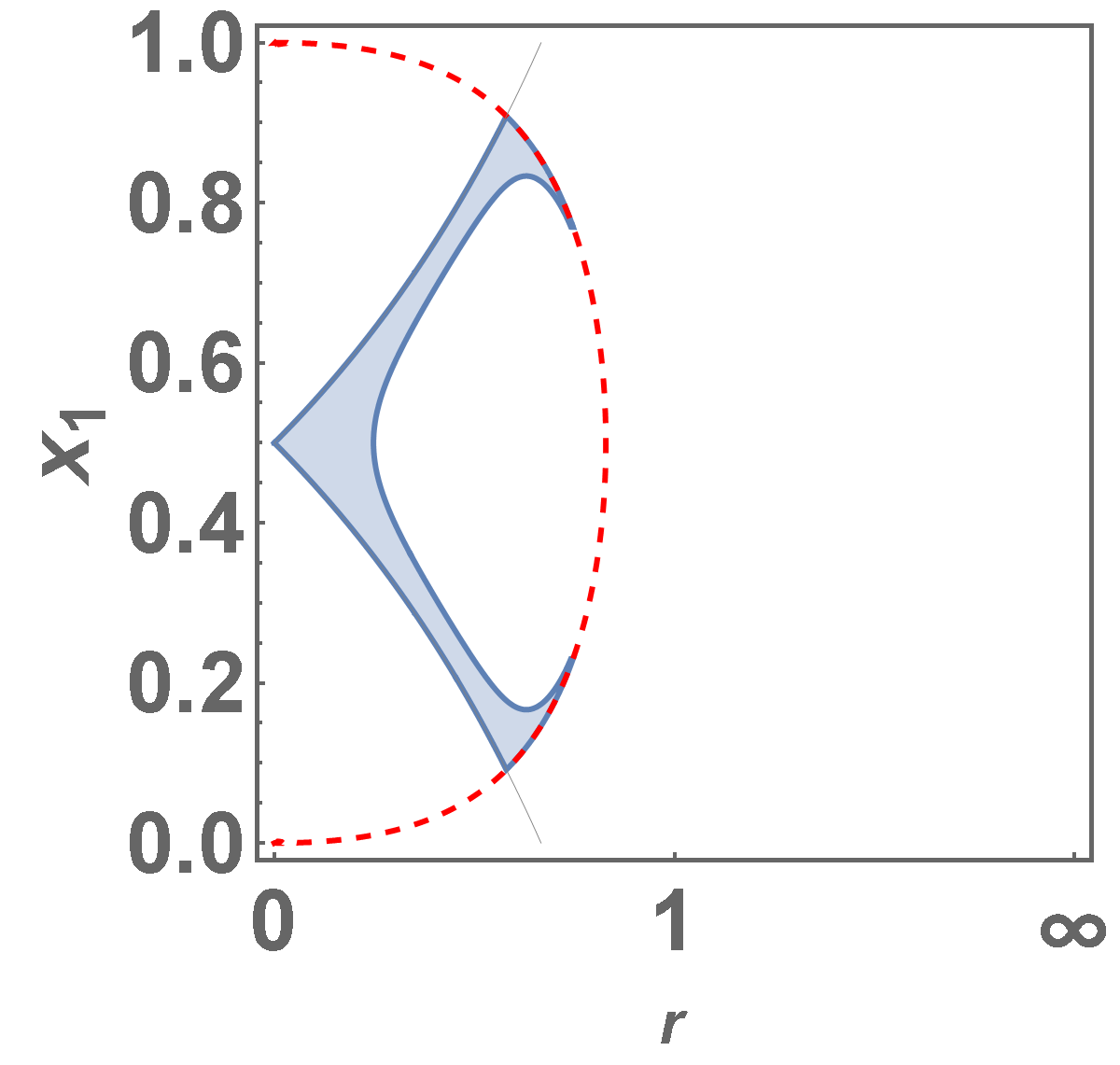}}
	\hspace{.5cm}
	\subfloat[$M_1=0.6875$]
	{\includegraphics[width=0.3\textwidth]{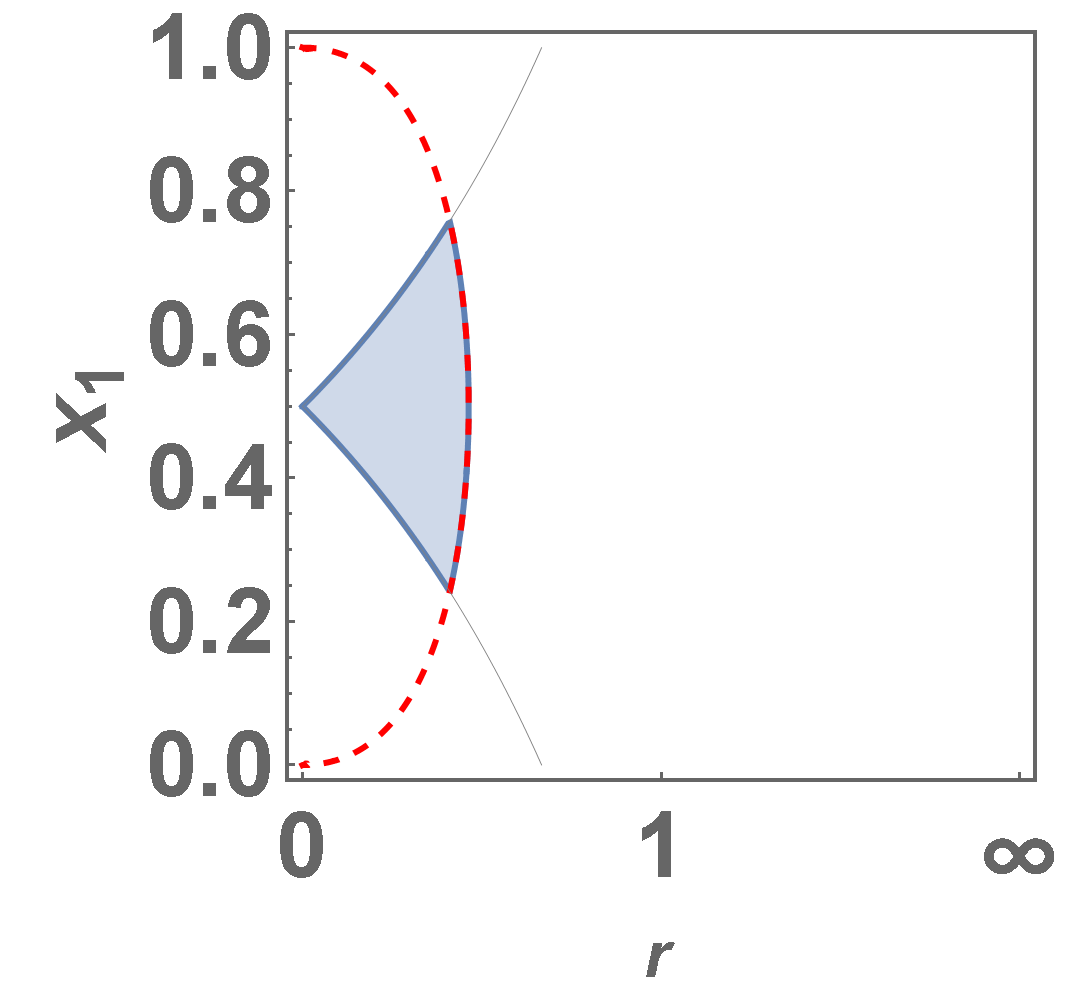}}
	\caption{\\\bfseries Isosceles Linear Stability}{\vspace{0.8em}\footnotesize $r_{min}$ (light gray curve), $\partial_r L^2=0$ (dashed curve), Linearly Stable Region (shaded)\\The linear stability region is bounded above by $\partial_r L^2=0$ and below by $r_{min}$.\\[-0.801em]}
	\label{fig:Isos_M1_0p5}
\end{figure}
Since there is no linear stability for $M_1\ge\frac{12 x_1 x_2}{(x_1 - x_2)^2+16x_1 x_2}$, that means there is no linear stability in the upper region of Figure \ref{fig:1DB_Isos_bif_regions_scaled}, or for bifurcation curves like Figure \ref{fig:1DB_Isos_L_PlotB}. Physically, this means that for linear stability we need the dumbbell body to be a significant portion of the overall mass of the system.  This is an unreasonable scenario for a large astronomical object and a dumbbell shaped artificial satellite (since the mass differential would be too great), but this could certainly be accomplished by two natural objects, or a small asteroid and an artificial satellite. If instead, we consider the point mass body as modeling a somewhat spherical shaped artificial satellite with $M_1$ very small, and the dumbbell modeling a massive oblong asteroid or moon, we can find linear stability.

As a next step up in complexity of our model, we examine the RE of a planar system with two dumbbells.
\chapter{Planar Two-Dumbbell Problem}\label{RE2DB}
\begin{figure}[H]
	\centering
	\includegraphics[width=0.4\linewidth]{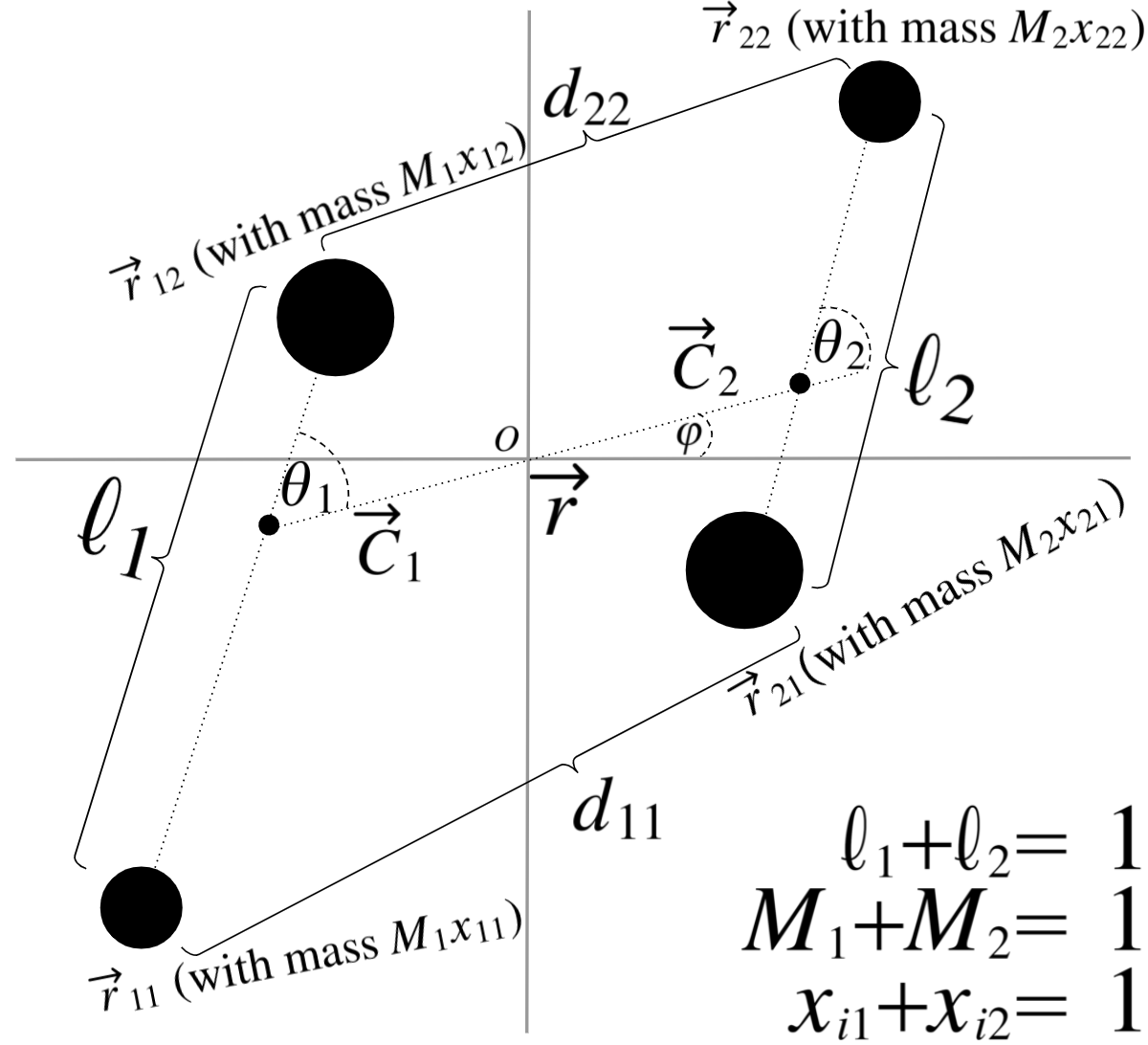}
	\caption{\\\bfseries Two-Dumbbell Problem}
	\label{fig:6_configuration_Notation}
\end{figure}
Recall how the possible configurations for the planar dumbbell/point mass problem (colinear and isosceles) were easily determined from the angular requirements. For the planar two-dumbbell problem, however, finding the complete set of solutions to the angular requirements is not possible. Instead, we consider simplifications which are likely to lead to RE.  Intuitively, and from our experience with the dumbbell/point mass problem, we suspect having dumbbells in symmetric configurations (colinear, perpendicular, parallel) may lead to RE.  Another option involves symmetry of masses on the dumbbells (equal masses).  In what follows, you will see this process play out as we find RE with these symmetric qualities.

Of course, this process begs the question: ``Are there any asymmetric RE?'' Yes. In addition to the symmetric RE, below we locate families of RE with asymmetric angular rotations bifurcating from the symmetric RE, but having some symmetry with respect to the dumbbell mass values.  We have also numerically located RE where all of the parameters and rotation angles are asymmetric.

In Section \ref{PerpBisThm} of this paper, we prove an extension of the Conley Perpendicular Bisector Theorem which puts geometric restrictions on the location of the dumbbell bodies for these RE.  
\begin{theorem*}[Perpendicular Bisector Theorem for RE of a Dumbbell and Rigid Planar Bodies]
	Let a dumbbell $\overline{r_1r_2}$ and one or more planar rigid bodies $\mathcal{B}_2,...,\mathcal{B}_n$ be in a planar RE. Then, if one of the two open cones determined by the lines through $\overline{r_1r_2}$ and its perpendicular bisector contains one or more rigid bodies, the other open cone cannot be empty.
\end{theorem*}
In particular, from that theorem we can conclude in advance that the dumbbells cannot be wholly contained in an open quadrant defined by the other dumbbell's rod and that rod's perpendicular bisector. In particular, we might suspect a colinear or a perpendicular configuration could be RE. Additionally, our theorem allows for RE when one of the dumbbell's masses is in an open quadrant, and the other mass is in a neighboring quadrant. The RE we locate in this chapter do indeed obey the restrictions identified by our theorem.
\section{Set-up and Notation}
We will follow the notation conventions used in the dumbbell/point mass problem. For the following description, refer to Figure \ref{fig:6_configuration_Notation}. Let the origin and center of mass of our system reference frame be $O$. We denote the mass ratios of the dumbbell bodies as $M_{i}$ such that the total system mass is scaled to $M_{1}+M_{2}=1.$ Each dumbbell body has a center of mass denoted $\vec{C}_{i}$. Each dumbbell body consists of two masses, we denote their mass ratios as $x_{i1}$ and $x_{i2}$ such that $x_{i1}+x_{i2}=1$. The masses are located at $\vec{r}_{i1}$ and $\vec{r}_{i2}$ (with respect to the system frame), which are connected by a massless rod of length $\ell_{i}$. We scale the system distances to let $\ell_{1}+\ell_{2}=1.$

We label the ``radius'' vector $\overrightarrow{C_{1}C_{2}}$ as $\vec{r},$ and $r:=\left\vert \vec{r}\right\vert$. We let $\varphi $ be the acute angle between the system's horizontal axis and $\vec{r}, $ so $\vec{C}_{i}=\left(-1\right)^{i}M_{n}r\left(\cos\varphi,\sin\varphi\right) $, where $n\neq i.$ And we let $\theta_{i}$ represent the angle between $\vec{r}$ and each dumbbell's rod, where if $\theta_{i}=0$, then $x_{in}$ is the mass lying on $\overline{C_1,C_2}$, where $n\neq i$. We see that we have four degrees of freedom: $r,$ $\varphi$, $\theta_{1},$ and $\theta_{2}$. And the system position vectors for the masses are:\\ $\vec{r}_{ij}=\left(-1\right)^{i}M_{n}r\left(\cos \varphi ,\sin\varphi\right)+\left(-1\right)^{j}x_{ik}\ell_{i}\left(\cos\left(\varphi +\theta_{i}\right) ,\,\sin\left(\varphi+\theta_{i}\right)\right)$, where $n\neq i$ and $k\neq j.$ Note, the mass associated with each $\vec{r}_{ij}$ is $M_{i}x_{ij}$. 
Also, we denote the distance between $\vec{r}_{1u}$ and $\vec{r}_{2v}$ as:
$d_{uv}\left(r,\theta_{1},\theta_{2}\right) :=$
\begin{align}\label{eq:2DB_Distances}
	\text{$\scriptstyle\sqrt{{r}^{2}-\left(-1\right)^{u}{2x_{1\overline{u\,}}\ell}_{1}{r}\cos{\theta}_{1}+\left(-1\right)^{v}{2x_{2\overline{v\,}}\ell_{2}r}\cos{\theta}_{2}-\left(-1\right)^{u+v}{2x_{1\overline{u\,}}x_{2\overline{v\,}}\ell}_{1}{\ell}_{2}\cos\left(\theta_{1}-\theta_{2}\right){+x_{1\overline{u\,}}^{2}\ell}_{1}^{2}{+x_{2\overline{v\,}}^{2}\ell}_{2}^{2}}$,}
\end{align}
where $u\neq \overline{u\,}$ and $v\neq \overline{v\,}.$

\section{Equations of Motion}
Unless otherwise stated, we assume both dumbbells are nontrivial (as we have already considered the dumbbell/point mass problem in the previous chapter). In order to find the equations of motion, we let $\vec{v}_{ij}$ denote the velocities of $\vec{r}_{ij}$. We calculate the Lagrangian $\mathcal{L}\,=T\,-U\,,$ where $T\,=\frac{1}{2}\sum_{i,j\in\{1,2\}}M_{i}x_{ij}\vec{v}_{ij}^{\text{ }2}$ is the kinetic energy, and the potential energy of the system (with $G=1,$ and scaled by $\frac{1}{M_{1}M_{2}}$) is $U\,=-\sum_{u,v\,\in\left\{1,2\right\} }^{{}}\frac{x_{1u}x_{2v}}{d_{uv}}$. To calculate kinetic energy, we first calculate the velocity vectors:\\ $\text{\enspace\enspace}\vec{v}_{ij}=\left({-1}\right)^{i}{M}_{{n}}\overset{\cdot}{r}\left(\cos{\varphi},\sin{\varphi}\right){\,+\,}\left({-1}\right)^{i}{M}_{{n}}{r}\overset{\cdot}{{\varphi}}\left(-\sin{\varphi},\cos{\varphi}\right)\\\text{\enspace\enspace\enspace\enspace\enspace\enspace\enspace}+\left({-1}\right)^{j}{x}_{ik}{\ell}_{i}\left(\overset{\cdot}{{\varphi}}+\overset{\cdot}{{\theta}}_{i}\right)\left(-\sin\left({\varphi+\theta}_{i}\right),\cos\left({\varphi+\theta}_{i}\right)\right).$

Recalling some trigonometric identities, we find:\\
$\text{\enspace\enspace}\textstyle\vec{v}_{ij}^{\hspace{.2em}2}=\,M_{n}^{2}\left(\overset{\cdot}{r}^{2}+r^{2}\overset{\cdot}{\varphi}^{2}\right)+x_{ik}^{2}\ell_{i}^{2}\left(\overset{\cdot}{\theta}_{i}+\overset{\cdot}{\varphi}\right)^{2}$\\$\text{\enspace\enspace\enspace\enspace}+\left(-1\right)^{i+j}2M_{n}x_{ik}\ell_{i}\left(\overset{\cdot}{\theta}_{i}+\overset{\cdot}{\varphi}\right)\left(r\overset{\cdot}{\varphi}\cos \theta_{i}-\overset{\cdot}{r}\sin \theta_{i}\right),$
where $n\neq i$ and $k\neq j.$\\And the kinetic energy is:\\
$\text{\enspace\enspace}T\,=\frac{1}{2}M_{1}M_{2}\left(\overset{\cdot}{r}^{2}+r^{2}\overset{\cdot}{\varphi}^{2}\right)+\frac{1}{2}x_{11}x_{12}M_{1}\ell_{1}^{2}\left(\overset{\cdot}{\theta}_{1}+\overset{\cdot}{\varphi}\right)^{2}+\frac{1}{2}x_{21}x_{22}M_{2}\ell_{2}^{2}\left(\overset{\cdot}{\theta}_{2}+\overset{\cdot}{\varphi}\right)^{2}.$

So, our Lagrangian (scaled by $\frac{1}{M_{1}M_{2}}$) is:
\begin{align}\label{eq:2DB_Lagrangian}
	{\textstyle \mathcal{L}=T\,-U\,=\frac{1}{2}\left(\overset{\cdot}{r}^{2}+r^{2}\overset{\cdot}{\varphi}^{2}\right)+\frac{1}{2}B_{1}\left(\overset{\cdot}{\theta}_{1}+\overset{\cdot}{\varphi}\right)^{2}+\frac{1}{2}B_{2}\left(\overset{\cdot}{\theta}_{2}+\overset{\cdot}{\varphi}\right)^{2}-U,}
\end{align}
\begin{align}\label{eq:2DB_MomOfInertExpressions}
	{\textstyle \text{ where } B_{i}:=\frac{x_{i1}x_{i2}}{M_{n}}\ell_{i}^{2}  \text{ (with $n\neq i$)}}
\end{align}
are the scaled moments of inertia relative to the centers of mass for each of the dumbbells.
The equations of motion are given by the Euler-Lagrange equation: $\frac{d}{dt}\frac{\partial\mathcal{L}}{\partial\overset{\cdot}{q_{i}}}-\frac{\partial \mathcal{L}}{\partial q_{i}}=0$, for each degree of freedom $q_i\in\left\{ r,\varphi ,\theta_{1},\theta_{2}\right\}.$ We calculate:
\begin{subequations}
	\makeatletter\@fleqntrue\makeatother
	\begin{align}
		\mathitem \text{$\overset{\cdot\cdot}{r}-r\overset{\cdot}{\varphi}^{2}=-\frac{\partial U}{\partial r}$,\label{eq:2DB_EOMsA}}\\
		\mathitem \text{$r^{2}\overset{\cdot\cdot}{\varphi}+2r\overset{\cdot}{r}\overset{\cdot}{\varphi}+B_{1}\left(\overset{\cdot\cdot}{\theta}_{1}+\overset{\cdot\cdot}{\varphi}\right)+B_{2}\left(\overset{\cdot\cdot}{\theta}_{2}+\overset{\cdot\cdot}{\varphi}\right) \,=-\frac{\partial U}{\partial\varphi}=0,$\label{eq:2DB_EOMsB}}\\
		\mathitem \text{$B_{i}\left(\overset{\cdot\cdot}{\theta}_{i}+\overset{\cdot\cdot}{\varphi}\right) \,\,=-\frac{\partial U}{\partial \theta_{i}},$ for $i\in\left\{ 1,2\right\}.$\label{eq:2DB_EOMsC}}
	\end{align}
	\label{eq:2DB_EOMs}
\end{subequations}

\section{Finding RE}
We now wish to reduce our system using the amended potential method in Section \ref{AmendedPotential} by exploiting the conserved angular momentum. So, as in the previous models, we calculate angular momentum (scaled by $\frac{1}{M_{1}M_{2}}$) as: $\frac{\partial\mathcal{L}}{\partial \overset{\cdot}{\varphi}}={r}^{2}\overset{\cdot}{\varphi}+B_{1}\left(\overset{\cdot}{\theta}_{1}+\overset{\cdot}{\varphi}\right)+B_{2}\left(\overset{\cdot}{\theta}_{2}+\overset{\cdot}{\varphi}\right) =:L.$ Or, solving for the rotational speed:
\begin{align}\label{eq:2DB_Rotation_Speed}
	{\textstyle
	 \overset{\cdot}{\varphi}\,=\frac{L-B_{1}\overset{\cdot}{\theta}_{1}-B_{2}\overset{\cdot}{\theta}_{2}}{{r}^{2}+B_{1}+B_{2}}\text{\enspace\enspace}\text{ or }\text{\enspace\enspace}\overset{\cdot}{\varphi}\,=\frac{L}{{r}^{2}+B_{1}+B_{2}}\text{ at RE}.}
\end{align}
We can now eliminate velocity $\overset{\cdot}{\varphi}$ from our system using \eqref{eq:2DB_Rotation_Speed}. Upon substituting $\overset{\cdot}{\varphi}$ into $\eqref{eq:2DB_EOMsB},$ and solving for $\overset{\cdot\cdot}{\varphi}$, we see it is equivalent to $\overset{\cdot\cdot}{\varphi}\,=-2r\overset{\cdot}{r}\frac{L-B_{1}\overset{\cdot}{\theta}_{1}-B_{2}\overset{\cdot}{\theta}_{2}}{\left({r}^{2}+B_{1}+B_{2}\right)^{2}}-\frac{B_{1}\overset{\cdot\cdot}{\theta}_{1}+B_{2}\overset{\cdot\cdot}{\theta}_{2}}{r^{2}+B_{1}+B_{2}}.$ And substituting $\overset{\cdot}{\varphi}$ and $\overset{\cdot\cdot}{\varphi}$ into $\eqref{eq:2DB_EOMsA}$ and $\eqref{eq:2DB_EOMsC}$, we determine the reduced Lagrangian $\mathcal{L}_{red}$ for which $\frac{d}{dt}\frac{\partial \mathcal{L}_{red}}{\partial\overset{\cdot}{r}_{i}}-\frac{\partial \mathcal{L}_{red}}{\partial r_{i}}$ will generate these reduced equations. We find:\\ $\mathcal{L}_{red}=T_{red}-V$\\$\text{\enspace\enspace}=\frac{1}{2}\left(\overset{\cdot}{r}^{2}+B_{1}\frac{{r}^{2}+B_{2}}{{r}^{2}+B_{1}+B_{2}}\overset{\cdot}{\theta}_{1}^{2}+B_{2}\frac{{r}^{2}+B_{1}}{{r}^{2}+B_{1}+B_{2}}\overset{\cdot}{\theta}_{2}^{2}\right)+\frac{L\left(B_{1}\overset{\cdot}{\theta}_{1}+B_{2}\overset{\cdot}{\theta}_{2}\right)-B_{1}B_{2}\overset{\cdot}{\theta}_{1}\overset{\cdot}{\theta}_{2}}{{r}^{2}+B_{1}+B_{2}}$\\$\text{\enspace\enspace\enspace\enspace\enspace}-\left(\frac{L^{2}}{2\left({r}^{2}+B_{1}+B_{2}\right)}+U\right).$

So, we have our amended potential (scaled by $\frac{1}{M_{1}M_{2}}$): $V:=\frac{L^{2}}{2\left(r^{2}+B_{1}+B_{2}\right)}+U.$

Recall we can characterize the RE of our system as the critical points of the amended potential $V.$ Taking the $r$ derivative of $V$, we find the \textbf{radial requirement}:\\[1.1em]
$\text{\enspace\enspace}\partial_{r}V=x_{11}\left(\frac{x_{21}\left({r+}x_{12}{\ell}_{1}\cos{\theta}_{1}-x_{22}{\ell}_{2}\cos{\theta}_{2}\right)}{d_{11}^{3}}+\frac{x_{22}\left({r+}x_{12}{\ell}_{1}\cos{\theta}_{1}+x_{21}{\ell}_{2}\cos{\theta}_{2}\right)}{d_{12}^{3}}\right)$\\[-1.3em]
$\text{\enspace\enspace\enspace\enspace\enspace\enspace}$
\begin{align}\label{eq:2DB_RadialREQ}
	{\text{\enspace}\textstyle +x_{12}\left(\frac{x_{21}\left({r-}x_{11}{\ell}_{1}\cos{\theta}_{1}-x_{22}{\ell}_{2}\cos{\theta}_{2}\right)}{d_{21}^{3}}+\frac{x_{22}\left({r-}x_{11}{\ell}_{1}\cos{\theta}_{1}+x_{21}{\ell}_{2}\cos{\theta}_{2}\right)}{d_{22}^{3}}\right)-\frac{rL^{2}}{\left(r^{2}+B_{1}+B_{2}\right)^{2}}=0.}
\end{align}
In the cases pursued below, we will wish to examine bifurcations of the related $L^{2}$ graph in order to identify qualitatively different parts of our parameter space as it relates to the quantity and stability of the RE. To this end, we rearrange \eqref{eq:2DB_RadialREQ} as:\\[1.1em]
$\text{\enspace\enspace}L^{2}=\frac{\left({r}^{2}+B_{1}+B_{2}\right)^{2}}{r}x_{11}\left(\frac{x_{21}\left({r+}x_{12}{\ell}_{1}\cos{\theta}_{1}-x_{22}{\ell}_{2}\cos{\theta}_{2}\right)}{d_{11}^{3}}+\frac{x_{22}\left({r+}x_{12}{\ell}_{1}\cos{\theta}_{1}+x_{21}{\ell}_{2}\cos{\theta}_{2}\right)}{d_{12}^{3}}\right)$\\[-1.3em]
\begin{align}\label{eq:2DB_Radial_REQ}
	{\textstyle +\frac{\left({r}^{2}+B_{1}+B_{2}\right)^{2}}{r}x_{12}\left(\frac{x_{21}\left({r-}x_{11}{\ell}_{1}\cos{\theta}_{1}-x_{22}{\ell}_{2}\cos{\theta}_{2}\right)}{d_{21}^{3}}+\frac{x_{22}\left({r-}x_{11}{\ell}_{1}\cos{\theta}_{1}+x_{21}{\ell}_{2}\cos{\theta}_{2}\right)}{d_{22}^{3}}\right).}
\end{align}

Taking ${\theta}_{i}$ derivatives of $V$, we find \textbf{angular requirements}:
\begin{flalign}\label{eq:2DB_Angular_REQ}
	\begin{aligned}
		\textstyle \partial_{\theta_{1}}V=&x_{11}x_{12}{\ell}_{1}\left(x_{21}\left(x_{22}{\ell}_{2}\sin\left(\theta_{1}-\theta_{2}\right)-{r}\sin{\theta}_{1}\right)\left(\frac{1}{d_{11}^{3}}-\frac{1}{d_{21}^{3}}\right)\right)\\
		&\textstyle +x_{11}x_{12}{\ell}_{1}\left(x_{22}\left(x_{21}{\ell}_{2}\sin\left(\theta_{1}-\theta_{2}\right)+{r}\sin{\theta}_{1}\right)\left(\frac{1}{d_{22}^{3}}-\frac{1}{d_{12}^{3}}\right)\right) =0,\\
		\textstyle \partial_{\theta_{2}}V=&x_{21}x_{22}{\ell}_{2}\left(x_{11}\left(
		x_{12}{\ell}_{1}\sin\left(\theta_{1}-\theta_{2}\right) -{r}\sin{\theta}_{2}\right)\left(\frac{1}{d_{12}^{3}}-\frac{1}{d_{11}^{3}}\right)\right)\\
		&\textstyle +x_{21}x_{22}{\ell}_{2}\left(x_{12}\left(x_{11}{\ell}_{1}\sin\left(\theta_{1}-\theta_{2}\right)+{r}\sin{\theta}_{2}\right)\left(\frac{1}{d_{21}^{3}}-\frac{1}{d_{22}^{3}}\right)\right) =0.
	\end{aligned}
\end{flalign}
Simplifying we have:
\begin{flalign}\label{eq:2DB_Angular_Requirements}
	\begin{aligned}
	 \text{$0=x_{21}\left(x_{22}{\ell}_{2}\sin\left({\theta}_{1}{-\theta}_{2}\right)-{r}\sin{\theta}_{1}\right)\left(\frac{1}{d_{11}^{3}}-\frac{1}{d_{21}^{3}}\right)$}\\
	  \text{$\text{\enspace\enspace\enspace}+x_{22}\left(x_{21}{\ell}_{2}\sin\left({\theta}_{1}{-\theta}_{2}\right)+{r}\sin{\theta}_{1}\right)\left(\frac{1}{d_{22}^{3}}-\frac{1}{d_{12}^{3}}\right),$}\\
 	\text{$0=x_{11}\left(x_{12}{\ell}_{1}\sin\left({\theta}_{1}{-\theta}_{2}\right)-{r}\sin{\theta}_{2}\right)\left(\frac{1}{d_{11}^{3}}-\frac{1}{d_{12}^{3}}\right)$}\\
 	\text{$\text{\enspace\enspace\enspace}-x_{12}\left(x_{11}{\ell}_{1}\sin\left({\theta}_{1}{-\theta}_{2}\right)+{r}\sin{\theta}_{2}\right)\left(\frac{1}{d_{21}^{3}}-\frac{1}{d_{22}^{3}}\right).$}
	\end{aligned}
\end{flalign}
Note that similar to Beletskii and Ponomareva's work on the dumbbell/point mass problem, if we can solve the angular requirements above for $\theta_{1},\theta_{2}$, we can substitute these into the radial requirement and find $r,L,$ pairs, which are then associated with a unique rotational speed $\overset{\cdot}{\varphi}$ given by \eqref{eq:2DB_Rotation_Speed}. However, finding the complete set of RE solutions to the angular requirements is nontrivial. To proceed beyond this impasse, we will consider symmetric configurations allowed by the Perpendicular Bisector Theorem for RE of a Dumbbell and Rigid Planar Bodies Theorem, for which one might suspect RE.

\paragraph{Energetic Stability}
\label{2DB_Energetic_Stability}Once we find RE, we will want to analyze their stability. Recall in the dumbbell/point mass problem that we had energetic stability for the colinear case only. So we hope to find some energetic stability for a colinear configuration here as well. As we saw in Section \ref{AmendedPotential}, to determine energetic stability we check if the RE are strict minima of the amended potential $V$. To this end, we calculate the Hessian $H$ of $V$: 
\begin{equation}\label{eq:2DB_Hession}
	H=\begin{bmatrix}\partial^2_{r}V&\partial^2_{r,\theta_1}V&\partial^2_{r,\theta_2}V\\ 
		\partial^2_{\theta_1,r}V&\partial^2_{\theta_1}V&\partial^2_{\theta_1\theta_2}V\\
		\partial^2_{\theta_2,r}V&\partial^2_{\theta_2,\theta_1}V&\partial^2_{\theta_2}V
	\end{bmatrix}.
\end{equation}
For several of the RE configurations examined below, we find $H$ becomes block diagonal\\($\partial^2_{r,\theta_i}V=0$). So let us recharacterize $\partial_r V$ in a way that helps us determine the sign of $\partial^2_r V$, and therefore the sign of $H$'s radial eigenvalue. Calculating $\partial_r V$, we see we can recharacterize it as:\\ $\partial_{r}V=\left(g\left(r,\theta_{1},\theta_{2}\right) -L^{2}\right)\frac{r}{\left(r^{2}+B_{1}+B_{2}\right)^{2}},$ where:\\ $\textstyle\text{\enspace\enspace} g\left(r,\theta_{1},\theta_{2}\right) :=\frac{\left(r^{2}+B_{1}+B_{2}\right)^{2}}{r}x_{11}\left(\frac{x_{21}\left({r+}x_{12}{\ell}_{1}\cos{\theta}_{1}-x_{22}{\ell}_{2}\cos{\theta}_{2}\right)}{d_{11}^{3}}+\frac{x_{22}\left({r+}x_{12}{\ell}_{1}\cos{\theta}_{1}+x_{21}{\ell}_{2}\cos{\theta}_{2}\right)}{d_{12}^{3}}\right)$\\
$\text{\hskip65pt}\textstyle+\frac{\left(r^{2}+B_{1}+B_{2}\right)^{2}}{r}x_{12}\left(\frac{x_{21}\left({r-}x_{11}{\ell}_{1}\cos{\theta}_{1}-x_{22}{\ell}_{2}\cos{\theta}_{2}\right)}{d_{21}^{3}}+\frac{x_{22}\left({r-}x_{11}{\ell}_{1}\cos{\theta}_{1}+x_{21}{\ell}_{2}\cos{\theta}_{2}\right)}{d_{22}^{3}}\right).$

This is done so that at a critical point of $\partial_r V$, we have: $g=L^{2}.$
$\partial_{r}^{2}V$ is then:\\ $\frac{r}{\left(r^{2}+B_{1}+B_{2}\right)^{2}}\partial_{r}g+\left(g-L^{2}\right)\left(1-\frac{4r^{2}}{r^{2}+B_{1}+B_{2}}\right) \frac{1}{\left(r^{2}+B_{1}+B_{2}\right)^{2}},$ which at a critical point of $\partial_r V$ becomes: $\partial_{r}^{2}V|_{RE}=\frac{r}{\left(r^{2}+B_{1}+B_{2}\right)^{2}}\partial_{r}g$. So, we can determine the sign of $\partial_{r}^{2}V$ with $\partial_{r}g,$ or equivalently $\partial_{r}L^{2}$. 
\begin{equation}\label{eq:2DB_Vr_Slope_Equal_LSquared_Slope}
	\partial_{r}L^{2}>0\implies\partial_{r}^{2}V>0.
\end{equation}
In other words, we can determine the sign of $\partial^2_r V$ with the slopes of graphs like Figure \ref{fig:6_3a_no_overlap} below.\vspace{-12pt}
\paragraph{Linear Stability}
We will determine linear stability for the two-dumbbell problem in the same way as we did with the dumbbell/point mass problem, by first rewriting the reduced Lagrangian \eqref{eq:2DB_Lagrangian} in terms of the amended potential. So:\\
$\text{\enspace\enspace}\scriptstyle\mathcal{L}_{red}=\frac{1}{2}\overset{\cdot}{r}^2+\frac{1}{2(r^2+B_1+B_2)}\left(B_1\left(r^2+B_2\right)\overset{\cdot}{\theta}_1^2+B_2\left(r^2+B_1\right)\overset{\cdot}{\theta}_{2}^{2}+2L\left(B_{1}\overset{\cdot}{\theta}_{1}+B_{2}\overset{\cdot}{\theta}_{2}\right)-B_{1}B_{2}\overset{\cdot}{\theta}_{1}\overset{\cdot}{\theta}_{2}\right) -V\left(r,\theta_{i}\right)$,\\where $V\left(r,\theta_{i}\right):=\frac{L^{2}}{2\left({r}^{2}+B_{1}+B_{2}\right)}+U\left(r,\theta_{i}\right) $ is the amended potential. Applying the Euler Lagrange equation 

and solving for acceleration gives us equations of motion:
\begin{subequations}\label{eq:2DB_EOMs_for_Linearization}
	\makeatletter\@fleqntrue\makeatother
	\begin{align}
		\begin{split}
			&\text{$\overset{\cdot\cdot}{r}\,=\frac{r\left(B_{1}\overset{\cdot}{\theta}_{1}+B_{2}\overset{\cdot}{\theta}_{2}\right)\left(B_{1}\overset{\cdot}{\theta}_{1}+B_{2}\overset{\cdot}{\theta}_{2}-2L\right)}{\left(r^{2}+B_{1}+B_{2}\right)^{2}}-\partial_{r}V,$}
		\end{split}\\
		\begin{split}
			&\text{$\overset{\cdot\cdot}{\theta}_{i}=-\frac{2\overset{\cdot}{r}\left(B_{1}\overset{\cdot}{\theta}_{1}+B_{2}\overset{\cdot}{\theta}_{2}-L\right)}{r\left(r^{2}+B_{1}+B_{2}\right)}-\left(\frac{1}{B_{i}}+\frac{1}{r^{2}}\right)\partial_{\theta_{i}}V-\frac{1}{r^{2}}\partial_{\theta_{n}}V,$}
		\end{split}\\
	\begin{split}
		\hspace{20pt}\text{$\text{ for }i\in\{1,2\}\text{ and }n\neq i.\nonumber$}
	\end{split}
	\end{align}
\end{subequations}
For RE, we have $\overset{\cdot}{r}_{RE}\,=\overset{\cdot}{\theta}_{iRE}=\overset{\cdot\cdot}{r}_{RE}=\overset{\cdot\cdot}{\theta}_{iRE}=0$, giving us:
\begin{subequations}
	\makeatletter\@fleqntrue\makeatother
	\begin{align}
		\mathitem \text{$\overset{\cdot\cdot}{r}_{RE}=-\partial_{r}V=0\nonumber$,}\\
		\mathitem \text{$\overset{\cdot\cdot}{\theta}_{iRE}=-\frac{1}{r^{2}}\left(\frac{r^{2}+B_{i}}{B_{i}}\partial_{\theta_{i}}V+\partial_{\theta_{n}}V\right)=0\text{, for }i\in\{1,2\}\text{ and }n\neq i,\nonumber$}
	\end{align}
\end{subequations}

from which we can see our previous requirements for RE that $\partial_{r}V=\partial_{\theta_{i}}V=0$.

Letting $v:=\left[r\text{\enspace}\theta_{1}\text{\enspace}\theta_{2}\text{\enspace}\overset{\cdot}{r}\text{\enspace}\overset{\cdot}{\theta}_{1}\text{\enspace}\overset{\cdot}{\theta}_{2}\right] $, we linearize our system $\eqref{eq:2DB_EOMs_for_Linearization}$ as $\overset{\cdot}{v}\,=Av$, which at RE ($\overset{\cdot}{r}=\overset{\cdot}{\theta}_{i}=\partial_{r}V=\partial_{\theta_{i}}V=0$) has: ${A}_{{RE}}{ :=}\begin{bmatrix}
	0 & I\\ 
	A_3 & A_4\\ \end{bmatrix}_{{ RE}}$, where\\
$A_3={\begin{bmatrix}
	{-\partial}_{r}^{2}{V} & {-\partial}_{r,\theta_{1}}^{2}{V} & {-\partial}_{r,\theta_{2}}^{2}{V}\\ 
	\frac{-{1}}{{r}^{2}}\left({\partial}_{{r\theta}_{2}}^{2}{V+}\frac{{r}^{2}{+B}_{1}}{B_{1}}{\partial}_{{r\theta}_{1}}^{2}{V}\right) &\frac{-{1}}{{r}^{2}}\left({\partial}_{{\theta}_{1}{\theta}_{2}}^{2}{V+}\frac{{r}^{2}{+B}_{1}}{B_{1}}{\partial}_{{\theta}_{1}}^{2}{V}\right) & \frac{-{1}}{{r}^{2}}\left({\partial}_{{\theta}_{2}}^{2}{V+}\frac{{r}^{2}{+B}_{1}}{B_{1}}{\partial }_{{\theta}_{1}{\theta}_{2}}^{2}{V}\right)\\ 
	\frac{-{1}}{{r}^{2}}\left({\partial}_{{r\theta}_{1}}^{2}{V+}\frac{{r}^{2}{+B}_{2}}{B_{2}}{\partial }_{{r\theta}_{2}}^{2}{V}\right) & \frac{-{1}}{{r}^{2}}\left({\partial}_{{\theta}_{1}}^{2}{V+}\frac{{r}^{2}{+B}_{2}}{B_{2}}{\partial}_{{\theta}_{1}{\theta}_{2}}^{2}{V}\right) & \frac{-{1}}{{r}^{2}}\left({\partial }_{{\theta}_{1}{\theta}_{2}}^{2}{ V+}\frac{{r}^{2}{+B}_{2}}{B_{2}}{\partial }_{{\theta}_{2}}^{2}{V}\right)\end{bmatrix}}$,\\
$A_4={\begin{bmatrix}
	0	& -\frac{2B_{1}Lr}{\left({r}^{2}{+B}_{1}{+B}_{2}\right)^{2}} &-\frac{2B_{2}Lr}{\left({r}^{2}{+B}_{1}{+B}_{2}\right)^{2}} \\ 
	\frac{2L}{{r}\left({r}^{2}{+B}_{1}{+B}_{2}\right)} & 0 & 0 \\ 
	\frac{2L}{{r}\left({r}^{2}{+B}_{1}{+B}_{2}\right)} & 0 & 0\end{bmatrix}}$, and $I,0$ are respectively the $3\times3$ identity and zero matrices.

For linear stability, we need the eigenvalues of $A_{RE} $ to be purely imaginary. Calculating the characteristic polynomial
of $A_{RE}$, we find: $p(z):=z^{6}+c_{2}z^{4}+c_{1}z^{2}+c_{0}$, where\\ $c_2=\frac{4 L^2 (B_1+B_2)}{\left(r^2+B_1+B_2\right)^3}+\frac{ r^2+B_1}{B_1 r^2}\partial^2_{\theta_1}V+\frac{r^2+B_2}{B_2 r^2}\partial^2_{\theta_2}V+\frac{2}{r^2}\partial^2_{\theta_1\theta_2}V+\partial^2_rV$,\\
$c_1=\frac{4 L^2}{B_1 B_2 \left(r^2+B_1+B_2\right)^3}\left(B_1^2 \partial^2_{\theta_2}V+B_2^2 \partial^2_{\theta_1}V-2 B_1 B_2 \partial^2_{\theta_1\theta_2}V\right)\\\text{\enspace\enspace\enspace}+\frac{r^2+B_1+B_2}{B_1 B_2 r^2}\left(\partial^2_{\theta_1}V \partial^2_{\theta_2}V-\partial^2_{\theta_1\theta_2}V^2\right)+\frac{r^2+B_1}{B_1 r^2}\left(\partial^2_{r}V \partial^2_{\theta_1}V-(\partial^2_{\theta_1}V)^2\right)\\\text{\enspace\enspace\enspace}+\frac{r^2+B_2}{B_2 r^2}\left(\partial^2_{r}V \partial^2_{\theta_2}V-(\partial^2_{\theta_2}V)^2\right)+\frac{2}{r^2}(\partial^2_{r}V \partial^2_{\theta_1\theta_2}V-\partial^2_{\theta_1}V \partial^2_{\theta_2}V)$,\\
$c_0=-\frac{r^2+B_1+B_2}{B_1 B_2 r^2}\left(\partial^2_{\theta_1}V \left((\partial^2_{\theta_2}V)^2-\partial^2_{r}V \partial^2_{\theta_2}V\right)+\partial^2_{r}V \partial^2_{\theta_1\theta_2}V^2\right)\\\text{\enspace\enspace\enspace}-\frac{r^2+B_1+B_2}{B_1 B_2 r^2}\left((\partial^2_{\theta_1}V)^2 \partial^2_{\theta_2}V-2 \partial^2_{\theta_1}V \partial^2_{\theta_2}V \partial^2_{\theta_1\theta_2}V\right)$.\\
Considering the polynomial $p\left(z^{2}\right) =\left(z^{2}\right)^{3}+c_{2}\left(z^{2}\right)^{2}+c_{1}z^{2}+c_{0}$ as a cubic in $z^{2},$ our requirement is that $z^{2}$ roots are real and negative. For a cubic to have all real roots we need the discriminant $\Delta\geq0$, where $\Delta=-27c_{0}^{2}-4c_{1}^{2}+18c_{0}c_{1}c_{2}+c_{1}^{2}c_{2}^{2}-4c_{0}c_{2}^{3}$. This becomes our first criteria for linear stability.

To ensure that the roots are negative, we can employ the Routh-Hurwitz stability criteria \cite{Rahman2002}. These criteria require the coefficients to be real, which we have. For a cubic, the criteria are: $c_{i}>0$. So our criteria for $z^{2}$ negative roots become:
\begin{align}\label{eq:2DB_Linear_Stability_Criteria}
	{\displaystyle \Delta\geq \,0\text{ and }c_{i}>0.}
\end{align}
Roots matching these criteria give us $z$ purely imaginary, and also linear stability. We will use these criteria for the configurations examined below. For our first configuration, as with the dumbbell/point mass problem, we find RE when the dumbbells are colinear.
\vspace*{-5mm}\subsection{Colinear Configuration: $\theta_1,\theta_2 =0$}
\label{2DB_Col}\begin{figure}[H]
	\centering
	\includegraphics[width=0.4\linewidth]{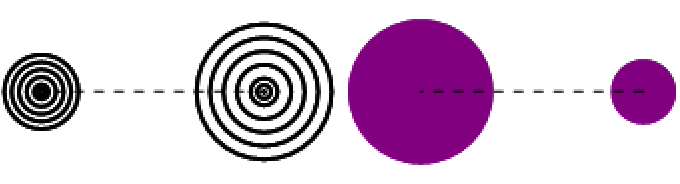}
	\caption{\\\bfseries Colinear Configuration}
\end{figure}
Note that this configuration ($\theta_1,\theta_2 =0$) immediately satisfies the angular requirements \eqref{eq:2DB_Angular_Requirements}. In terms of $L^{2}$, from the radial requirement \eqref{eq:2DB_RadialREQ} we find:\\
$L^{2}(r)=\frac{\left({r}^{2}+B_{1}+B_{2}\right)^{2}}{r}x_{11}\left(\frac{x_{21}}{\left({r+}x_{12}{\ell}_{1}-x_{22}{\ell}_{2}\right)\left\vert{r+}x_{12}{\ell}_{1}-x_{22}{\ell}_{2}\right\vert}+\frac{x_{22}}{\left({r+}x_{12}{\ell}_{1}+x_{21}{\ell}_{2}\right)^{2}}\right)$
\begin{align}\label{eq:2DB_Col_LS_EQ}
	{\textstyle +\frac{\left({r}^{2}+B_{1}+B_{2}\right)^{2}}{r}x_{12}\left(\frac{x_{21}}{\left({r-}x_{11}{\ell}_{1}-x_{22}{\ell}_{2}\right)\left\vert{r-}x_{11}{\ell}_{1}-x_{22}{\ell}_{2}\right\vert}+\frac{x_{22}}{\left({r-}x_{11}{\ell}_{1}+x_{21}{\ell}_{2}\right)\left\vert{r-}x_{11}{\ell}_{1}+x_{21}{\ell}_{2}\right\vert}\right).}
\end{align}

Observe that we have singularities (collisions of the masses) when $d_{uv}=0$, or equivalently when $r\in\left\{r_{4},r_{3},r_{2},r_{1}\right\}:=\left\{-{x_{21}\ell}_{2}{-x_{12}\ell}_{1},\;-{x_{21}\ell}_{2}+{x_{11}\ell}_{1},\;{x_{22}\ell}_{2}-{x_{12}\ell}_{1},\;{x_{22}\ell}_{2}+{x_{11}\ell}_{1}\right\} $.

For this configuration, not only do we have singularities, but also ``overlap.'' That is, for sufficiently small radii, the inner mass of each body is located within the massless rod of the other body. 
\begin{figure}[H]
	\centering
	\includegraphics[width=0.4\linewidth]{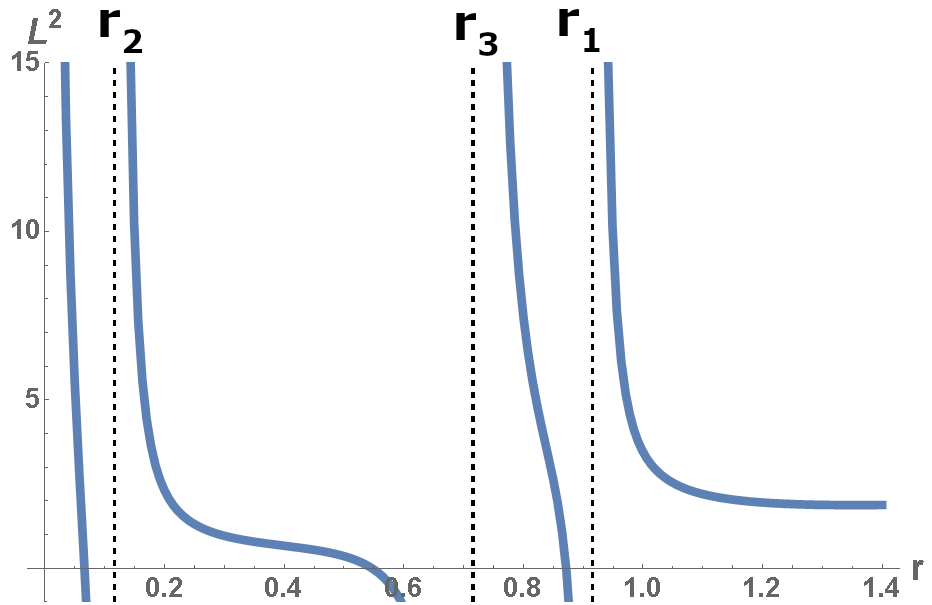}
	\caption{\\\bfseries Colinear $L^2$ Curves}{\vspace{0.8em}\footnotesize $x_{11}=0.9,x_{21}=0.08,\ell_1=0.2,M_1=\frac{1}{2}$\\The dotted lines ($r_i$) are collision radii.\\[-0.801em]}
	\label{fig:2KD_Colinear_LS}
\end{figure}
We see five intervals around these singularities: $-\infty<r_{4}< r_{3},r_{2}$ $<r_{1}<\infty$. We can ignore the interval $r<r_{4}$ (non-overlap, but negative radius), as this case is the same as $r>r_{1}$ with the $x_{ij}$ and ${\ell}_{i}$ permuted. Indeed, when $r<r_{2},r_{3}$, we may have negative radius, but we can disregard these radially negative cases for the same reason. However, the behavior of our system has the most physical relevance when $r>r_{1}$, the non-overlapped case. We will restrict our attention to this case.
\vspace*{-5mm}\subsubsection{Case: $r>r_1$, Non-Overlap}
\begin{figure}[H]
	\centering
	\includegraphics[width=0.4\linewidth]{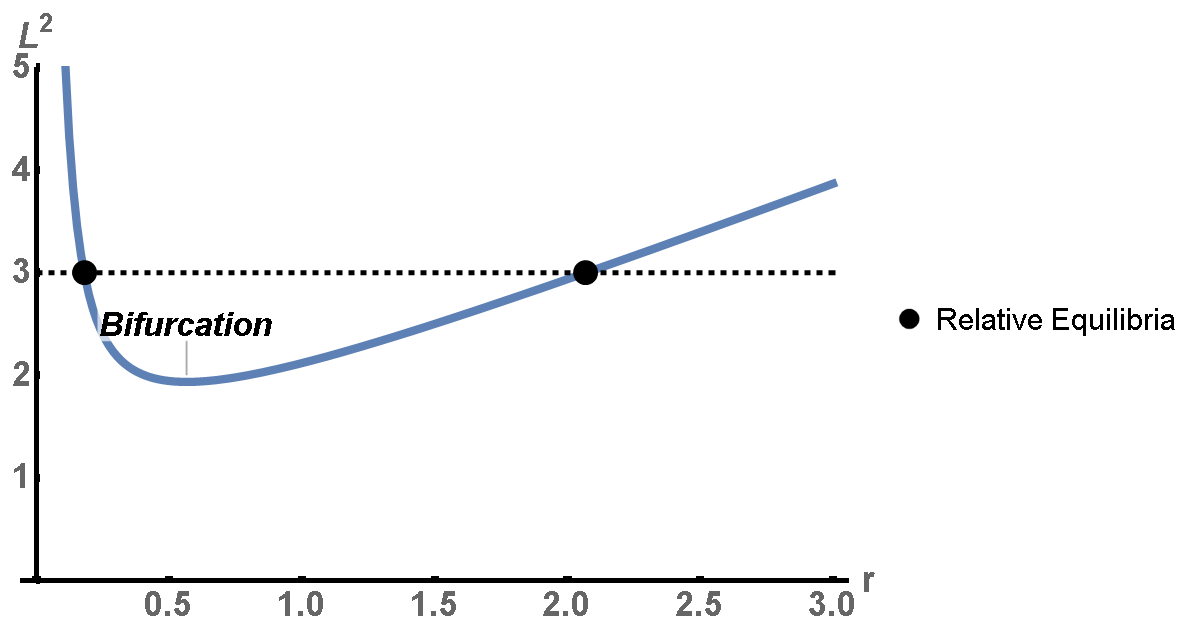}
	\caption{\\\bfseries Colinear $L^2$ Curve for Non-Overlap}{\vspace{0.8em}\footnotesize $x_{11}=\frac{1}{2},x_{21}=\frac{1}{10},\ell_1=\frac{1}{2},M_1=\frac{1}{2}$\\We have a single bifurcation, so zero or two RE depending upon angular momentum.\\[-1.901em]}
	\label{fig:6_3a_no_overlap}
\end{figure}
To examine this interval of most interest, let: $R:=r-{x_{22}\ell}_{2}-{x_{11}\ell}_{1}$ or $r=R+{x_{11}\ell}_{1}+{x_{22}\ell}_{2}$. So for $R\in\left(0,\infty\right)$, we have $r>r_{1}$. Substituting this into $\eqref{eq:2DB_Col_LS_EQ}$:\\ $L^2(R)=$
$\frac{1}{R+{x}_{11}\ell_{1}+{x}_{22}\ell_{2}}\left(\frac{{x}_{11}{x}_{21}}{\left(R+\ell_{1}\right)^{2}}+\frac{{x}_{12}{x}_{22}}{\left(R+\ell_{2}\right){}^{2}}\right)\left(\frac{{x}_{11}{x}_{12}}{M_{2}}{\ell}_{1}^{2}+\frac{{x}_{21}{x}_{22}}{M_{1}}{\ell}_{2}^{2}+\left({R+x}_{11}{\ell}_{1}{+x}_{22}{\ell}_{2}\right)^{2}\right)^{2}$\\[-1.001em]
\begin{equation}\label{eq:2DB_Colinear_NoOver}
	{\textstyle   +\frac{1}{R+{x}_{11}\ell_{1}+{x}_{22}\ell_{2}}\left(\frac{{x}_{11}{x}_{22}}{\left(R+1\right)^{2}}+\frac{{x}_{12}{x}_{21}}{R^{2}}\right)\left(\frac{{x}_{11}{x}_{12}}{M_{2}}{\ell}_{1}^{2}+\frac{{x}_{21}{x}_{22}}{M_{1}}{\ell}_{2}^{2}+\left({R+x}_{11}{\ell}_{1}{+x}_{22}{\ell}_{2}\right)^{2}\right)^{2}.}
\end{equation}\\[-0.901em]
Now we will prove the shape of the non-overlap $L^2$ graphs. In Figure \ref{fig:6_3a_no_overlap}, we see the RE bifurcate for a particular value of $L^2$.  Below this value, there are no RE, and above it there are two RE. 
\begin{theorem}[Number of Non-Overlapped Colinear RE as Angular Momentum Varies]
	\label{2DB_Colinear_NonOverlapped_Bifurcation_Thm}
	In the planar colinear non-overlapped two-dumbbell problem, and for sufficiently low angular momenta $L$, there are no RE. However, for some angular momentum $L_b>0$, and at some radius $r_b$, two RE bifurcate. For all angular momenta greater than $L_b$, there are two RE.
\end{theorem}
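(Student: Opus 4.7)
The plan is to mirror the approach used in the dumbbell/point mass colinear non-overlap proof, adapting it to the more elaborate expression \eqref{eq:2DB_Colinear_NoOver}. First I would establish the boundary behavior of $L^{2}(R)$. Every factor in \eqref{eq:2DB_Colinear_NoOver} is manifestly positive for $R>0$, so $L^{2}(R)>0$ throughout. As $R\to 0^{+}$ the singular term $\frac{x_{12}x_{21}}{R^{2}}$ dominates, driving $L^{2}\to +\infty$. As $R\to\infty$ the factor $(R+x_{11}\ell_{1}+x_{22}\ell_{2})^{2}$ grows like $R^{2}$ while the outer quotient times the bracketed sum behaves like $R^{-1}\cdot R^{-2}\cdot R^{4}=R$, yielding $L^{2}\to +\infty$ as well. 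Continuity of $L^{2}$ on $(0,\infty)$ together with these two infinite limits and positivity then guarantees the existence of an attained positive global minimum $L_{b}^{2}:=L^{2}(r_{b})$ at some interior $r_{b}\in(0,\infty)$.

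The main task, and the main obstacle, is proving uniqueness of the critical point so that exactly one bifurcation occurs. The cleanest route is to show strict convexity: $(L^{2})''(R)>0$ for all $R>0$. I would multiply $(L^{2})''(R)$ through by a positive common denominator, expressed as a product of the form $R^{a}(R+\ell_{1})^{b}(R+\ell_{2})^{c}(R+1)^{d}(R+x_{11}\ell_{1}+x_{22}\ell_{2})^{e}$ with $M_{1}M_{2}$ weights, and then collect the numerator as a polynomial $p(R)$ with coefficients depending on $x_{11},x_{21},\ell_{1},M_{1}$. The analogous computation in the dumbbell/point mass case produced a polynomial whose coefficients were patently positive after the substitution $x_{1}=u/(1+u)$; the present case should succumb to an analogous substitution $x_{11}=u_{1}/(1+u_{1})$, $x_{21}=u_{2}/(1+u_{2})$, $\ell_{1}=s/(1+s)$, so that every $x_{ij}$ and $\ell_{i}$ becomes a positive rational in $u_{1},u_{2},s,M_{1}$. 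After clearing denominators, each coefficient of $p(R)$ should be a polynomial in $u_{1},u_{2},s,M_{1}$ with nonnegative integer coefficients, and therefore strictly positive.

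Granted strict convexity, $(L^{2})'(R)$ is strictly increasing, so it has at most one zero. Combined with $(L^{2})'(R)\to -\infty$ as $R\to 0^{+}$ (from the $R^{-3}$ behavior of the singular term) and $(L^{2})'(R)\to +\infty$ as $R\to\infty$, the intermediate value theorem produces exactly one critical point $r_{b}$, and it is a global minimum. Consequently the horizontal line $L^{2}=\text{const}$ in the $(R,L^{2})$-plane intersects the graph of $L^{2}(R)$ in zero points when $L<L_{b}$, in one point (the bifurcation) when $L=L_{b}$, and in exactly two points when $L>L_{b}$, which is precisely the statement of the theorem.

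The expected difficulty is purely computational: the polynomial $p(R)$ arising from $(L^{2})''(R)$ will have a large number of terms once the four distance factors $d_{uv}^{-2}$ are differentiated twice and combined, and verifying by inspection that every coefficient is a sum of positive monomials after the uniformizing substitution will be tedious. If a single coefficient should fail to be manifestly positive, a fallback is to bound $(L^{2})'$ directly: show it has at most one sign change on $(0,\infty)$ via Descartes' rule of signs on its numerator polynomial (as was done in the overlap subcase), which would still give uniqueness of the critical point without requiring full convexity.
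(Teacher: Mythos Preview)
Your proposal is correct and follows essentially the same route as the paper: positivity and boundary blow-up of $L^{2}(R)$, strict convexity established by clearing denominators and using the uniformizing substitutions $x_{i1}=u_i/(1+u_i)$, $\ell_1=\ell/(1+\ell)$ so that the expanded numerator has only positive terms, then the intermediate value theorem for the unique critical point. The one refinement the paper adds that you omit is to also substitute $M_1=m/(1+m)$, $M_2=1/(1+m)$ (both $M_i$ appear in the $B_i$ denominators), after which the fully expanded expression---$37{,}144$ terms---is verified to be a sum of positive monomials.
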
\vspace{-1em}
\begin{proof}By inspection, observe that \eqref{eq:2DB_Colinear_NoOver} is always positive, and $L^2(R)\rightarrow \infty $ as $R\rightarrow\left\{0,\infty\right\}.$ To show that we have only one bifurcation as we vary $L^{2},$ we first show that $L^2$ has positive curvature. We do this by showing that $(L^2)^{\text{ }\prime\prime }(R)$ is always positive. Multiplying $(L^2)^{\text{ }\prime\prime }(R)$ by the positive
expression $h(R):=\left(R+{x_{11}\ell}_{1}+{x_{22}\ell}_{2}\right)^{3}$, we get: $h(R)(L^2)^{\prime\prime}(R)=$\\[-3.901em]
\begin{adjustwidth}{-0.7cm}{0cm}
\begin{flalign}\label{eq:2DB_Col_LS_Curvature_EQ}
	  \begin{aligned}
	\scriptstyle &\scriptstyle 6\left({R+x}_{11}{\ell}_{1}{+x}_{22}{\ell}_{2}\right)^{2}\left(\frac{{x}_{12}{x}_{22}}{\left(R+\ell_{2}\right){}^{4}}+\frac{{x}_{12}{x}_{21}}{R^{4}}+\frac{{x}_{11}{x}_{21}}{\left(R+\ell_{1}\right){}^{4}}+\frac{{x}_{11}{x}_{22}}{(R+1)^{4}}\right)\left(\left({R+x}_{11}{\ell}_{1}{+x}_{22}{\ell}_{2}\right){{}}^{2}+\frac{{x}_{11}{x}_{12}}{M_{2}}{\ell}_{1}^{2}{+}\frac{{x}_{21}{x}_{22}}{M_{1}}{\ell}_{2}^{2}\right){{}}^{2}\\
	&\scriptstyle-16\left({R+x}_{11}{\ell}_{1}{+x}_{22}{\ell}_{2}\right)^{3}\left(\frac{{x}_{12}{x}_{22}}{\left(R+\ell_{2}\right){}^{3}}+\frac{{x}_{12}{x}_{21}}{R^{3}}+\frac{{x}_{11}{x}_{21}}{\left(R+\ell_{1}\right){}^{3}}+\frac{{x}_{11}{x}_{22}}{(R+1)^{3}}\right)\left(\left({R+x}_{11}{\ell}_{1}{+x}_{22}{\ell}_{2}\right){}^{2}+\frac{{x}_{11}{x}_{12}}{M_{2}}{\ell}_{1}^{2}+\frac{{x}_{21}{x}_{22}}{M_{1}}{\ell}_{2}^{2}\right)\\
	&\scriptstyle+4\left({ R+x}_{11}{\ell}_{1}{+x}_{22}{\ell}_{2}\right)\left(\frac{{x}_{12}{x}_{22}}{\left(R+\ell_{2}\right){}^{3}}+\frac{{x}_{12}{x}_{21}}{R^{3}}+\frac{{x}_{11}{x}_{21}}{\left(R+\ell_{1}\right){}^{3}}+\frac{{x}_{11}{x}_{22}}{(R+1)^{3}}\right)\left(\left({R+x}_{11}{\ell}_{1}{+x}_{22}{\ell}_{2}\right){}^{2}+\frac{{x}_{11}{x}_{12}}{M_{2}}{\ell}_{1}^{2}+\frac{{x}_{21}{x}_{22}}{M_{1}}{\ell}_{2}^{2}\right){}^{2}\\
	&\scriptstyle+2\left(\frac{{x}_{12}{x}_{22}}{\left(R+\ell_{2}\right){}^{2}}+\frac{{x}_{12}{x}_{21}}{R^{2}}+\frac{{x}_{11}{x}_{21}}{\left(R+\ell_{1}\right){}^{2}}+\frac{{x}_{11}{x}_{22}}{(R+1)^{2}}\right)\left(\left({ R+x}_{11}{\ell}_{1}{+x}_{22}{\ell}_{2}\right){}^{2}+\frac{{x}_{11}{x}_{12}}{M_{2}}{\ell}_{1}^{2}+\frac{{x}_{21}{x}_{22}}{M_{1}}{\ell}_{2}^{2}\right){}^{2}\\
	&\scriptstyle-4\left({R+x}_{11}{\ell}_{1}{+x}_{22}{\ell}_{2}\right)^{2}\left(\frac{{x}_{12}{x}_{22}}{\left(R+\ell_{2}\right){}^{2}}+\frac{{x}_{12}{x}_{21}}{R^{2}}+\frac{{x}_{11}{x}_{21}}{\left(R+\ell_{1}\right){}^{2}}+\frac{{x}_{11}{x}_{22}}{(R+1)^{2}}\right)\left(\left({R+x}_{11}{\ell}_{1}{+x}_{22}{\ell}_{2}\right){}^{2}+\frac{{x}_{11}{x}_{12}}{M_{2}}{\ell}_{1}^{2}+\frac{{x}_{21}{x}_{22}}{M_{1}}{\ell}_{2}^{2}\right)\\
	&\scriptstyle+8\left(R+{x}_{11}\ell_{1}+{x}_{22}\ell_{2}\right)^{4}{}\left(\frac{{x}_{12}{x}_{22}}{\left(R+\ell_{2}\right){}^{2}}+\frac{{x}_{12}{x}_{21}}{R^{2}}+\frac{{x}_{11}{x}_{21}}{\left(R+\ell_{1}\right){}^{2}}+\frac{{x}_{11}{x}_{22}}{(R+1)^{2}}\right).
	  \end{aligned}
	\end{flalign}
\end{adjustwidth}
To simplify our analysis, we make the following changes of parameter:
\begin{flalign}\label{eq:2DB_Col_LS_Curvature_COVs}
	\begin{aligned}
	&\textstyle{x}_{11}\rightarrow\frac{u_{1}}{1+u_{1}},\text{\enspace\enspace}{x}_{12}\rightarrow \frac{1}{1+u_{1}},\text{\enspace\enspace}{x}_{21}\rightarrow \frac{u_{2}}{1+u_{2}},\text{\enspace\enspace}{x}_{22}\rightarrow \frac{1}{1+u_{2}},\\
	&\textstyle M_{1}\rightarrow\frac{m}{1+m},\text{\enspace\enspace}M_{2}\rightarrow\frac{1}{1+m},
	\text{\enspace\enspace}\ell_{1}\rightarrow\frac{\ell}{1+\ell},\text{\enspace\enspace}\text{ and }\ell_{2}\rightarrow\frac{1}{1+\ell}.
	\end{aligned}
\end{flalign}

Note that we still have ${x}_{i1}+{x}_{i2}=M_{1}+$ $M_{2}=\ell_{1}+\ell_{2}=1,$ but now we have characterized\ these 8 parameters as only 4 parameters $0<u_{i},m,\ell<\infty$. Upon substitution into \eqref{eq:2DB_Col_LS_Curvature_EQ}, the resulting expanded expression has 37,144 terms. However, since all of our parameters are defined to be positive, and the terms are all added together, the result is positive. Therefore the graph is concave up, giving at most one bifurcation. We calculate that $(L^2)^{\;\prime }(R)=0$ when:
\begin{equation}
	\begin{split}
	\scriptstyle 0& \scriptstyle=4\left({R+\ell}_{1}{x}_{11}{+\ell}_{2}{x}_{22}\right)^{2}\left(\left({R+\ell}_{1}{x}_{11}{+\ell}_{2}{x}_{22}\right)^{2}+\frac{{x}_{11}{x}_{12}}{M_{2}}{\ell}_{1}^{2}+\frac{{x}_{21}{x}_{22}}{M_{1}}{\ell}_{2}^{2}\right)\left(\frac{{x}_{12}{x}_{21}}{R^{2}}+\frac{{x}_{11}{x}_{22}}{\left(R+1\right)^{2}}+\frac{{x}_{11}{x}_{21}}{\left(R+\ell_{1}\right)^{2}}+\frac{{x}_{12}{x}_{22}}{\left(R+\ell_{2}\right)^{2}}\right)\nonumber \\
	&\scriptstyle-2\left({R+\ell}_{1}{x}_{11}{+\ell}_{2}{x}_{22}\right)\left(\left({R+\ell}_{1}{x}_{11}{+\ell}_{2}{x}_{22}\right)^{2}+\frac{{x}_{11}{x}_{12}}{M_{2}}{\ell}_{1}^{2}+\frac{{x}_{21}{x}_{22}}{M_{1}}{\ell}_{2}^{2}\right)^{2}\left(\frac{{x}_{12}{x}_{21}}{R^{3}}+\frac{{x}_{11}{x}_{22}}{\left(R+1\right)^{3}}+\frac{{x}_{11}{x}_{21}}{\left(R+\ell_{1}\right)^{3}}+\frac{{x}_{12}{x}_{22}}{\left(R+\ell_{2}\right)^{3}}\right)\nonumber \\
	&\scriptstyle-\left(\left({R+\ell}_{1}{x}_{11}{+\ell}_{2}{x}_{22}\right)^{2}+\frac{{x}_{11}{x}_{12}}{M_{2}}\ell_{1}^{2}+\frac{{x}_{21}{x}_{22}}{M_{1}}\ell_{2}^{2}\right)^{2}\left(\frac{{x}_{12}{x}_{21}}{R^{2}}+\frac{{x}_{11}{x}_{22}}{\left(R+1\right)^{2}}+\frac{{x}_{11}{x}_{21}}{\left(R+\ell_{1}\right)^{2}}+\frac{{x}_{12}{x}_{22}}{\left(R+\ell_{2}\right)^{2}}\right).\nonumber
	\end{split}
\end{equation}
We note that (on $R>0$) this expression is continuous and takes negative (as $R\rightarrow 0$ for instance), and positive (as $R\rightarrow \infty $) values. Therefore, by the intermediate value theorem $(L^2)^{\;\prime }(R)$ has a zero. So $L^2$ must have a minimum, giving us a bifurcation of the number of RE as $L^{2}$ is varied. As noted above, $L^2$ is always positive, so this minimum is positive, giving these RE physical relevance (real angular momenta).
\end{proof}
Now that we know the location of the colinear RE, let us determine their stability.

\paragraph{Energetic Stability of Colinear}
\begin{theorem}[Stability for the Colinear Non-Overlapped Two-Dumbbell Problem]
	\label{2DB_Colinear_Stability_Thm}
	RE of the colinear, non-overlapped two-dumbbell problem are stable when $\partial_{r}L^{2}>0$.
\end{theorem}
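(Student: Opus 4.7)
The approach is to invoke the Dirichlet--Lagrange criterion through Smale's theorem, as in the Amended Potential chapter: it suffices to show that the $3\times 3$ Hessian $H$ of the amended potential $V$ displayed in \eqref{eq:2DB_Hession} is positive definite at the colinear RE.

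The first step is to exploit the colinear symmetry to decouple $H$ into a $1\times 1$ radial block and a $2\times 2$ angular block. Inspecting the angular requirements \eqref{eq:2DB_Angular_Requirements}, every term carries a factor of $\sin\theta_1$, $\sin\theta_2$, or $\sin(\theta_1-\theta_2)$, each of which vanishes when $\theta_1 = \theta_2 = 0$. Hence $\partial_{\theta_i}V \equiv 0$ identically along the colinear slice, which forces $\partial^2_{r,\theta_i}V|_{\text{RE}} = 0$. The radial entry is positive by assumption: \eqref{eq:2DB_Vr_Slope_Equal_LSquared_Slope} together with the hypothesis $\partial_r L^2 > 0$ gives $\partial^2_r V|_{\text{RE}} > 0$.

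The bulk of the work lies in showing that the $2\times 2$ angular block $H_{\theta\theta}$ is positive definite. Using \eqref{eq:2DB_Distances} and parametrizing the signed displacement of each mass along the line of centers by $s_1(u) \in \{-x_{12}\ell_1,\, x_{11}\ell_1\}$ and $s_2(v) \in \{-x_{22}\ell_2,\, x_{21}\ell_2\}$, a direct expansion yields
\[
\partial^2_{\theta_1}(1/d_{uv})|_0 = -\tfrac{s_1(r+s_2)}{d_{uv,0}^3},\qquad \partial^2_{\theta_2}(1/d_{uv})|_0 = \tfrac{s_2(r-s_1)}{d_{uv,0}^3},\qquad \partial^2_{\theta_1\theta_2}(1/d_{uv})|_0 = \tfrac{s_1 s_2}{d_{uv,0}^3}.
\]
Summing $-x_{1u}x_{2v}$ times these over the four mass pairs and regrouping, the diagonal entries $\partial^2_{\theta_i}V|_{\text{RE}}$ become positive linear combinations of inverse-cube differences $(d_{\text{inner}}^{-3} - d_{\text{outer}}^{-3})$, each strictly positive in the non-overlapped regime because the inner-mass pairings yield strictly smaller colinear distances than the outer ones. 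Moreover, the geometric identity $d_{11,0}+d_{22,0} = d_{12,0}+d_{21,0}$ (immediate from the colinear arrangement, since both sides equal $2r+(x_{12}-x_{11})\ell_1+(x_{21}-x_{22})\ell_2$) together with strict convexity of $x\mapsto 1/x^3$ on $(0,\infty)$ shows via a majorization argument that the off-diagonal entry $\partial^2_{\theta_1\theta_2}V|_{\text{RE}}$ has a fixed positive sign.

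The main obstacle is establishing $\det H_{\theta\theta} > 0$, since having all three entries positive is not yet enough. I would attack this by writing the determinant as a quadratic form in the four quantities $d_{uv,0}^{-3}$ and exploiting the same colinear constraint $d_{11,0}+d_{22,0} = d_{12,0}+d_{21,0}$ together with convexity of $1/x^3$ in a Cauchy--Schwarz-type inequality that controls the cross term by the geometric mean of the diagonal entries. Once $H_{\theta\theta}$ is shown positive definite, combining with the radial inequality gives $H$ positive definite at the RE, and the Dirichlet--Lagrange theorem via Smale yields the asserted energetic stability.
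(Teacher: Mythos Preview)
Your setup matches the paper: the block-diagonal decomposition of $H$ at $\theta_1=\theta_2=0$ and the radial positivity via \eqref{eq:2DB_Vr_Slope_Equal_LSquared_Slope} under the hypothesis $\partial_r L^2>0$ are exactly how the paper begins. The divergence is in how the $2\times 2$ angular block $H_{\theta\theta}$ is handled.

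The paper does \emph{not} argue geometrically. Instead it applies the change of parameters \eqref{eq:2DB_Col_LS_Curvature_COVs}, replacing $(x_{11},x_{12},x_{21},x_{22},M_1,M_2,\ell_1,\ell_2)$ by the four positive variables $(u_1,u_2,m,\ell)$, and then simply expands the diagonal entries and the determinant of $H_{\theta\theta}$ symbolically. Each of these becomes a long polynomial expression in $R,u_1,u_2,m,\ell$ in which every monomial has a positive coefficient, so positivity follows by inspection. This is the same device used in the proof of Theorem~\ref{2DB_Colinear_NonOverlapped_Bifurcation_Thm}. It is computational rather than conceptual, but it is complete.

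Your convexity/majorization route is more elegant in spirit, and the identity $d_{11,0}+d_{22,0}=d_{12,0}+d_{21,0}$ is a nice observation. However, the proposal has a genuine gap at the decisive step: you acknowledge that positivity of all three entries does not yet give $\det H_{\theta\theta}>0$, and then say only that you ``would attack this'' via an unspecified Cauchy--Schwarz-type bound. That is precisely the hard inequality, and nothing in the proposal indicates how the colinear constraint together with convexity of $x\mapsto x^{-3}$ would actually dominate the cross term by the geometric mean of the diagonals. Until that step is carried out explicitly, the argument is a heuristic rather than a proof. If you want to pursue this line, you would need to exhibit the determinant concretely as a sum of manifestly nonnegative terms (or a product of such), not merely invoke convexity in the abstract; otherwise the paper's brute-force substitution \eqref{eq:2DB_Col_LS_Curvature_COVs} is the reliable way to close the argument.
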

\begin{proof}
\label{2DB_Colinear_Energetic_Stability}As we saw in Section \ref{AmendedPotential}, to determine energetic stability we will need to check if the RE are strict minima of the amended potential $V$. 
We find that since $\theta_{1}=\theta_{2}=0$, we have $\partial_{R,{\theta}_{1}}V=\partial_{R,{\theta}_{2}}V=\partial_{{\theta}_{1},R}V=\partial_{{\theta}_{2},R}V=0.$ So our Hessian \eqref{eq:2DB_Hession} becomes block diagonal:\\
$\text{\enspace\enspace}H=\begin{bmatrix}\partial^2_{R}V&0&0\\ 
	0&\partial^2_{\theta_1}V&\partial^2_{\theta_1\theta_2}V\\
	0&\partial^2_{\theta_2,\theta_1}V&\partial^2_{\theta_2}V
\end{bmatrix}.$

Therefore, stability of our RE will be determined by the sign of $\partial^2_{R}V$ (which we learned from \eqref{eq:2DB_Vr_Slope_Equal_LSquared_Slope} was the sign of the slope of our $L^2$ graphs) and the sub-Hessian $H_s=\begin{bmatrix}\partial_{{\theta}_{1},{\theta}_{1}}V & \partial_{{\theta}_{1},{\theta}_{2}}V \\ 
\partial_{{\theta}_{2},{\theta}_{1}}V & \partial_{{\theta}_{2},{\theta}_{2}}V\end{bmatrix}$. Determining whether this matrix is positive definite is nontrivial due to the complexity of the component expressions. In an effort to simplify things, we make the same substitutions $\eqref{eq:2DB_Col_LS_Curvature_COVs}$ as we did with $\eqref{eq:2DB_Col_LS_Curvature_EQ}$. Then, the diagonal components as well as the determinant are long expressions of $u_1,u_2,m,$ and $\ell$. And since the parameters are defined to be positive, and all of the terms of these expressions are added together, we find the expressions to be positive, the sub-Hessian to be positive definite, and the RE to be stable when $\partial_rL^{2}>0$.
\end{proof}
If you note that the colinear two-dumbbell problem becomes the colinear dumbbell/point mass problem as $\ell_1 \rightarrow 0$, we should expect the stability results for the dumbbell/point mass problem to be consistent with this limit of the two-dumbbell problem. Figure \ref{fig:2DB_Linearization}b suggests stability converging on the dumbbell/point mass stability results shown in Figure \ref{fig:Col_M1_0p5}.
\paragraph{Linear Stability of Colinear}
\label{2DB_Colinear_Linear_Stability}Mapping the linear stability criteria \eqref{eq:2DB_Linear_Stability_Criteria} in the $Rx_{21}$-plane for the colinear configuration with no overlap, we find graphs:
\begin{figure}[H]
	\captionsetup[subfigure]{justification=centering}
	\centering
	\subfloat[$M_1=\frac{1}{2},\ell_1=\frac{1}{2},x_{11}=\frac{1}{2}$]
	{\includegraphics[width=0.45\textwidth]{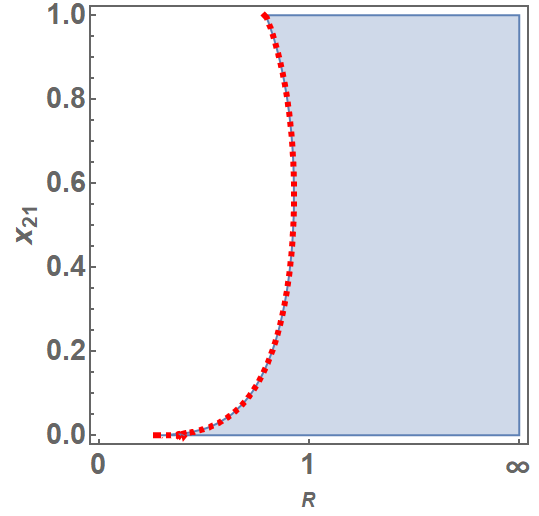}}
	\hspace{1cm}
	\subfloat[$M_1=\frac{1}{2},\ell_1=0.0001,x_{11}=0.9999$]
	{\includegraphics[width=0.45\textwidth]{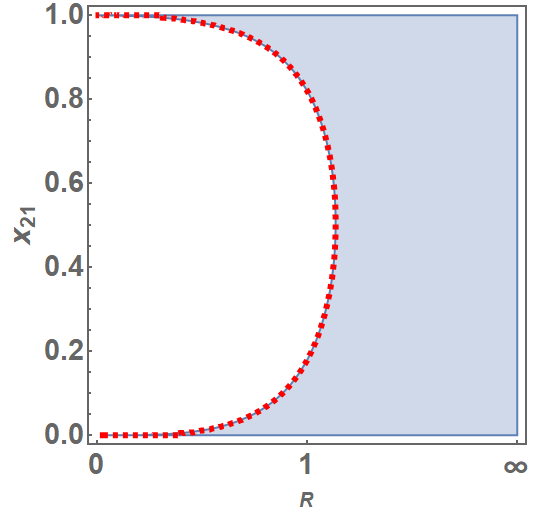}}
	\caption{\\\bfseries Colinear Energetic and Linear Stability}{\vspace{0.8em}\footnotesize $\partial_r L^2=0$ (dashed curve),  Energetically and Linearly Stable Region (shaded)\\Numerically, linear stability appears to coincide with energetic stability ($\partial_r L^{2}>0$).\\The parameters in (b) are close to the colinear dumbbell/point mass problem, approximating Figure \ref{fig:Col_M1_0p5}.\\[-0.801em]}
	\label{fig:2DB_Linearization}
\end{figure}
In order to visually cover the entire $R$ range, note that we have used $R=\frac{z}{2-z}$ as the horizontal axis in our graphs with $z\in\left(0,2\right)$. The dashed curve going through the plane is $\partial_r L^{2}=0$. So we see that the linear stability boundary coincides with the energetic stability boundary calculated in Section \ref{2DB_Energetic_Stability}. Figure \ref{fig:2DB_Linearization}a is the graph for our equal mass case, and we see that for each $x_{21}$, there is some radius $R\leq 1$ below which RE are unstable (as well as linearly unstable), and above which RE are stable (consistent with Theorem \ref{2DB_Colinear_Stability_Thm}). Figure \ref{fig:2DB_Linearization}b has parameters $\ell_{1}=0.0001$ and $x_{11}=0.9999,$ approaching the colinear dumbbell/point mass problem. Observe that the graph is indistinguishable from Figure \ref{fig:Col_M1_0p5} in Section \ref{1DB_Colinear_Linear_Stability}. We chose $M_{1}=\frac{1}{2}$ for these graphs, but qualitatively the shapes of these graphs do not change as you vary $M_{1}.$ The main difference is that the radius where stability begins is smaller for $M_{1}$ small, and larger for $M_{1}$ large. Now let us look at another symmetric configuration, where the bodies are perpendicular.

\vspace*{-5mm}\subsection{Perpendicular Configuration: $(\theta_1,\theta_2) =(\frac{\pi}{2},0)$}\label{2DB_Perp}
\begin{figure}[H]
	\centering
	\includegraphics[width=0.35\linewidth]{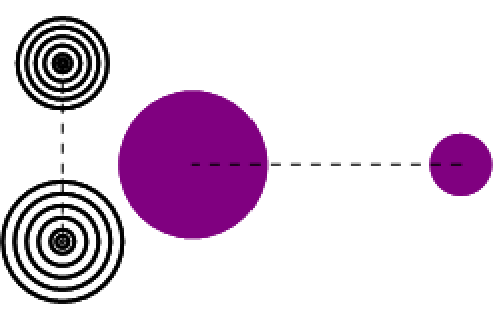}
	\caption{\\\bfseries Perpendicular Configuration}
	\label{fig:QW491W03}
\end{figure}
We examine $(\theta_1,\theta_2) =(\frac{\pi}{2},0)$, but similar results are found for $(\theta_1,\theta_2) =(0,\frac{\pi}{2})$. For this perpendicular configuration, the distances between our masses \eqref{eq:2DB_Distances} become:\\
$\text{\enspace\enspace}d_{11}=\sqrt{{r}^{2}{-2}x_{22}{\ell}_{2}{r+}x_{12}^{2}{\ell}_{1}^{2}+x_{22}^{2}{\ell}_{2}^{2}},\text{\enspace\enspace}d_{12}=\sqrt{{r}^{2}{+2}x_{21}{\ell}_{2}{r+}x_{12}^{2}{\ell}_{1}^{2}+x_{21}^{2}{\ell}_{2}^{2}},$\\
$\text{\enspace\enspace}d_{21}=\sqrt{{r}^{2}-{2}x_{22}{\ell}_{2}{r+}x_{11}^{2}{\ell}_{1}^{2}+x_{22}^{2}{\ell}_{2}^{2}},\text{\enspace\enspace}d_{22}=\sqrt{{r}^{2}{+2}x_{21}{\ell}_{2}{r+}x_{11}^{2}{\ell}_{1}^{2}+x_{21}^{2}{\ell}_{2}^{2}}.$\\
And our angular requirements $\eqref{eq:2DB_Angular_REQ}$ become:
\begin{subequations}\label{eq:2DB_Perp_Angular_REQ}
	\makeatletter\@fleqntrue\makeatother
	\begin{align}
		\begin{split}
			&\text{$0=x_{21}\left(x_{22}{\ell}_{2}-{r}\right)\left(\frac{1}{d_{11}^{3}}-\frac{1}{d_{21}^{3}}\right)+x_{22}\left(x_{21}{\ell}_{2}+{r}\right)\left(\frac{1}{d_{22}^{3}}-\frac{1}{d_{12}^{3}}\right),$}\label{eq:2DB_Perp_Angular_REQa}
		\end{split}\\
		\begin{split}
			&\text{$0=x_{11}x_{12}{\ell}_{1}\left(\frac{1}{d_{12}^{3}}{-}\frac{1}{d_{11}^{3}}+\frac{1}{d_{21}^{3}}-\frac{1}{d_{22}^{3}}\right).$}\label{eq:2DB_Perp_Angular_REQb}
		\end{split}
	\end{align}
\end{subequations}
Unlike the colinear configuration, simply being perpendicular is insufficient to guarantee a RE.  Rather, the following theorem gives further restrictions on the shape and mass values.
\begin{theorem}[Perpendicular RE for the Two-Dumbbell Problem]\label{2DB_Number_of_Rhombus_RE_Thm}
For the perpendicular $(\theta_1,\theta_2) =(\frac{\pi}{2},0)$ configuration of the two-dumbbell problem, there is one family of isosceles RE ($d_{11}=d_{21}$ and $d_{12}=d_{22}$) where the masses on the vertical body are equal.
\end{theorem}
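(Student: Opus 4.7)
The plan is to analyse the perpendicular-configuration angular requirements \eqref{eq:2DB_Perp_Angular_REQ}, extract a mass-symmetry constraint from equation (b), and then verify that (a) is automatically satisfied. First I will observe that with $\cos\theta_1 = \cos(\theta_1-\theta_2) = 0$ and $\cos\theta_2 = 1$, the four distances in \eqref{eq:2DB_Distances} collapse to
\[ d_{11}^2 = A+\beta,\quad d_{21}^2 = A+\alpha,\quad d_{12}^2 = B+\beta,\quad d_{22}^2 = B+\alpha, \]
where I set $A := (r-x_{22}\ell_2)^2$, $B := (r+x_{21}\ell_2)^2$, $\alpha := x_{11}^2\ell_1^2$, and $\beta := x_{12}^2\ell_1^2$. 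The key structural observation is that the first index of $d_{ij}$ toggles the vertical-body mass contribution between $\alpha$ and $\beta$, while the second index toggles the horizontal-body geometry between $A$ and $B$.

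Next I will extract the constraint from \eqref{eq:2DB_Perp_Angular_REQb}. Dividing by the nonzero factor $x_{11}x_{12}\ell_1$ and regrouping yields $(d_{12}^{-3}-d_{22}^{-3}) = (d_{11}^{-3}-d_{21}^{-3})$, which in the new notation reads $G(\beta) = G(\alpha)$ with
\[ G(t) := (B+t)^{-3/2}-(A+t)^{-3/2}. \]
A direct computation gives $G'(t) = \tfrac{3}{2}\bigl[(A+t)^{-5/2}-(B+t)^{-5/2}\bigr]$, which has strictly constant sign whenever $A\ne B$. Hence $G$ is strictly monotonic, so $G(\alpha)=G(\beta)$ forces $\alpha=\beta$, i.e.\ $x_{11}^2 = x_{12}^2$; since the mass ratios are positive and sum to $1$, I conclude $x_{11}=x_{12}=\tfrac{1}{2}$, meaning the vertical body must have equal masses.

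Finally I will verify that this mass symmetry is also sufficient and yields the isosceles shape in the statement. Under $x_{11}=x_{12}$ we have $\alpha=\beta$, so $d_{11}=d_{21}$ and $d_{12}=d_{22}$, which is exactly the isosceles condition. Substituting into \eqref{eq:2DB_Perp_Angular_REQa}, both parenthesised differences $(d_{11}^{-3}-d_{21}^{-3})$ and $(d_{22}^{-3}-d_{12}^{-3})$ vanish identically, so (a) holds for every $r$. The radial requirement \eqref{eq:2DB_Radial_REQ} then reduces to a single equation expressing $L^2$ as a function of $r$, giving the one-parameter family of RE. The main technical wrinkle is the isolated overlap radius $r=(x_{22}-x_{21})\ell_2/2$ (possible only when $x_{22}>x_{21}$) at which $A=B$: there $G\equiv 0$ and the monotonicity argument gives no information, so uniqueness at that lone radius requires a separate direct check of (a), but this measure-zero exception does not alter the family described by the theorem.
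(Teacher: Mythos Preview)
Your argument is correct and takes a genuinely different route from the paper's. The paper proceeds by case analysis: it first rewrites \eqref{eq:2DB_Perp_Angular_REQb} as $d_{12}^{-3}+d_{21}^{-3}=d_{11}^{-3}+d_{22}^{-3}$, then separately considers (i) all four distances equal (rhombus), (ii) $d_{11}=d_{21}$ and $d_{12}=d_{22}$ (isosceles), (iii) $d_{11}=d_{12}$ and $d_{21}=d_{22}$, and (iv) all four distances distinct, in the last case combining both angular requirements algebraically to reach a contradiction. Your approach bypasses this enumeration entirely: the parametrisation $d_{ij}^{2}\in\{A,B\}+\{\alpha,\beta\}$ together with the strict monotonicity of $G(t)=(B+t)^{-3/2}-(A+t)^{-3/2}$ extracts $x_{11}=x_{12}$ from equation \eqref{eq:2DB_Perp_Angular_REQb} \emph{alone}, after which \eqref{eq:2DB_Perp_Angular_REQa} follows automatically. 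This is more conceptual and shorter.

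One small point: the isolated radius $A=B$ you flag is precisely the paper's ``rhombus'' radius $r=\tfrac{x_{22}-x_{21}}{2}\ell_2$. There, rather than leave it as an unperformed check, you can finish in one line: when $A=B$ we have $d_{11}=d_{12}$ and $d_{21}=d_{22}$, and substituting into \eqref{eq:2DB_Perp_Angular_REQa} gives $-r\,(d_{11}^{-3}-d_{21}^{-3})=0$, which still forces $d_{11}=d_{21}$ and hence $\alpha=\beta$. So the isosceles conclusion holds there too, and the family is complete without exception.
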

\begin{proof}
A simple calculation reveals that the requirement \eqref{eq:2DB_Perp_Angular_REQb}, for nontrivial dumbbells, reduces to:
\begin{align}\label{eq:2DB_Perpendicular_RE_Conditions}
	{\displaystyle  {\frac{1}{d_{12}^{3}}+\frac{1}{d_{21}^{3}}=}\frac{1}{d_{22}^{3}}+\frac{1}{d_{11}^{3}}.}
\end{align}
Let us examine when \eqref{eq:2DB_Perpendicular_RE_Conditions} is satisfied as we set various distances equal.

\textbf{Rhombus}\\When all of distances are equal, the dumbbells form a rhombus and $\eqref{eq:2DB_Perpendicular_RE_Conditions}$ and $\eqref{eq:2DB_Perp_Angular_REQa}$ are satisfied. Observe that $d_{11}=d_{21}$ implies $x_{11}=x_{12}=\frac{1}{2}.$ Also, $d_{11}=d_{12},$ implies:\\[-1em]
\begin{equation}\label{eq:2DB_Rhombus_REQ}
r=\frac{x_{22}-x_{21}}{2}\ell_2
\end{equation}
This also requires that $r<\frac{\ell_{2}}{2}$ (otherwise \eqref{eq:2DB_Rhombus_REQ} implies $x_{21}\le0$). $x_{21}$ must also be less than $\frac{1}{2}$, otherwise the equation implies negative radius. Therefore $r<\frac{\ell_2}{2}<\ell_2 x_{22}$ (since $x_{21}<\frac{1}{2}\implies x_{22}>\frac{1}{2}$), so the radius is in the overlap region. From \eqref{eq:2DB_Rhombus_REQ} you can choose various $x_{2j}$ masses, and the solution adjusts the radius and angular momentum necessary to maintain the rhombus shape.

\textbf{Isosceles}\\Another way we can set distances equal to each other is to satisfy the requirement \eqref{eq:2DB_Perpendicular_RE_Conditions} with $d_{11}=d_{21}$ and $d_{22}=d_{12}$ (which also implies $x_{11}=x_{12}=\frac{1}{2}$). Observe that these also satisfy \eqref{eq:2DB_Perp_Angular_REQa} giving isosceles triangles for any radius. Of course, the rhombus RE also satisfy this requirement, and are therefore a subset of the isosceles RE.

\textbf{Rhombus Again}\\The last way we could satisfy \eqref{eq:2DB_Perpendicular_RE_Conditions} with equal distances is in the case when $d_{11}=d_{12}$ and $d_{22}=d_{21}$. However, upon substituting these into $\eqref{eq:2DB_Perp_Angular_REQa}$, we find we must also have $d_{11}=d_{21}$, which gives us the rhombus configuration again.

\textbf{Unequal distances}\\The last situation to examine is when all of the distances are different, but still somehow manage to satisfy \eqref{eq:2DB_Perpendicular_RE_Conditions}. Rearranging \eqref{eq:2DB_Perp_Angular_REQa}, we have:
\begin{equation}\label{eq:2DB_Perp_Ang_REQ_b}
0={x}_{21}{\small\ell}_{2}\left(\frac{1}{d_{11}^{3}}{\small-}\frac{1}{d_{21}^{3}}+\frac{1}{d_{22}^{3}}{\small-}\frac{1}{d_{12}^{3}}\right) +{\small r}\left(\frac{1}{d_{22}^{3}}{\small-}\frac{1}{d_{12}^{3}}-\frac{1}{d_{11}^{3}}{\small +}\frac{1}{d_{21}^{3}}\right).
\end{equation} 
Notice that if we assume \eqref{eq:2DB_Perpendicular_RE_Conditions} is satisfied, this allows us to eliminate the first term in \eqref{eq:2DB_Perp_Ang_REQ_b}. And since our radius is greater than zero, \eqref{eq:2DB_Perp_Ang_REQ_b} becomes: 
\begin{equation}\label{eq:2DB_Perp_UnequalDist_Ang_REQ}
{\small \frac{1}{d_{22}^{3}}+\frac{1}{d_{21}^{3}}=}\frac{1}{d_{11}^{3}}{\small +}\frac{1}{d_{12}^{3}}.
\end{equation} 
 Solving for $\frac{1}{d_{22}^{3}}$ in \eqref{eq:2DB_Perpendicular_RE_Conditions} and substituting into \eqref{eq:2DB_Perp_UnequalDist_Ang_REQ} reduces to $\frac{1}{d_{11}^{3}}=\frac{1}{d_{21}^{3}}$, which implies $d_{11}=d_{21}.$ But we assumed the distances were all different, so we have a contradiction.  Therefore, the only possible RE for the perpendicular configuration is the isosceles family (with a subset of rhombus RE).
\end{proof}
We restrict our analysis to the area of most interest, the non-overlap radii.

\vspace*{-5mm}\subsubsection{Perpendicular - Isosceles}
\label{Isosceles}\begin{figure}[H]
	\centering
	\includegraphics[width=0.30\linewidth]{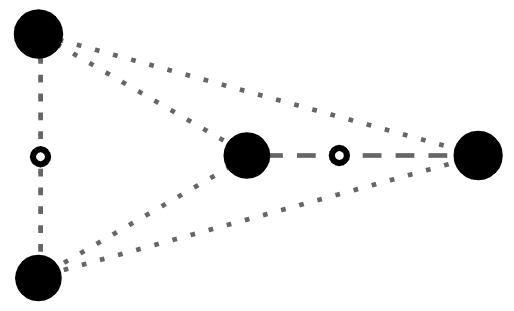}
	\caption{\\\bfseries Perpendicular - Isosceles}{\text{ }\\[-1.801em]}
	\label{fig:QW491W05}
\end{figure}
Substituting the isosceles mass and distance restrictions ($d_{11}=d_{21},$ $d_{22}=d_{12}$, and $x_{11}=\frac{1}{2}$) into our radial requirement \eqref{eq:2DB_Radial_REQ}, we find the angular momentum:\\[-1.801em]
\begin{align}
	{\textstyle L^{2}=\frac{\left({r}^{2}+B_{1}+B_{2}\right)^{2}}{r}\left(
		\frac{x_{21}\left({r-}x_{22}{\ell}_{2}\right)+x_{21}\left({r-}x_{22}{\ell}_{2}\right)}{d_{11}^{3}}+\frac{x_{22}\left({r+}x_{21}{\ell}_{2}\right)+x_{22}\left({r+}x_{21}{\ell}_{2}\right)}{d_{12}^{3}}\right) \nonumber}\\
	{\textstyle =\frac{2\left({r}^{2}+B_{1}+B_{2}\right)^{2}}{r}\left(x_{21}\frac{{r-}x_{22}{\ell}_{2}}{d_{11}^{3}}+x_{22}\frac{{r+}x_{21}{\ell}_{2}}{d_{12}^{3}}\right) . \nonumber}
\end{align}
Let us explore the shape of the curve to see how the RE bifurcate. Note that overlap of the dumbbell masses occurs when $r<\ell_{2}x_{22}.$ So we make the substitution $r\rightarrow R+\ell_{2}x_{22}$, and our distances become: $d_{11}=d_{21}=\sqrt{R^{2}{+\frac{1}{4}\ell}_{1}^{2}},$ and $
d_{12}=d_{22}=\sqrt{\left(R+\ell_{2}\right)^{2}{+\frac{1}{4}\ell}_{1}^{2}}.$ 
Our angular momentum becomes: 
$L^{2}=\frac{2\left(\left(R+\ell_{2}x_{22}\right)^{2}+B_{1}+B_{2}\right)^{2}}{R+\ell_{2}x_{22}}\left(x_{21}\frac{R}{d_{11}^{3}}+x_{22}\frac{R+\ell_{2}}{d_{12}^{3}}\right)$, where we see that $L^2>0$, and therefore the angular momentum is real for the non-overlap region. 
As $R\rightarrow 0$ we have: $L^{2}\rightarrow \frac{2\left(x_{22}^2\ell_{2}^{2}+\frac{x_{21}x_{22}}{M_{1}}\ell_{2}^{2} +\frac{1}{4M_{2}}\ell_{1}^{2}\right)^{2}}{\sqrt{\ell_{2}^{2}{+\frac{1}{4}\ell}_{1}^{2}}^{3}}$, and as $R\rightarrow\infty$ we have $L^{2}\rightarrow\infty$. However, the $L^2$ bifurcation curve is qualitatively different depending upon $(\ell_1,M_1,x_{21})$. Below, we graph three of the most common $L^2$ shapes.
\begin{figure}[H]
	\centering
	\includegraphics[width=0.48\linewidth]{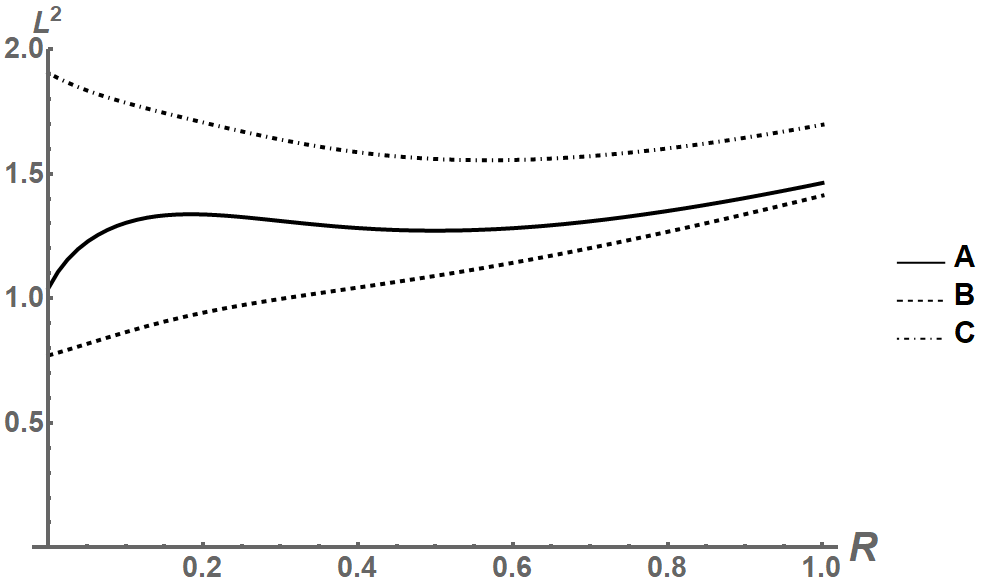}
	\caption{\\\bfseries Isosceles $L^2$ Curves}{\vspace{0.8em}\footnotesize $L^2$ graphs for parameter values with $\ell_1=\frac{3}{4}$ and\\(A) $M_1=\frac{1}{2},$ $x_{21}=\frac{3}{4}$.  (B) $M_1=\frac{1}{4},$ $x_{21}=\frac{1}{2}$.  (C) $M_1=\frac{1}{20},$ $x_{21}=\frac{1}{4}$.\\$L^2$ curves take a variety of shapes depending on parameters.  No simple relationship was found. \\[-0.801em]}
	\label{fig:2D_Perp_Isos}
\end{figure}
As bifurcation parameter $L^2$ increases, the number of RE found in the graphs above are A: ($0\rightarrow1\rightarrow3\rightarrow1$), B: ($0\rightarrow1$), and C: ($0\rightarrow2\rightarrow1$). And while these account for the vast majority of shapes, we have also found ($0\rightarrow2\rightarrow4\rightarrow2\rightarrow1$), ($0\rightarrow2\rightarrow1\rightarrow3\rightarrow1$), and ($0\rightarrow2\rightarrow4\rightarrow3\rightarrow1$). So no simple relationship between the shapes and parameter values was found. However, for all these curves, we start out with no RE for low angular momenta, and a single RE for sufficiently large angular momenta.

\paragraph{Energetic Stability of Isosceles}
\label{Energetic_Stability_of_Isosceles}As we saw in Section \ref{AmendedPotential}, to determine energetic stability we check if the RE are strict minima of the amended potential $V$. For $(\theta_{1},\theta_2)=(\frac{\pi}{2},0)$ and $x_{11}=x_{12}=\frac{1}{2}$, the resulting Hessian is block diagonal: 
\begin{equation}\label{eq:2DB_Perp_Hession}
	H=\begin{bmatrix}
		\partial_{r}^{2}V & 0 & 0 \\ 0 & \partial_{\theta_{1}}^{2}V & \partial_{\theta_{1},\theta_{2}}^{2}V\\0 & \partial_{\theta_{2},\theta_{1}}^{2}V & \partial_{\theta_{2}}^{2}V
	\end{bmatrix}.
\end{equation}
Recall from \eqref{eq:2DB_Vr_Slope_Equal_LSquared_Slope} that $\partial_{r}^{2}V$ is positive when the slope of our $L^2$ graph is positive. However, we also note by inspection that $\partial_{\theta_{1}}^{2}V$ is strictly negative:\\ $\text{\enspace\enspace}\partial_{\theta_{1}}^{2}V=-24\ell_{1}^{2}\left(\frac{x_{21}(r-x_{22}\ell_{2})^{2}}{\left(4(r-x_{22}\ell_{2})^{2}+\ell_{1}^{2}\right)^{5/2}}+\frac{x_{22}(r+x_{21}\ell_{2})^{2}}{\left(4(r+x_{21}\ell_{2})^{2}+\ell_{1}^{2}\right)^{5/2}}\right)$. 

Therefore, whenever the determinant of the first minor of $H$ is positive, the determinant of the second minor is negative. So, we have no strict minima or stability. Observe that we also lacked stability in \ref{1DB_Energetic_Stability_Isosceles} for the isosceles dumbbell/point mass problem. Note that the isosceles two-dumbbell problem above approaches the equal mass isosceles dumbbell/point mass problem as $\ell_2 \rightarrow 0$, so it's not surprising that in the limit we should also find no stability.

\paragraph{Linear Stability of Isosceles}
Despite the lack of energetic stability, when we map the linear stability criteria \eqref{eq:2DB_Linear_Stability_Criteria} in the $rM_{1}$-plane, we see linear stability for large $M_{1}$ on some non-overlapping radial intervals (with starting radius $r_s>\ell_2x_{22}$).  We observe that varying $x_{21}$ does not significantly affect the location of this region. As seen in the figures below, the region starts at higher $M_{1}$ when $\ell_{1}$ low and the radial intervals cease at the dashed line $\partial_r L^2=0$. Similar graphs for $\left(\theta_{1},\theta_{2}\right) =\left(0,\frac{\pi}{2}\right) $ exist with linear stability for small $M_{1}$. So for a sufficiently massive vertical body, a radially oriented satellite can find linear stability.
\begin{figure}[H]
	\captionsetup[subfigure]{justification=centering}
	\centering
	\subfloat[$\ell_1=0.2,x_{21}=\frac{1}{2}$]
	{\includegraphics[width=0.3\textwidth]{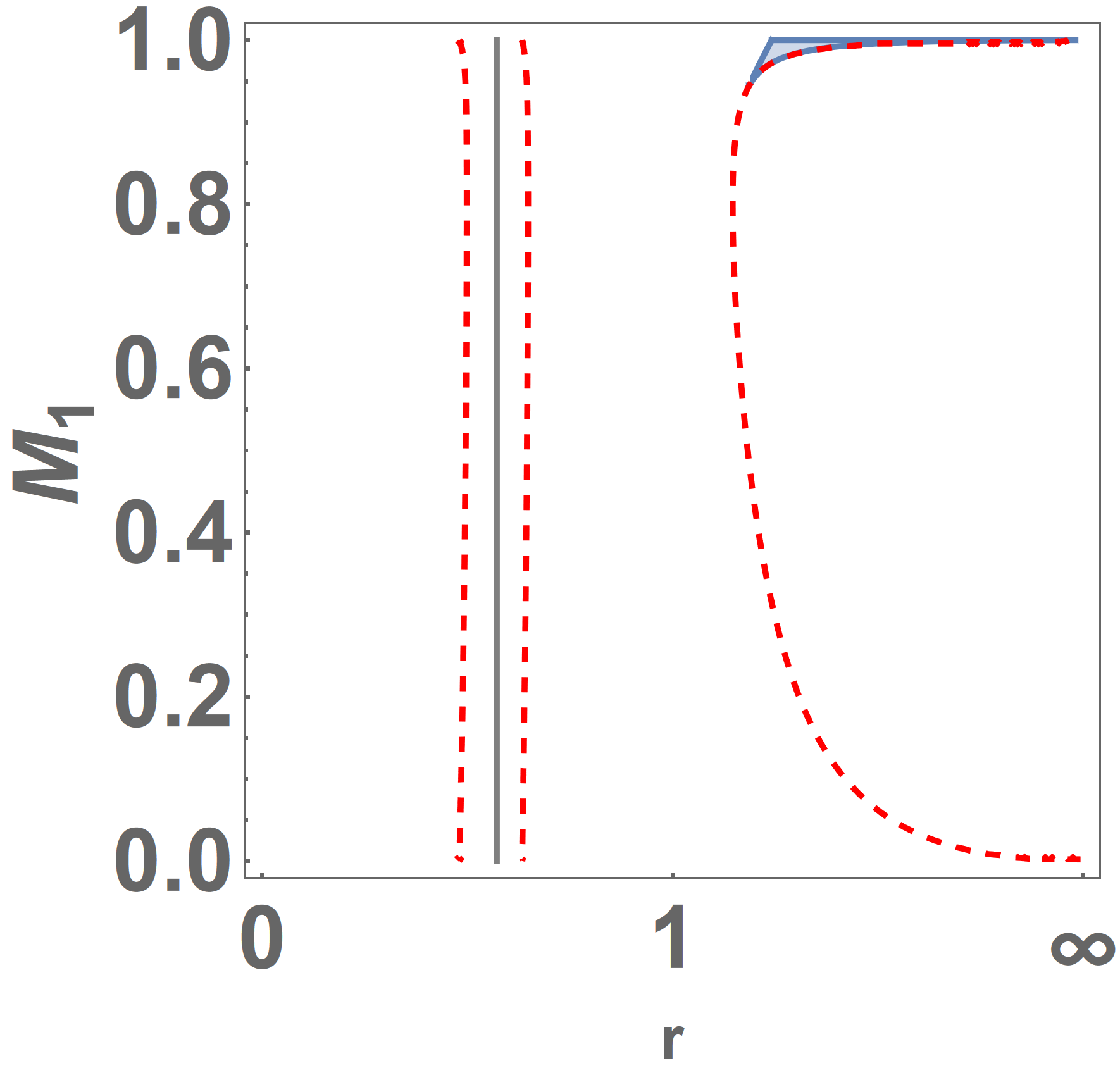}}
	\hspace{.1cm}
	\subfloat[$\ell_1=\frac{1}{2},x_{21}=\frac{1}{2}$]
	{\includegraphics[width=0.3\textwidth]{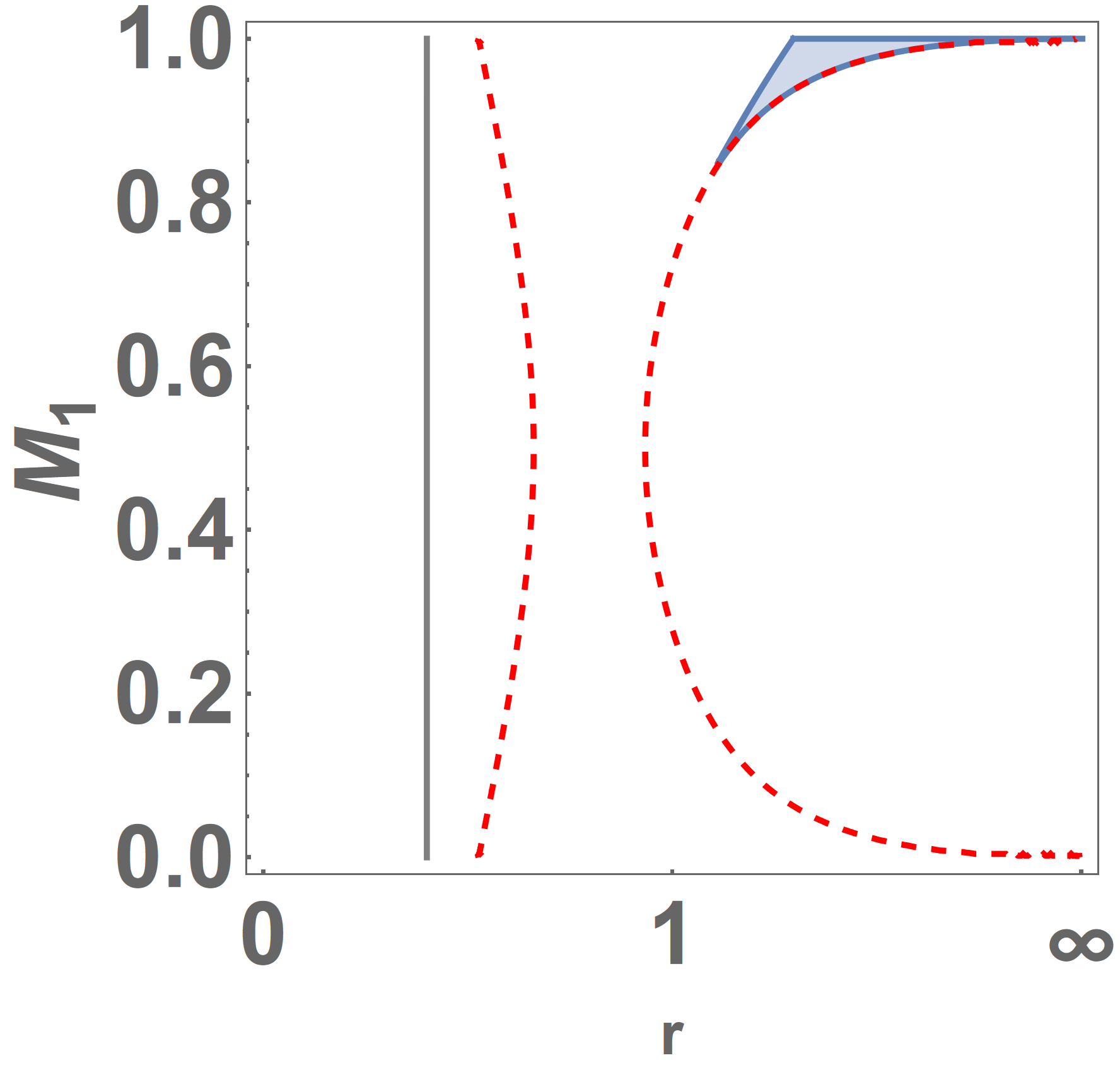}}
	\hspace{.1cm}
	\subfloat[$\ell_1=0.99,x_{21}=\frac{1}{2}$]
	{\includegraphics[width=0.3\textwidth]{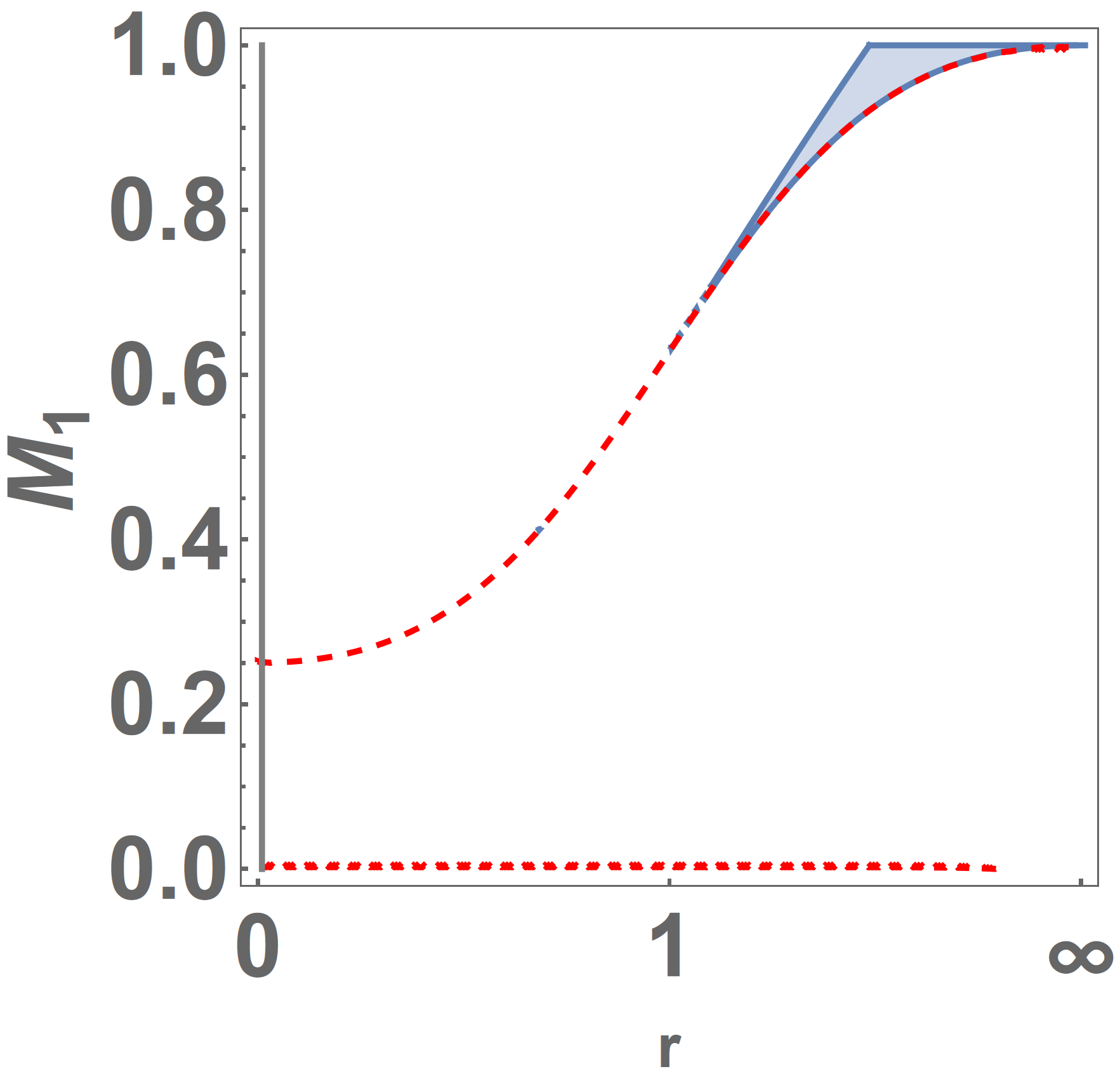}}
	\caption{\\\bfseries Isosceles Linear Stability}{\vspace{0.8em}\footnotesize $r=\ell_2x_{22}$ (solid line), $\partial_r L^2=0$ (dashed line), Linearly Stable Region (shaded)\\Linear stability with large vertical dumbbell mass $M_1$, bounded above by $\partial_r L^2=0$.\\[-0.801em]}
	\label{fig:Linearization_Isos_By_M1}
\end{figure}
\begin{wrapfigure}{R}{0.15\textwidth}
	\centering
	    \captionsetup{justification=centering}
	\raisebox{0pt}[\dimexpr\height-1.6\baselineskip\relax]{\includegraphics[width=0.15\textwidth]{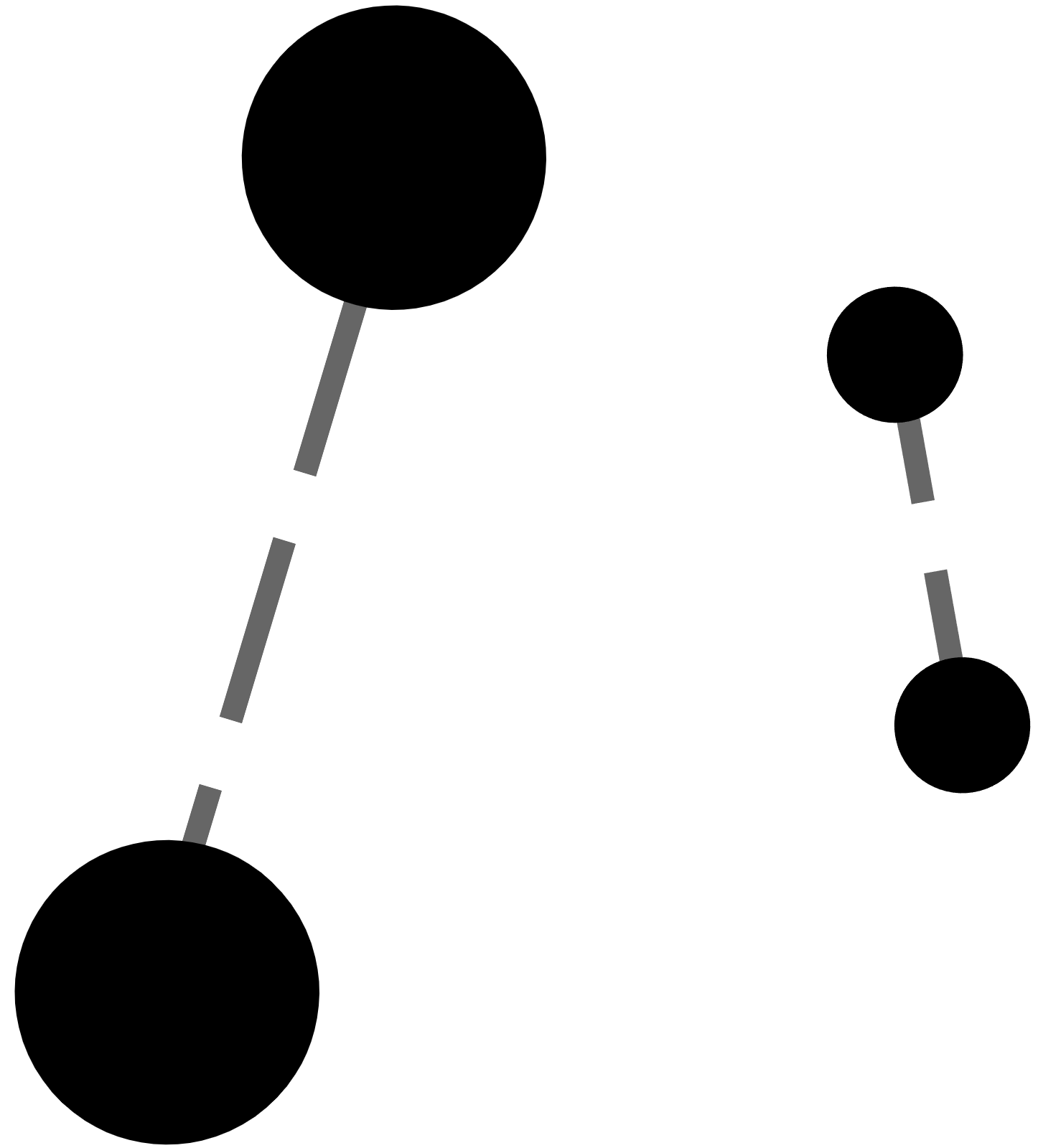}}
	\caption{\\\bfseries  \protect{Equal Mass Configuration}}
\end{wrapfigure}\vspace*{-5mm}\subsection{(Pairwise) Equal Mass Configuration: $x_{11}=x_{12}$ and $x_{21}=x_{22}$}
Another obvious rotational symmetry is when the dumbbells are parallel, a trapezoid configuration. Applying $\theta_{1}=\theta_{2}=\frac{\pi}{2}$ to angular requirements \eqref{eq:2DB_Angular_Requirements}: 
\begin{equation}
	\begin{split}\label{eq:Trap_Angular_Requirements}
		&\textstyle {0 = }-x_{21}r\left(\frac{1}{d_{11}^{3}}-\frac{1}{d_{21}^{3}}\right)+x_{22}r\left(\frac{1}{d_{22}^{3}}-\frac{1}{d_{12}^{3}}\right)\text{, and}\\
		&\textstyle { 0 = \hspace{3pt}}x_{11}r\left(\frac{1}{d_{11}^{3}}-\frac{1}{d_{12}^{3}}\right)+x_{12}\left(\frac{1}{d_{21}^{3}}-\frac{1}{d_{22}^{3}}\right).
	\end{split}
\end{equation}
Finding all possible solutions to the system is nontrivial.  However, working with the system one finds $d_{11}=d_{22}$ and $d_{12}=d_{21}$ to be the most straightforward conditions solving it. These conditions further imply either the relationship $\frac{\ell_1} {\ell_2} =\frac{x_{12}-x_{11}}{x_{22}-x_{21}}$ between the parameters, or pairwise equal masses. Pairwise equal masses means the two dumbbells may have different masses ($M_1$ need not equal $M_2$), but the mass ratios on each dumbbell are equal: $x_{i1}=x_{i2}=\frac{1}{2}$. For this paper, we will focus on this (pairwise) equal mass configuration. In fact, this section will focus more generally on it (not assuming a trapezoid).  This will allow us to not only study the equal mass trapezoid configuration, but it provides a larger context in which to study asymmetric RE which bifurcate from the symmetric configurations we've studied so far. Substituting equal masses into our angular requirements \eqref{eq:2DB_Angular_Requirements}, we find:
\begin{equation}
	\begin{split}\label{eq:EqMass_Angular_Requirements}
		&\textstyle {0=}\left({\ell}_{2}\sin\left({\theta}_{1}{-\theta}_{2}\right) -2{r}\sin{\theta}_{1}\right)\left(\frac{1}{d_{11}^{3}}-\frac{1}{d_{21}^{3}}\right)+\left({\ell}_{2}\sin\left({\theta}_{1}{-\theta}_{2}\right)+2{r}\sin{\theta}_{1}\right)\left(\frac{1}{d_{22}^{3}}-\frac{1}{d_{12}^{3}}\right)\text{, and}\\
		&\textstyle { 0=}\left({\ell}_{1}\sin\left({\theta}_{1}{-\theta}_{2}\right) -2{r}\sin{\theta}_{2}\right)\left(\frac{1}{d_{11}^{3}}-\frac{1}{d_{12}^{3}}\right)-\left({\ell}_{1}\sin\left({\theta}_{1}{-\theta}_{2}\right)+2{r}\sin{\theta}_{2}\right)\left(\frac{1}{d_{21}^{3}}-\frac{1}{d_{22}^{3}}\right).
	\end{split}
\end{equation}
Or equivalently:\\
$\text{\enspace\enspace}\textstyle{0=2r\sin\theta_{1}\left(d_{11}^3d_{12}^3(d_{21}^3+d_{22}^3)-d_{21}^3d_{22}^3(d_{11}^3+d_{12}^3)\right)}\\ \text{\enspace\enspace\enspace\enspace}+\ell_{2}\sin\left({\theta}_{1}{-\theta}_{2}\right)\left(d_{11}^3d_{12}^3(d_{21}^3-d_{22}^3)-d_{21}^3d_{22}^3(d_{11}^3-d_{12}^3)\right)$, and\\
$\text{\enspace\enspace}\textstyle{0=2r\sin\theta_{2}\left(d_{11}^3d_{12}^3(d_{22}^3-d_{21}^3)-d_{21}^3d_{22}^3(d_{11}^3-d_{12}^3)\right)}\\ \text{\enspace\enspace\enspace\enspace}+\ell_{1}\sin\left({\theta}_{1}{-\theta}_{2}\right)\left(d_{11}^3d_{12}^3(d_{21}^3-d_{22}^3)-d_{21}^3d_{22}^3(d_{11}^3-d_{12}^3)\right)$, where 
\begin{align}\label{eq:EqMass_Distances}
	{\scriptstyle   d_{uv}=\sqrt{{r}^{2}-\left(-1\right)^{u}{\ell}_{1}{r}\cos{\theta}_{1}+\left(-1\right)^{v}{\ell_{2}r}\cos{\theta}_{2}-\left(-1\right)^{u+v}\frac{1}{2}{\ell}_{1}{\ell}_{2}\cos\left(\theta_{1}-\theta_{2}\right){+\frac{1}{4}}\left({\ell}_{1}^{2}{+\ell}_{2}^{2}\right)}}.
\end{align}

Our radial requirement becomes:\\
$\text{\enspace\enspace}L^{2}=\frac{\left({r}^{2}+B_{1}+B_{2}\right)^{2}}{8r}\left(\frac{2{r+\ell}_{1}\cos{\theta}_{1}{-\ell}_{2}\cos{\theta}_{2}}{d_{11}^{3}}+\frac{2{r+\ell}_{1}\cos{\theta}_{1}{+\ell}_{2}\cos{\theta}_{2}}{d_{12}^{3}}\right)$\\[-2em]
\begin{equation}\label{eq:EqMass_LS}
\textstyle\text{\enspace\enspace\enspace\enspace}+\frac{\left({r}^{2}+B_{1}+B_{2}\right)^{2}}{8r}\left(\frac{2{r-\ell}_{1}\cos{\theta}_{1}{-\ell}_{2}\cos{\theta}_{2}}{d_{21}^{3}}+\frac{2{r-\ell}_{1}\cos{\theta}_{1}{+\ell}_{2}\cos{\theta}_{2}}{d_{22}^{3}}\right)\text{, where }\scriptstyle B_{i}:=\frac{\ell_{i}^{2}}{4M_{j}}\text{ and }\scriptstyle n\ne i.
\end{equation}
So let us find some solutions for the angular requirements by looking at symmetric configurations and taking advantage of their accompanying simplifications.
We saw earlier that the colinear angular requirements are satisfied for any choice of masses, and the perpendicular angular requirements are met when the vertical masses are equal, so these configurations' requirements are certainly satisfied in the equal mass configuration. In an attempt to locate even more RE, we set the body rotation angles equal to each other, but not equal to zero.

\vspace*{-5mm}\subsubsection{Case: $\theta_1 =\theta_2\ne 0$}
\begin{adjustwidth}{0.5cm}{0cm}
The distances \eqref{eq:EqMass_Distances} become:\\
$\text{\enspace\enspace} d_{11}=\sqrt{{r}^{2}+\left({\ell}_{1}-{\ell_{2}}\right){r}\cos{\theta}_{1}+\frac{1}{4}\left(\ell_{1}-\ell_{2}\right)^{2}},\text{\enspace\enspace} d_{12}=\sqrt{{r}^{2}+{r}\cos{\theta}_{1}{+\frac{1}{4}}},$\\
$\text{\enspace\enspace} d_{21}=\sqrt{{r}^{2}-{r}\cos{\theta}_{1}{+\frac{1}{4}}}\text{,\enspace\enspace and }\text{\enspace\enspace}d_{22}=\sqrt{{r}^{2}-\left({\ell}_{1}-{\ell_{2}}\right){r}\cos{\theta}_{1}+\frac{1}{4}\left(\ell_{1}-\ell_{2}\right)^{2}}$.

Our angular requirements \eqref{eq:EqMass_Angular_Requirements} become:\\ $\text{\enspace\enspace}{0=d_{11}^3d_{12}^3(d_{22}^3+d_{21}^3)-d_{21}^3d_{22}^3(d_{11}^3+d_{12}^3)}$, and\\
$\text{\enspace\enspace} {0=d_{11}^3d_{12}^3(d_{22}^3-d_{21}^3)-d_{21}^3d_{22}^3(d_{11}^3-d_{12}^3)}$.

We find two obvious conditions satisfying these equations. The first is when $d_{11}=d_{22}$ and $d_{12}=d_{21}.$ Examining the distance formulas, we see this requires $\theta_{1}=\theta_{2}=\frac{\pi}{2}$, and we recapture the trapezoid configuration.
\begin{figure}[H]
	\centering
	\includegraphics[width=0.15\textwidth]{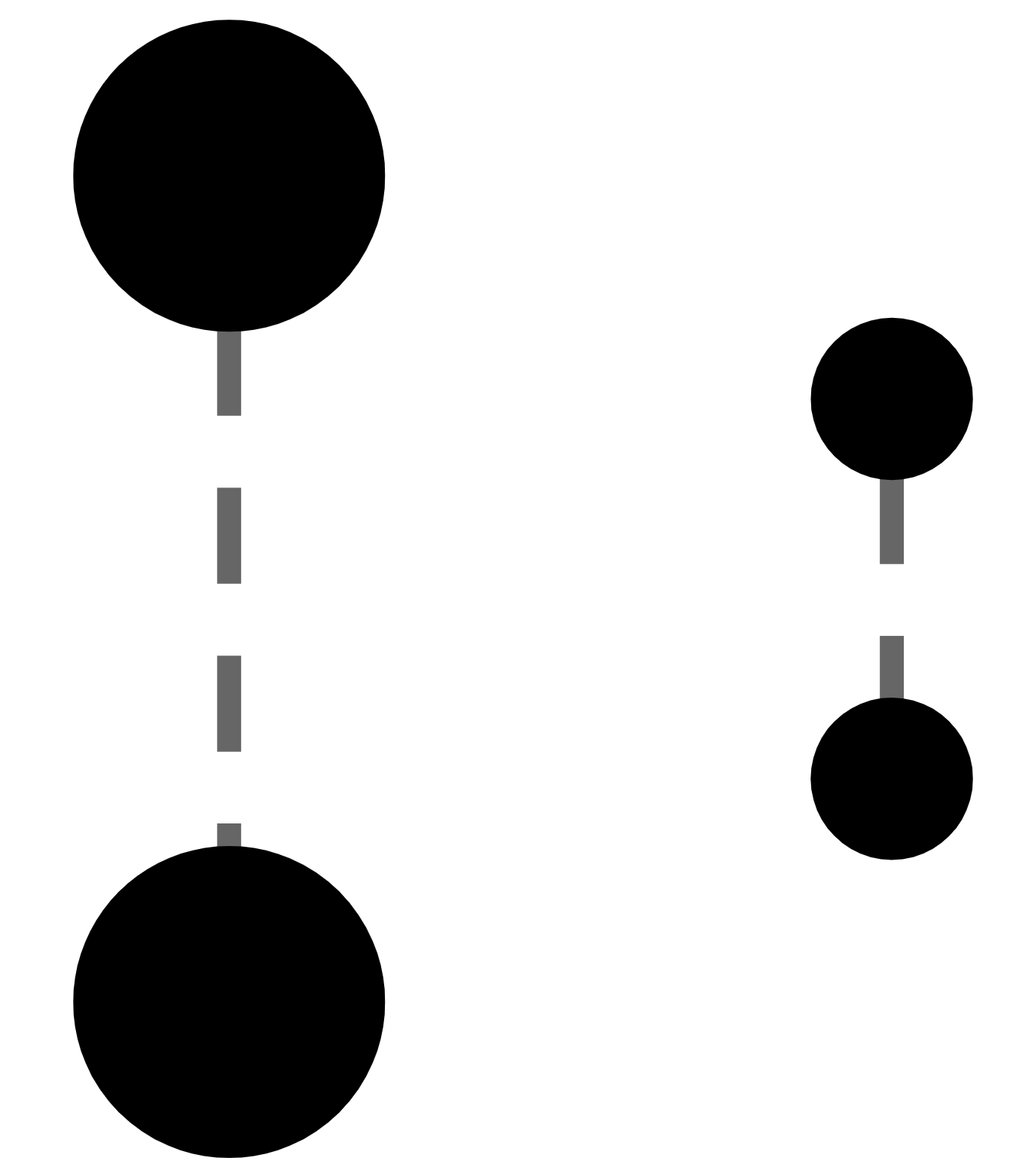}
	\caption{\\\bfseries Trapezoid Equal Mass Configuration}{\text{ }\\[-1.801em]}
\end{figure}
The second condition is when $d_{22}=d_{12}$ and $d_{11}=d_{21},$ or $d_{22}=d_{21}$ and $d_{11}=d_{12}$. Setting the distances equal (from \eqref{eq:EqMass_Distances}) requires: $2{\ell}_{1}{r}\cos{\theta}_{1}={-\frac{1}{4}+}\frac{1}{4}\left(\ell_{1}-\ell_{2}\right)^{2}$ and $2{\ell}_{1}{r}\cos{\theta}_{1}={+\frac{1}{4}-}\frac{1}{4}\left(\ell_{1}-\ell_{2}\right)^{2}$. Setting the right hand sides of these equations equal gives us $1=(\ell_1-\ell_2)^2$, which requires a trivial dumbbell $\ell_{1}\in\left\{0,1\right\}$. Similarly, requiring $d_{11}=d_{12}$ and $d_{22}=d_{21}$ (left pointing isosceles triangles) also requires $\ell_{2}\in\left\{0,1\right\}$. But these imply the dumbbell/point mass problem which we examined in Chapter 4.
\end{adjustwidth}

So the RE found through these simplifications are symmetric configurations (colinear, perpendicular, trapezoid). But are there any asymmetric RE?
\vspace*{-5mm}\subsubsection{Pitchfork Bifurcations}
In an attempt to find asymmetric solutions not found above, we perform a bifurcation analysis of the symmetric solutions (colinear, perpendicular, trapezoid), using radius as our bifurcation parameter. We hope to locate asymmetrical solutions bifurcating from the symmetric ones. Encouragingly, when we plot the solution curves to the equal mass angular requirements using Mathematica, we see what appears to be pitchfork bifurcations coming from our symmetric RE. Below, we determine quadratic approximations of these bifurcation curves.

Consider a real system $\widetilde{f}\left(\vec{z};r\right) ,\widetilde{g}\left(\vec{z};r\right) =0$ of two equations in two variables $\vec{z}:=\left(z_{1},z_{2}\right) $ and depending upon a bifurcation parameter $r.$ Our goal is to find a local approximation of solutions to the system near a pitchfork bifurcation. To that end, let us first obtain some properties of this type of pitchfork. Assuming our bifurcation occurs at $\vec{z}=\vec{0}$, in order to be considered a pitchfork, $\widetilde{f},\widetilde{g}$ must be odd functions of $\vec{z}.$ In particular, we would then have $\vec{0}$ as a solution for all $r.$ Therefore, it is possible to write the equations as $\widetilde{f}=z_{1}\widetilde{f}_{1}+z_{2}\widetilde{f}_{2}$ and $\widetilde{g}=z_{1}\widetilde{g}_{1}+z_{2}\widetilde{g}_{2}$. From these, we can write the Jacobian $D\left(\widetilde{f},\widetilde{g}\right) \ $(with respect to $\vec{z}$) at the origin as: $D\left(\widetilde{f},\widetilde{g}\right) _{\left(\vec{0};r\right)}=\begin{bmatrix}
	\widetilde{f}_{1} & \widetilde{f}_{2} \\ 
	\widetilde{g}_{1} & \widetilde{g}_{2}\end{bmatrix}|_{\left(\vec{0};r\right)},$ with eigenvalues $\mu_1(r)$, $\mu_2(r)$.

Since we are presupposing a codimension-1 bifurcation, without loss of generality, assume at $r=0$ we have $\mu _{1}\left(0\right) =0,\;\mu_{2}\left(0\right) =:\mu \neq 0.$ Also, to be a pitchfork, we have $\frac{d\mu _{1}}{dr}=:k\neq 0,$ where the zero eigenvalue is crossing the imaginary axis (transversality).

By performing a Jordan normal form decomposition, we can find a linear change of coordinates $P\vec{z}=:\vec{u}.$ From this we define: $\text{\enspace\enspace}\left\{ f(\vec{u};r),g(\vec{u};r)\right\} :=P^{-1}\left\{\widetilde{f}\left(P^{-1}\vec{u};r\right) ,\widetilde{g}\left(P^{-1}\vec{u};r\right) \right\}$.

Written this way, $f,g$ are odd in $\left(u_{1},u_{2}\right) $, and can be expressed as:
\begin{subequations}\label{eq:Pitchfork_Taylor}
	\makeatletter\@fleqntrue\makeatother
	\begin{align}
		\begin{split}
			&\text{$f=u_{1}f_{1}+u_{2}f_{2},\text{\enspace\enspace} \text{ and}$\label{eq:Pitchfork_Taylora}}
		\end{split}\\
		\begin{split}
			&\text{$g=u_{1}g_{1}+u_{2}g_{2}.$\label{eq:Pitchfork_Taylorb}}
		\end{split}
	\end{align}
\end{subequations}

Making this change brings the benefit that at $\left(\vec{u};r\right) =\left(\vec{0};0\right) $ we have\\ $D\left(f,g\right)_{\left(\vec{0};0\right)}=\begin{bmatrix}
	f_{1} & f_{2} \\ 
	g_{1} & g_{2}\end{bmatrix}|_{\left(\vec{0};0\right)}=\begin{bmatrix}
	0 & 0 \\ 
	0 & \mu\end{bmatrix}.$ Also, $f_{1r}=\frac{d\mu _{1}}{dr}=k;$ where $f_{1r}$ denotes the partial derivative of $f_{1}$ with respect to $r$.\\
Note that the functions $f_{i},g_{i}$ are even functions of $\vec{u}$ (since $f,g$ are odd functions of $u_{1},u_{2}$). So at $\vec{u}=\left(0,0\right) $ and for all $r,$ their partial derivatives vanish: $f_{iu_{1}}=f_{iu_{2}}=g_{iu_{1}}=g_{iu_{2}}=0.$ We will use this in a Taylor expansion below.

We will now show that under the assumption that $f_{1u_{1}u_{1}}\left(\vec{0};0\right) =:l\neq 0$, we can find a curve of solutions of the form: $u_{2}=\alpha\left(u_{1}\right) $ and $r=\beta\left(u_{1}\right) $ near $u_{1}=0,$ with: $\alpha(0)=\beta(0)=\alpha^{\prime}(0)=\beta^{\prime}(0)=\alpha ^{\prime \prime }(0)=0,$ and $\beta ^{\prime \prime }(0)\neq 0.$

We wish to continue our bifurcation point of $\eqref{eq:Pitchfork_Taylor}$ into a curve of solutions. So the Implicit Function Theorem (IFT) will be helpful, but first we need to do a change of variable to obtain a nonzero Jacobian determinant. We will take advantage of the nonzero values $g_{2}=\mu $ and $f_{1r}=k$. Observe that we obtain $g_{2}$ from $\eqref{eq:Pitchfork_Taylorb}$ if we take a derivative with respect to $u_{2}$, and (if we can first get rid of the $u_{1}$ coefficient) we can obtain $f_{1r}$ from $\eqref{eq:Pitchfork_Taylora}$ upon taking a derivative with respect to $r$. Therefore, let us make the change of variable:
\begin{align}\label{eq:Pitchfork_COV}
	{\displaystyle u_{2}=u_{1}z.}
\end{align}
Upon substituting \eqref{eq:Pitchfork_COV} into $\eqref{eq:Pitchfork_Taylor}$, we then define (after canceling a factor of $u_{1}$):\\
$\text{\enspace\enspace}F\left(u_{1},z;r\right):=\frac{f}{u_{1}}=f_{1}\left(u_{1},u_{1}z;r\right)+zf_{2}\left(u_{1},u_{1}z;r\right) =0,$

$\text{\enspace\enspace}G\left(u_{1},z;r\right):=\frac{g}{u_{1}}=g_{1}\left(u_{1},u_{1}z;r\right)+zg_{2}\left(u_{1},u_{1}z;r\right) =0.$

The Jacobian $\frac{\partial (F,G)}{(z,r)}$ evaluated at our bifurcation point is: 
$\begin{bmatrix}
	f_{2} & f_{1r} \\ 
	g_{2} & g_{1r}\end{bmatrix}|_{\left(0;0\right)}=\begin{bmatrix}
	0 & k \\ 
	\mu & g_{1r}\end{bmatrix}.$
And since $\mu ,k$ are nonzero, the determinant is nonzero, and IFT
guarantees solutions of the form:
\begin{align}\label{eq:Pitchfork_IFT_Sols}
	{\displaystyle  z=\gamma (u_{1}),\;r=\beta (u_{1}),}
\end{align}
with $\gamma (0)=\beta (0)=0$.
Recapturing $u_{2}$ from $z,$ we find from \eqref{eq:Pitchfork_COV},\eqref{eq:Pitchfork_IFT_Sols} that
\begin{align}\label{eq:Pitchfork_Pre_Sols1}
	{\displaystyle  u_{2}=u_{1}\gamma(u_{1})=:\alpha (u_{1}),}
\end{align}
with $\alpha (0)=\alpha ^{\prime }(0)=0.$ We then discover our curve by calculating the derivatives of $\alpha(u_{1}),$ $\beta (u_{1})$ using implicit differentiation on the equations:

$\text{\enspace\enspace}F\left(u_{1},u_{1}\gamma (u_{1}),\beta (u_{1})\right) =f_{1}\left(u_{1},u_{1}\gamma (u_{1}),\beta (u_{1})\right)+\gamma (u_{1})f_{2}\left(u_{1},u_{1}\gamma (u_{1}),\beta (u_{1})\right) =0,$

$\text{\enspace\enspace}G\left(u_{1},u_{1}\gamma (u_{1}),\beta (u_{1})\right) =g_{1}\left(u_{1},u_{1}\gamma (u_{1}),\beta (u_{1})\right)+\gamma (u_{1})g_{2}\left(u_{1},u_{1}\gamma (u_{1}),\beta (u_{1})\right) =0.$

The first derivatives are:
\begin{flalign}\label{eq:2DB_EQMass_Bifurcation_First_Derivative}
	\begin{aligned}
		& \text{\enspace\enspace}f_{1u_{1}}+\left(u_{1}\gamma ^{\prime }+\gamma\right)f_{1u_{2}}+\beta^{\prime}f_{1r}+\gamma^{\prime}f_{2}+\gamma\left(f_{2u_{1}}+\left(u_{1}\gamma ^{\prime }+\gamma\right)f_{2u_{2}}+f_{2r}\right) =0,\\
		& \text{\enspace\enspace}g_{1u_{1}}+\left(u_{1}\gamma ^{\prime }+\gamma\right)g_{1u_{2}}+\beta ^{\prime }g_{1r}+\gamma ^{\prime }g_{2}+\gamma\left(g_{2u_{1}}+\left(u_{1}\gamma ^{\prime }+\gamma\right)g_{2u_{1}}+g_{2r}\right) =0,
	\end{aligned}
\end{flalign}
where the arguments have been suppressed. At $\left(\vec{u};r\right) =\left(\vec{0};0\right) $ we get: $k\beta ^{\prime }(0)=g_{1r}\beta ^{\prime }(0)+\mu \gamma^{\prime }(0)=0.$ So $\beta ^{\prime }(0)=\gamma ^{\prime }(0)=0.$ Since $\gamma ^{\prime }(0)=0,$ then from \eqref{eq:Pitchfork_Pre_Sols1} we find $\alpha ^{\prime \prime}(0)=\left(\gamma +u_{1}\gamma ^{\prime }\right)^{\prime }|_{0}=\left(\gamma ^{\prime }+\gamma ^{\prime }+u_{1}\gamma ^{\prime \prime }\right)|_{0}=0$. Differentiating the first equation of \eqref{eq:2DB_EQMass_Bifurcation_First_Derivative}, and ignoring terms which we determined above vanish at $\left(\vec{0};0\right) $ gives:
$f_{1u_{1}u_{1}}+f_{1r}\beta ^{\prime \prime }(0)=l+k\beta^{\prime \prime }(0)=0\;\;\Rightarrow \;\;\beta ^{\prime \prime }(0)=-\frac{l}{k}.$

So if $f_{1u_{1}u_{1}}\left(\vec{0};0\right) =l\neq 0$, we can find our curve of solutions up to second order as:
\begin{subequations}\label{eq:Pitchfork_Sols}
	\makeatletter\@fleqntrue\makeatother
	\begin{align}
		\begin{split}
			&\text{$u_{2}=0+\mathcal{O}\left(u_{1}^{3}\right)$\text{, and }\label{eq:Pitchfork_SolsA}}
		\end{split}\\
		\begin{split}
			&\text{$\;r=\frac{\;\beta^{\prime\prime}(0)}{2!}u_{1}^{2}+\mathcal{O}\left(u_{1}^{3}\right)=-\frac{l}{2k}u_{1}^{2}+\mathcal{O}\left(u_{1}^{3}\right).$\label{eq:Pitchfork_SolsB}}
		\end{split}
	\end{align}
\end{subequations}
\\
Next, we show that if $P=P^{-1}$, then at the bifurcation point, $l=f_{1u_{1}u_{1}}$ in \eqref{eq:Pitchfork_SolsB}
is equal to $\frac{1}{3}f_{u_{1}u_{1}u_{1}}$. We point this out since, in practice,
it is easier to calculate $f_{u_{1}u_{1}u_{1}}$ than to calculate $f_{1u_{1}u_{1}}$ directly. Recall: $\left\{ f(\vec{u};r),g(\vec{u};r)\right\}
:=P^{-1}\left\{\widetilde{f}\left(P^{-1}\vec{u};r\right) ,\widetilde{g}\left(
P^{-1}\vec{u};r\right) \right\}$, where $\widetilde{f}=z_{1}\widetilde{f}_{1}+z_{2}\widetilde{f}_{2}$ and $\widetilde{g}=z_{1}\widetilde{g}_{1}+z_{2}\widetilde{g}_{2}.$ Then assuming: $P=P^{-1}=\,\begin{bmatrix}P_{11} & P_{12} \\ 
	P_{12} & -P_{11}\end{bmatrix}$, we have:\\
$\text{\enspace\enspace} f=u_{1}\left(P_{11}\left(P_{11}\widetilde{f}_{1}+P_{12}\widetilde{f}_{2}\right)+P_{12}\left(P_{11}\widetilde{g}_{1}+P_{12}\widetilde{g}_{2}\right)\right)$\\
$\text{\enspace\enspace\enspace\enspace\enspace\enspace}+u_{2}\left(P_{11}\left(P_{12}\widetilde{f}_{1}-P_{11}\widetilde{f}_{2}\right)+P_{12}\left(P_{12}\widetilde{g}_{1}-P_{11}\widetilde{g}_{2}\right)\right)$\\
$\text{\enspace\enspace\enspace\enspace}=:u_{1}f_{1}+u_{2}f_{2}.$

Note that $\partial_{u_{1}}^{3}f\,\,|_{u_{2}=0}=\partial
_{u_{1}}^{2}\left(f_{1}+u_{1}f_{1u_{1}}\right) =\partial_{u_{1}}\left(2f_{1u_{1}}+u_{1}f_{1u_{1}u_{1}}\right)
=3f_{1u_{1}u_{1}}+u_{1}f_{1u_{1}u_{1}u_{1}}.$ So, we do indeed find that $f_{u_{1}u_{1}u_{1}}\,\,|_{\vec{u}=0}=3f_{1u_{1}u_{1}}.$
Now let us apply these bifurcation results to our equal mass configuration.

\paragraph{Bifurcation Analysis for Equal Mass Configuration}
\label{2DB_EqualMass_Bifurcation_Analysis}When looking for RE numerically, depending on $\ell_{1},$ there are several 1D families of RE curving through the equal mass configuration space $(r,\theta_{1},\theta_{2})$. In particular, there are symmetric families which consist of the colinear configuration $\mathcal{R}_{C}$ where $\left(\theta_{1},\theta_{2}\right) =\left(0,0\right) $, the perpendicular configurations $\mathcal{R}_{P_{1}},\mathcal{R}_{P_{2}}$ where $\left(\theta_{1},\theta_{2}\right) \in\left\{\left(\frac{\pi}{2},0\right) ,\left(0,\frac{\pi}{2}\right) \right\}$, and the trapezoid configuration $\mathcal{R}_{T}$ where $\left(\theta_{1},\theta_{2}\right) =\left(\frac{\pi}{2},\frac{\pi}{2}\right)$. Their existence is independent of $r,\ell_{1}.$ We also find asymmetric solutions which bifurcate from these symmetric ones.

After setting $r$, the configuration space consists of a $(\theta_{1},\theta_{2})$ torus. If we graph the angular RE requirements ($V_{\theta_{i}}=0$), the resulting RE appear at their intersections. In Figure \ref{fig:EqMassRSlice_l10p75_r1000p} you see an example of such a graph ($r=1000,\;\ell_{1}=\frac{3}{4}$). Observe that $\mathcal{R}_{T}$ is at the center of the graph, and $\mathcal{R}_{C},\mathcal{R}_{P_{1}},\mathcal{R}_{P_{2}}$ are on the boundary. Note that the edges are identified (this is a torus), so in total we have only four RE visualized here (not nine).

\textbf{Bifurcation curves for $\ell_1\neq\frac{1}{2}$}. We now describe bifurcations shown in Figures \ref{fig:EqMassRSlice_l10p75_r0p252_r0p362},...,\ref{fig:EqMassRSlice_l10p75_r0p388_r0p498}. We include hatched regions in the figures to be used later when examining stability. 
\begin{itemize}[parsep=3pt]
	\item As $r$ increases from $0$, we see $\mathcal{R}_{C}$ bifurcating (Figure \ref{fig:EqMassRSlice_l10p75_r0p252_r0p362}a) at radius $r_{1}$ (whose value depends upon $\ell_{1}$) into three RE, including two new RE branches which we label $\mathcal{B}_{CC^{\pm}}$ (a bifurcation from colinear, later merging back with colinear) where $\mathcal{B}_{CC^{+}}$ has $\theta_1$ increasing, and $\mathcal{B}_{CC^{-}}$ has $\theta_1$ decreasing. Then at some $r_{8}$ (Figure \ref{fig:EqMassRSlice_l1_NotHalf_r8}), $\mathcal{B}_{CC^{\pm }}$ merge back with $\mathcal{R}_{C}$.
	\item At some $r_{2}$ (Figure \ref{fig:EqMassRSlice_l1_NotHalf_r2}), $\mathcal{R}_{T}$ bifurcates into three RE, including two new RE branches which we label $\mathcal{B}_{TP^{\pm}}$ (bifurcation going from trapezoid to perpendicular). Then at some $r_{7}$ (Figure \ref{fig:EqMassRSlice_l1_NotHalf_r7}), $\mathcal{B}_{TP^{\pm }}$ merge with $\mathcal{R}_{P_{1}}$.
	\item At some $r_{3}$ (Figure \ref{fig:EqMassRSlice_l1_NotHalf_r3}), $\mathcal{R}_{P_{2}}$ bifurcates into three RE, including two new RE branches which we label $\mathcal{B}_{PC^{\pm}}$ (going from perpendicular to colinear). Then at some $r_{5}$  (Figure \ref{fig:EqMassRSlice_l1_NotHalf_r5}), $\mathcal{B}_{PC^{\pm}}$ merge with $\mathcal{R}_{C}$.
	\item At some $r_{4}$ (Figure \ref{fig:EqMassRSlice_l1_NotHalf_r4}), $\mathcal{R}_{C}$ bifurcates again into two RE which we label $\mathcal{B}_{CP^{\pm}}$ (going from colinear to perpendicular). Then at some $r_{6}$ (Figure \ref{fig:EqMassRSlice_l1_NotHalf_r6}), $\mathcal{B}_{CP^{\pm }}$ merge with $\mathcal{R}_{P_{2}}$.
\end{itemize}
\begin{figure}[H]
	\captionsetup[subfigure]{justification=centering}
	\centering
	\subfloat[$r=0.252$.\\Subsequent to $\mathcal{B}_{CC}(r_{1})$.\\Shows RE bifurcating from $\theta_{1,2}=0$.]
	{\includegraphics[width=0.35\textwidth]{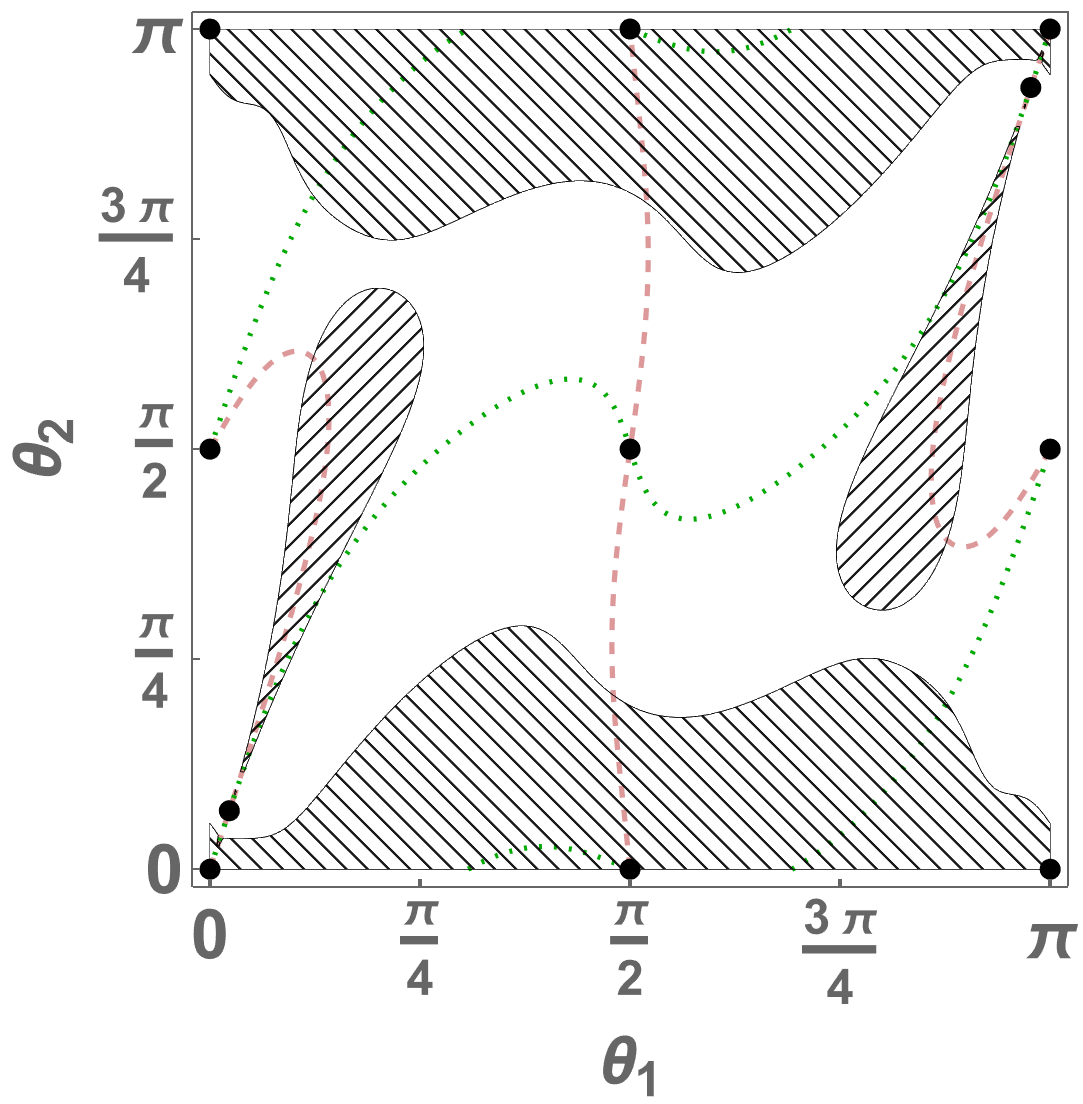}\label{fig:EqMassRSlice_l1_NotHalf_r1}}
	\hspace{0.7cm}
	\subfloat[$r=0.362$,\\subsequent to $\mathcal{B}_{TP}(r_{2})$.\\Shows RE bifurcating from $\theta_{1,2}=\frac{\pi}{2}$.]
	{\includegraphics[width=0.35\textwidth]{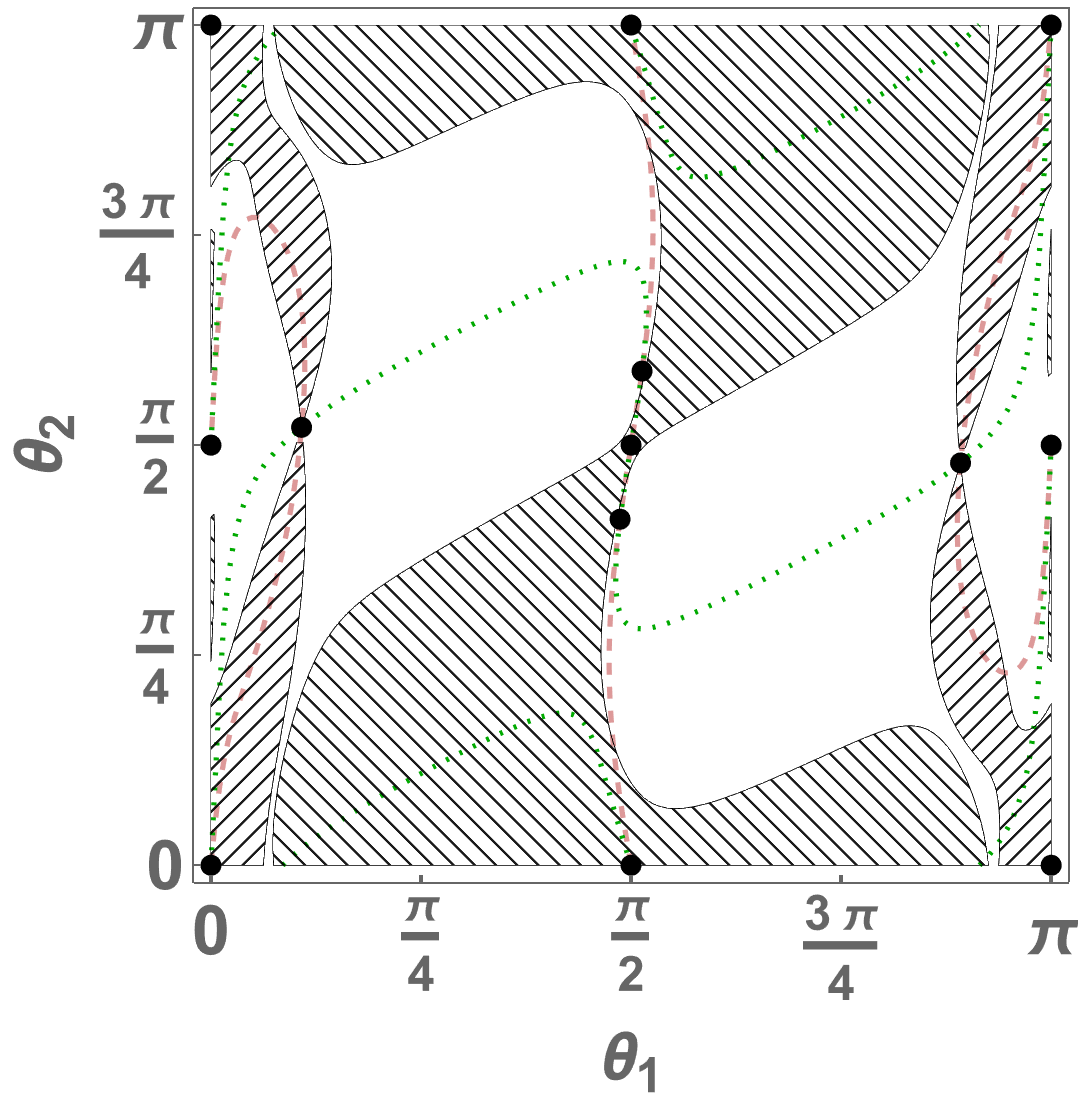}\label{fig:EqMassRSlice_l1_NotHalf_r2}}\raisebox{0.9\height}{\includegraphics[width=0.20\textwidth]{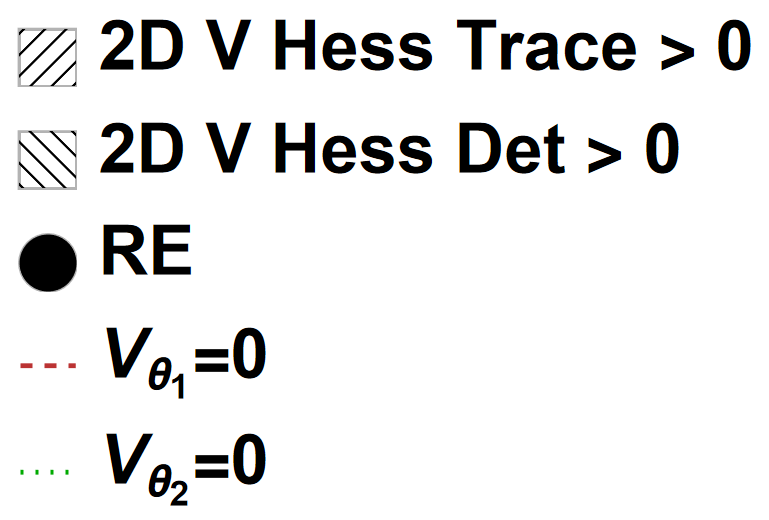}}
		\\[-1.5ex]
	\caption{\\\bfseries Equal Mass RE/Trace/Det.\\with $\ell_1=\frac{3}{4}$ for $r_1,r_2$}{\text{ }\\[-1.801em]}
	\label{fig:EqMassRSlice_l10p75_r0p252_r0p362}
\end{figure}
\begin{figure}[H]
	\captionsetup[subfigure]{justification=centering}
	\centering
	\subfloat[$r=0.364$. \\Subsequent to $\mathcal{B}_{PC}(r_{3})$.\\Shows RE bifurcating from $(\theta_1,\theta_2)=(0,\frac{\pi}{2})$.]
	{\includegraphics[width=0.35\textwidth]{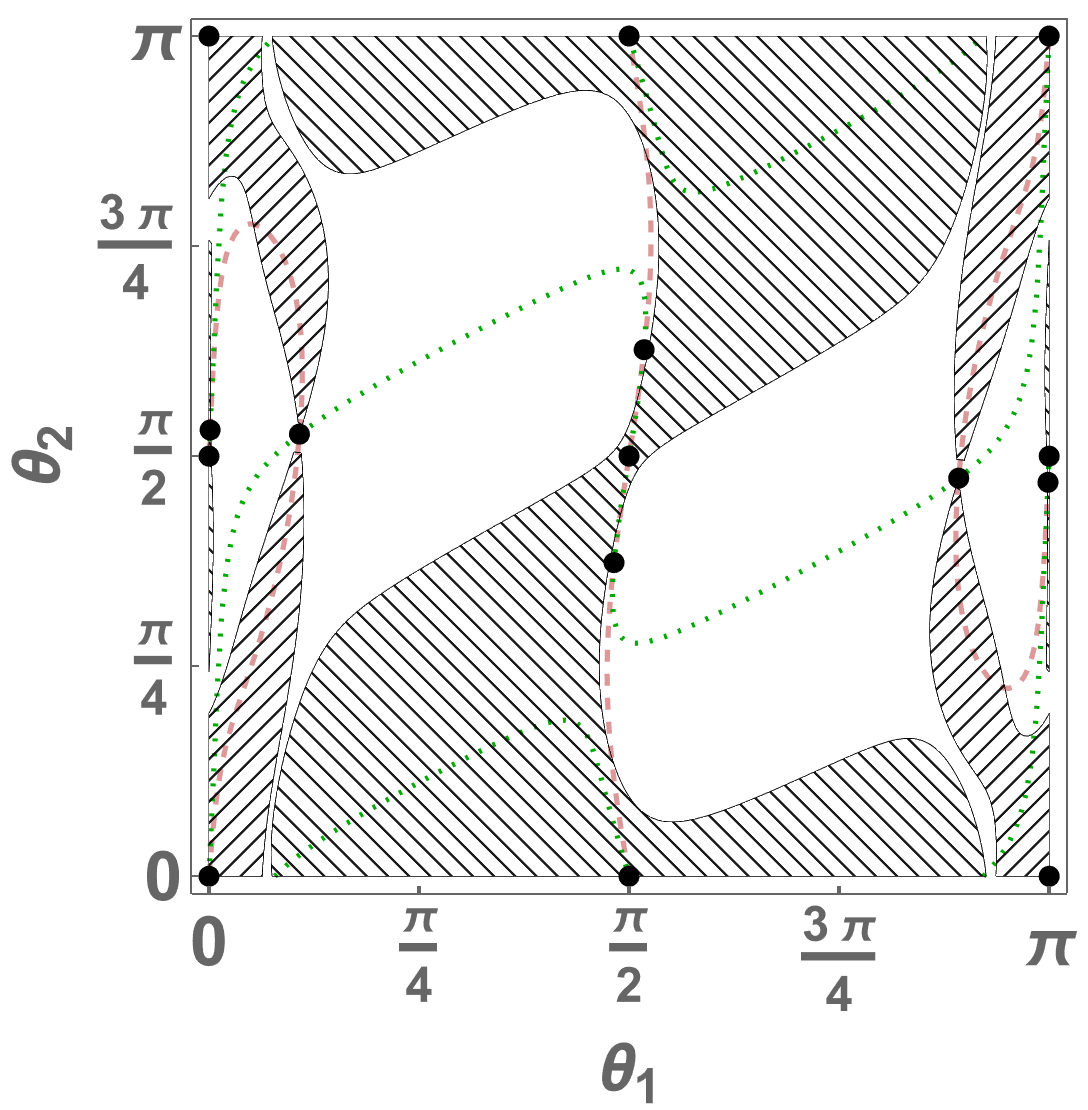}\label{fig:EqMassRSlice_l1_NotHalf_r3}}
	\hspace{0.7cm}
	\subfloat[$r=0.37$. \\Subsequent to $\mathcal{B}_{CP}(r_{4})$.\\Shows RE bifurcating from $\theta_{1,2}=0$.]
	{\includegraphics[width=0.35\textwidth]{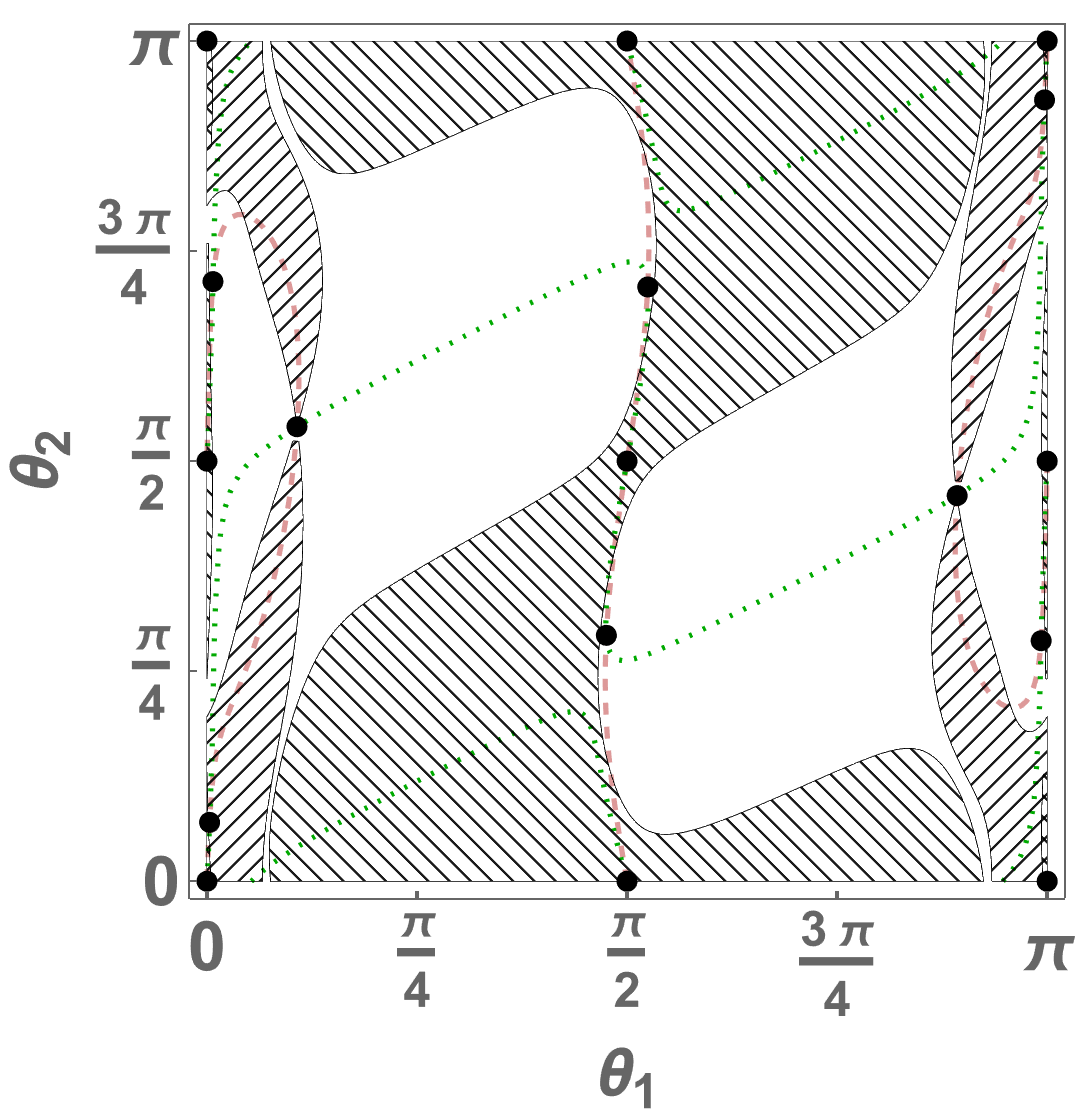}\label{fig:EqMassRSlice_l1_NotHalf_r4}}\raisebox{0.9\height}{\includegraphics[width=0.20\textwidth]{images/EqMass_rSlice/Legend.png}}
	\\[-1.5ex]
	\caption{\\\bfseries Equal Mass RE/Trace/Det.\\with $\ell_1=\frac{3}{4}$ for $r_3,r_4$}{\text{ }\\[-1.801em]}
	\label{fig:EqMassRSlice_l10p75_r0p364_r0p37}
\end{figure}
\begin{figure}[H]
	\captionsetup[subfigure]{justification=centering}
	\centering
	\subfloat[$r=0.38$. \\Prior to $\mathcal{B}_{PC}(r_{5})$.\\Shows RE merging with $\theta_{1,2}=0$]
	{\includegraphics[width=0.35\textwidth]{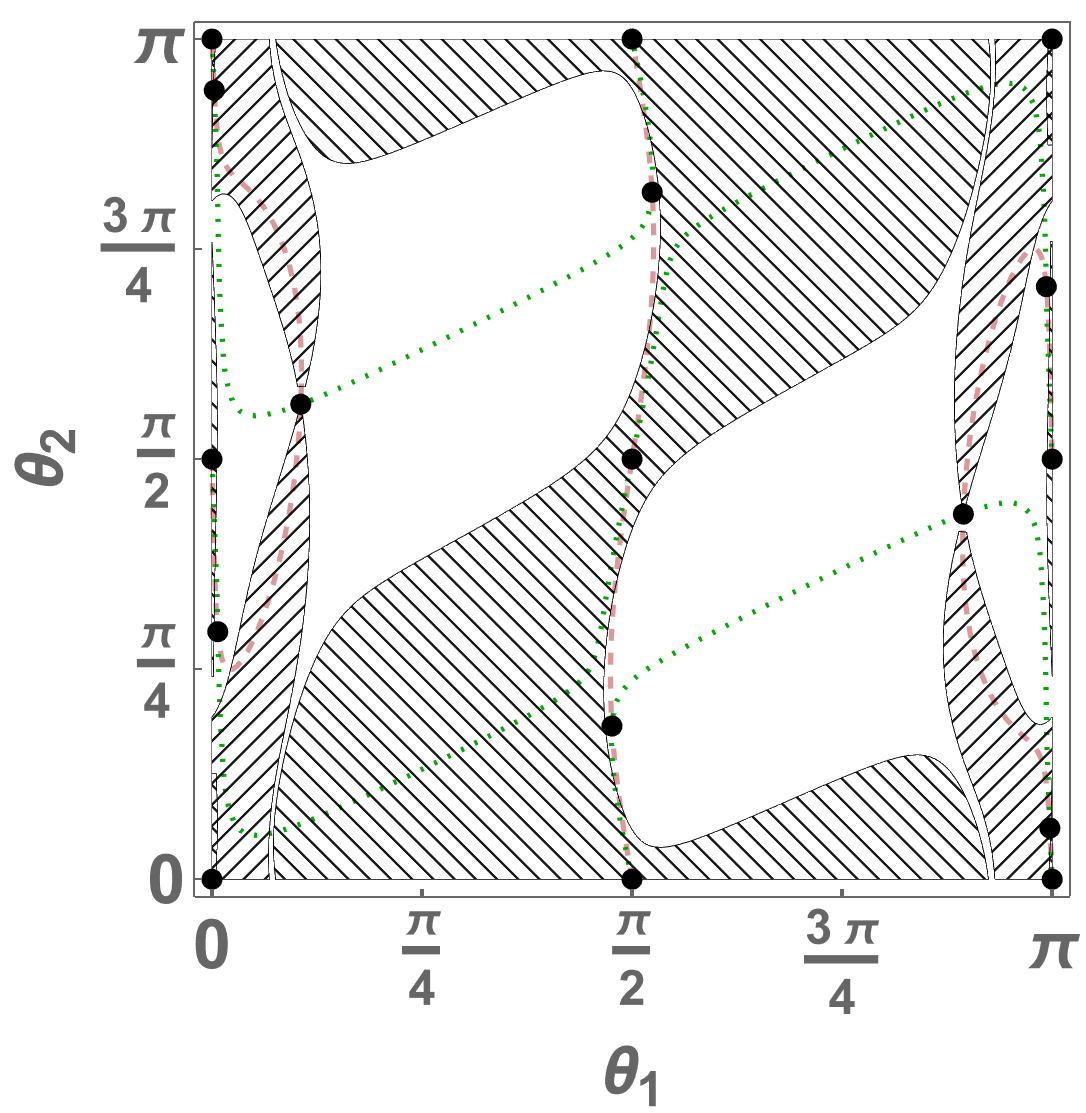}\label{fig:EqMassRSlice_l1_NotHalf_r5}}
	\hspace{0.7cm}
	\subfloat[$r=0.384$. \\Prior to $\mathcal{B}_{CP}(r_{6})$.\\Shows RE merging with $(\theta_1,\theta_2)=(0,\frac{\pi}{2})$]
	{\includegraphics[width=0.35\textwidth]{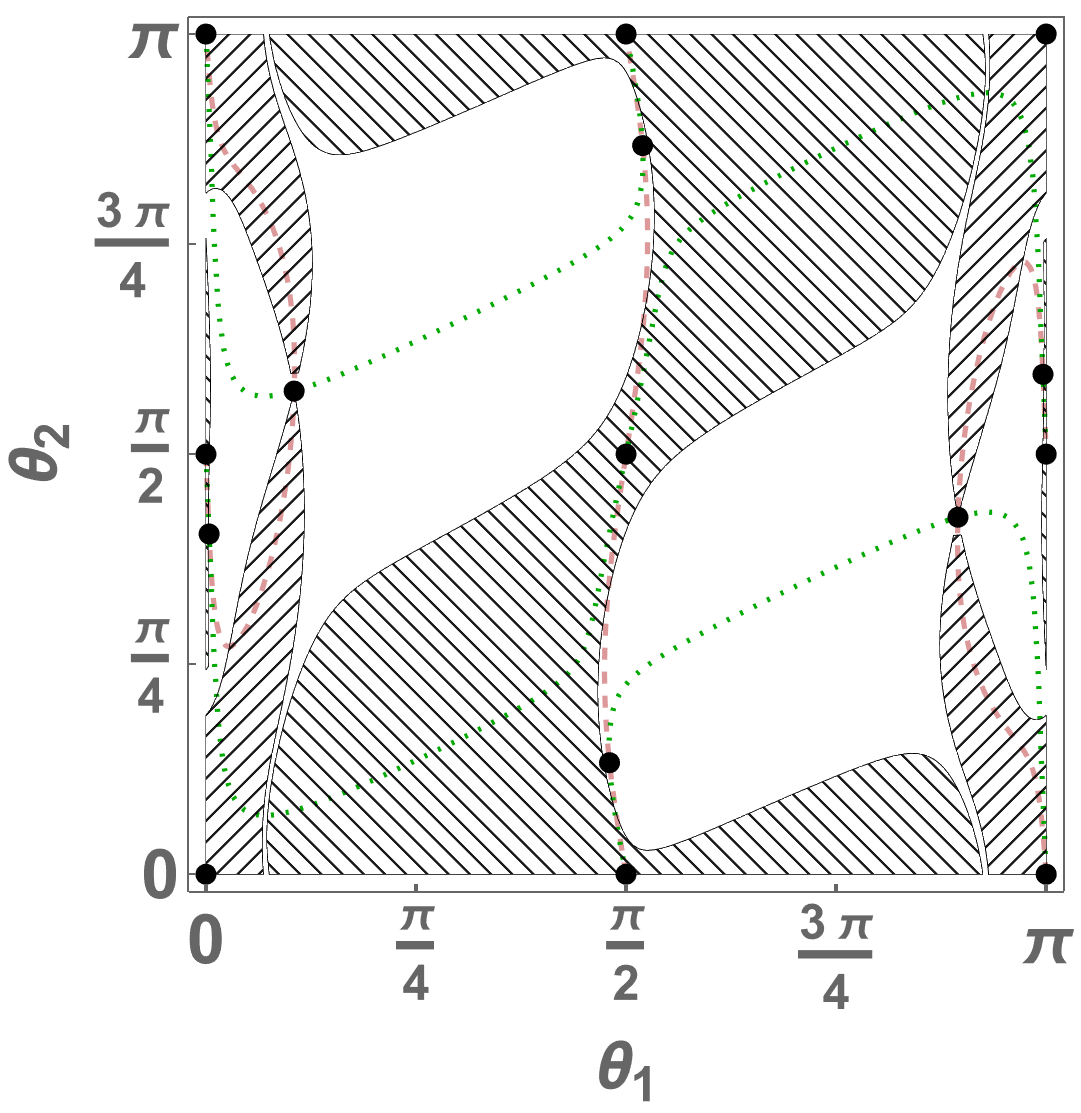}\label{fig:EqMassRSlice_l1_NotHalf_r6}}\raisebox{0.9\height}{\includegraphics[width=0.20\textwidth]{images/EqMass_rSlice/Legend.png}}
	\\[-1.5ex]
	\caption{\\\bfseries Equal Mass RE/Trace/Det.\\with $\ell_1=\frac{3}{4}$ for $r_5,r_6$}{\text{ }\\[-1.801em]}
	\label{fig:EqMassRSlice_l10p75_r0p38_r0p384}
\end{figure}
\begin{figure}[H]
	\captionsetup[subfigure]{justification=centering}
	\centering
	\subfloat[$r=0.388$. \\Prior to $\mathcal{B}_{TP}(r_{7})$.\\Shows RE merging with $(\theta_1,\theta_2)=(\frac{\pi}{2},0)$.\label{fig:EqMassRSlice_l1_NotHalf_r7}]
	{\includegraphics[width=0.35\textwidth]{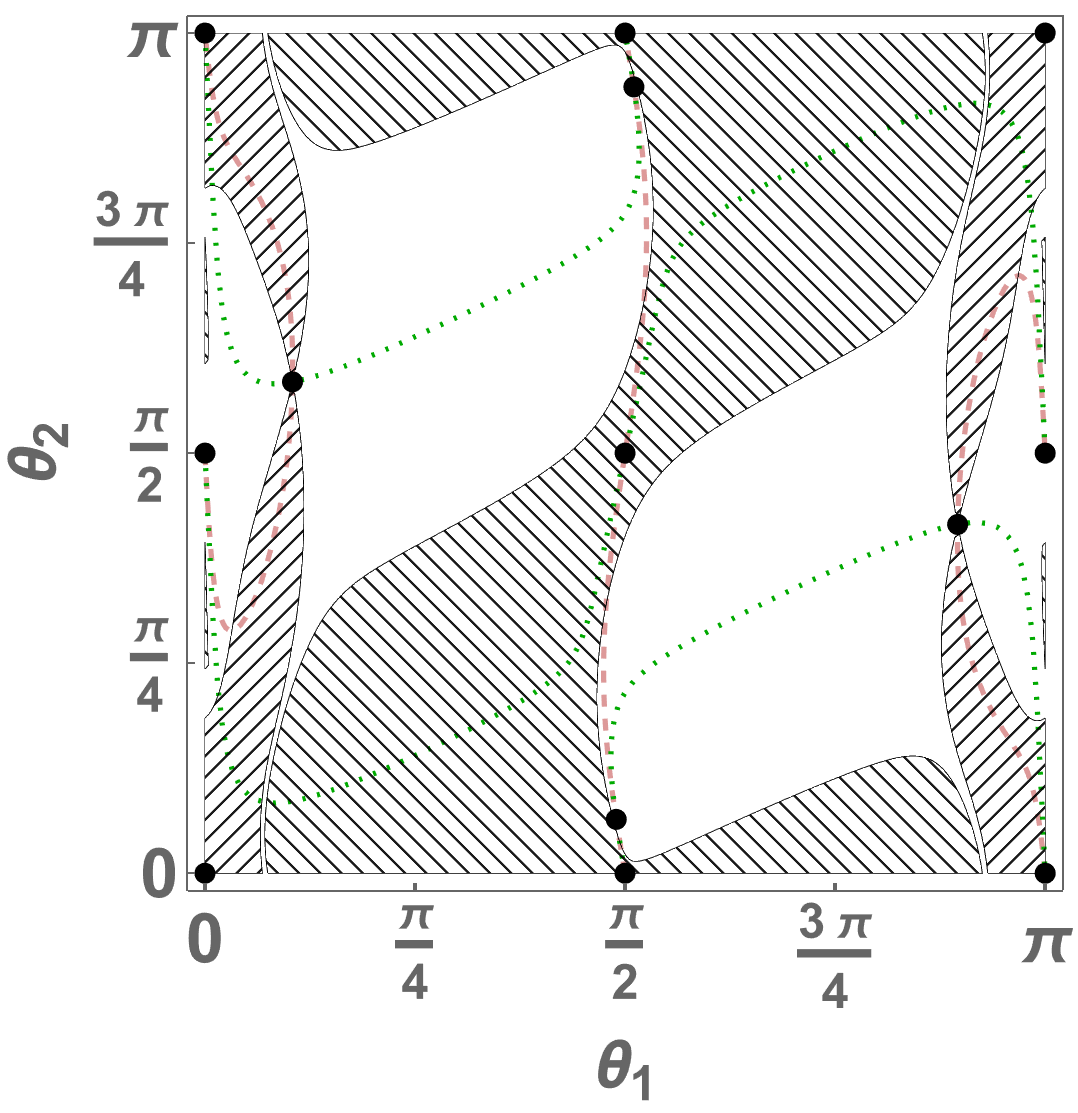}}
	\hspace{0.7cm}
	\subfloat[$r=0.498$. \\Prior to $\mathcal{B}_{CC}(r_{8})$.\\Shows RE merging with $\theta_{1,2}=0$.]
	{\includegraphics[width=0.35\textwidth]{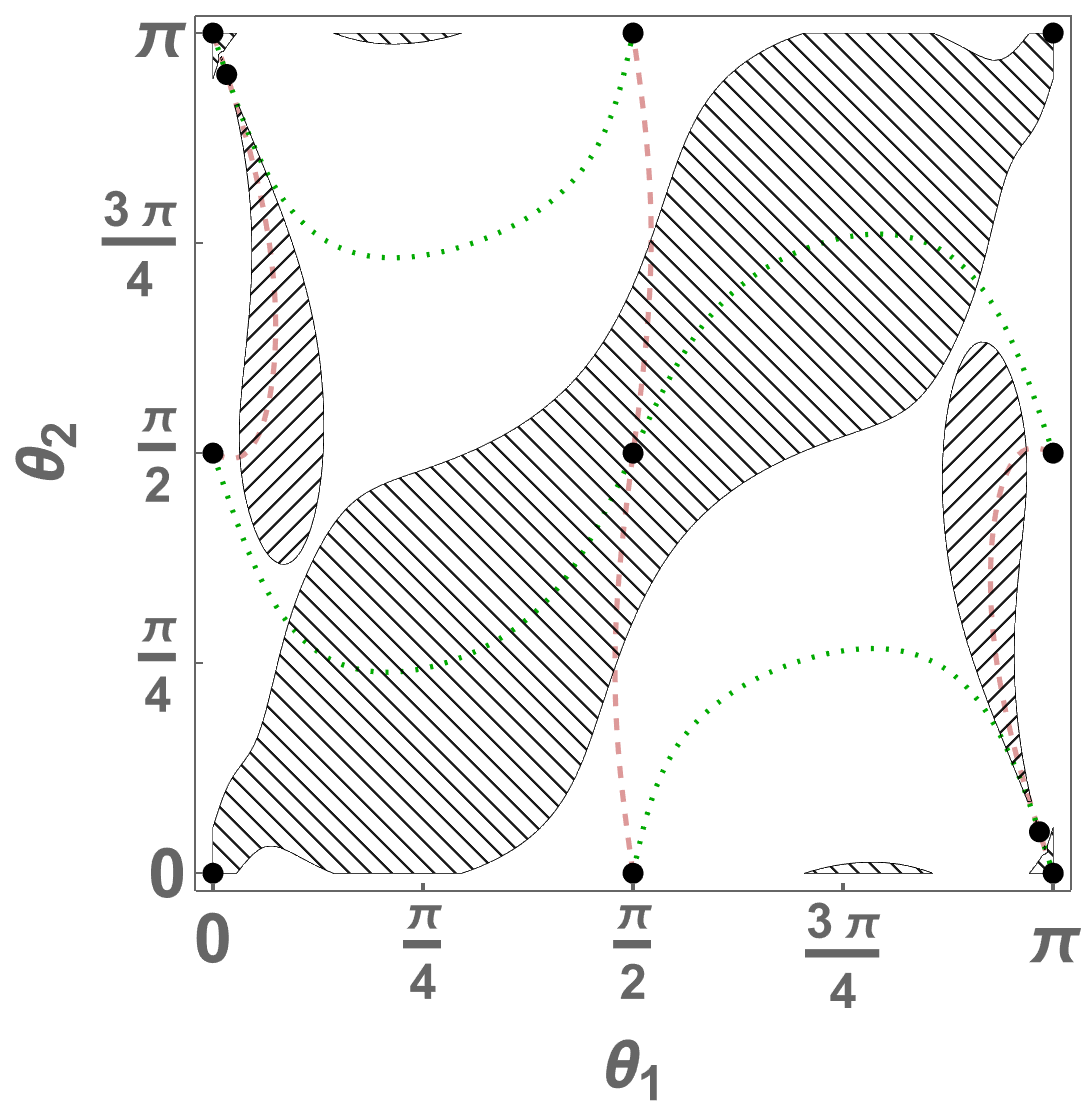}\label{fig:EqMassRSlice_l1_NotHalf_r8}}\raisebox{0.9\height}{\includegraphics[width=0.20\textwidth]{images/EqMass_rSlice/Legend.png}}
	\\[-1.5ex]
	\caption{\\\bfseries Equal Mass RE/Trace/Det.\\with $\ell_1=\frac{3}{4}$ for $r_7,r_8$}{\text{ }\\[-1.801em]}
	\label{fig:EqMassRSlice_l10p75_r0p388_r0p498}
\end{figure}
\begin{figure}[H]
	\centering
	\includegraphics[width=0.45\textwidth]{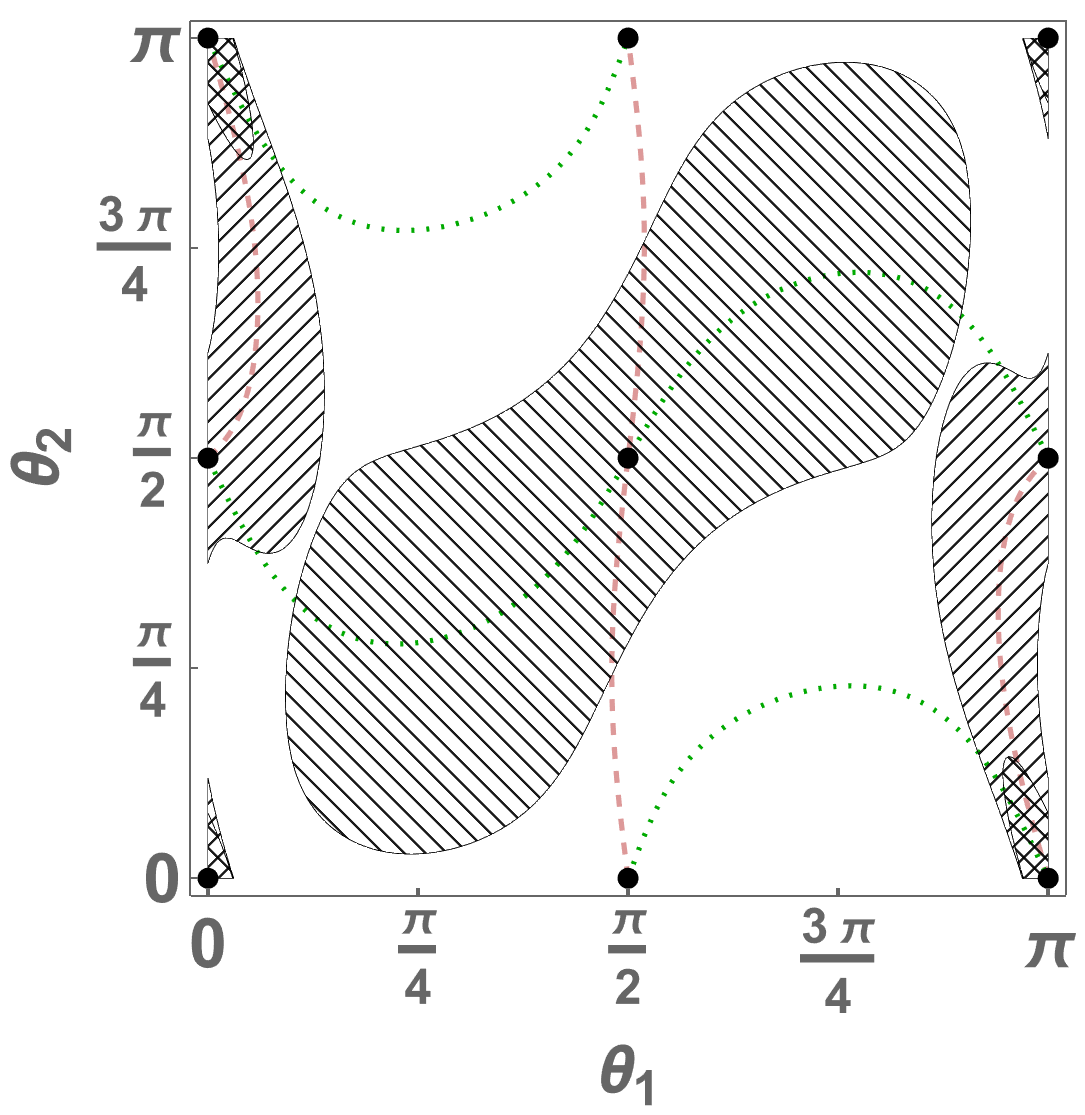}\raisebox{1.6\height}{\includegraphics[width=0.20\textwidth]{images/EqMass_rSlice/Legend.png}}
		\\[-1.5ex]
	\caption{\\\bfseries Equal Mass RE/Trace/Det.\\with $\ell_1=\frac{3}{4}$ for $r=0.552$}{\vspace{1em}Subsequent to $\mathcal{B}_{CC}(r_{8})$, only symmetric RE remain with stable colinear RE.\\[-0.801em]}
	\label{fig:EqMassRSlice_l10p75_r0p552}
\end{figure}
To get an idea of the long-term behavior, we graph $r=1000$: 
\begin{figure}[H]
	\centering
	\captionsetup{justification=centering}
	\includegraphics[width=0.45\textwidth]{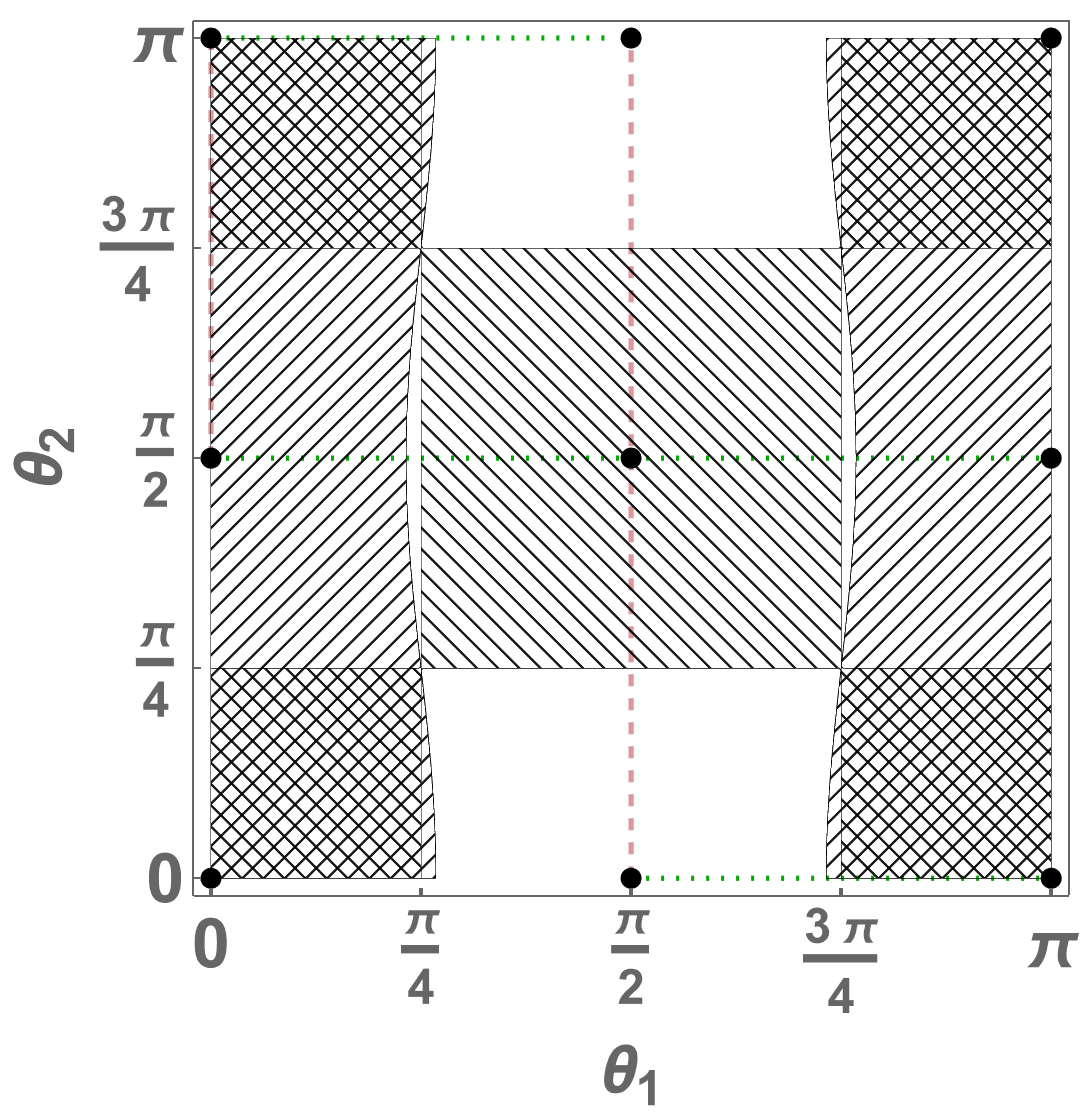}\raisebox{1.8\height}{\includegraphics[width=0.20\textwidth]{images/EqMass_rSlice/Legend.png}}
		\\[-1.5ex]
	\caption{\\\bfseries Equal Mass RE/Trace/Det.\\with $\ell_1=\frac{3}{4}$ for $r=1000$}{\vspace{0.8em}\footnotesize Long-term Behavior, stable colinear RE\\[-0.801em]}
	\label{fig:EqMassRSlice_l10p75_r1000p}
\end{figure}
\textbf{Bifurcation curves for $\ell_1=\frac{1}{2}$}. We will now describe the bifurcations shown in Figures \ref{fig:EQMassRSlice_l1_0p5_r_0},...,\ref{fig:EQMassRSlice_l10p5_r0p528} below. For this case when the lengths of the bodies are equal, observe that $r=0$ and $\theta_{1}=\theta_{2}\pm\frac{1}{2}n\pi$ with $n\in\mathbb{Z}$ trivially satisfy the angular requirements $V_{\theta_{i}}=0$ \eqref{eq:EqMass_Angular_Requirements}. Therefore, they represent a family $\mathcal{R}_0$ of RE at $r=0$. For small $r$, the following RE curves can be seen bifurcating from points within $\mathcal{R}_0$ (Figure \ref{fig:EqMassRSlice_l1_Half_r0p01}), and later merge with the symmetric families at some radii. 
\begin{itemize}[parsep=3pt]
	\item Two curves which we denote $\mathcal{B}_{C^{\pm}}$, bifurcate from $\left(\theta_1,\theta_2\right)\in\left\{\left(\frac{\pi}{4},\frac{3\pi }{4}\right),\left(\frac{3\pi }{4},\frac{\pi}{4}\right)\right\}$, and later ($r_1$) merge with $\mathcal{R}_{C}$ (Figure \ref{fig:EqMassRSlice_l1_Half_r1}).
\item Two curves which we denote $\mathcal{B}_{LP^{\pm}}$, bifurcate from $\theta_1,\theta_2=\cot ^{-1}\sqrt{2}$ (at collision), and later ($r_2$) merge with $\mathcal{R}_{P_{1/2}}$ (Figure \ref{fig:EqMassRSlice_l1_Half_r2}). And similarly two curves which we label $\mathcal{B}_{RP^{\pm}}$, bifurcate from $\theta_1,\theta_2=\cos^{-1}\left(-\sqrt{\frac{2}{3}}\right)$ (at collision), and later ($r_2$) merge with $\mathcal{R}_{P_{1/2}}$ (Figure \ref{fig:EqMassRSlice_l1_Half_r2}).
\item Two curves which we denote $\mathcal{B}_{T^{\pm}}$, bifurcate from the trapezoid RE $\theta_1,\theta_2=\frac{\pi}{2}$ (at collision), and later ($r_3$) merge with $\mathcal{R}_{C}$ (Figure \ref{fig:EqMassRSlice_l1_Half_r3a}). 
\end{itemize}
Below we graph a 2D slice of the configuration space at $r=0.01$ that reveals these RE families. You'll note the three RE $\left\{\mathcal{R}_{T},\mathcal{B}_{T^{+}}(r),\mathcal{B}_{T^{-}}(r)\text{ }\right\} $ tightly grouped near $\left(\frac{\pi}{2},\frac{\pi}{2}\right)$. However, the RE $\mathcal{B}_{LP^{\pm }}$ and $\mathcal{B}_{RP^{\pm}} $ are so close together (near $(\cot^{-1}\sqrt{2},\cot^{-1}\sqrt{2})$) it is difficult to distinguish them in Figure \ref{fig:EqMassRSlice_l1_Half_r0p01}, so we have included Figure \ref{fig:EQMassRSlice_l1_0p5_r_0p002_Zoom_In} zoomed in on $\theta_{1,2}=\cot^{-1}\sqrt{2}\approx 
0.6155,$ where one is able to distinguish between $\mathcal{B}_{LP^{+}}$ and $\mathcal{B}_{LP^{-}}.$ A similar zoomed in graph exists for $\mathcal{B}_{RP^{\pm}}.$ 
\begin{figure}[H]
	\captionsetup[subfigure]{justification=centering}
	\centering
	\subfloat[\footnotesize Shows low radii RE, including bifurcations from $\theta_{1,2}\in\{\cot ^{-1}\sqrt{2},\frac{\pi}{2},\cos^{-1}\left(-\sqrt{\frac{2}{3}}\right)\}$.]
	{\includegraphics[width=0.33\textwidth]{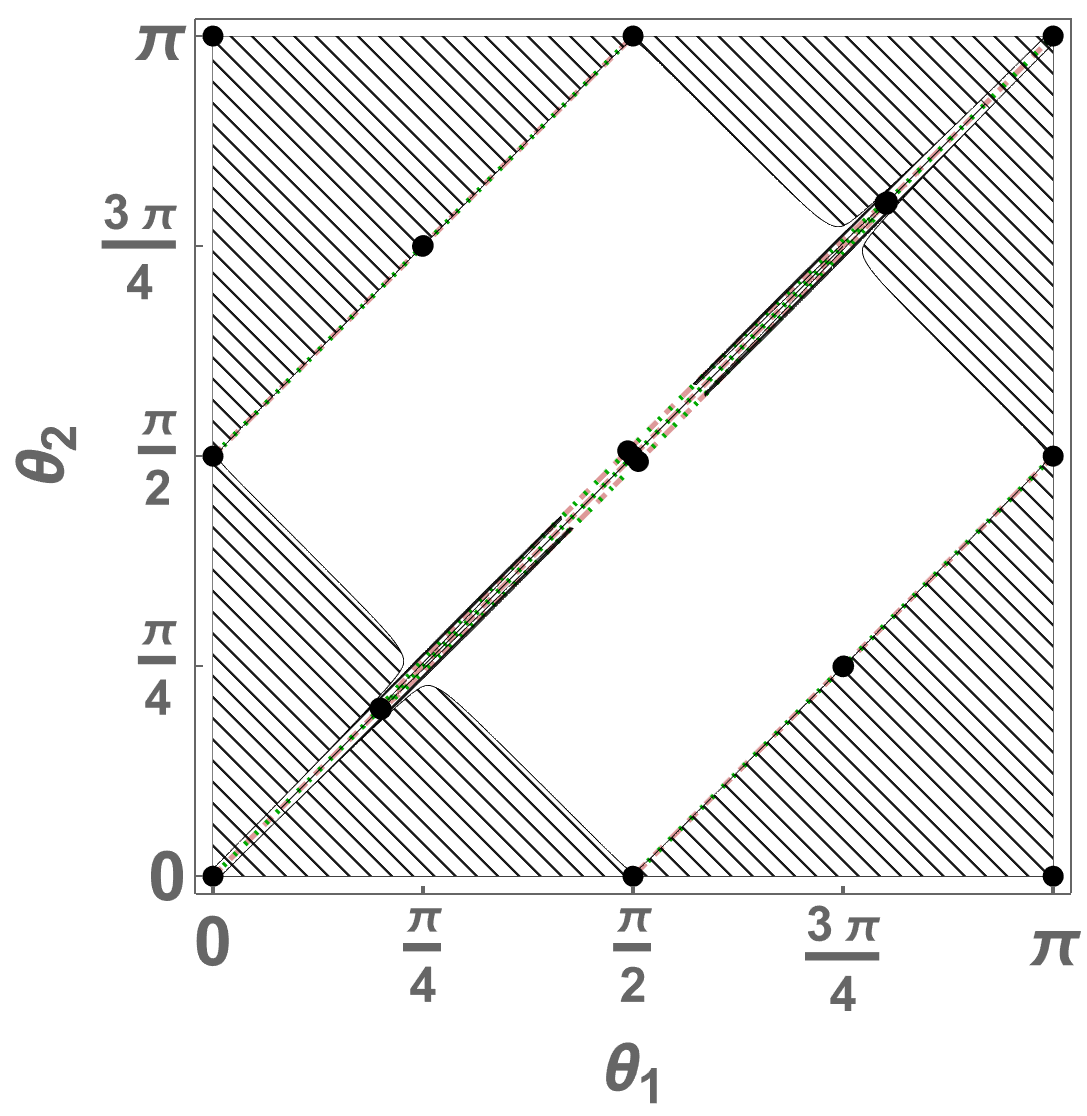}\label{fig:EqMassRSlice_l1_Half_r0p01}}
	\hspace{0.5cm}
	\subfloat[\footnotesize Shows close-up of $\mathcal{B}_{LP}$\\with RE bifurcating from $\cot ^{-1}\sqrt{2}$.]
	{\includegraphics[width=0.40\textwidth]{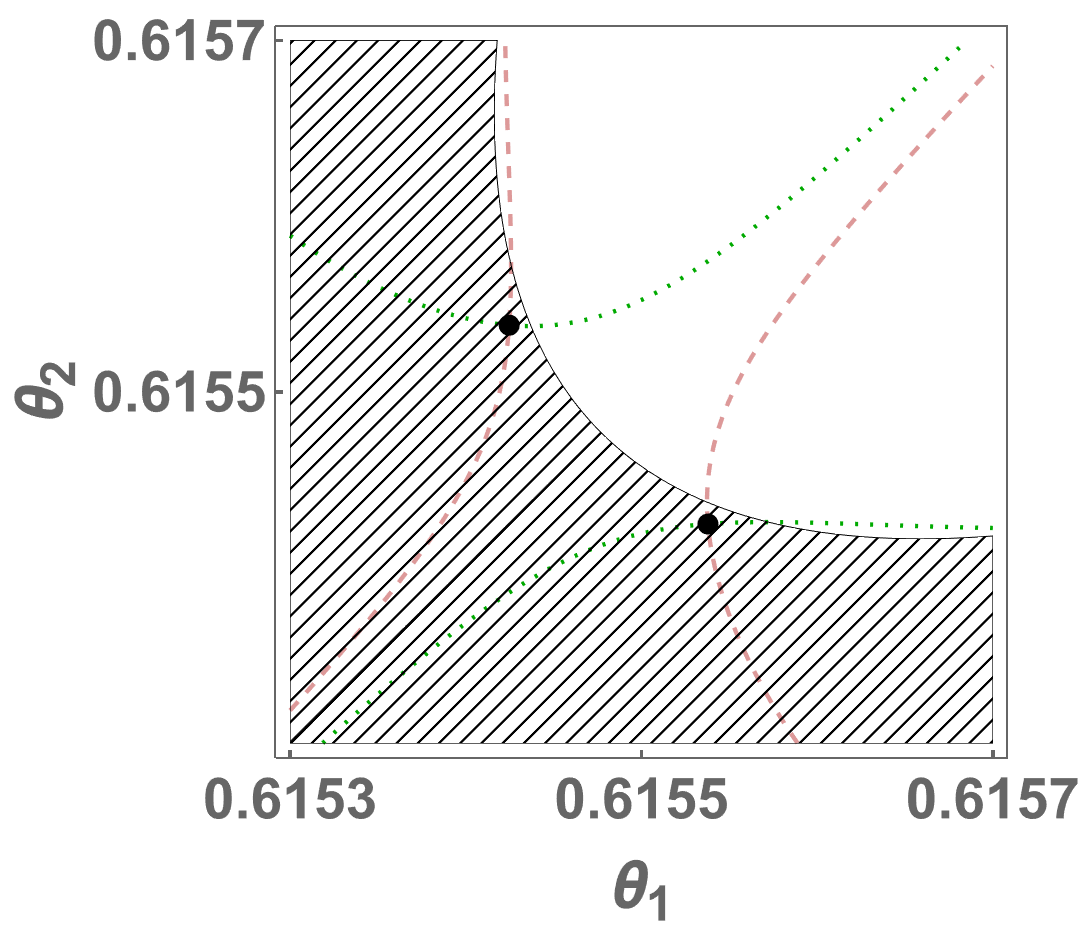}\label{fig:EQMassRSlice_l1_0p5_r_0p002_Zoom_In}}\raisebox{0.9\height}{\includegraphics[width=0.20\textwidth]{images/EqMass_rSlice/Legend.png}}
	\\[-1.5ex]
	\caption{\\\bfseries Equal Mass RE/Trace/Det.\\with $\ell_1=\frac{1}{2}$ for $r=0.01$}{\text{ }\\[-1.801em]}
	\label{fig:EQMassRSlice_l1_0p5_r_0}
\end{figure}
\begin{figure}[H]
	\captionsetup[subfigure]{justification=centering}
	\centering
	\subfloat[$r=0.316$, prior to $\mathcal{B}_C(r_1)$.\\Shows RE merging with $\theta_{1,2}=0$.]
	{\includegraphics[width=0.35\textwidth]{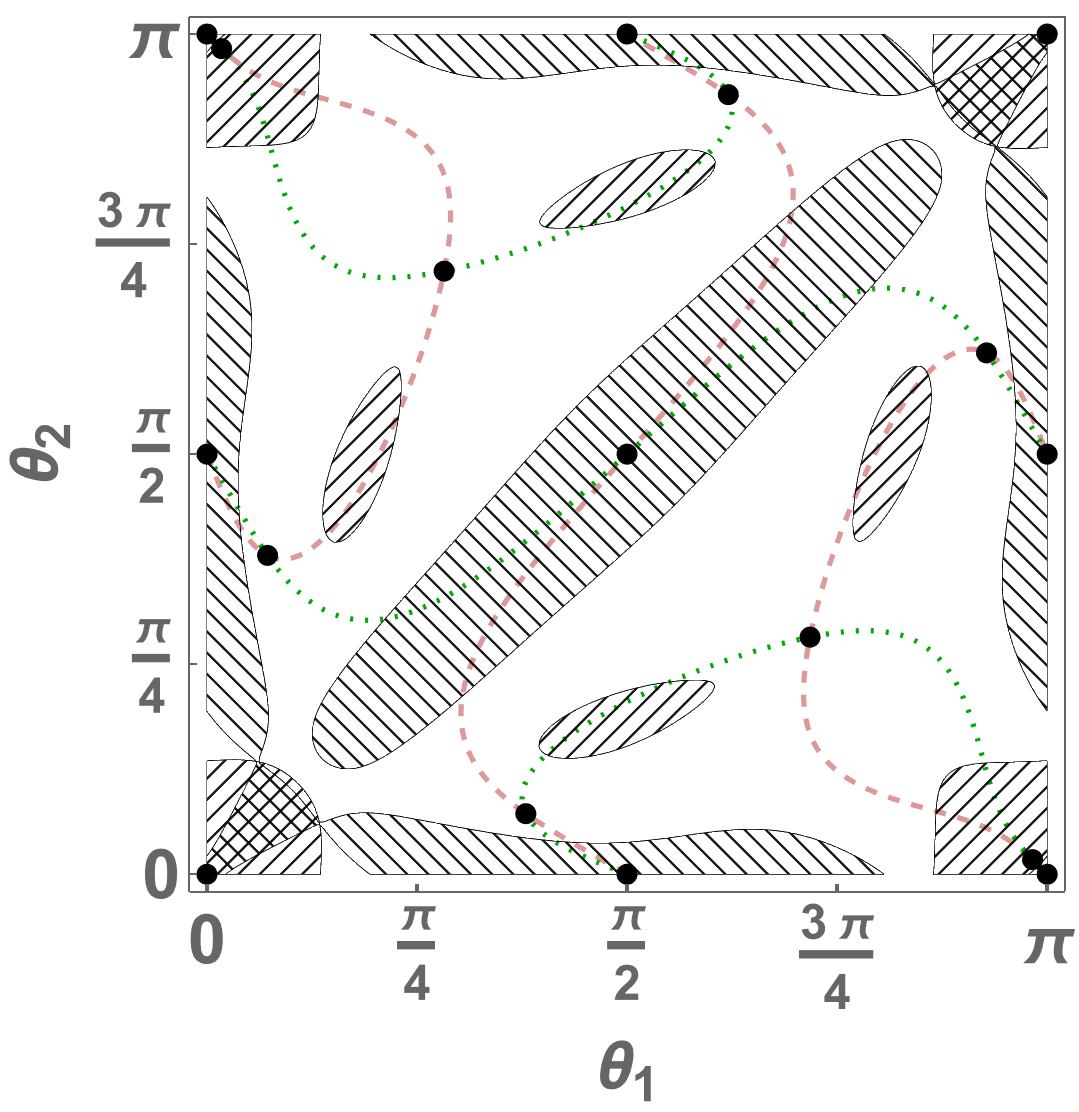}\label{fig:EqMassRSlice_l1_Half_r1}}
	\hspace{0.7cm}
	\subfloat[$r=0.336$, prior to $\mathcal{B}_{LP}(r_2)$/$\mathcal{B}_{RP}(r_2)$.\\Shows RE merging with $(\theta_1,\theta_2)\in\{(0,\frac{\pi}{2}),(\frac{\pi}{2},0)\}$.]
	{\includegraphics[width=0.35\textwidth]{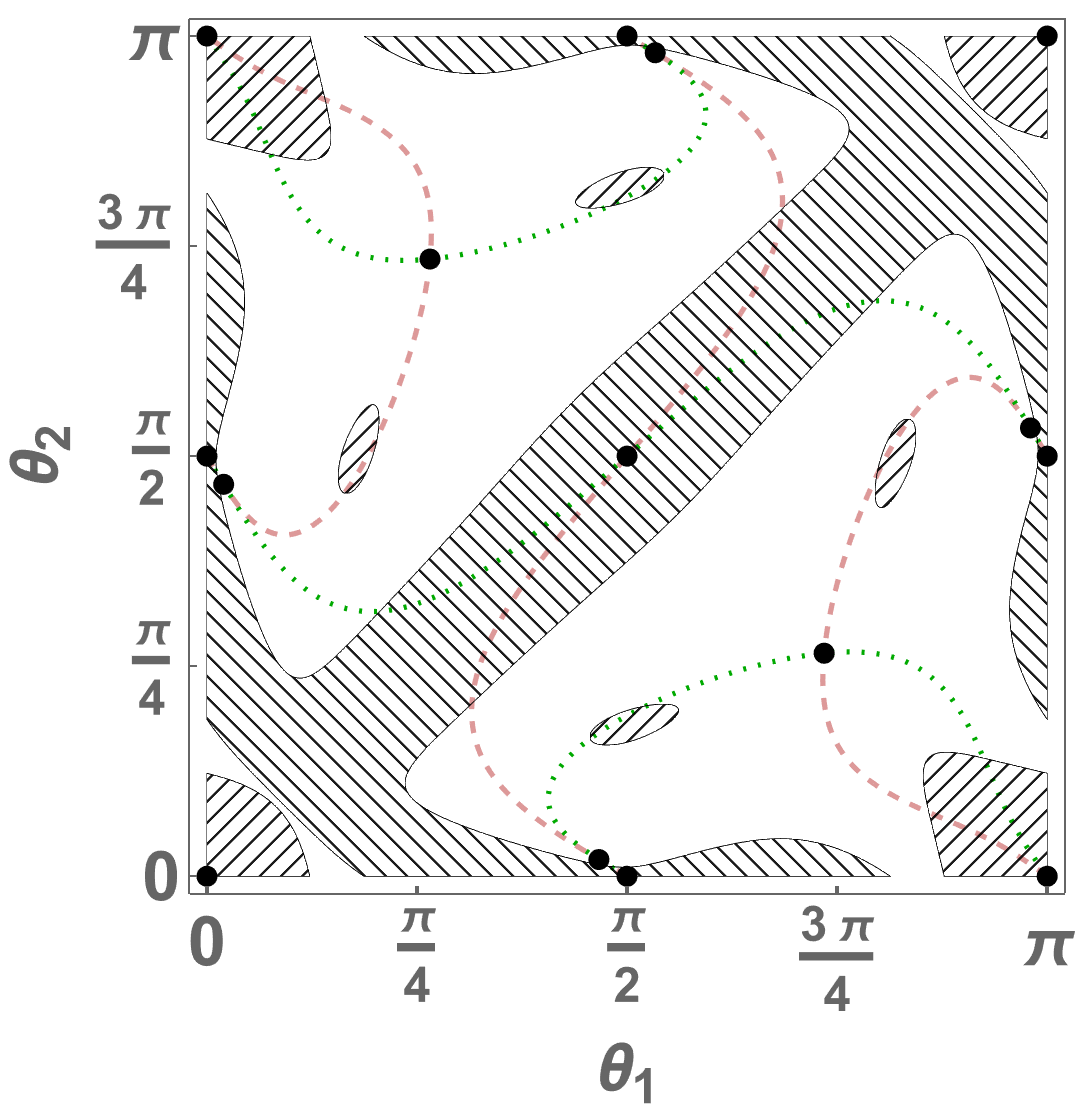}\label{fig:EqMassRSlice_l1_Half_r2}}\raisebox{0.9\height}{\includegraphics[width=0.20\textwidth]{images/EqMass_rSlice/Legend.png}\label{fig:EqMassRSlice_l1_Half_r8}}
	\\[-1.5ex]
	\caption{\\\bfseries Equal Mass RE/Trace/Det.\\with $\ell_1=\frac{1}{2}$ for $r_1,r_2$}{\text{ }\\[-1.801em]}
	\label{fig:EQMassRSlice_l10p5_r0p336}
\end{figure}
\begin{figure}[H]
	\captionsetup[subfigure]{justification=centering}
	\centering
	\subfloat[$r=0.498$, prior to $\mathcal{B}_T(r_3)$.\\Shows RE merging with $\theta_{1,2}=0$.]
	{\includegraphics[width=0.35\textwidth]{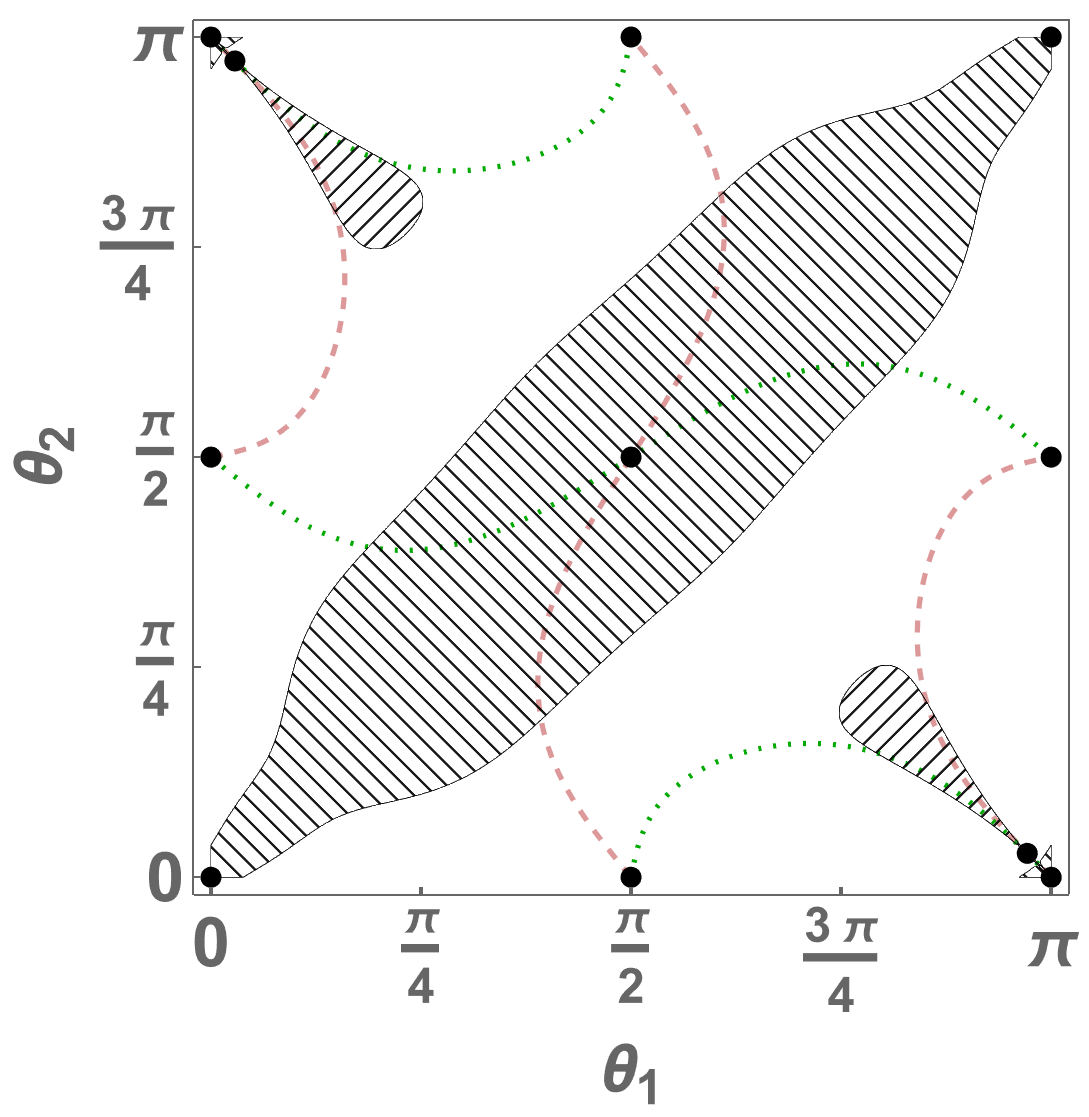}\label{fig:EqMassRSlice_l1_Half_r3a}}
	\hspace{0.7cm}
	\subfloat[$r=0.528$, subsequent to $\mathcal{B}_T(r_3)$.\\Shows $\theta_{1,2}=0$ stability after merging.]
	{\includegraphics[width=0.35\textwidth]{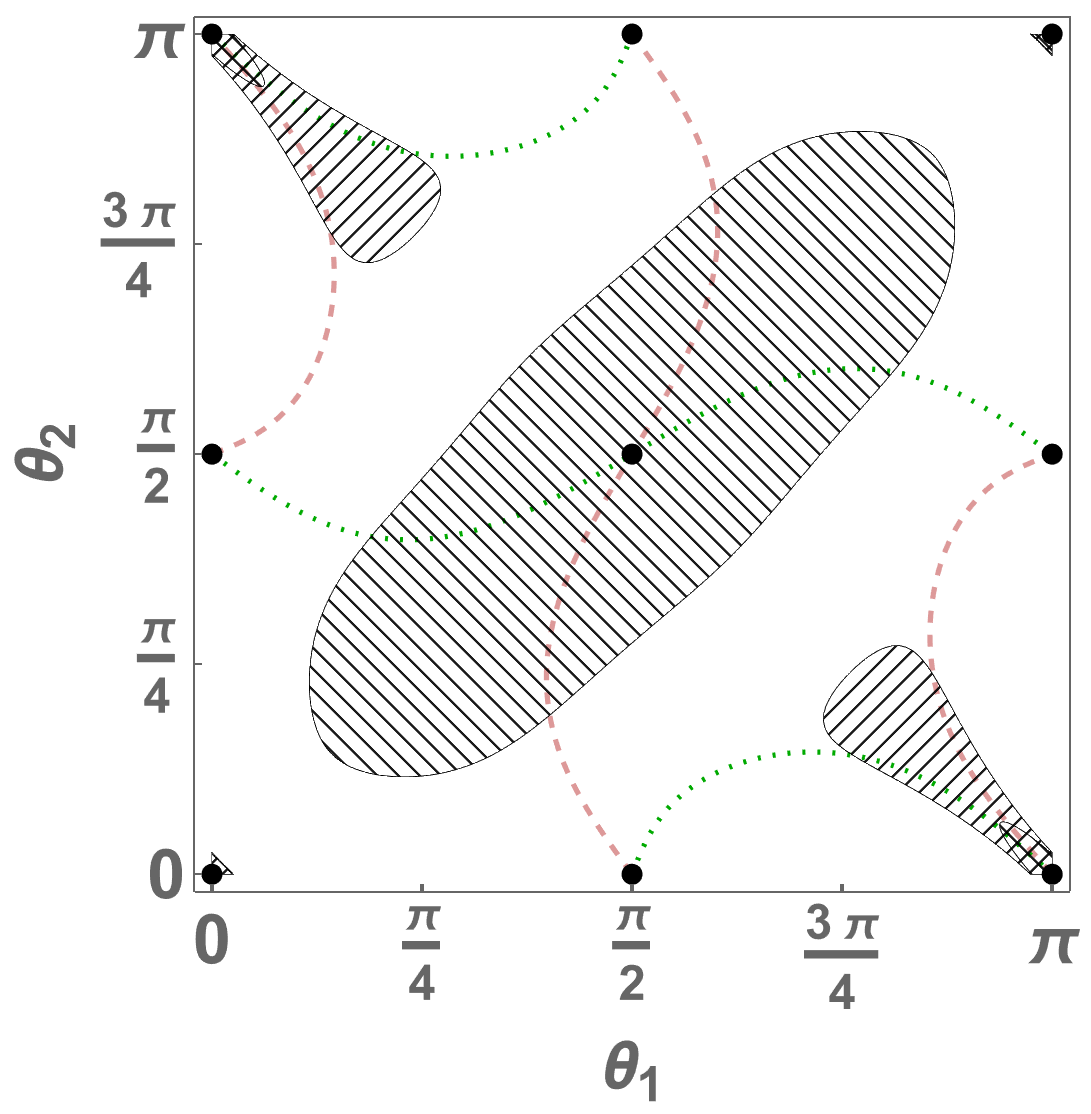}\label{fig:EqMassRSlice_l1_Half_r3b}}\raisebox{0.9\height}{\includegraphics[width=0.20\textwidth]{images/EqMass_rSlice/Legend.png}\label{fig:EqMassRSlice_l1_Half_r0p552}}
	\\[-1.5ex]
	\caption{\\\bfseries Equal Mass RE/Trace/Det.\\with $\ell_1=\frac{1}{2}$ for $r_3$}{\text{ }\\[-1.801em]}
	\label{fig:EQMassRSlice_l10p5_r0p528}
\end{figure}
To get an idea of the long-term behavior, we graph $r=1000$ below:
\begin{figure}[H]
	\centering
	\includegraphics[width=0.45\textwidth]{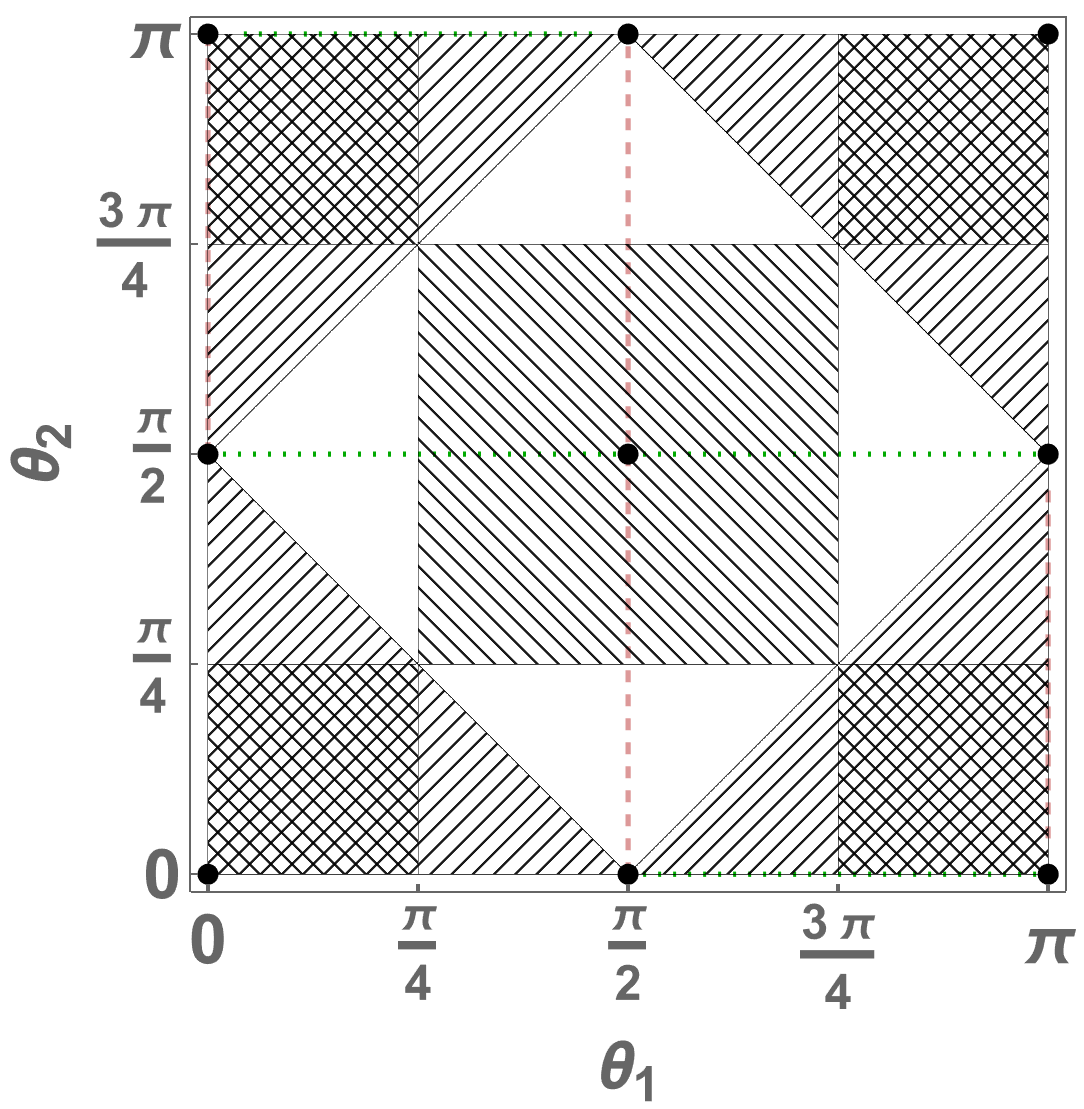}\raisebox{1.8\height}{\includegraphics[width=0.20\textwidth]{images/EqMass_rSlice/Legend.png}\label{fig:EqMassRSlice_l1_Half_r1000}}
	\\[-1.5ex]
	\caption{\\\bfseries Equal Mass RE/Trace/Det. \\with $\ell_1=\frac{1}{2}$ for $r=1000$}{\vspace{0.8em}\footnotesize Long-term Behavior, stable colinear RE\\[-0.931em]}
	\label{fig:EQMassRSlice_l10p5_r1000p}
\end{figure}
When looking at the numerically found curves through our configuration space, we calculate that the angular momentum is nonphysical (either infinite or complex) for $\mathcal{B}_{CC}$, $\mathcal{B}_{PC}$ while $\ell \neq \frac{1}{2};$ and for $\mathcal{B}_{C}$, $\mathcal{B}_{T}$ while $\ell_{1}=\frac{1}{2}.$ Also, the angular momentum becomes unbounded for $\mathcal{B}_{LP/RP}$ as the curves approach their collision branching points at $r=0$. As a result, we will focus our attention on the bifurcation points located where the angular momentum is physically relevant, namely $\left\{\mathcal{B}_{TP}\left(r_{2}\right) ,\mathcal{B}_{TP}\left(r_{7}\right) ,\mathcal{B}_{CP}\left(r_{4}\right) ,\mathcal{B}_{CP}\left(r_{6}\right)\right\}$ for $\ell \neq\frac{1}{2}$ and $\mathcal{B}_{LP/RP}\left(r_{2}\right)$ for $\ell =\frac{1}{2}$, with $r>0$.

Let us perform the bifurcation analysis described above for these curves to confirm their existence. First, we will do a change of variables $\left(\theta_{1},\theta_{2}\right)\rightarrow\left(\theta_{1}+\theta_{1}^{\ast },\theta_{2}+\theta_{2}^{\ast }\right) $ so that our bifurcation points are at $\left(\theta_{1},\theta_{2}\right) =(0,0).$ For $\left(\theta_{1}^{\ast },\theta_{2}^{\ast }\right) \in\left\{\left(0,0\right) ,\left(0,\frac{\pi}{2}\right) ,\left(\frac{\pi}{2},0\right) ,\left(\frac{\pi}{2},\frac{\pi}{2}\right) \right\}$, it is easily confirmed that $\left(\widetilde{f},\widetilde{g}\right)\left({\theta}_{1}+{\theta}_{1}^{\ast }{ ,\theta}_{2}+{\theta}_{2}^{\ast },{r}\right) $ is odd in $\left(\theta_{1},\theta_{2}\right)$. Next, we will apply our bifurcation analysis to $\ell_{1}=\frac{3}{4}$ for $\mathcal{B}_{TP}$. This is the curve which bifurcates from the trapezoid configuration $\mathcal{R}_{T}$ at $r_2$, then the dumbbell bodies rotate until the curve merges with the perpendicular configuration $\mathcal{R}_{P}$ at $r_7$.

\paragraph{$\bm{\mathcal{B}_{TP}\left(r_7\right)}$ Pitchfork for $\bm{\ell_1=\frac{3}{4}}$}
When we are at $\left(\theta_{1}^{\ast },\theta_{2}^{\ast }\right) =\left(\frac{\pi}{2},0\right) $, our system \eqref{eq:2DB_Angular_Requirements} becomes: \\
$\text{\enspace\enspace} \widetilde{f}\left(\vec{{\theta}};{r}\right) :=\partial_{{\theta}_{1}}V\left(\vec{{\theta}},{r}\right) =\frac{1}{2}\left(\frac{1}{8}\sin\left({\theta}_{1}{-\theta}_{2}\right){-r}\sin{\theta}_{1}\right)\left(\frac{1}{d_{11}^{3}}-\frac{1}{d_{21}^{3}}\right)$\\
$\text{\enspace\enspace\enspace\enspace\enspace\enspace\enspace\enspace\enspace\enspace}+\frac{1}{2}\left(\frac{1}{8}\sin\left({\theta}_{1}{-\theta}_{2}\right){+r}\sin{\theta}_{1}\right)\left(\frac{1}{d_{22}^{3}}-\frac{1}{d_{12}^{3}}\right)$, and

$\text{\enspace\enspace} \widetilde{g}\left(\vec{{\theta}};{r}\right) \,:=\partial_{{\theta}_{2}}V\left(\vec{{\theta}},{r}\right) =\frac{1}{2}\left(\frac{3}{8}\sin\left({\theta}_{1}{-\theta}_{2}\right){ -r}\sin{\theta}_{2}\right)\left(\frac{1}{d_{12}^{3}}-\frac{1}{d_{11}^{3}}\right) $\\
$\text{\enspace\enspace\enspace\enspace\enspace\enspace\enspace\enspace\enspace\enspace}-\frac{1}{2}\left(\frac{3}{8}\sin\left({\theta}_{1}{-\theta}_{2}\right){+r}\sin{\theta}_{2}\right)\left(\frac{1}{d_{21}^{3}}-\frac{1}{d_{22}^{3}}\right)$,

where $\scriptstyle d_{wv}\left(\vec{{\theta}};r\right) :=\sqrt{{r}^{2}-\left({-1}\right)^{w}\frac{3}{4}{r}\cos{\theta}_{1}+\left({-1}\right)^{v}\frac{1}{4}{r}\cos{\theta}_{{2}}-\left({-1}\right)^{w+v}\frac{3}{32}\cos\left({\theta}_{1}{-\theta}_{2}\right)+\frac{5}{32}}.$

For $\vec{\theta}=\vec{0}$, we locate $\left\vert D\left(\widetilde{f},\widetilde{g}\right) _{\left(\vec{0};r\right)}\right\vert =0$ for $r^{\ast }\approx
0.3893$. We then calculate the eigenvalues $\left(\mu _{1},\;\mu _{2}\right) $ of $\left\vert D\left(f,g\right)_{\left(\vec{0};0\right)}\right\vert$, the eigenvectors, and the transition
matrix $P=\,\begin{bmatrix}
	P_{11} & P_{12} \\ 
	P_{12} & -P_{11}\end{bmatrix}$
to the Jordan normal form, where $\left(P_{11},P_{12}\right)\approx\left(
0.1669,\;
-0.9860\right) $. This allows us to change variables to align our axes with the tangent plane to the RE curves. Observe that $\left\vert P\right\vert =1$, such that $P=P^{-1}.$ So, let $\vec{u}=P\vec{\theta},$ or $\theta_{1}=P_{11}u_{1}+P_{12}u_{2}$ and $\theta_{2}=P_{12}u_{1}-P_{11}u_{2}.$ Our new functions become: $\left(f(\vec{u}),g(\vec{u})\right) :=P^{-1}\left(\widetilde{f}\left(P^{-1}\vec{u};r\right) ,\widetilde{g}\left(P^{-1}\vec{u};r\right)\right)$.

If we then define: $\begin{bmatrix}
	f_{1} & f_{2} \\ 
	g_{1} & g_{2}\end{bmatrix}:=D\left(f,g\right) _{\vec{u}},$ we find that at the bifurcation point we have:

$\text{\enspace\enspace}\begin{bmatrix}
	f_{1} & f_{2} \\ 
	g_{1} & g_{2}\end{bmatrix}|_{\left(\vec{0};0\right)}=\begin{bmatrix}
	0 & 0 \\ 
	0 & 
	-1.295\end{bmatrix}.$

In other words: $\mu _{1}\left(0\right) =0,\;\mu _{2}\left(0\right) =:\mu\approx
-1.295\neq 0$, $k:=\frac{d\mu _{1}}{dr}=f_{1r}|_{\left(\vec{0};0\right)}\approx
0.9213\neq 0,$ and $l:=\frac{1}{3}f_{{ u}_{1}{ u}_{1}{ u}_{1}}\left(0;0\right)\approx
0.05897.$ And using our conclusions \eqref{eq:Pitchfork_Sols} above, we have:
\begin{subequations}\label{eq:EqMass_Pitchfork_Sols}
	\makeatletter\@fleqntrue\makeatother
	\begin{align}
		\begin{split}
			&\text{$u_{2}=0+\mathcal{O}\left(u_{1}^{3}\right) ,\;$}
		\end{split}\\
		\begin{split}
			&\text{$r=-\frac{l}{2k}u_{1}^{2}+\mathcal{O}\left(u_{1}^{3}\right)\approx-\frac{
				0.05897}{2\left(
				0.9213\right)}u_{1}^{2}+\mathcal{O}\left(u_{1}^{3}\right)\approx
			-0.03200\,u_{1}^{2}+\mathcal{O}\left(u_{1}^{3}\right).$}
		\end{split}
	\end{align}
\end{subequations}
To compare \eqref{eq:EqMass_Pitchfork_Sols} to our numerical results, we will reverse our previous change of coordinates. In general we have:
$r\rightarrow r-r^{\ast },$ $ u_{1}\rightarrow P_{11}\left(\theta_{1}-\theta_{1}^{\ast }\right)+P_{12}\left(\theta_{2}-\theta_{2}^{\ast }\right) \ $and $\;u_{2}\rightarrow P_{12}\left(\theta_{1}-\theta_{1}^{\ast }\right) -P_{11}\left(\theta_{2}-\theta_{2}^{\ast}\right)$. So \eqref{eq:EqMass_Pitchfork_Sols} becomes:
\begin{subequations}
	\begin{align}
		\begin{split}
			\text{$\theta_{2}$}&	\text{$=\theta_{2}^{\ast}+\frac{P_{12}}{P_{11}}\left(\theta_{1}-\theta_{1}^{\ast }\right)+\mathcal{O}\left(\left(\theta_{1}-\theta_{1}^{\ast }\right)^{3}\right)\nonumber$}
		\end{split}\\
		\begin{split}
			\text{$r$}&	\text{$=r^{\ast}-\frac{l}{2k}\left(P_{11}\left(\theta_{1}-\theta_{1}^{\ast}\right)+P_{12}\left(\theta_{2}-\theta_{2}^{\ast}\right)\right)^{2}+\mathcal{O}\left(\left(\theta_{1}-\theta_{1}^{\ast }\right)^{3}\right)\nonumber$}
		\end{split}\\
	\begin{split}
		&\text{$=r^{\ast}-\frac{l}{2k}\left(P_{11}+\frac{P_{12}^{2}}{P_{11}}\right)^{2}\left(\theta_{1}-\theta_{1}^{\ast}\right)^{2}+\mathcal{O}\left(\left(\theta_{1}-\theta_{1}^{\ast}\right)^{3}\right)\nonumber$}
\end{split}
	\end{align}
\end{subequations}

And our parameterized graph in $\left(\vec{\theta},r\right)$ becomes:\\$\text{\enspace\enspace}G\left(\theta_{1}\right) :=\left(\theta_{1},\text{ }\theta_{2}^{\ast }+\frac{P_{12}}{P_{11}}\left(\theta_{1}-\theta_{1}^{\ast}\right) ,\text{ }r^{\ast }-\frac{l}{2k}\left(P_{11}+\frac{P_{12}^{2}}{P_{11}}\right)^{2}\left(\theta_{1}-\theta_{1}^{\ast }\right)^{2}\right) $.

In particular, for $\mathcal{B}_{TP}\left(r_{7}\right) $ we take: $r\rightarrow r-
0.3893,\;$ $u_{1}\rightarrow
0.1669\theta_{1}-
0.9860\left(\theta_{2}-\frac{\pi}{2}\right)$, and\\$\;u_{2}\rightarrow
-0.9860\theta_{1}
-0.1669\left(\theta_{2}-\frac{\pi }{2}\right)$. Also, $\frac{P_{12}}{P_{11}}=\frac{
-0.9860}{
0.1669}\approx
-5.909,$ and $\ -\frac{l}{2k}\left(P_{11}+\frac{P_{12}^{2}}{P_{11}}\right)^{2}\\ \approx
-0.03200\left(
35.92\right) \approx
-1.150.$ Therefore, we find the parameterized graph of $\mathcal{B}_{TP}\left(r_{7}\right)$ as\\
$G=\left(\theta_{1},\text{ }
-5.909\,\theta_{1},\text{ }
0.3893-1.150\,\left(\theta_{1}-\frac{\pi}{2}\right)^{2}\right) $. Below, this graph is plotted in the 
$r\theta_2$-plane. 
We also include the numerically found RE for comparison.
\begin{figure}[H]
	\captionsetup[subfigure]{justification=centering}
	\centering
	{\includegraphics[width=0.35\textwidth]{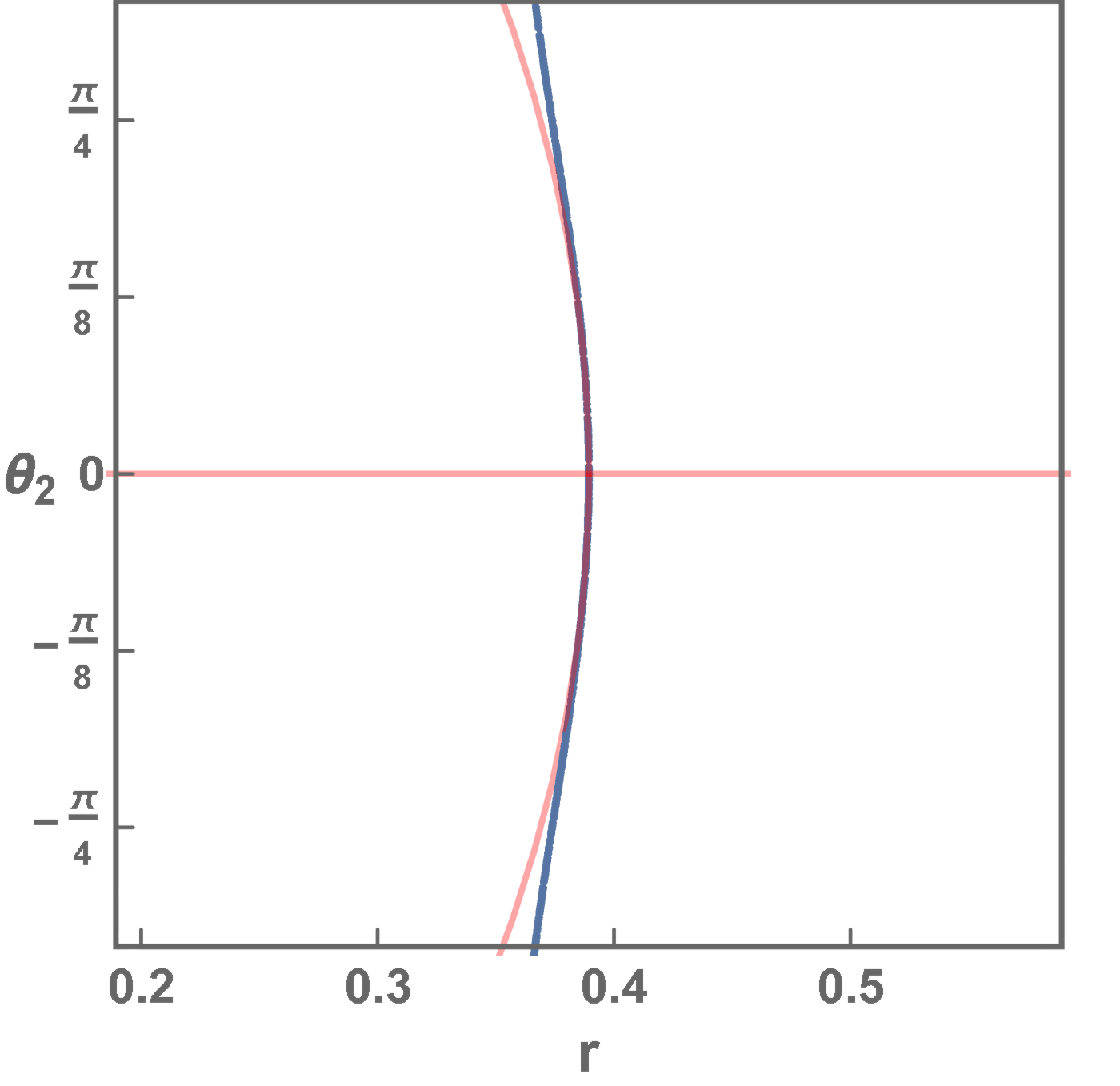}}
	\caption{\\\bfseries Equal Mass $\mathcal{B}_{TP}\left(r_{7}\right)$ Bifurcation with $\ell_1=\frac{3}{4}$}{\vspace{0.8em}\footnotesize Numerical (darker curve), Quadratic Approximation G (lighter curve)\\This shows the end of the curve which transitions from a trapezoid to a perpendicular configuration.\\[-0.051em]}
	\label{fig:Bifurcation_TP1}
\end{figure}
\paragraph{$\bm{\mathcal{B}_{TP}\left(r_2\right)}$ Pitchfork for $\bm{\ell_1=\frac{3}{4}}$}
Similarly, for the start of the trapezoid to perpendicular bifurcation $\mathcal{B}_{TP}\left(r_{2}\right)$ we find:
$G=\left(\theta_{1},\text{ }\frac{\pi}{2}-6.44012\,\left(\theta_{1}-\frac{\pi}{2}\right) ,\text{ }0.360032-0.0253042\left(42.4752\right)\,\left(\theta_{1}-\frac{\pi}{2}\right)^{2}\right) $. And below we include a plot comparing the numerically found results to this curve.
\begin{figure}[H]
	\centering
	\includegraphics[width=0.32\textwidth]{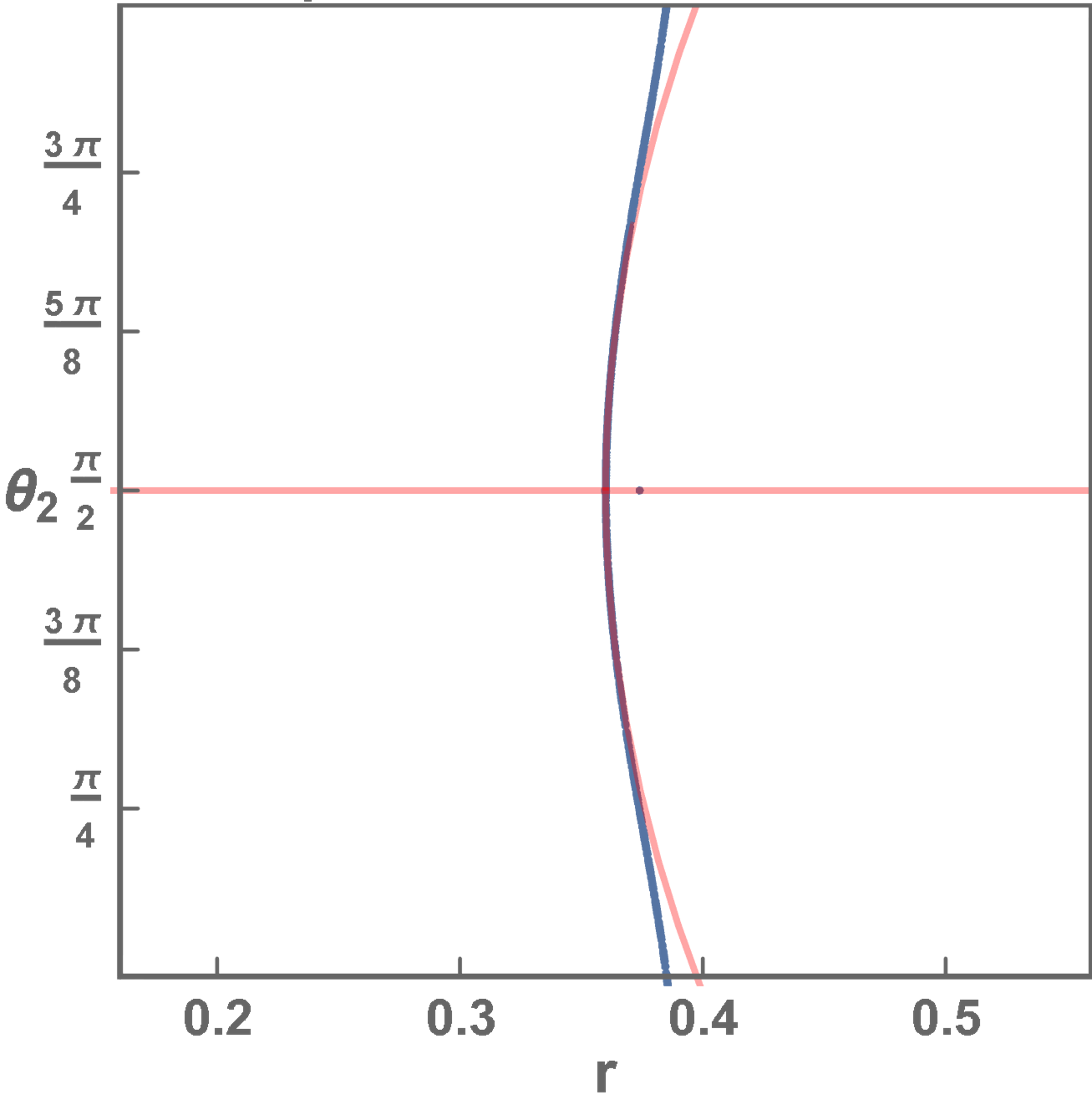}
	\caption{\\\bfseries Equal Mass $\mathcal{B}_{TP}\left(r_{2}\right)$ Bifurcations with $\ell_1=\frac{3}{4}$}{\vspace{0.8em}\footnotesize Numerical (darker curve), Quadratic Approximation G (lighter curve)\\This shows the start of the curve which transitions from a trapezoid to a perpendicular configuration.\\[-2.5951em]}
	\label{fig:Bifurcation_TP1a}
\end{figure}
\paragraph{$\bm{\mathcal{B}_{CP}}$ Pitchforks for $\bm{\ell_1=\frac{3}{4}}$}
$\mathcal{B}_{CP}$ is the curve which bifurcates from the colinear configuration, and later merges with the perpendicular configuration. For the two ends of the curve we find (respectively):\\
$\text{\hskip20pt}G=\left(\theta_{1},\text{ }
19.99\theta_{1},\text{ }
0.3686+0.01874\left(400.8\right)\theta_{1}^{2}\right)$, and \\
$\text{\hskip20pt}G=\left(\theta_{1},\text{ }\frac{\pi}{2}-21.611\theta_{1},\text{ }
0.3865-0.01684\left(468.0\right)\,\theta_{1}^{2}\right)$.\\Below are the plots comparing the numerically found results to these curves.
\begin{figure}[H]
	\captionsetup[subfigure]{justification=centering}
	\centering
	\subfloat[$\mathcal{B}_{CP}\left(r_{4}\right)$]
	{\includegraphics[width=0.32\textwidth]{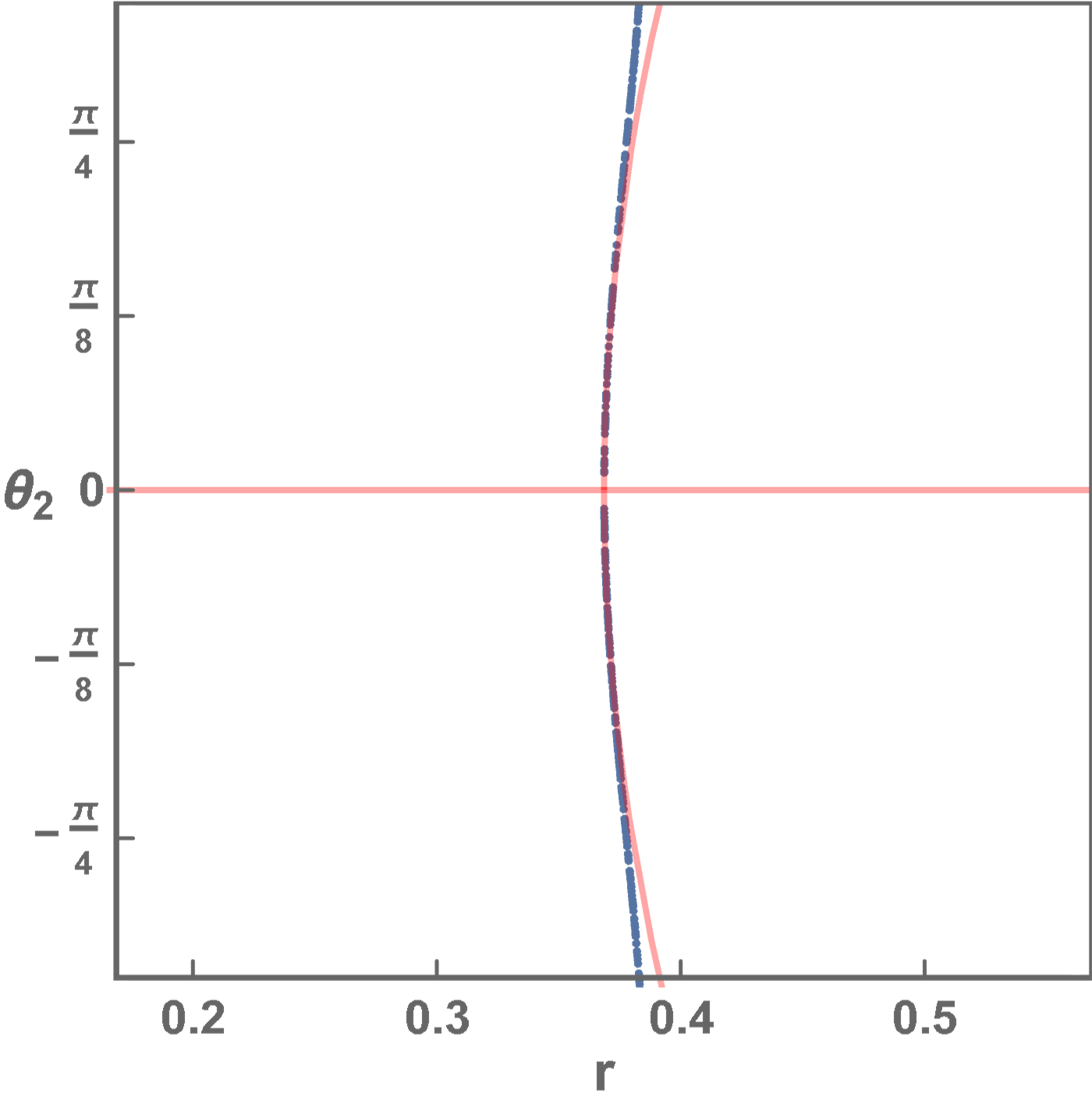}}
	\hspace{1cm}
	\subfloat[$\mathcal{B}_{CP}\left(r_{6}\right)$]
	{\includegraphics[width=0.32\textwidth]{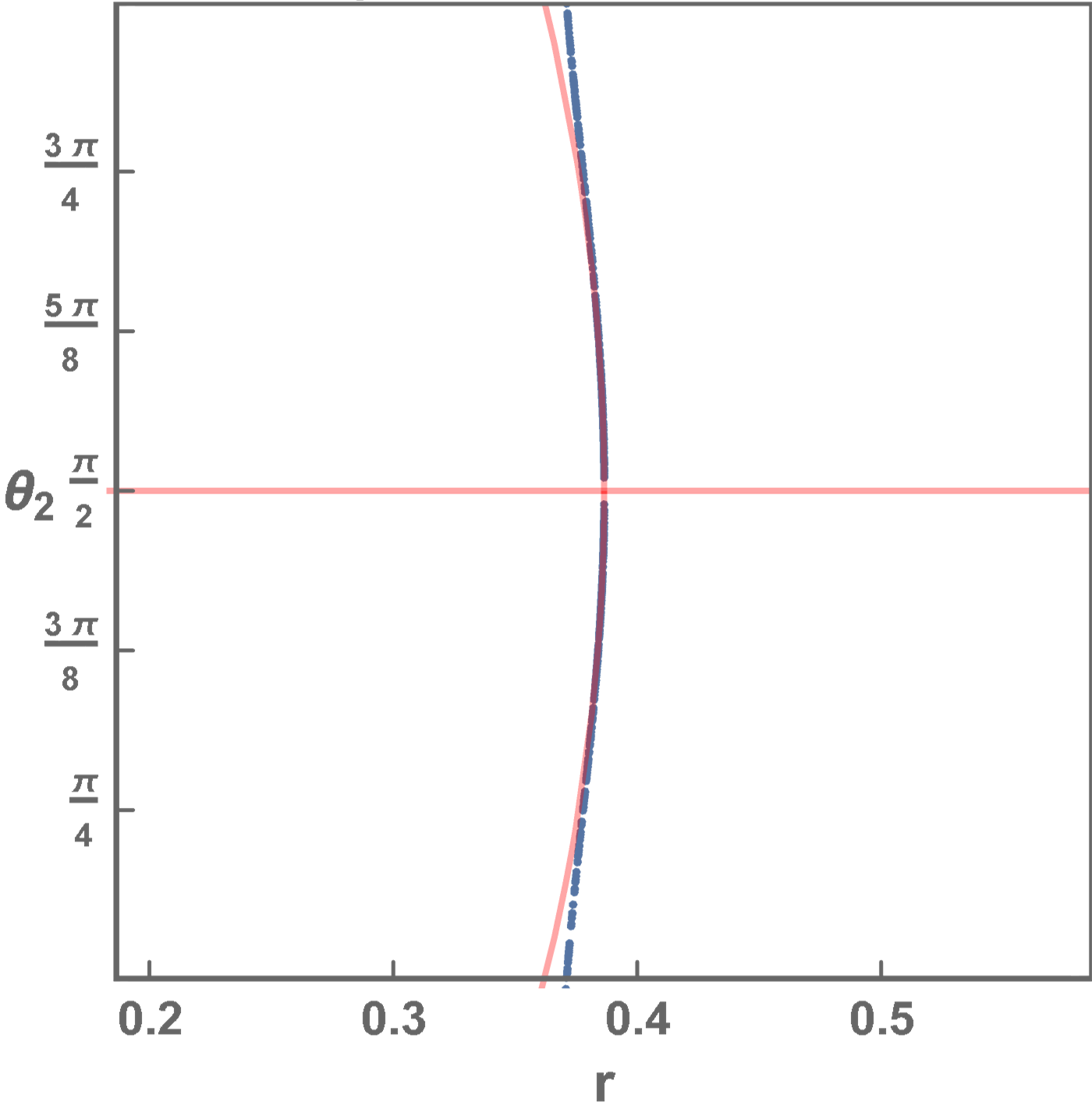}}
	\caption{\\\bfseries Equal Mass $\mathcal{B}_{CP}$ Bifurcation with $\ell_1=\frac{3}{4}$}{\vspace{0.8em}\footnotesize Numerical (darker curve), Quadratic Approximation G (lighter curve)\\This shows both ends of the curve which transitions from a colinear to a perpendicular configuration.\\[-1.991em]}
	\label{fig:Bifurcation_CPib}
\end{figure}
\paragraph{$\bm{\mathcal{B}_{LP}}$ Pitchforks for $\bm{\ell_1=\frac{1}{2}}$}
$\mathcal{B}_{LP}$ is the curve which bifurcates from $(\theta_{1},\theta_{2})=\left(\cot ^{-1}\sqrt{2},\cot ^{-1}\sqrt{2}\right)$ at $r=0$ (a collision), and later merges with the perpendicular configuration. For the end of the curve at $r_2$ we find: $G=\left(\theta_{1},\text{ }\frac{\pi}{2}
-1.686\theta_{1},\text{ }0.3377-0.1068\left(3.842\right)\theta_{1}^{2}\right)$. Below we compare the numerically found results to this curve.
\begin{figure}[H]
	\centering

	\includegraphics[width=0.4\textwidth]{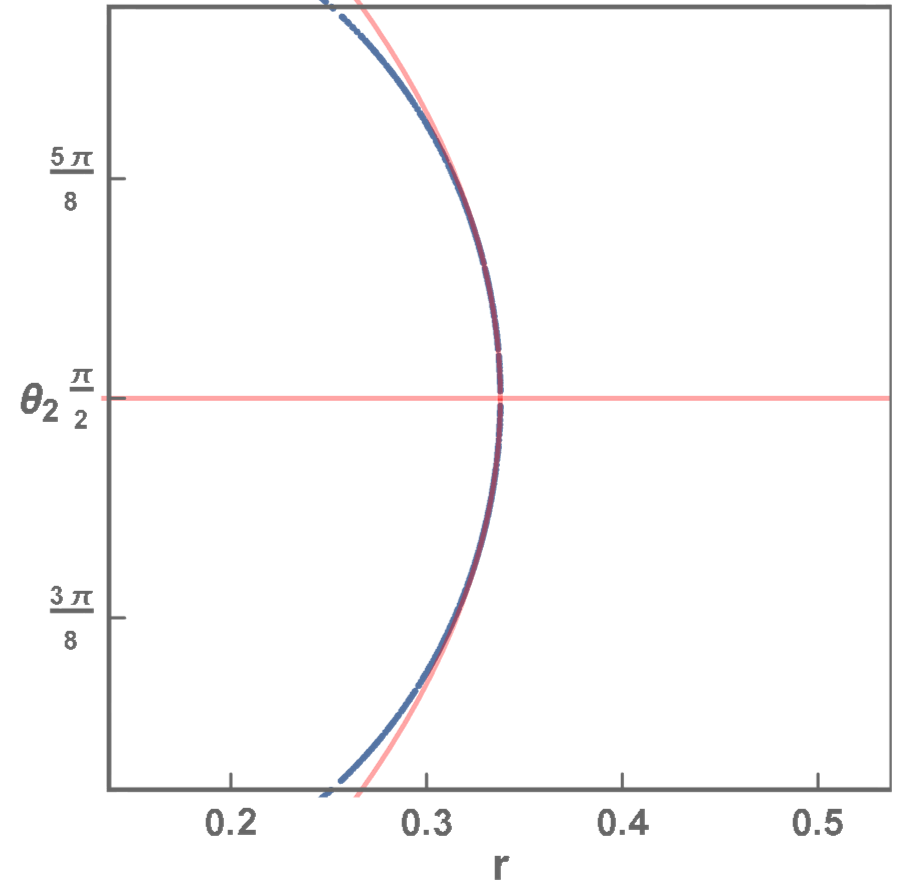}
	\caption{\\\bfseries Equal Mass $\mathcal{B}_{LP}(r_2)$ Bifurcation with $\ell_1=\frac{1}{2}$}{\vspace{0.8em}\footnotesize Numerical (darker curve), Quadratic Approximation G (lighter curve)\\This shows the end of the curve which starts at $(r,\theta_{i})=(0,\cot ^{-1}\sqrt{2})$\\and subsequently merges with the perpendicular configuration.\\[-1.851em]}
	\label{fig:Bifurcation_LP}
\end{figure}

\textbf{$L^2$ Bifurcation Analysis}\\
Now that we have confirmed the existence of the trapezoid and asymmetric RE curves, let us determine the regions of the parameter space that differ by the number of RE. For the asymmetric curves, we first calculate angular momentum for the numerically located RE families. Recall for $\ell_1\neq\frac{1}{2}$, we had a curve bifurcating from the trapezoid RE and subsequently merging with the perpendicular RE (see Figure \ref{fig:EQMassTrapToPerpBif_l10p75}), and a curve bifurcating from the colinear RE and subsequently merging with the perpendicular RE (see Figure \ref{fig:EQMassColToPerpBif_l10p75}).  And for $\ell_1=\frac{1}{2}$, we had one family of solutions in which a curve bifurcated from a collision of the two dumbbells, and subsequently merged with the perpendicular RE (see Figure \ref{fig:EQMassPerpBifurcation_l10p5}). For each of these families, the value of $M_1$ did not qualitatively change the shape of the angular momentum graphs. Below you see the graphs for these families over the relevant radii. 
\begin{figure}[H]
	\captionsetup[subfigure]{justification=centering}
	\centering
	\subfloat[$\mathcal{B}_{TP}$ for $\ell_1=\frac{3}{4}$]
	{\includegraphics[width=0.3\textwidth]{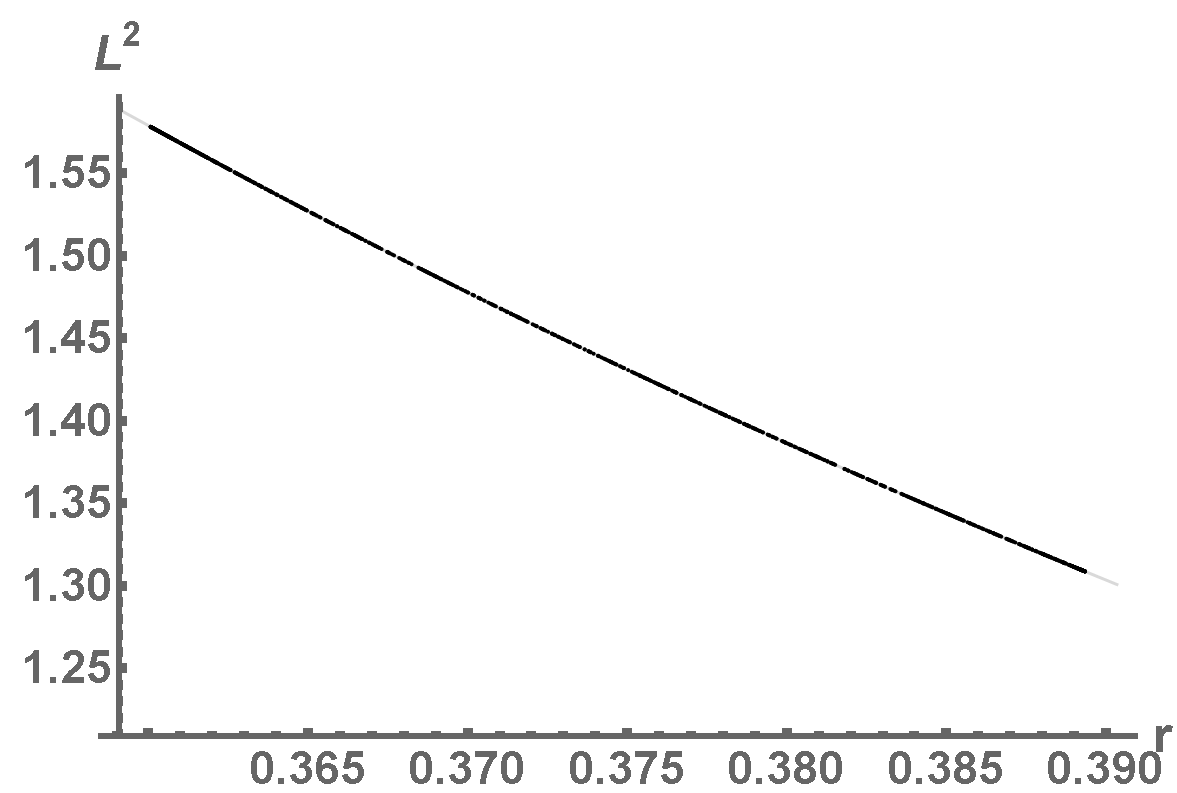}}
	\hspace{.4cm}
	\subfloat[$\mathcal{B}_{CP}$ for $\ell_1=\frac{3}{4}$]
	{\includegraphics[width=0.3\textwidth]{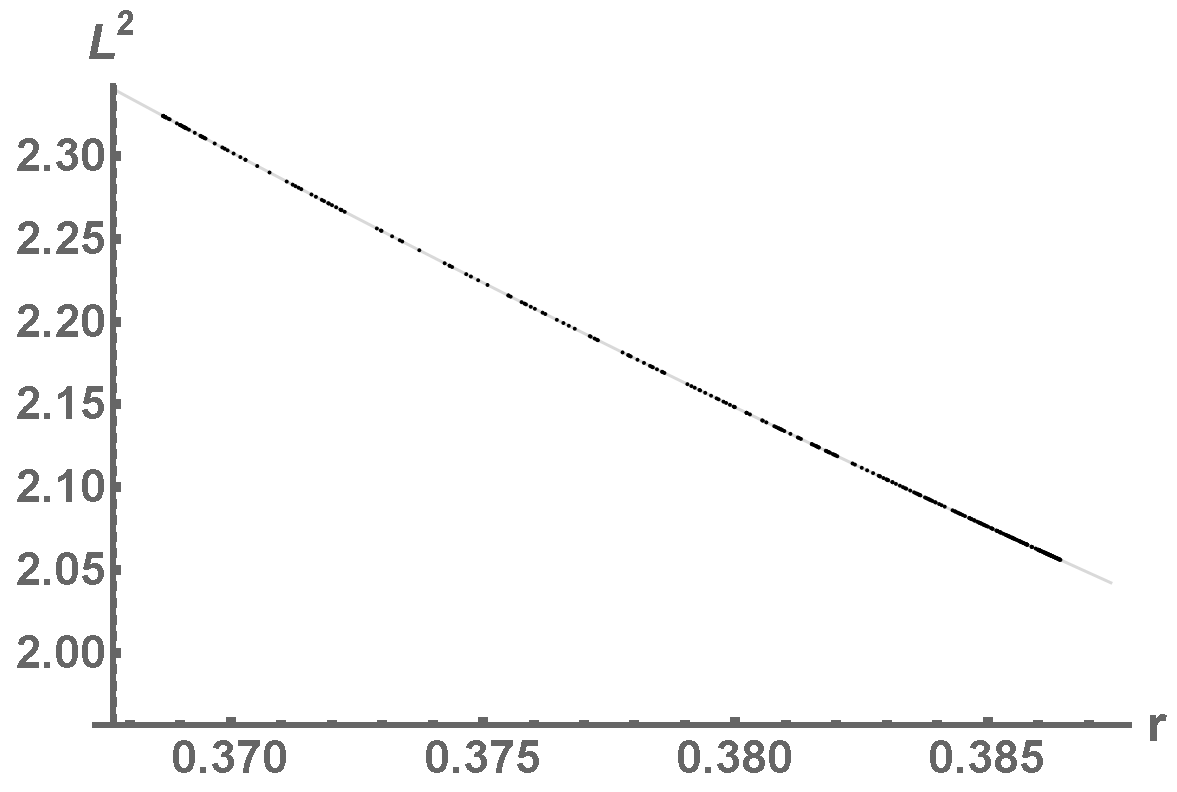}}
	\hspace{.4cm}
	\subfloat[$\mathcal{B}_{LP/RP}$ for $\ell_1=\frac{1}{2}$]
	{\includegraphics[width=0.3\textwidth]{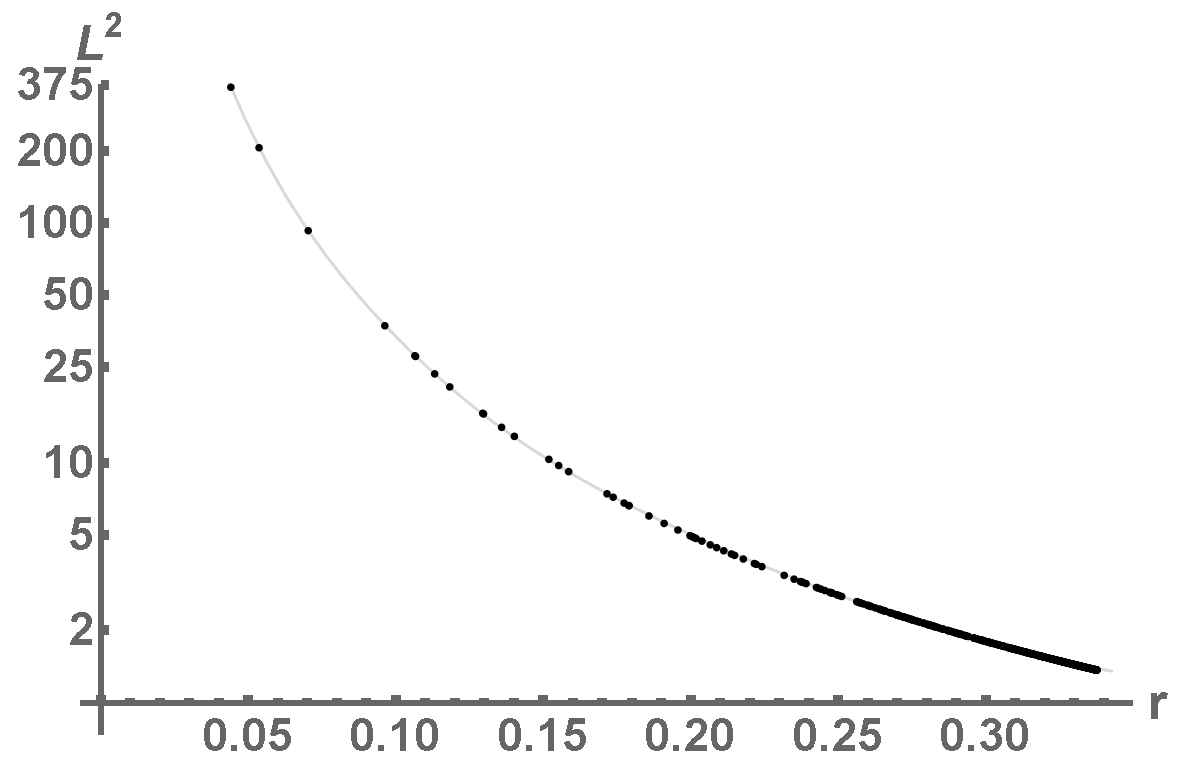}}
	\caption{\\\bfseries Equal Mass Angular Momenta for $\mathcal{B}_{TP}$, $\mathcal{B}_{CP}$, $\mathcal{B}_{LP/RP}$ with $M_1=\frac{1}{2}$}{\vspace{0.8em}\footnotesize The $L^2$ curves are strictly decreasing, and therefore have only one RE\\for each allowed angular momentum.\\[-0.851em]}
	\label{fig:Bifurcation_LP}
\end{figure}
Observe these curves are all strictly decreasing.  So for the angular momentum range allowed by these curves, there is only one RE per $L$.

For the trapezoid RE, the $L^2$ curves look qualitatively similar to the ones below.
\begin{figure}[H]
	\captionsetup[subfigure]{justification=centering}
	\centering
	{\includegraphics[width=0.55\textwidth]{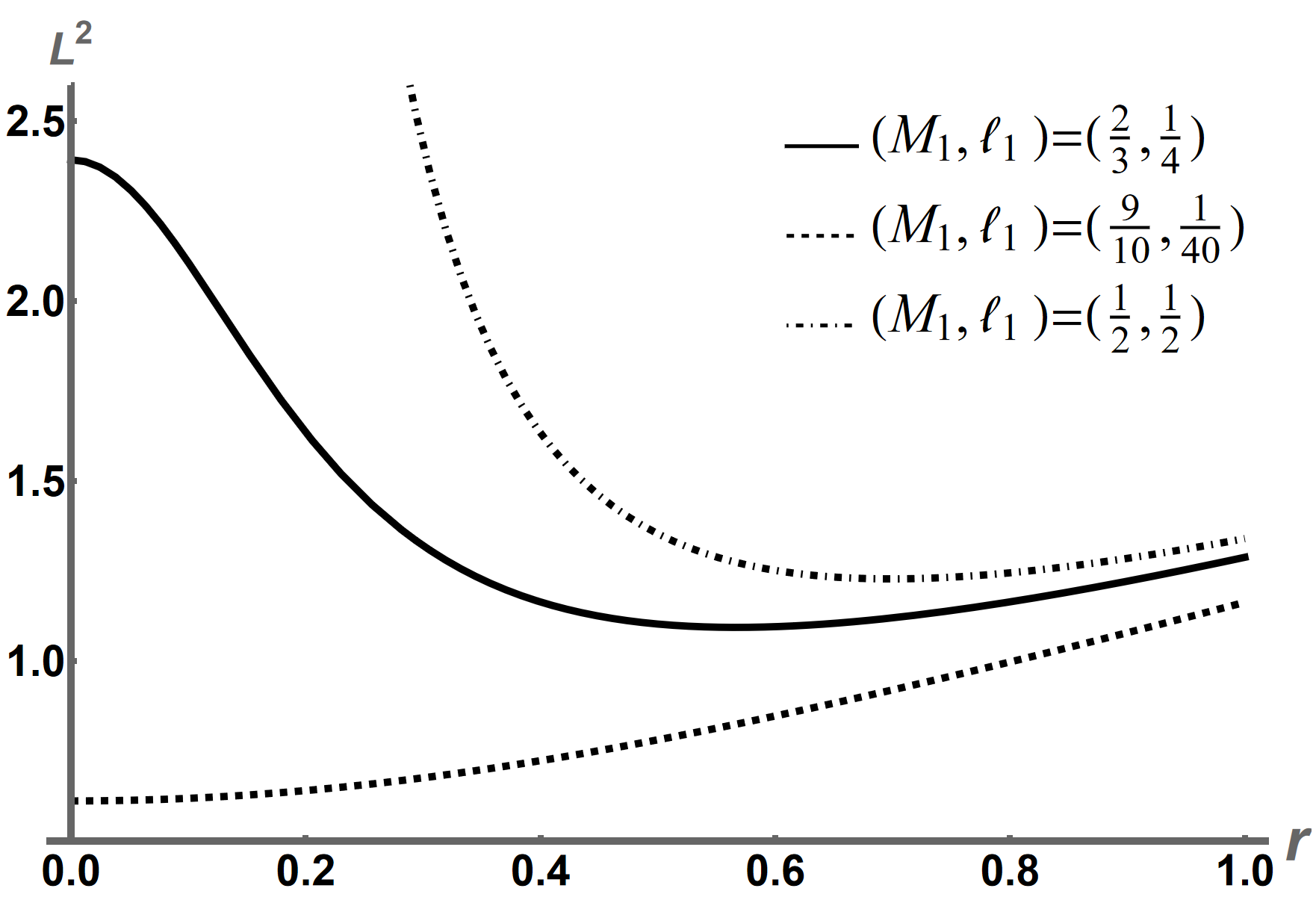}}
	\caption{\\\bfseries Trapezoid $L^2$ Curves}{\vspace{0.8em}\footnotesize Depending upon $(M_1,\ell_1)$, as $L^2$ increases there are either zero, then two, then one RE (solid line);\\zero, then one RE (dashed line); or zero, then two RE (dash-dotted line). \\[-0.851em]}
	\label{fig:LS_Trapezoid}
\end{figure}
The $L^2$ equation \ref{eq:EqMass_LS} for the trapezoid configuration becomes $\frac{(r^2+B_1+B_2)^2}{2}\left(\frac{1}{d^3_{11}}+\frac{1}{d^3_{12}}\right)$. Note geometrically that when the length of the dumbbells are unequal, and $r=0$, the distances $d_{11},d_{12}$ are strictly positive, and by inspection $L^2$ is finitely positive. When the lengths are equal, the distance $d_{11}$ goes to zero, and the expression becomes unbounded (as in Figure \ref{fig:LS_Trapezoid}). 

Taking a derivative, we have:\\ $\partial_rL^2=-\frac{3}{2}r(r^2+B_1+B_2)^2(\frac{1}{d^5_{11}}+\frac{1}{d^5_{12}})+2r(r^2+B_1+B_2)(\frac{1}{d^3_{11}}+\frac{1}{d^3_{12}}).$\\We can see by inspection that for dumbbells of unequal length, this slope goes to zero as $r\rightarrow0$. Additionally, since $d_{ij}\rightarrow r$ as $r\rightarrow\infty$, the expression goes to $1$, and angular momentum is unbounded as $r\rightarrow\infty$.

Looking at the curvature at $r=0$, we calculate $\partial_r^2L|_{r=0}$. However, this gives a very complicated expression, ill-suited to determining the sign.  If instead, we make substitutions \ref{eq:2DB_Col_LS_Curvature_COVs} in $\partial_r^2L|_{r=0}$, then after some simplification we get the following when $\ell_1<\ell_2$: \\
$\scriptstyle\text{\enspace\enspace\enspace\enspace\enspace} - 3\ell^7(1 + M)^2 - 3\ell^6(9 + 14 M + 5 M^2) - 
2\ell^5(51 + 58 M + 15 M^2) - 10\ell^4(21 + 16 M + 3 M^2) - 
5\ell^3(51 + 22 M + 3 M^2)\\ \text{\enspace\enspace\enspace\enspace\enspace\enspace\enspace\enspace} - 
2\ell^2(93 + 8 M + 3 M^2) +28\ell(M-3) + 8(M-3)$.\\
Observe by inspection that each term is negative when $M<3$, or equivalently when $M_1<\frac{3}{4}$ (same result occurs for $\ell_2<\ell_1$ when $M_2<\frac{3}{4}$). So we have negative curvature (solid line in Figure \ref{fig:LS_Trapezoid}).  Observe that if $M>3$, the last two terms are positive such that for small $\ell$ ($\ell_1$ small) we have positive curvature (dotted line in Figure \ref{fig:LS_Trapezoid}).

So the boundary behavior matches what we see in the graphs. 
However, proving the shape of the graph between the boundaries for all choices of parameters ($\ell_1,M_1$) is nontrivial. Numerically, we find that the above graphed results reflect the possibilities.  That is, when $M_1<\frac{3}{4}$ or with $\ell_1$ large, we have one minimum, and as $L^2$ increases, we have zero, then two, then one RE. When $M_1>\frac{3}{4}$ and $\ell_1$ small, we have no minimum, and as $L^2$ increases, we have zero, then one RE. And when $\ell_1=\frac{1}{2}$, we have zero, then two RE.

\vspace*{-5mm}\subsubsection{Energetic Stability for Equal Mass}
\label{2DB_EQMass_Energetic_Stability}Now that we have located RE for the equal mass configuration, let us consider its stability. Since we have already characterized energetic stability for the colinear and perpendicular configurations, we will examine only the trapezoid configuration and the asymmetric bifurcation curves with $L^2>0$. As we saw in Section \ref{AmendedPotential}, to determine energetic stability we check if the RE are strict minima of the amended potential $V$. However, for the equal mass configuration, the amended potential $V$'s Hessian is not (in general) block diagonal. So it is not simple to characterize conditions under which $H$ is positive definite.

However, note that for each choice of $r$, we can consider the two-dimensional trace and determinant of the amended potential $V$'s Hessian \eqref{eq:2DB_Hession}. We can therefore identify the regions where the RE would be 2D maxima, minima, or saddles. In our effort to identify the 3D minima, being a 2D minimum becomes a necessary criteria, and will help us eliminate unstable RE.

After setting $r$, the configuration space consists of a $(\theta_{1},\theta_{2})$ torus. In our analysis below, you can reference the graphs supplied in the equal mass bifurcation section of \ref{2DB_EqualMass_Bifurcation_Analysis}. These will provide visual references consistent with the stability we find below. Note that in addition to the RE graphed in these figures, we also hatch marked the positive regions of the two-dimensional $V$ trace and determinant. This way, the RE are stable if they are in a crosshatched region.

In Figure \ref{fig:EqMassRSlice_elThreeQuarters} you see an example of such a graph ($r=0.018,\;\ell_{1}=\frac{3}{4}$). We see that the only RE in the positive 2D determinant region are $\mathcal{R}_{C}$ and $\mathcal{R}_{P_{1}}$. And of these, only $\mathcal{R}_{C}$ also has a positive 2D trace. So we may conclude that $\mathcal{R}_{C}$ is a 2D minimum, $\mathcal{R}_{P_{1}}$ is a 2D maximum, and $\mathcal{R}_{P_{2}},\mathcal{R}_{T}$ are 2D saddles.
\begin{figure}[H]
	\centering
	\includegraphics[width=0.45\linewidth]{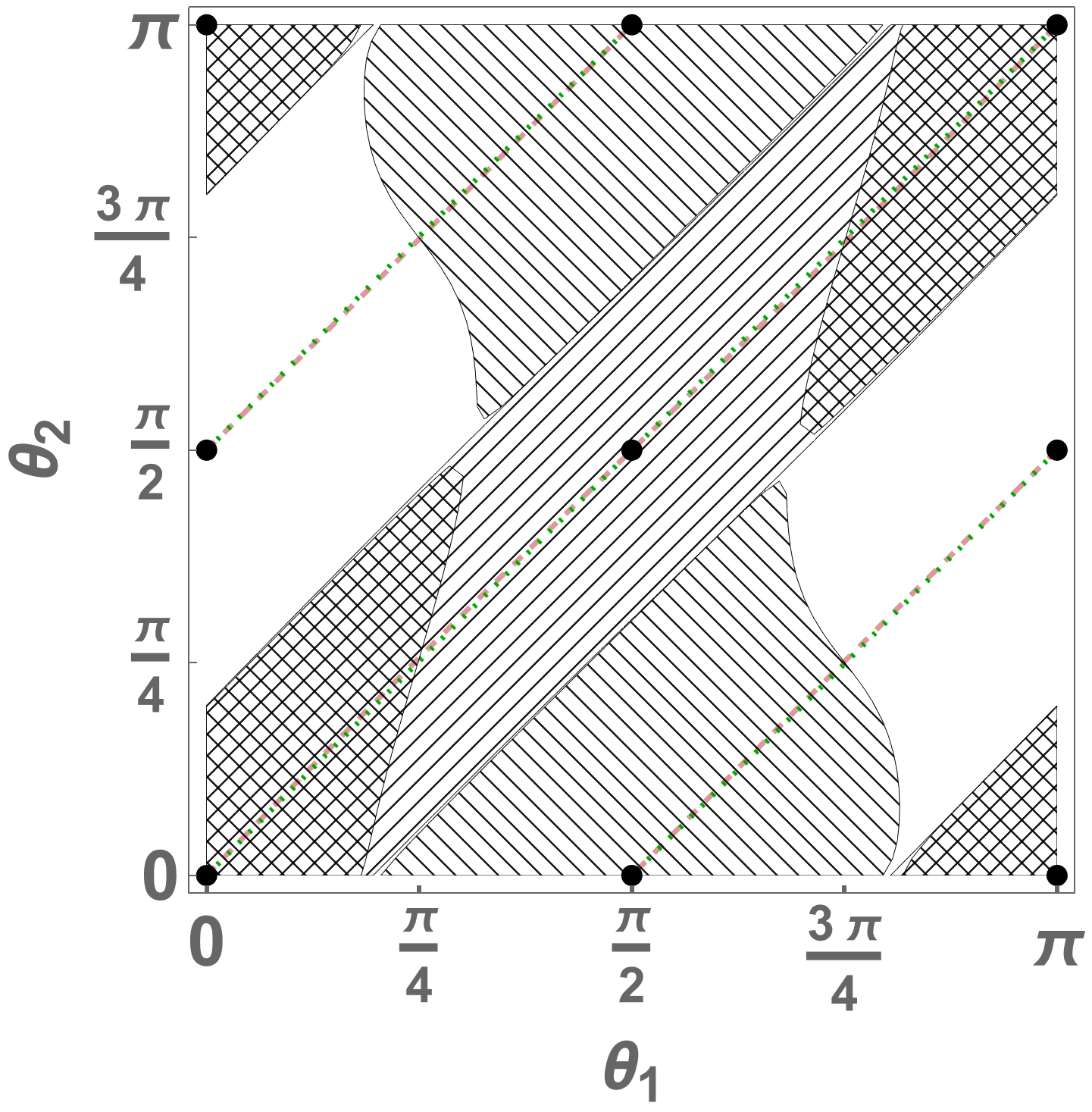}\raisebox{1.3\height}{\includegraphics[width=0.23\textwidth]{images/EqMass_rSlice/Legend.png}}
	\caption{\\\bfseries Equal Mass RE/Trace/Det.\\with $\ell_1=\frac{3}{4}$ for $r=0.018$}
	\label{fig:EqMassRSlice_elThreeQuarters}{\vspace{0.8em}\footnotesize This type of graph reveals the 2D stability of RE.\\We see the stable crosshatched minimum region around $(\theta_{1},\theta_{2})=(0,0)$,\\a maximum at $(\frac{\pi}{2},0)$, and saddles at $(\frac{\pi}{2},\frac{\pi}{2})$, $(0,\frac{\pi}{2})$. \\[-0.851em]}
\end{figure}
As noted earlier, we find $\ell_{1}=\frac{1}{2}$ leads to qualitatively different bifurcations than when $\ell_{1}\neq \frac{1}{2}.$ So let us explore the energetic stability of the RE for these two cases.
\paragraph{Case: $\ell_1\neq \frac{1}{2}$}
Recall the description for the $\ell_{1}\neq \frac{1}{2}$ curves given in the equal mass bifurcation section of \ref{2DB_EqualMass_Bifurcation_Analysis}, where we determined that, besides the colinear and perpendicular which we covered previously, the only curves with physically realizable angular momenta are the trapezoid and bifurcated curves $\mathcal{B}_{TP},\mathcal{B}_{CP}$. So we restrict our analysis to these below.
Looking at the specific case of $\ell_{1}=\frac{3}{4}$, we calculate bifurcation radii of interest as: $\left(r_{1},r_{2},r_{3},r_{4}\right)\approx\left(\frac{1}{4},0.3600,0.3630,0.3686\right)$, and $\left(r_{5},r_{6},r_{7},r_{8}\right) \approx\left(0.3812,0.3865,0.3893,\frac{1}{2}\right)$, (consistent with the figures in the equal mass bifurcation section).

In the tables below, we map the signs of the full 3D eigenvalues ($Sgn\left(e_{r},e_{\theta_{i}}\right) \in\left(\pm ,\pm ,\pm\right) $) for each curve as $r$ varies through the above mentioned radii. These signs are consistent with the graphs in the equal mass bifurcation section of \ref{2DB_EqualMass_Bifurcation_Analysis}. These signs are found by looking at the eigenvalues of the Hessian at the numerically located RE in the configuration space. Since the signs are from the 3D Hessian, the sign of the radial eigenvalue seen in the tables below will change at radii not indicated in our 2D images above. By comparing the 2D to the 3D eigenvalues, we can also discern which is the radial eigenvalue.

\textbf{Trapezoid}\begin{adjustwidth}{0.5cm}{0cm}In addition to the bifurcation seen above at $r_{2}\approx 0.360$, for some $r_{t}$ (which depends upon $M_{1}$), the trapezoid radial eigenvalue turns from negative to positive. 
Below is a table of the signs of $H$'s eigenvalues \eqref{eq:2DB_Hession} as $r$ ranges through the relevant radii. Observe we find no stability ($Sgn\left(e_{r},e_{\theta_{i}}\right)\ne\left(+,+,+\right)$).
\end{adjustwidth}
\captionof{table}{\\Equal Mass Trapezoid Eigensigns with $\ell_1\ne\frac{1} {2}$}\begin{table}[h]\centering\begin{tabular}{|l|c|c|c|c|c|c|}
	\hline
	$\left(r,\theta_{1},\theta_{2}\right) $ & $\left(0,\frac{\pi}{2},\frac{\pi }{2}\right) $ & $\left(0^{+},\frac{\pi}{2},\frac{\pi}{2}\right) $ & $\left(r_{2}^{-},\frac{\pi}{2},\frac{\pi}{2}\right) $ & $\left(r_{2}^{+},\frac{\pi}{2},\frac{\pi}{2}\right) $ & $\left(r_{t}^{-},\frac{\pi}{2},\frac{\pi}{2}\right) $ & $\left(r_{t}^{+},\frac{\pi}{2},\frac{\pi}{2}\right) $ \\ \hline
	\multicolumn{1}{|r|}{$\mathcal{R}_{T}$ signs} & $\left(0,0,+\right) $ & $\left(-,-,+\right) $ & $\left(-,-,0^{+}\right) $ & $\left(-,-,0^{-}\right) $ & $\left(0^{-},-,-\right) $ & $\left(0^{+},-,-\right) $ \\ \hline
\end{tabular}\end{table}
$\vspace{10pt}$\newpage
Below you find visualizations of the dumbbell configuration for a couple of radii in this range.
\begin{figure}[H]
	\captionsetup[subfigure]{justification=centering}
	\centering
	\subfloat[$r=0$:\\$L^2$ singular.]
	{\includegraphics[width=0.12\textwidth]{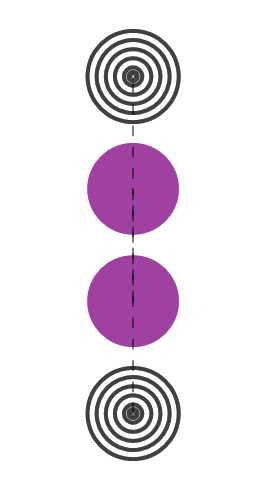}}
	\hspace{2cm}
	\subfloat[$r_2: $ Bifurcation to $B_{TP}$.\\$L^2>0$]
	{\includegraphics[width=0.29\textwidth]{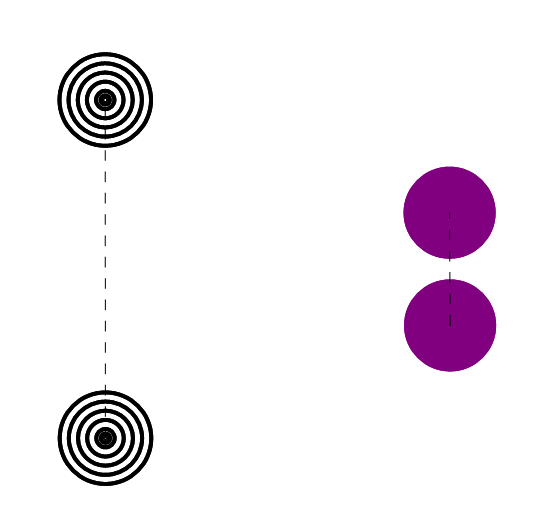}}
	\caption{\\\bfseries Equal Mass Trapezoid Visualization with $\ell_1\ne\frac{1}{2}$}{\vspace{0.8em}\footnotesize Finite positive angular momenta when $r>0$, but no stability.\\[0.191em]}
	\label{fig:EQMassTrap_l10p75_r0p0_r_2}
\end{figure}

\textbf{TP Bifurcation}
\begin{adjustwidth}{0.5cm}{0cm}
We saw bifurcations in Figures \ref{fig:EqMassRSlice_l10p75_r0p252_r0p362}b, \ref{fig:EqMassRSlice_l10p75_r0p388_r0p498}a at $\left(r_{2},r_{7}\right) \approx\left(0.3600,0.3893\right).$ 
Below is a table of the signs of $H$'s eigenvalues \eqref{eq:2DB_Hession} as $r$ ranges through the relevant radii. Note that the notation $0^{\{-,+,-\}}$ is used below to imply that the eigenvalue is near zero, but takes on positive or negative values depending upon the value of $M_{1}.$ In particular, $0^{\{-,+,-\}}$ is meant to imply that for smaller values of $M_{1},$ the eigenvalue is negative, for middle range values of $M_{1}$ the eigenvalue is positive, and for larger values of $M_{1}$ the eigenvalue is again negative. Observe we find no stability ($Sgn\left(e_{r},e_{\theta_{i}}\right)\ne\left(+,+,+\right)$).
\end{adjustwidth}
\captionof{table}{\\Equal Mass Trapezoid$\rightarrow$Perpendicular Eigensigns with $\ell_1\ne\frac{1} {2}$}\begin{table}[h]\centering\begin{tabular}{|l|c|c|c|}
	\hline
	$\left(r,\theta_{1},\theta_{2}\right) $ & $\left(r_{2}^{+},\frac{\pi}{2},\frac{\pi}{2}\right) $ & $\left(r_{7}^{-},\frac{\pi}{2},0\right) /\left(
	r_{7}^{-},0,\frac{\pi}{2}\right),\text{\small radial eigenroots at } M_{1}=\frac{1}{10},\frac{1}{2}$ & $r_{7}^{+}$ \\ \hline
	\multicolumn{1}{|r|}{$\mathcal{B}_{TP^{\pm }}$ signs} & $\left(
	-,-,0^{+}\right) $ & $\left(0^{\{-,+,-\}},-,0^{+}\right) $ & N/A \\ \hline
\end{tabular}\end{table}\newpage
Below you find visualizations of the dumbbell configuration for various radii in this range.
\begin{figure}[H]
	\captionsetup[subfigure]{justification=centering}
	\centering
	\subfloat[$r_2$: Bifurcating from $\mathcal{R}_{T}$. \\$L^2>0. $]
	{\includegraphics[width=0.27\textwidth]{images/EQMassTrapBif/l10p75_Num1.png}}
	\hspace{.4cm}
	\subfloat[$r\approx0.3788$: \\$L^2>0.$]
	{\includegraphics[width=0.3\textwidth]{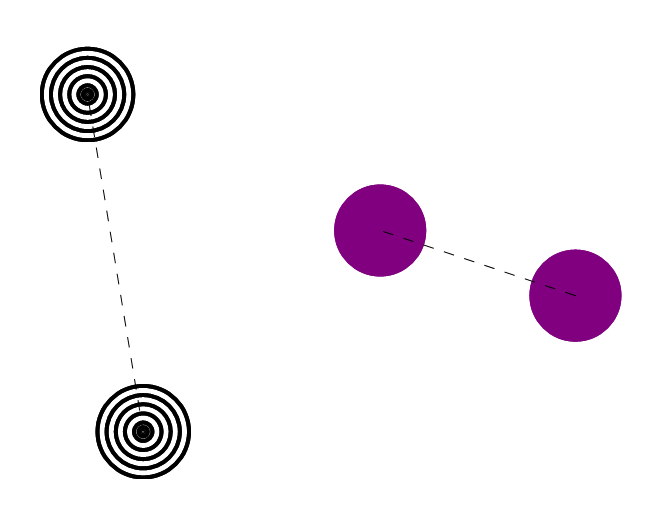}}
	\hspace{.4cm}
	\subfloat[$r_7$: Merging with $\mathcal{R}_{P_{1}}$. \\$L^2>0.$]
	{\includegraphics[width=0.3\textwidth]{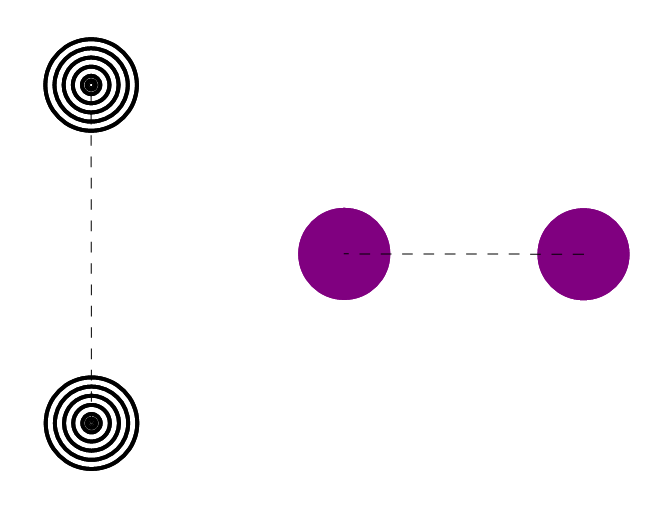}}
	\caption{\\\bfseries Equal Mass Trapezoid$\rightarrow$Perpendicular Visualization with $\ell_1\ne\frac{1}{2}$}{\vspace{0.8em}\footnotesize Finite positive angular momenta, but no stability.\\[0.191em]}
	\label{fig:EQMassTrapToPerpBif_l10p75}
\end{figure}

\textbf{CP Bifurcation}
\begin{adjustwidth}{0.5cm}{0cm}
We saw bifurcations in Figures \ref{fig:EqMassRSlice_l1_NotHalf_r4}, \ref{fig:EqMassRSlice_l1_NotHalf_r6} at $\left(r_{4},r_{6}\right) \approx\left(0.3686,0.3865\right).$ 
Below is a table of the signs of $H$'s eigenvalues \eqref{eq:2DB_Hession} as $r$ ranges through these radii. Observe we find no stability ($Sgn\left(e_{r},e_{\theta_{i}}\right) \ne\left(+,+,+\right)$).
\end{adjustwidth}
\captionof{table}{\\Equal Mass Colinear$\rightarrow$Perpendicular Eigensigns with $\ell_1\ne\frac{1} {2}$}\begin{table}[h]\centering\begin{tabular}{|l|c|c|c|}
	\hline
	$\left(r,\theta_{1},\theta_{2}\right) $ & $\left(r_{4}^{+},0^{+},0^{+}\right) $ & $\left(r_{6}^{-},0^{+},\frac{\pi}{2}^{-}\right) $ & $r_{6}^{+}$ \\ \hline
	\multicolumn{1}{|r|}{$\mathcal{B}_{CP^{\pm }}$ signs} & $\left(-,+,0^{-}\right) $ & $\left(0^{-},+,-\right) $ & N/A \\ \hline
\end{tabular}\end{table}
Below you find visualizations of the dumbbell configuration for various radii in this range.
\begin{figure}[H]
	\captionsetup[subfigure]{justification=centering}
	\centering
	\subfloat[$r_4$: Bifurcating from $\mathcal{R}_{C}$. \\$L^2>0$]
	{\includegraphics[width=0.3\textwidth]{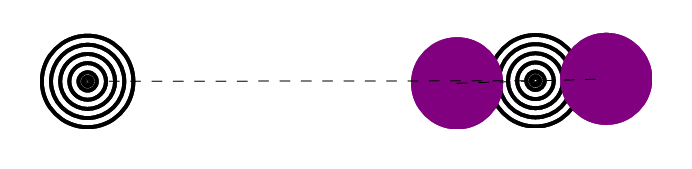}}
	\hspace{0.4cm}
	\subfloat[$r=0.382836$.  \\$L^2>0$]
	{\includegraphics[width=0.3\textwidth]{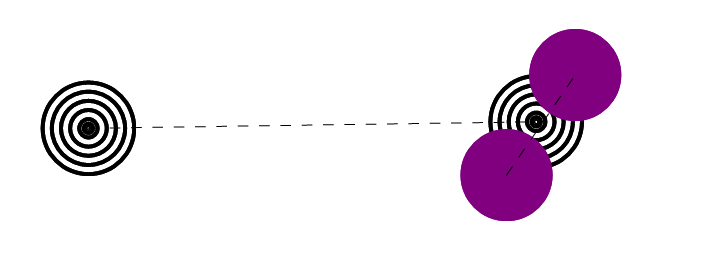}}
	\hspace{0.4cm}
	\subfloat[$r_6$: Merging with $\mathcal{R}_{P_2}$. \\$L^2>0$]
	{\includegraphics[width=0.3\textwidth]{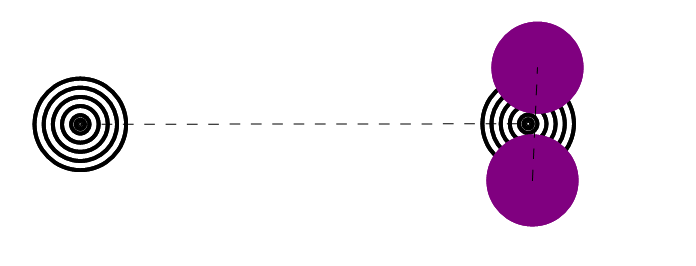}}
	\caption{\\\bfseries Equal Mass Colinear$\rightarrow$Perpendicular Visualization with $\ell_1\ne\frac{1}{2}$}{\vspace{0.8em}\footnotesize Finite positive angular momenta, but no stability.\\[0.191em]}
	\label{fig:EQMassColToPerpBif_l10p75}
\end{figure}

\textbf{Stability Conclusions}\\
In our effort to find energetic stability, we were looking for eigenvalues $\left(e_{r},e_{\theta_{i}}\right) $ such that $Sgn\left(e_{r},e_{\theta_{i}}\right) =\left(+,+,+\right)$, which are associated with strict minima. In Section \ref{2DB_Colinear_Energetic_Stability}, we saw that $\mathcal{R}_{C},$ with sufficiently large radius such that $\partial_rL^2>0,$ is energetically stable. We note from above that as $r$ varies, none of the other RE curves' signs become $\left(+,+,+\right) $. For the most part, we find saddles. But for the interval $\left(0,r_{7}\right) $ of $\mathcal{R}_{P_{1}}$, for certain $M_{1}$ values we have maxima. Similarly we find maxima for $\mathcal{R}_{T}$ on $\left(r_{2}^{+},\frac{\pi}{2},\frac{\pi}{2}\right)$. Although, as has been noted previously, the positive definite nature of the kinetic energy in the Hamiltonian makes these RE saddles in the energy manifold.

\paragraph{Case: $\ell_1=\frac{1}{2}$}
Now we will take a look at $\ell_{1}=\frac{1}{2}$, since it has qualitatively different bifurcation curves than $\ell_{1}\ne\frac{1}{2}$. Recall the description for the $\ell_{1}=\frac{1}{2}$ curves given in the equal mass bifurcation section of \ref{2DB_EqualMass_Bifurcation_Analysis} where we determined that, besides the colinear and perpendicular which we covered previously, the only curves with physically realizable angular momenta are the trapezoid and $\mathcal{B}_{LP/RP}$. So we restrict our analysis to these below.

In the following tables, we look at each of these curves, and map the signs of the full 3D eigenvalues ($Sgn\left(e_{r},e_{\theta_{i}}\right) \in\left(\pm ,\pm ,\pm\right) $) as $r$ varies through the above mentioned radii. These signs are consistent with the graphs in the equal mass bifurcation section of \ref{2DB_EqualMass_Bifurcation_Analysis}. These signs are found by looking at the eigenvalues of the Hessian at numerically located RE in the configuration space. Note that the signs of the 3D radial eigenvalue, seen in the tables below, will change at radii not suggested in our 2D images above.

\textbf{Trapezoid}
\begin{adjustwidth}{0.5cm}{0cm}
In addition to the bifurcation seen in Figure \ref{fig:EqMassRSlice_l1_Half_r0p01} at $r=0$, for some $r_{t}$ (which depends upon $M_{1}$), the trapezoid radial eigenvalue turns from negative to positive. 
Below is a table of the signs of $H$'s eigenvalues \eqref{eq:2DB_Hession} as $r$ ranges through the relevant radii. Observe we find no stability ($Sgn\left(e_{r},e_{\theta_{i}}\right) \neq\left(+,+,+\right)$).
\end{adjustwidth}
\captionof{table}{\\Equal Mass Trapezoid Eigensigns with $\ell_1=\frac{1} {2}$}\begin{table}[h]\centering\begin{tabular}{|l|c|c|c|c|}
	\hline
	$\left(r,\theta_{1},\theta_{2}\right) $ & $\left(0,\frac{\pi}{2},\frac{\pi }{2}\right) $ & $\left(0^{+},\frac{\pi}{2},\frac{\pi}{2}\right) $ & $\left(r_{t}^{-},0,0\right) $ & $\left(r_{t}^{+},0,0\right) $ \\ \hline
	\multicolumn{1}{|r|}{$\mathcal{R}_{T}$ signs} & collision & $\left(-,-,0^{-}\right) $ & $\left(0^{-},-,-\right) $ & $\left(
	0^{+},-,-\right) $ \\ \hline
\end{tabular}\end{table}\newpage

Below you find visualizations of the dumbbell configuration for a couple of radii in this range.
\begin{figure}[H]
	\captionsetup[subfigure]{justification=centering}
	\centering
	\subfloat[Near collision.\\$L^2\rightarrow+\infty$ as $r\rightarrow0$.]
	{\includegraphics[width=0.22\textwidth]{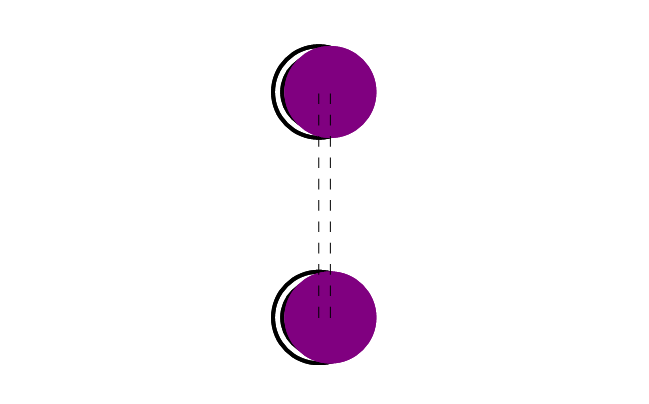}}
	\hspace{1cm}
	\subfloat[$r\approx0.1792$: \\$L^2>0$.]
	{\includegraphics[width=0.22\textwidth]{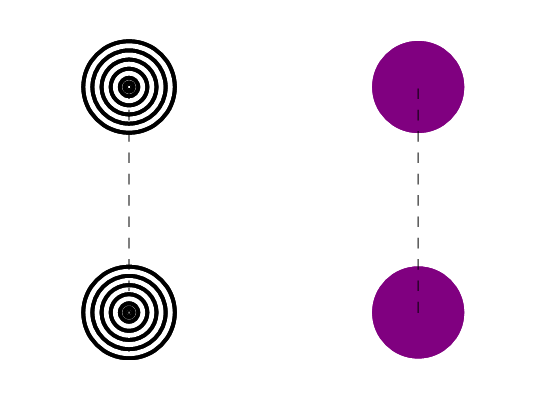}}
	\caption{\\\bfseries Equal Mass Trapezoid Visualization with $\ell_1=\frac{1}{2}$}{\vspace{0.8em}\footnotesize Finite positive angular momenta when $r>0$, but no stability.\\[-0.191em]}
	\label{fig:EQMassTrapBifurcation}
\end{figure}

\textbf{Perpendicular Bifurcated}
\begin{adjustwidth}{0.5cm}{0cm}
We saw a bifurcation in Figure \ref{fig:EqMassRSlice_l1_Half_r2} at $r_{2}\approx 0.3377.$ 
Below is a table of the signs of $H$'s eigenvalues \eqref{eq:2DB_Hession} as $r$ ranges through the relevant radii. Observe we find no stability ($Sgn\left(e_{r},e_{\theta_{i}}\right)\neq\left(+,+,+\right)$).
\end{adjustwidth}
\captionof{table}{\\Equal Mass Perpendicular Bifurcation Eigensigns with $\ell_1=\frac{1} {2}$}\begin{table}[h]\centering\begin{tabular}{|l|c|}
	\hline $\left(r,\theta_{1},\theta_{2}\right) $ & $\left(0,\cot ^{-1}\sqrt{2},\cot ^{-1}\sqrt{2}\right) /\left(0,\cos ^{-1}\left(-\sqrt{\frac{2}{3}}\right) ,\cos ^{-1}\left(-\sqrt{\frac{2}{3}}\right)\right) $
	  \\ \hline
	\multicolumn{1}{|r|}{$\mathcal{B}_{LP^{\pm }}/\mathcal{B}_{RP^{\pm }}$ signs} & collision\\ \hline
\end{tabular}\end{table}
$\vspace{20pt}\text{\enspace\enspace\enspace\enspace}$
\begin{tabular}{|c|c|c|}
	\hline
	$0^{+}$ & $r_{2}^{-}$ & $r_{2}^{+}$  \\ \hline
	 $\left(-,+,-\right) $ & $\left(0^{-},+,-\right) $ & N/A \\ \hline
\end{tabular}\\
Below you find visualizations of the dumbbell configuration for various radii in this range.
\begin{figure}[H]
	\captionsetup[subfigure]{justification=centering}
	\centering
	\subfloat[Near Collision: \\$L^2\rightarrow+\infty$ as $r\rightarrow0$.]
	{\includegraphics[width=0.17\textwidth]{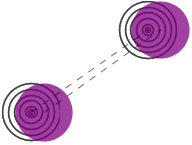}}
	\hspace{0.2cm}
	\subfloat[$r=0.2882$: \\$L^2>0$.]
	{\includegraphics[width=0.22\textwidth]{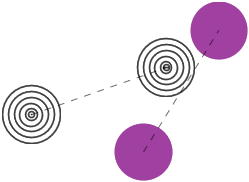}}
	\hspace{0.2cm}
	\subfloat[$r=0.3306$: \\$L^2>0$.]
	{\includegraphics[width=0.215\textwidth]{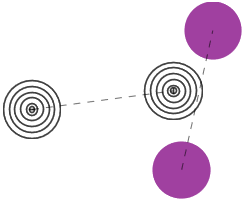}\hspace{10pt}}
	\hspace{0.2cm}
	\subfloat[$r_2$: Merging with $\mathcal{R}_{P_2}$. \\$L^2>0$.]
	{\includegraphics[width=0.205\textwidth]{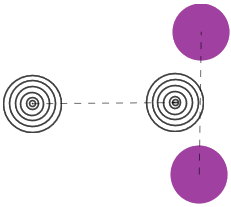}}
	\caption{\\\bfseries Equal Mass Perpendicular Bifurcation  Visualization with $\ell_1=\frac{1}{2}$}{\vspace{0.8em}\footnotesize Finite positive angular momenta when $r>0$, but no stability.\\[-0.191em]}
	\label{fig:EQMassPerpBifurcation_l10p5}
\end{figure}

\newpage
\textbf{Stability Conclusions for Equal Mass}\\
\label{Stability_Conclusions_for_Equal_Mass}In our effort to find energetic stability, we were looking for eigenvalues $\left(e_{r},e_{\theta_{i}}\right) $ such that $Sgn\left(e_{r},e_{\theta_{i}}\right) =\left(+,+,+\right)$, which are associated with strict minima and stability. In Section \ref{2DB_Colinear_Energetic_Stability}, we saw that $\mathcal{R}_{C},$ with sufficiently large radii such that $\partial_rL^2>0,$ is energetically stable. We note from above that as $r$ varies, none of the other RE curves' signs become $\left(+,+,+\right) $. For the most part, we find saddles in $V$. But we do find maxima in $V$ for low radius $\mathcal{R}_{T}$, which are saddles for the Hamiltonian when considering the positive definite kinetic energy.

\subsubsection{Linear Stability for Equal Mass}
Despite finding energetic stability in only the colinear case, when we map the linear stability criteria \eqref{eq:2DB_Linear_Stability_Criteria} to the equal mass configuration, we see some linear stability for each of the symmetric, but none of the asymmetric cases. As we have already examined linear stability for the colinear and perpendicular configurations, we will restrict ourselves to the trapezoid. Particularly, for small $r$ in the $\theta_{1}\theta_{2}$-plane (see Figure \ref{fig:EqMassLinearization_1}) we have linear stability in the trapezoid case when $\ell_{1}\neq \frac{1}{2}$. Below are graphs in the $\theta_{1}\theta_{2}$ and $rM_1$-planes at $r,\ell_{1},M_{1}$ values where linear stability exists.

In Figure \ref{fig:EqMassLinearization_RM1_Trap} we note a lack of stability for the trapezoid configuration when $\ell_{1}=\frac{1}{2}.$ We also see that as $\ell _{1}\rightarrow 1,$ stability appears to converge to those found by Beletskii and Ponomareva \cite{beletskii1990} for the dumbbell/point mass problem in their fig 5. All linear stability occurs at low $r.$ Similar figures for $\ell_1<\frac{1}{2}$ exist requiring $M_2$ large when $\ell_2$ large.

Physically speaking, linear stability here requires that if one body is long, it is also the massive body.  This would require the shorter body to be less massive.  This seems to fit well with how mass and size tend to work in real life, and so does not impose a requirement which is difficult to satisfy.
\paragraph{Equal Mass Linear Stability Conclusions}
For the equal mass configuration, in addition to our previous colinear and perpendicular results,
we found linear stability for the trapezoid case when $r$ is small and $\ell_1\neq\frac{1}{2}$, but none for the asymmetric RE curves.
\\\\
As referenced earlier, we now provide a theorem which geometrically restricts the location of planar rigid bodies when in planar RE with a dumbbell.  We noted earlier how the RE located in this paper obeyed these restrictions.
\begin{figure}[H]
	\captionsetup[subfigure]{justification=centering}
	\centering
	{\includegraphics[width=0.4\textwidth]{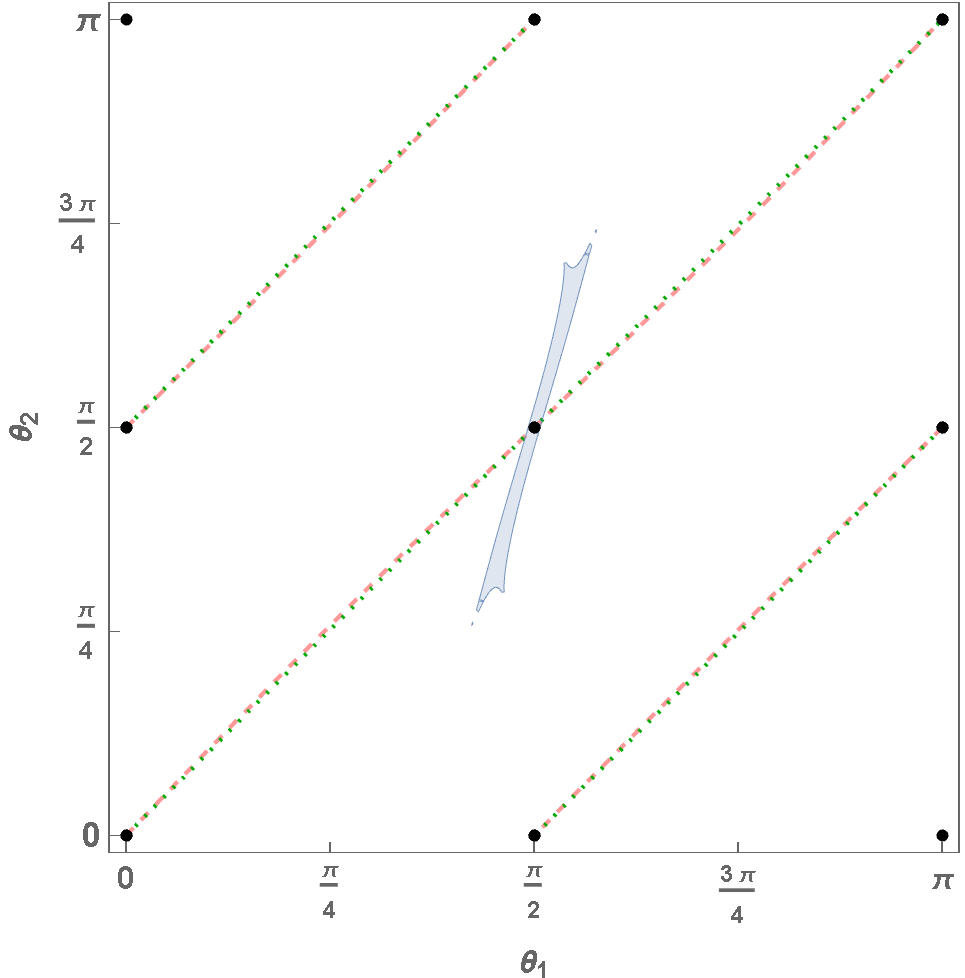}\label{fig:EqMassLinearization_1A}}
	\caption{\\\bfseries Equal Mass Trapezoid Linear Stability in $\theta_1\theta_2$-plane}
{\vspace{0.8em}\footnotesize $\ell_1=\frac{3}{4}, M_1=\frac{1}{2},r=0.02$\\$V_{\theta_i}=0$ (dashed lines), RE (black dots), Linear Stability (shaded regions).\\Graph shows linear stability for a small radius.\\[-1.591em]}
	\label{fig:EqMassLinearization_1}
\end{figure}

\begin{figure}[H]
	\captionsetup[subfigure]{justification=centering}
	\centering
	\subfloat[$\ell _{1}=\frac{1}{2}$]
	{\includegraphics[width=0.27\textwidth]{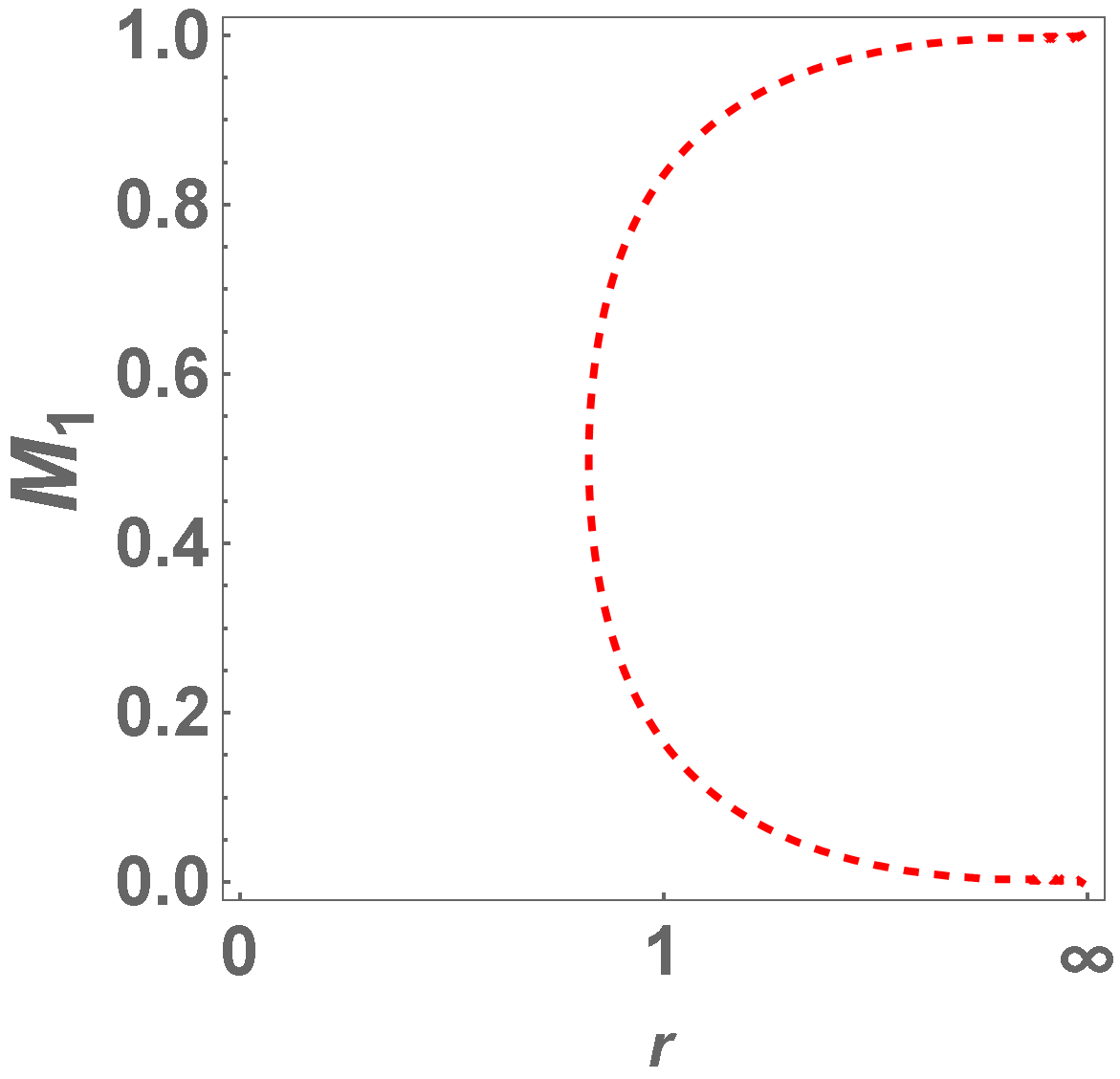}}
	\hspace{1cm}
	\subfloat[$\ell _{1}=0.8$]
	{\includegraphics[width=0.27\textwidth]{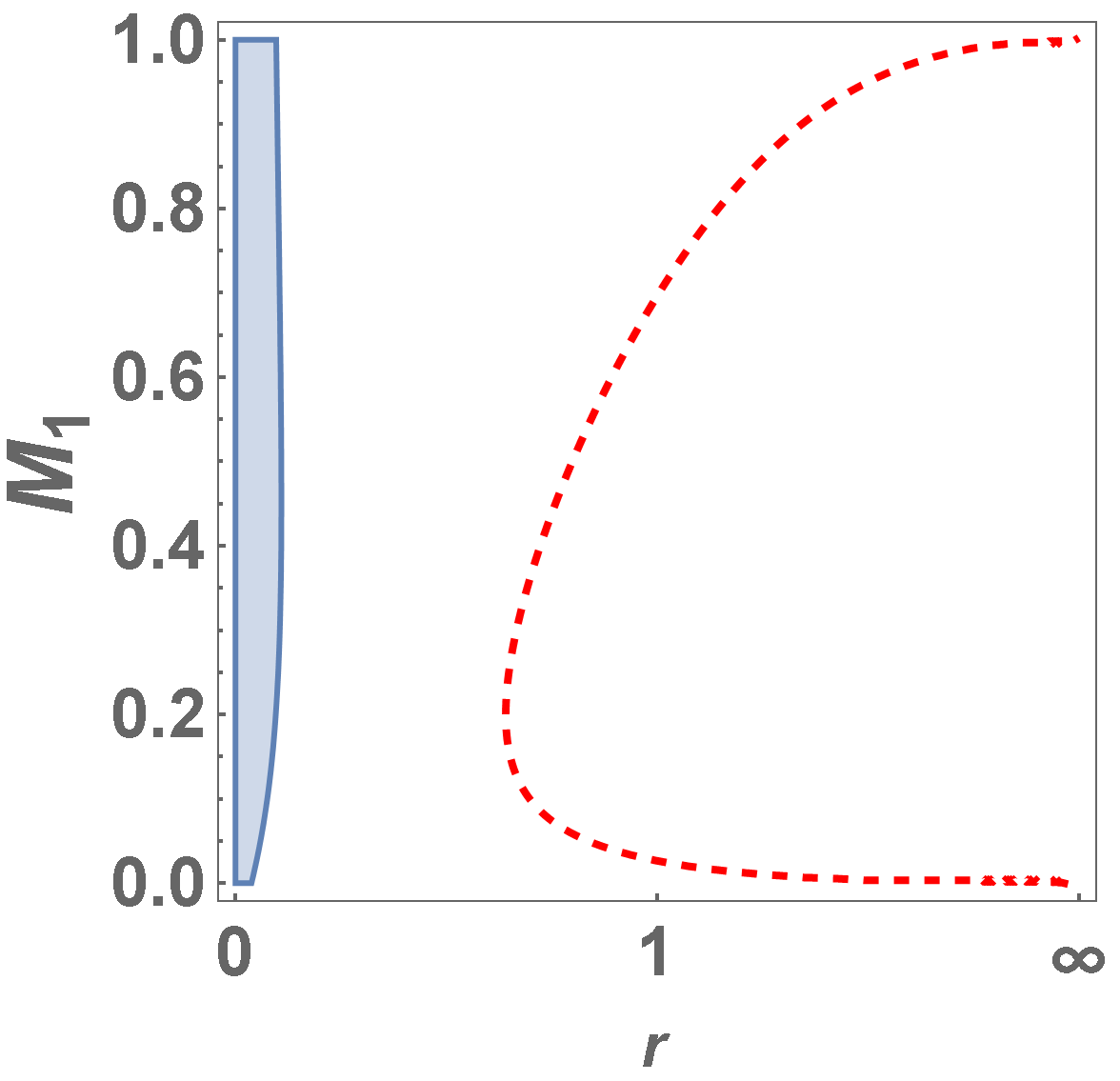}}\\
	\subfloat[$\ell _{1}=0.9$]
	{\includegraphics[width=0.27\textwidth]{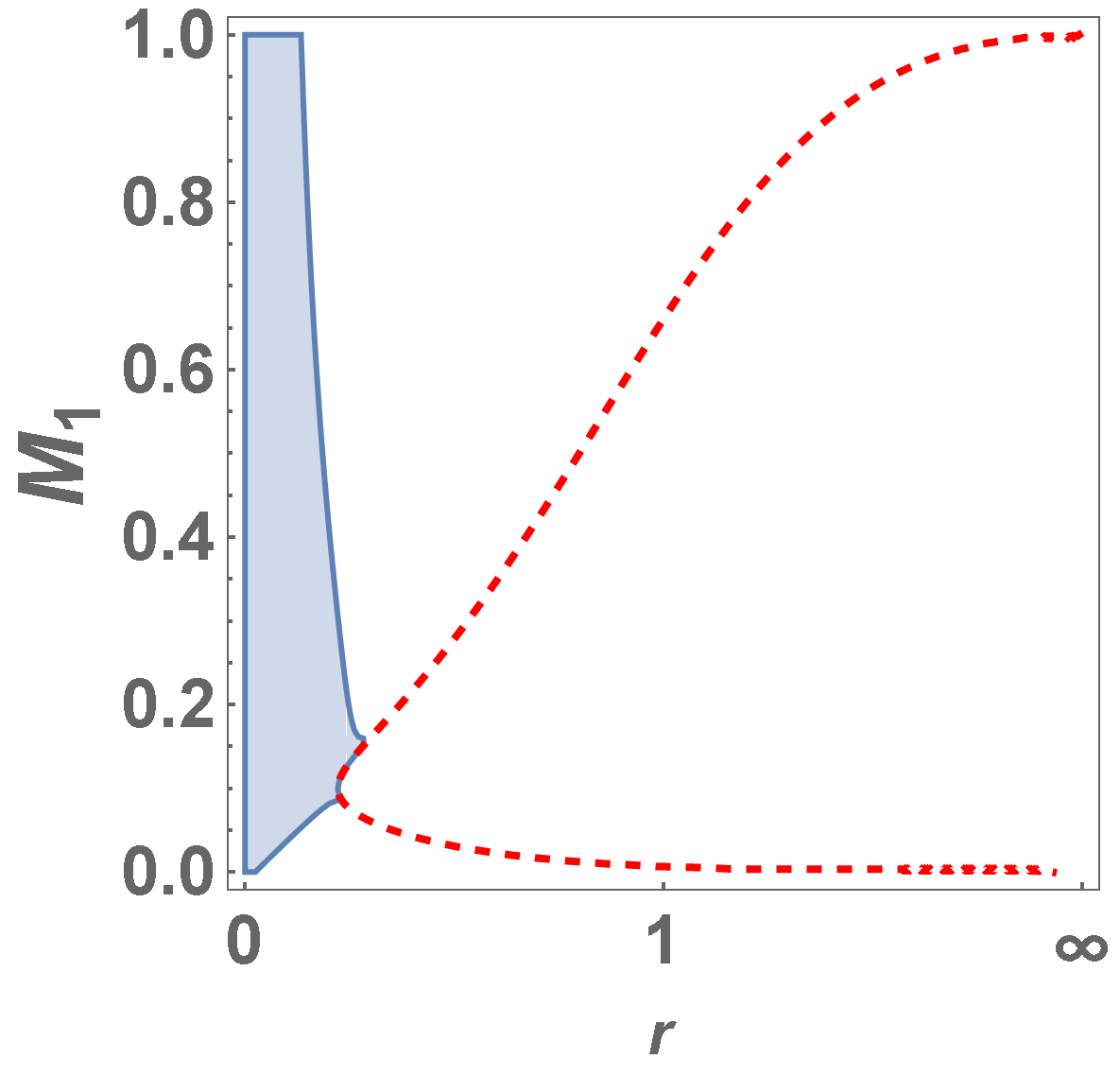}}
	\hspace{1cm}
	\subfloat[$\ell _{1}=0.95$]
	{\includegraphics[width=0.27\textwidth]{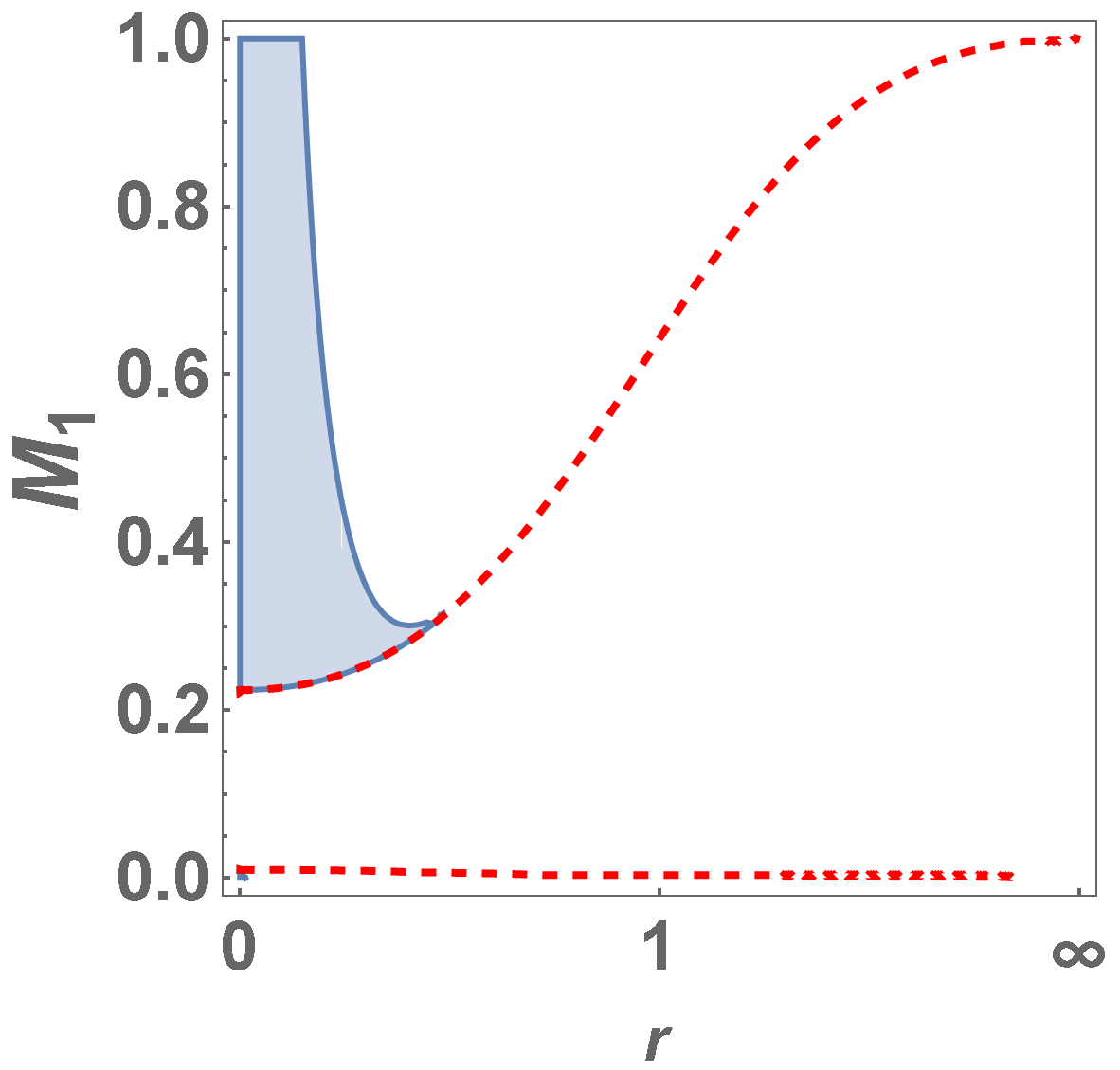}}
	\caption{\\\bfseries Equal Mass Trapezoid Linear Stability in $rM_1$-plane}{\vspace{0.8em}\footnotesize $\partial_r L^2=0$ (dashed lines), Linear Stability (shaded regions).\\Graphs show linear stability for low radii, bounded by $\partial_r L^2=0$.\\[-0.891em]}
	\label{fig:EqMassLinearization_RM1_Trap}
\end{figure}

\newpage\section{Perpendicular Bisector Theorem for RE of a Dumbbell and Rigid Planar Bodies}\label{PerpBisThm}
In 1990, Conley and Moeckel developed the perpendicular bisector theorem which restricts the possible geometries of central configurations \cite{Moeckel1990}. For each pair of point masses $\vec{r}_i,\vec{r}_j$, the theorem asks one to consider the four quadrants formed by the line containing $\vec{r}_i,\vec{r}_j$, and its perpendicular bisector. The hourglass shape which is formed from the union of the 1st and 3rd quadrants is called a cone, similarly with the 2nd and 4th quadrants. The term ``open cone'' refers to a cone minus the axes.\\
\begin{theorem}[Perpendicular Bisector Theorem] Let $q = \left\{\vec{r}_1, \vec{r}_2,...\right\}$ be a planar central configuration and let $\vec{r}_i$ and $\vec{r}_j$ be any two of its points. Then, if one of the two open cones determined by the line through $\vec{r}_i$ and $\vec{r}_j$ and its perpendicular bisector contains points of the configuration, so does the other one.
\end{theorem}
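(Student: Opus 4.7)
The plan is to exploit the central-configuration identity for the two distinguished points and then project onto the direction perpendicular to the line joining them. Recall that a planar central configuration satisfies
\begin{equation*}
\lambda \vec{r}_k = \sum_{\ell \neq k} \frac{m_\ell (\vec{r}_\ell - \vec{r}_k)}{|\vec{r}_\ell - \vec{r}_k|^3}
\end{equation*}
for every index $k$, where $\lambda$ is a single scalar depending on the configuration (and the center of mass is at the origin, which we do not actually need here). First I would change coordinates so that the line through $\vec{r}_i$ and $\vec{r}_j$ is the horizontal axis and the perpendicular bisector is the vertical axis, with $\vec{r}_i = (-a,0)$ and $\vec{r}_j = (a,0)$ for some $a > 0$. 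In these coordinates, the two open cones become the union of quadrants $Q_1 \cup Q_3$ and $Q_2 \cup Q_4$.

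Next I would write the central-configuration equations at $\vec{r}_i$ and at $\vec{r}_j$ and extract only the $y$-component of each. Since $\vec{r}_i$ and $\vec{r}_j$ both have zero $y$-coordinate, the left-hand sides vanish, yielding
\begin{equation*}
0 = \sum_{k \neq i} \frac{m_k y_k}{|\vec{r}_k - \vec{r}_i|^3}, \qquad 0 = \sum_{k \neq j} \frac{m_k y_k}{|\vec{r}_k - \vec{r}_j|^3}.
\end{equation*}
Subtracting, the $k=i$ and $k=j$ terms drop out because $y_i = y_j = 0$, and one obtains the single identity
\begin{equation*}
0 = \sum_{k \neq i,j} m_k y_k \left( \frac{1}{|\vec{r}_k - \vec{r}_i|^3} - \frac{1}{|\vec{r}_k - \vec{r}_j|^3} \right).
\end{equation*}

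The key step is the sign analysis of each summand. A point $\vec{r}_k$ with $x_k > 0$ is strictly closer to $\vec{r}_j$ than to $\vec{r}_i$, so the bracketed factor is negative; if instead $x_k < 0$ it is positive; and if $x_k = 0$ (on the perpendicular bisector) it vanishes. Combined with the sign of $y_k$, this shows that every $\vec{r}_k$ lying in the open cone $Q_1 \cup Q_3$ contributes a strictly negative term, every $\vec{r}_k$ in $Q_2 \cup Q_4$ contributes a strictly positive term, and every $\vec{r}_k$ lying on either axis contributes zero. Since the total sum is zero and the masses $m_k$ are positive, the existence of at least one body in one open cone forces at least one body in the other, which is exactly the claim.

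The main obstacle I expect is not conceptual but notational: making the strict inequalities honest requires being careful that ``open cone'' excludes both the axis through $\vec{r}_i, \vec{r}_j$ and its perpendicular bisector, so that the sign of each nonzero summand is genuinely determined by quadrant membership and never zero for a spurious reason. Once that is spelled out, the argument is essentially a one-line identity plus the sign check; no appeal to the value of $\lambda$ or the location of the center of mass is needed, which is why the same template will generalize to the dumbbell setting the authors are after.
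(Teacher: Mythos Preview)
Your proof is correct and complete. Note, however, that the paper does not actually prove this classical statement: it is quoted as the known Conley--Moeckel result from \cite{Moeckel1990}, and what the paper proves is the \emph{extension} to a dumbbell together with rigid planar bodies (Theorem~\ref{Perpendicular_Bisector_Theorem_for_Dumbbell_and_Rigid_Bodies}).

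That said, your argument is exactly the template the paper uses for its extension. There, instead of the central-configuration identity, the authors compute the rotational acceleration $\ddot\theta$ of the dumbbell by subtracting the vertical components of the accelerations on its two endpoints; the centrifugal contributions cancel (your $\lambda$-terms play the analogous role and cancel upon subtraction), leaving a sum or integral of terms of the form $(p_y - r_{1y})\bigl(|\vec p - \vec r_2|^{-4} - |\vec p - \vec r_1|^{-4}\bigr)$, whose sign is determined quadrant-by-quadrant precisely as in your analysis. So your approach and the paper's are the same idea in two notations; the only cosmetic difference is that the paper phrases things in terms of physical forces while you work directly with the algebraic balance equation. Your remark that neither $\lambda$ nor the location of the center of mass is needed after subtraction is exactly why the same computation survives the passage to the rigid-body setting.
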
\vspace{1em}

We will prove an extension of this theorem as it relates to a dumbbell and several rigid planar bodies in planar RE.  In particular, the theorem will also apply to discretized bodies.  A discretized body is one which consists of point masses, all connected by massless rods, with a point mass body being trivially discretized. And of course, a dumbbell is a discretized body.  
\begin{wrapfigure}{r}{0.41\textwidth}
	\begin{minipage}{1\linewidth}
		\centering\captionsetup[subfigure]{justification=centering}
		{\includegraphics[width=.9\textwidth]{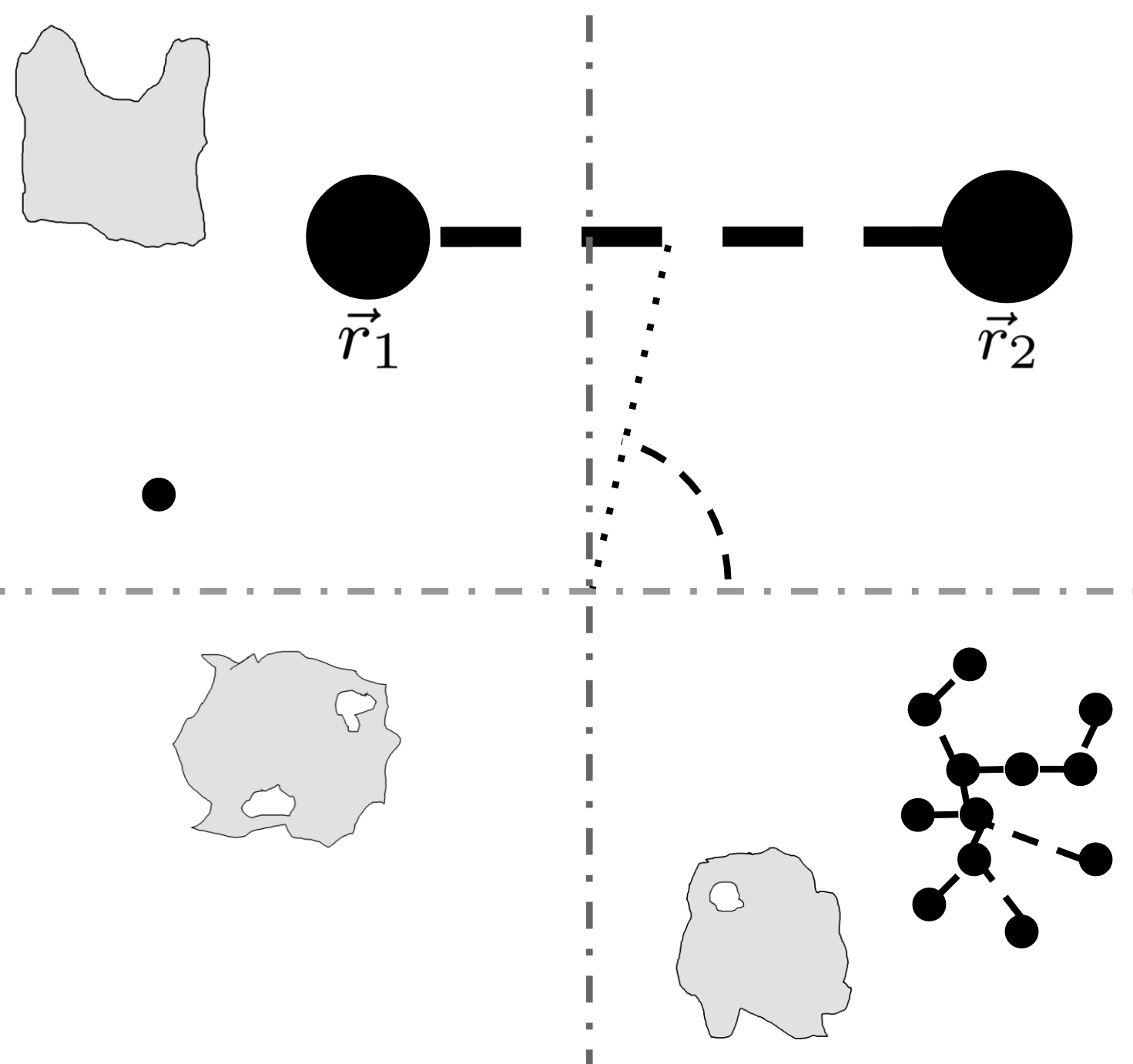}}
		\end{minipage}
	\caption{\\\bfseries Dumbbell and Rigid Bodies}{\vspace{0.8em}\footnotesize Initial system rotation chosen such that the dumbbell is parallel to the horizontal axis.\\[-4.8em]}
	\label{fig:Perp_Biff_Rigid_Rotated}
\end{wrapfigure}

For this analysis, reference Figure \ref{fig:Perp_Biff_Rigid_Rotated}. As before, we let $\vec{r}_1$ and $\vec{r}_2$ be the locations of the dumbbell's masses (with mass ratios $x_1,x_2$). We denote the body mass of the dumbbell as $M_1$, the dumbbell body as $\mathcal{B}_1$, and the other rigid bodies as $\mathcal{B}_2,...,\mathcal{B}_n$. We assume a reference frame rotating such that a RE configuration will be at equilibrium. Note that the dynamics do not depend upon our choice initial system rotation, so for convenience of calculation, and without loss of generality, we choose this rotation such that the dumbbell is parallel to the horizontal axis, see Figure \ref{fig:Perp_Biff_Rigid_Rotated}.\newpage
\begin{theorem}[Perpendicular Bisector Theorem for RE of a Dumbbell and Rigid Planar Bodies]
	\label{Perpendicular_Bisector_Theorem_for_Dumbbell_and_Rigid_Bodies}
	Let a dumbbell $\overline{r_1r_2}$ and one or more planar rigid bodies $\mathcal{B}_2,...,\mathcal{B}_n$ be in a planar RE. Then, if one of the two open cones determined by the lines through $\overline{r_1r_2}$ and its perpendicular bisector contains one or more rigid bodies, the other open cone cannot be empty.
\end{theorem}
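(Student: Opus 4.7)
The plan is to reduce the theorem to Conley's classical perpendicular-bisector sign argument by extracting a single scalar condition from torque balance on the dumbbell. First, pass to a uniformly rotating frame in which the RE is stationary, and orient coordinates so the rod lies along the $x$-axis, with $\hat u:=(\vec r_2-\vec r_1)/\ell$ along the rod and $\hat n$ the unit normal. Let $\vec F_i$ denote the total gravitational force on the dumbbell mass at $\vec r_i$ (of mass $m_i=M_1 x_i$) from the other rigid bodies $\mathcal B_2,\dots,\mathcal B_n$, and let $\vec C_1$ denote the dumbbell's center of mass, so that $\vec r_1-\vec C_1=-x_2\ell\hat u$ and $\vec r_2-\vec C_1=x_1\ell\hat u$.

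Since the dumbbell is rigid and stationary in the rotating frame, its net torque about $\vec C_1$ must vanish. The rod's internal constraint force is parallel to $\vec r_i-\vec C_1$ and contributes nothing to the torque. For the centrifugal contribution, writing $\vec r_i=\vec C_1+(\vec r_i-\vec C_1)$ and using $\sum_i m_i(\vec r_i-\vec C_1)=\vec 0$,
\begin{equation*}
\omega^2\sum_i m_i(\vec r_i-\vec C_1)\times\vec r_i=\omega^2\left[\sum_i m_i(\vec r_i-\vec C_1)\right]\times\vec C_1=\vec 0.
\end{equation*}
Only the gravitational torque survives, yielding $\ell\,\hat u\times(x_1\vec F_2-x_2\vec F_1)=\vec 0$, equivalently $(x_1\vec F_2-x_2\vec F_1)\cdot\hat n=0$.

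Expanding $\vec F_i=m_i\sum_k\int_{\mathcal B_k}G(\vec r-\vec r_i)|\vec r-\vec r_i|^{-3}\,d\mu_k(\vec r)$, the common factor $GM_1^2 x_1 x_2$ drops out, and since $\vec r_2-\vec r_1\parallel\hat u$ the perpendicular coordinate $y:=(\vec r-\vec r_i)\cdot\hat n$ is independent of $i$. The condition reduces to
\begin{equation*}
0=\sum_{k=2}^{n}\int_{\mathcal B_k}y\left(\frac{1}{|\vec r-\vec r_2|^3}-\frac{1}{|\vec r-\vec r_1|^3}\right)d\mu_k(\vec r).
\end{equation*}
Letting $m$ denote the rod's midpoint along $\hat u$, the identity $|\vec r-\vec r_2|^2-|\vec r-\vec r_1|^2=-2\ell(x-m)$ shows the bracket has the sign of $x-m$, so the whole integrand has the sign of $y(x-m)$: strictly positive on the open cone $\{y(x-m)>0\}$, strictly negative on $\{y(x-m)<0\}$, and zero on the two dividing lines. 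If one open cone contained mass from some $\mathcal B_k$ while the other were empty, the integral would carry a definite nonzero sign, contradicting the identity.

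The main obstacle is the rigid-body torque bookkeeping in the first two steps, particularly verifying that both the centrifugal torque about $\vec C_1$ (not about the system origin) and the rod-constraint torque vanish, so that the balance equation collapses cleanly onto Conley's classical kernel. Once that reduction is in hand, the sign analysis is essentially the original Conley argument applied to the pair $\{\vec r_1,\vec r_2\}$, with integrals over arbitrary positive mass distributions replacing the original discrete sum over point masses.
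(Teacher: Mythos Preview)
Your proof is correct and follows essentially the same strategy as the paper's: align the rod with a coordinate axis, show that the centrifugal contribution to the dumbbell's rotational balance vanishes, and reduce the gravitational contribution to an integral whose integrand has the sign of $y(x-m)$, so that mass confined to a single open cone would force a nonzero value. The paper phrases the rotational balance as the difference of the perpendicular components of the two point-mass accelerations rather than as a torque about $\vec C_1$, but the resulting scalar identity and the Conley-style sign analysis are the same.
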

\begin{proof}
The idea of the proof is to calculate the rotational acceleration $\ddot{\theta}$ of the dumbbell, and to show that it will be nonzero if the rigid bodies are contained in just one open cone. Note that each of the point masses on the dumbbell experiences acceleration due to gravitation and centrifugal forces. Our choice of system rotation has positioned our dumbbell such that the rotational acceleration is determined by the second of our vector components (perpendicular to the dumbbell). Therefore, to determine the rotational acceleration of the dumbbell, we subtract the accelerations of the two point masses, and look at the vertical component. 
\vspace*{-0mm}\subsection{Centrifugal Force}\label{Discrete Body Centrifugal Force}
Note that the centrifugal forces for our masses are:\\ $\text{\hskip20pt}\vec{F}_{c_{i}}=\frac{M_1x_i\protect\vec{v}_i^{\text{\hskip2pt}2}}{\left\vert\protect\vec{r}_i\right\vert^2}\protect\vec{r}_i$$=M_1x_i\overset{\cdot}{\varphi}^{2}\vec{r}_{i} =:M_{1}x_{i}\overset{\cdot}{\varphi}^{2}\left(r_{ix},r_{iy}\right)$. \\And the accompanying centrifugal accelerations can be written as $k\left(r_{ix},r_{iy}\right)$, with $k$ depending upon rotation rate. The centrifugal accelerations perpendicular to the massless rod are $kr_{iy}.$
Due to the dumbbell being parallel with the horizontal axis, note that the second (vertical) component of $\vec{r}_{i}$ does not depend on $i$. Therefore, when we subtract these components to determine the rotational acceleration due to centrifugal force ($\ddot{\theta}_{c}=kr_{2y}-kr_{1y}$), the terms cancel and we find centrifugal force does not contribute to rotational acceleration. 
\vspace*{-0mm}\subsection{Gravitational Force}
Observe that the gravitational effect of $\vec{r}_{1}$ on $\vec{r}_{2}$ is exactly canceled out by an equal and opposite force of $\vec{r}_{2}$ on $\vec{r}_{1}$ being transmitted through the massless rod connecting them. Therefore, for each point mass on the dumbbell, gravitationally we need only take into account the force exerted by the rigid bodies. 
Note that the gravitational force and acceleration on each $r_{i}$ is $-\partial_{\overrightarrow{r}_{i}}U$ and $-\frac{\partial_{\overrightarrow{r}_{i}}U}{M_1x_i}$, respectively. If we let $\delta(\vec{p})$ represent the density function, we have:\\
$\text{\enspace\enspace\enspace\enspace\enspace}
\textstyle \frac{\partial_{\overrightarrow{r}_2}U}{M_1x_2}-\frac{\partial_{\overrightarrow{r}_1}U}{M_1x_1}=-\frac{1}{M_1}\left(\frac{\partial_{\overrightarrow{r}_2}}{x_2}-\frac{\partial_{\overrightarrow{r}_1}}{x_1}\right)\left(\sum_{k=2}^{n}\int_{\overrightarrow{p}\in\mathcal{B}_k}\left(\frac{x_1}{\left\vert \overrightarrow{p}-\overrightarrow{r}_1\right\vert ^2}+\frac{x_2}{\left\vert\overrightarrow{p}-\overrightarrow{r}_2\right\vert^2}\right)\delta \left(\overrightarrow{p}\right) d\overrightarrow{p}\right)$\\
$\text{\enspace\enspace\enspace\enspace\enspace}\textstyle  =-\frac{1}{M_{1}}\sum_{k=2}^{n}\left( \int_{\overrightarrow{p}\in\mathcal{B}_{k}}\partial _{\overrightarrow{r}_{2}}\left( \frac{\delta \left( \overrightarrow{p}\right)}{\left\vert \overrightarrow{p}-\overrightarrow{r}_{2}\right\vert ^{2}}\right) -\partial _{\overrightarrow{r}_{1}}\left(\frac{\delta \left( \overrightarrow{p}\right) }{\left\vert\overrightarrow{p}-\overrightarrow{r}_{1}\right\vert ^{2}}\right) d\overrightarrow{p}\right)$\\
$\text{\enspace\enspace\enspace\enspace\enspace}\textstyle  =\frac{2}{M_{1}}\sum_{k=2}^{n}\int_{\overrightarrow{p}\in\mathcal{B}_{k}}\left(\frac{\overrightarrow{p}-\overrightarrow{r}_{2}}{\left\vert\overrightarrow{p}-\overrightarrow{r}_{2}\right\vert^{4}}-\frac{\overrightarrow{p}-\overrightarrow{r}_{1}}{\left\vert\overrightarrow{p}-\overrightarrow{r}_{1}\right\vert^{4}}\right)\delta\left(\overrightarrow{p}\right) d\overrightarrow{p}.$\\
Looking at the vertical components, we can calculate the total rotational acceleration for the dumbbell. And, since the dumbbell is horizontal, note that $p_{y}-r_{2y}=p_{y}-r_{1y}$, allowing for some simplification in the following calculation:\\
$\text{\enspace\enspace\enspace\enspace\enspace}\overset{\cdot \cdot}\theta=\frac{2}{M_{1}}\sum_{k=2}^{n}\int_{\overrightarrow{p}\in \mathcal{B}_{k}}\left(\frac{p_{y}-r_{2y}}{\left\vert\overrightarrow{p}-\overrightarrow{r}_{2}\right\vert^{4}}-\frac{p_{y}-r_{1y}}{\left\vert\overrightarrow{p}-\overrightarrow{r}_{1}\right\vert ^{4}}\right) \delta \left( \overrightarrow{p}\right) d\overrightarrow{p}$.
\begin{equation}\label{eq:RigidBody_dumbbell_acceleration}
	\text{\enspace\enspace}=\frac{2}{M_{1}}\sum_{k=2}^{n}\int_{\overrightarrow{p}\in \mathcal{B}_{k}}(p_{y}-r_{1y})\left(\frac{1}{\left\vert\overrightarrow{p}-\overrightarrow{r}_{2}\right\vert^{4}}-\frac{1}{\left\vert\overrightarrow{p}-\overrightarrow{r}_{1}\right\vert ^{4}}\right) \delta \left( \overrightarrow{p}\right) d\overrightarrow{p}.
\end{equation}
Now consider the quadrants determined by the line through the dumbbell's rod and that rod's perpendicular bisector (see Figure \ref{fig:Perp_Biff_Rigid_Integral}).
Since density is always positive, we see that if a particular $\mathcal{B}_{k}$ is in the 4th quadrant, then $|\vec{p}-\vec{r}_2|<|\vec{p}-\vec{r}_1|$,  $p_{y}-r_{iy}<0$ for all $\vec{p}\in\mathcal{B}_{k}$, and the integral will be negative. If a $\mathcal{B}_{k}$ is in the 3rd quadrant, then $|\vec{p}-\vec{r}_1|<|\vec{p}-\vec{r}_2|$,  $p_{y}-r_{iy}<0$ for all $\vec{p}\in\mathcal{B}_{k}$, and the integral will be positive. Similarly, the integral will be negative when a $\mathcal{B}_{k}$ is in the 2nd quadrant and positive in the 1st quadrant. The preceding analysis for discretized bodies is nearly identical, except for the use of summations over the discrete points in \ref{eq:RigidBody_dumbbell_acceleration}, instead of integrals. Therefore, if the $\mathcal{B}_{k}$ are all in the open cone of the 2nd and 4th quadrants, the dumbbell will accelerate clockwise. And if they are in the 1st and 3rd, the dumbbell will accelerate counterclockwise. So, for RE, either both cones are empty or both are occupied.
\end{proof}
\begin{figure}[H]
\captionsetup[subfigure]{justification=centering}
\centering
{\includegraphics[width=0.35\textwidth]{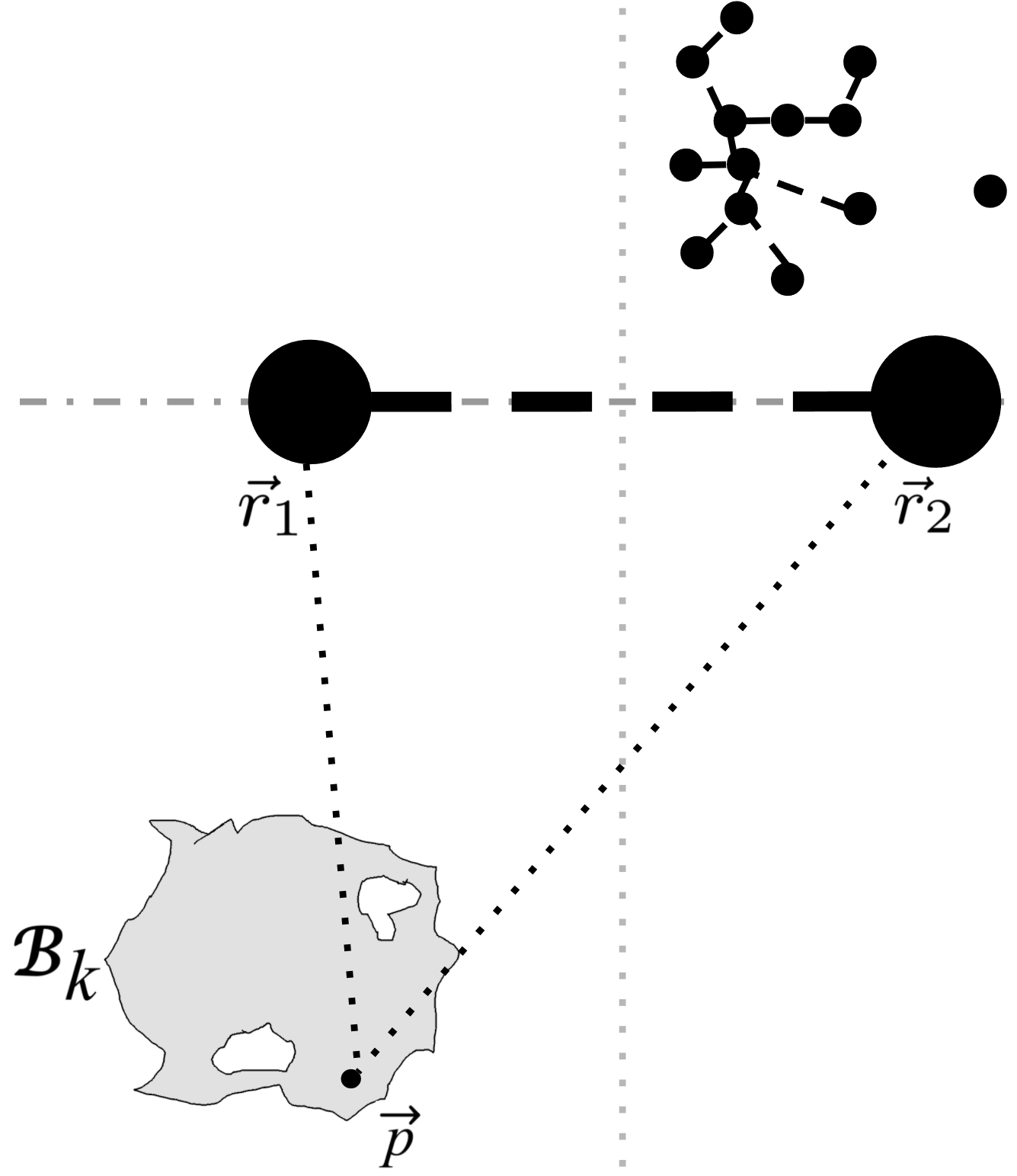}}
\caption{\\\bfseries Dumbbell and Rigid Bodies in Non-Re}{\vspace{0.8em}\footnotesize Comparing the distances between $\vec{p}$ and $\vec{r}_1,\vec{r}_2$.\\[-2.8em]}
\label{fig:Perp_Biff_Rigid_Integral}
\end{figure}
\chapter{Conclusion}
\section{Dumbbell/Point Mass Problem}
We verified the RE (colinear and isosceles) and stability for the dumbbell/point mass problem found by Beletskii and Ponomareva. We also found RE, including some stable RE in the overlapped colinear region. We performed bifurcation analyses for all RE in order to characterize qualitatively different regions of the $L^2$ and $x_1 M_1$ parameter space where the number of RE differ.
\paragraph{Colinear}
Our bifurcation analysis discovered that in the non-overlapped colinear configuration, irrespective of $x_1,M_1,$ for small angular momenta there are no RE. For sufficiently large angular momenta there are two RE (one at a closer radius, and another farther away). For the overlap region, when $x_1>x_2$ we found no RE for any angular momenta.  However, we discovered that when $x_1\ll x_2$ we have either one or three RE depending upon the angular momentum, and when $x_1<x_2$ with $x_1$ sufficiently large we have one RE for all angular momenta.

\textbf{Stability}\\
We verified stability found in Beletskii and Ponomareva for the non-overlap region, with sufficiently large radius.
In the overlapped region we also found radial intervals of energetic stability for sufficiently small $x_1$. Smaller $x_1$ is associated with larger intervals of stability. For physically realizable angular momenta, linear stability coincided with the energetic stability.
\paragraph{Isosceles}
In our bifurcation analysis, we found as angular momentum increases, that for a sufficiently massive point mass body there is either no RE, or one RE. For a less massive point mass body, as angular momentum increases we find no RE, then two RE, then one RE (see Figure \ref{fig:1DB_Isos_L_Plot}).\newpage
\textbf{Stability}\\
We found no energetic stability for the isosceles configuration, but for sufficiently large vertical body mass, we find radial intervals of linear stability. Depending on $x_1,M_1$, there are either one or two such radial intervals.
\section{Two Dumbbell Problem}
We found RE and examined stability for symmetric (colinear, perpendicular, equal mass) configurations, as well as asymmetric RE curves bifurcating from them.
\paragraph{Colinear}
We discovered that irrespective of the choice of parameters, with sufficiently low angular momenta $L$, there are no RE. However, for some $L_b>0$, and at some radius $r_b$, two RE bifurcate. And for all angular momenta greater than $L_b$, there are two RE (see Figure \ref{fig:6_3a_no_overlap}).

\textbf{Stability}\\
We showed that the RE of the colinear configuration are stable when the radius is sufficiently large, in particular when $\partial_{r}L^2>0$. We found linear stability coincided with the energetic stability.
\paragraph{Perpendicular}
We proved that for the perpendicular configuration, there is a family of isosceles RE (where $d_{11}=d_{21}$ and $d_{12}=d_{22}$).
We found RE for every radius of the isosceles family. Numerically, we found there were many qualitatively different $L^2$ bifurcation graphs for the family, depending upon the choice of parameters (see Figure \ref{fig:2D_Perp_Isos}).\\
\textbf{Stability}\\
We showed there is no energetic stability for the isosceles family. However, we did find radial intervals of linear stability for for large (resp. small) $M_{1}$ when $(\theta_{1},\theta_2)=(\frac{\pi}{2},0)$ (resp. $(\theta_{1},\theta_2)=(0,\frac{\pi}{2})$) (see Figure \ref{fig:Linearization_Isos_By_M1}).
\paragraph{Equal Mass}
We identified a symmetric equal mass trapezoid RE ($(\theta_{1},\theta_2)=(\frac{\pi}{2},\frac{\pi}{2})$). Additionally, a bifurcation analysis (this time using $r$ as our bifurcation parameter) revealed several families of RE bifurcating from the symmetric ones and curving through $r\theta_{1}\theta_2$-space. While some of these RE are nonphysical due to their complex or unbounded angular momenta, some are not. In particular, for $\ell_{1}\neq\frac{\pi}{2}$ we found a curve bifurcating from the trapezoid RE and subsequently merging with the perpendicular RE (see Figure \ref{fig:EQMassTrapToPerpBif_l10p75}), and a curve bifurcating from the colinear RE and subsequently merging with the perpendicular RE (see Figure \ref{fig:EQMassColToPerpBif_l10p75}). For $\ell_{1}=\frac{1}{2}$ we found a curve bifurcating from a collision of the two dumbbells, and subsequently merging with the perpendicular RE (see Figure \ref{fig:EQMassPerpBifurcation_l10p5}).

Regarding the $L^2$ bifurcation, we found 3 possibilities for the trapezoid configuration.  When $M_1<\frac{3}{4}$ or with $\ell_1$ large, we have one minimum, and as $L^2$ increases, we have zero, then two, then one RE. When $M_1>\frac{3}{4}$ and $\ell_1$ small, we have no minimum, and as $L^2$ increases, we have zero, then one RE. And when $\ell_1=\frac{1}{2}$, we have zero, then two RE as $L^2$ increases. For the bifurcating families of asymmetric (and physically realizable) RE, we have exactly one RE for each angular momentum within the range of angular momenta occurring in these curves.

\textbf{Stability}\\
The trapezoid configuration and asymmetric RE showed no energetic stability. The asymmetric RE also showed no linear stability. However, we did find linear stability for the trapezoid case for small $r$ when $\ell_{1}\neq\frac{1}{2}$.
\paragraph{Perpendicular Bisector Theorem for a Dumbbell and Planar Rigid Bodies}
We also proved an extension of the Conley Perpendicular Bisector Theorem.\\ Let a dumbbell $\overline{r_1r_2}$ and one or more planar rigid bodies $\mathcal{B}_2,...,\mathcal{B}_n$ be in a planar RE. Then, if one of the two open cones determined by the lines through $\overline{r_1r_2}$ and its perpendicular bisector contains one or more rigid bodies, the other open cone cannot be empty.
\appendix\chapter*{Appendices}
\addcontentsline{toc}{chapter}{Appendices}
\renewcommand{\thesection}{\Alph{section}}
\section{Notation Used in This Paper}
\label{appendix:Notation Used in This Paper}
\begin{tabular}{|l|l|}
\hline
\textbf{Notation}   & \textbf{Meaning}\\ \hline
$O$& Origin, also system's center of mass                                              \\ \hline
$\mathcal{B}_i$& Gravitational bodies in the system \\ \hline
$\vec{C},\vec{C}_i$& Location of center of mass for bodies \\ \hline
$\vec{r}_p$& Location of point mass body\\ \hline
$\vec{r}_i,\vec{r}_{ij},\vec{r}_{kj}$& Locations of points on dumbbell or discretized body\\ \hline
$M_i$& Mass of body $i$ \\ \hline
$\vec{r}, r$& Vector and distance between bodies' centers of mass\\ \hline
$R$& distance between bodies' centers of mass after change of variable\\ \hline
$\ell_i$& Length of massless rod connecting point masses $\vec{r}_{ij}$ on dumbbell \\ \hline
$\phi$& Acute angle between positive horizontal axis and $\vec{r}$\\ \hline
$\theta,\theta_i$& Angle between $\vec{r}$ and dumbbell's rod\\ \hline
$\vec{v}_i, \vec{v}_{ij}$& Velocity with which mass $\vec{r}_{i},\vec{r}_{ij}$ is moving  \\ \hline
$\mathcal{L},T,U,V$& Lagrangian, kinetic, potential energy, and amended potential \\ \hline
$d_{i},d_{ij}$ & Distance from $\vec{r}_{p}$ to $\vec{r}_{i}$, and from $\vec{r}_{1i}$ to $\vec{r}_{2j}$, respectively \\ \hline
$x_{i},x_{ij}$ & Dumbbell mass ratios  \\ \hline
$B,B_{i}$& Moment of inertia for dumbbells\\ \hline
$\vec{F}_{c_i}$ & Centrifugal force experienced by $\vec{r}_i$\\ \hline
L & Scalar angular momentum\\ \hline
$\mathcal{R}_{C},\mathcal{R}_{P_{1,2}},\mathcal{R}_{T}$ & Equal mass colinear, perpendicular, and trapezoid RE\\ \hline
\setlength\tabcolsep{0pt}\begin{tabular}{ll} $\mathcal{B}_{CC^{\pm}},\mathcal{B}_{TP^{\pm}},$ \\ $\mathcal{B}_{PC^{\pm}},\mathcal{B}_{CP^{\pm}}$ \end{tabular}& Equal mass $\ell_1\neq\frac{1}{2}$ RE curves bifurcating from symmetric RE\\ \hline
\setlength\tabcolsep{0pt}\begin{tabular}{ll} $\mathcal{B}_{C^{\pm}},\mathcal{B}_{T^{\pm}},$ \\ $\mathcal{B}_{LP^{\pm}},\mathcal{B}_{RP^{\pm}}$ \end{tabular}& Equal mass $\ell_1=\frac{1}{2}$ RE curves bifurcating from symmetric RE\\ \hline
$\delta(\vec{p})$ & Density function\\ \hline
\end{tabular}


\begin{thebibliography}{99}
	\addcontentsline{toc}{chapter}{Bibliography}
	
	\baselineskip=16pt
	\bibitem{newton1687} I. Newton, \textit{Philosophiae Naturalis Principia Mathematica}, Londini: Jussu Societatis Regiæ ac Typis Josephi Streater. Prostat apud plures Bibliopolas, (1687).
	\bibitem{Euler1766} L. Euler, \textit{Considerationes de motu corporum coelestium} (E304), Novi Commentarii academiae scientiarum Petropolitanae 1766(10), (1766), 544-558.
	\bibitem{Euler1767} L. Euler, \textit{De motu rectilineo trium corporum se mutuo attrahentium} (E327), Novi Commentarii academiae scientiarum Petropolitanae 1767(11), (1767), 144-151.
	\bibitem{Laplace1787} P. Laplace, \textit{Mémoire sur les inégalités séculaires des planètes et des satellites}, MARS, 1–50; OC., XI, (1787), 49–92.
	\bibitem{Dirichlet1846} P.G.L. Dirichlet, \textit{Über die Stabilität des Gleich-gewichts}, CRELLE, J. Reine Angew. Math. 32, (1846), 85–88.
	\bibitem{Maxwell1859} J.C. Maxwell, \textit{On the stability of the motion of Saturn’s rings}, An essay, which obtained the Adams Prize for the year 1956, in the university of Cambridge. Cambridge: Macmillan and CO. 23 Henrietta street, Covent garden, London, (1859), 1-71
	\bibitem{Noether1918} E. Noether, \textit{Invariante Variationsprobleme}, Kgl. Ges. d. Wiss. Nachrichten, Math.-phys. Klasse, (1918), 235-257.
	\bibitem{smale1970} S. Smale, \textit{Topology and mechanics}, Inventiones mathematicae, 10, (1970), 305-331.
	\bibitem{beletskii1990} V.V. Beletskii, O.N. Ponomareva, \textit{Parametric Analysis of Relative Equilibrium Stability in a Gravitational Field}, Kosmicheskie Issledovaniia, 28(5), (1990), 664-675.
	\bibitem{Moeckel1990}R. Moeckel, \textit{On Central Configurations}, Math. Z. 205, (1990), 499-517.
	\bibitem{wang1992} L. Wang, J.H. Maddocks, P.S. Krishnaprasad, \textit{Steady Rigid-Body Motions in a Central Gravitational Field}, Journal of the Astronautical Sciences, 40(4), (1992), 449-478.
	\bibitem{marsden1992} J.E. Marsden, \textit{Lectures on Mechanics}, Cambridge Univ. Press, (1992).
	\bibitem{Moeckel1994} R. Moeckel, \textit{Linear stability of relative equilibria with a dominant mass}, J Dyn Diff Eqns, Vol 6, No. 1, (1994), 37-51.
	\bibitem{maciejewski1995} A.J. Maciejewski, \textit{Reduction, Relative Equilibria and Potential in the Two Rigid Bodies Problem}, Celestial Mechanics and Dynamical Astronomy, 63(1), (1995), 1-28.6
	\bibitem{scheeres2000} D.J. Scheeres, S.J. Ostro, R.A. Werner, E.I. Asphaug, R.S. Hudson. \textit{Effects of Gravitational Interactions on Asteroid Spin States}, Icarus 147, (2000), 106-118.
	\bibitem{scheeres2001} D.J. Scheeres, \textit{Changes in Rotational Angular Momentum due to Gravitational Interactions Between Two Finite Bodies}, Celest. Mech. Dynam. Astron. 81, (2001), 39-44.\\
	\bibitem{Rahman2002} Q.I. Rahman, G. Schmeisser. \textit{Analytic Theory of Polynomials}, London Mathematical Society Monographs, New Series, No. 26, (2002), 366-368.
	\bibitem{ElipeETAL2006} A. Elipe, M. Palacios, and H. Pretka-Ziomek, \textit{Equilibria of the three-body problem with rigid dumb-bell satellite}, Chaos Soliton Fract., 35, (2006),  830-842.
	\bibitem{scheeres2006} D.J. Scheeres, \textit{Relative Equilibria for General Gravity Fields in the Sphere-Restricted Full 2-Body Problem}, Celestial Mechanics and Dynamical Astronomy, 94(3), (2006), 317-349.
	\bibitem{scheeres2007} D.J. Scheeres, \textit{Minimum Energy Configurations of Resting Equilibria}, abstract presented at the 38th American Astronomical Society Division on Dynamical Astronomy, Ann Arbor, (2007).
	\bibitem{scheeres2009} D.J. Scheeres, \textit{Stability of the Planar Full 2-Body Problem}, Celestial Mechanics and Dynamical Astronomy, 104(1-2), (2009), 103-128.
	\bibitem{scheeres2012} D.J. Scheeres, \textit{Minimum Energy Configurations in the N-Body Problem and the Celestial Mechanics of Granular Systems}, Celestial Mechanics and Dynamical Astronomy, 113(3), (2012), 291-320.
	\bibitem{Kinoshita2013} H.  Kinoshita, \textit{Stability  motions  of  an  axisymmetric  body  around  a  spherical  body  and  their  stability}, Publ. Astron. Soc. Jpn., 22, (1970), 383–403.
	\bibitem{Michaely2017} E. Michaely, H. Perets, E. Grishin, \textit{On the Existence of Regular and Irregular Outer Moons Orbiting the Pluto-Charon System}, The Astrophysical Journal, Volume 836, Issue 1, article id. 27, (2017).
	\bibitem{MaciejewskiETAL2018} A. J. Maciejewski, M. Przybylska, L. Simpson, and W. Szumi\'{n}ski, \textit{Non-integrability of the dumbbell and point mass problem}, Celestial Mech. Dynam. Astronom., 117, (2018), 315-330.
	\bibitem{moeckel2018} R. Moeckel, \textit{Counting Relative Equilibrium Configurations of the Full Two-Body Problem}, Celestial Mechanics and Dynamical Astronomy, 130(2), (2018).
	\bibitem{Naidua2020} S.P. Naidua, L.A.M. Bennera, M. Brozovica, M.C. Nolanb, S.J. Ostroa, J.L. Margotc, J.D. Giorginia, T. Hirabayashid, D.J. Scheeres, P. Pravecf, P. Scheirichf, C. Magrig, J.S. Jaoa, \textit{Radar observations and a physical model of binary near-Earth asteroid 65803 Didymos, target of the DART mission}, Icarus, Volume 348, (2020).
	\bibitem{DilaoMurteira2020} R. Dil\~{a}o, M. Murteira, \textit{Principal Periodic Orbits of the Keplerian Dumbbell System}, Siam J. Applied Dynamical Systems, Vol. 19, No. 1, (2020), 181-207.
	\bibitem{Bistafa2021} S.R. Bistafa, \textit{Euler's three-body problem}, Euleriana: 1(2), Article 6, (2021), 181.
\end{thebibliography}
\end{document}